\documentclass[a4paper,11pt]{article} 

\newcommand\authormark[1]{\textsuperscript{#1}}
\usepackage[numbers,sort&compress]{natbib}
\usepackage{opticameet3}

\usepackage[colorlinks=true,
            bookmarks=false,
            citecolor=blue,
            urlcolor=blue]{hyperref}

\usepackage{amsmath,amssymb}
\usepackage[colorlinks=true,bookmarks=false,citecolor=blue,urlcolor=blue]{hyperref} 
\usepackage[utf8]{inputenc}
\usepackage[T1]{fontenc}
\usepackage{booktabs}
\usepackage{multirow}
\usepackage{amsthm} 
\usepackage{amssymb} 
\usepackage{caption}
\usepackage{subcaption}
\newtheorem{theorem}{Theorem}[section]   
\newtheorem{lemma}{Lemma}[section]       
\newtheorem{example}{Example}
\newtheorem{definition}{Definition}[section] 
\newtheorem{assumption}{Assumption}[section]
\newtheorem{corollary}{Corollary}[section]
\newtheorem{remark}{Remark}[section]

\usepackage{graphicx}
\usepackage{amsmath}
\usepackage[version=4]{mhchem}
\usepackage{siunitx}
\usepackage{longtable,tabularx}
\usepackage{xcolor} 

\renewcommand{\u}{\mathbf{u}} 
\newcommand{\B}{\mathbf{b}} 
\renewcommand{\v}{\mathbf{v}} 
\newcommand{\C}{\mathbf{w}} 
\renewcommand{\L}{\mathbf{L}}

\numberwithin{equation}{section}
\numberwithin{definition}{section}

\begin{document}


\title{Finite Element Approximation of Rotating MHD-Boussinesq Flows
with Nonlinear Power-Law Viscosity}






\author{Akash Ashirbad Panda,\authormark{1}
Evana Islam Sarkar,\authormark{2}}

\address{
\authormark{1}Department of Mathematics, 
Indian Institute of Technology Indore, Madhya Pradesh-452020, India
\\
\authormark{2}Department of Mathematics, School of Basic Sciences,
Indian Institute of Technology Bhubaneswar, Odisha-752050, India
}

\email{
\authormark{1}akashpanda@iiti.ac.in, \ 
\authormark{2}s24ma09008@iitbbs.ac.in
}

\begin{abstract}
{\small{
We propose and analyse a fully discrete finite element approximation of the
three-dimensional incompressible rotating MHD--Boussinesq system with nonlinear
power-law ($p$-Laplace) viscosity. Conforming elements are used for the velocity
and temperature, $\mathbf H(\operatorname{curl})$-conforming N\'ed\'elec elements
for the magnetic induction, and a semi-implicit backward Euler scheme in time.
The discretisation preserves the principal energy structure: the Lorentz and
induction terms cancel exactly, while the Coriolis force remains energy neutral.
This yields well-posedness and mesh-independent energy stability.

The nonlinear power-law diffusion creates two main analytical difficulties. Since a uniform
time-derivative estimate for the discrete velocity is unavailable, strong compactness is
obtained through a time-translate argument and Simon's compactness criterion. The weak limit
of the nonlinear diffusion is then identified by a Minty argument based on a discrete energy
$\limsup$ inequality. Compactness and passage to the limit hold throughout the
shear-thickening regime, while identification of a genuine weak solution is obtained in the
Newtonian case and, in three dimensions, above an explicit threshold dictated by the
Lorentz force rather than by convection. We further establish a~priori error estimates
without any mesh-time coupling condition. Three-dimensional numerical experiments confirm
the stability and convergence behaviour predicted by the analysis.
}}
\end{abstract}

\vspace{0.5cm}

\noindent
\textbf{Keywords:} \
magnetoconvection; magnetohydrodynamics-Boussinesq system;
$p$-Laplace viscosity;
power-law fluid;
finite element method;
N\'ed\'elec edge elements;
semi-implicit Euler scheme;
convergence analysis;
unconditional error estimates.

\medskip

\noindent
\textbf{MSC (2020):}
Primary: 65M60;
Secondary: 65M12, 76W05, 35Q35, 35K55.

\section{Introduction}\label{sec-introduction}

Magnetoconvection in electrically conducting fluids arises in applications ranging from
planetary dynamo action to liquid-metal cooling and plasma confinement: buoyancy drives the
fluid motion, the motion induces a magnetic field, and the resulting Lorentz force feeds back
on the flow. In many such regimes the effective viscosity is non-Newtonian; shear-dependent
behaviour may arise from microstructure, temperature-dependent material parameters, or
turbulence closures, and is commonly described by a power law. In the momentum equation this
leads to a $p$-Laplacian diffusion operator, recovering the Newtonian model for $p=2$ and
describing shear-thickening behaviour for $p>2$. In rotating systems, the fluid dynamics are
additionally influenced by the Coriolis force, which redistributes momentum while remaining
energy neutral.

\subsection{The rotating MHD-Boussinesq model}

Let $\mathbb{D}\subset\mathbb{R}^{3}$ be a bounded Lipschitz polyhedron,
assumed simply connected with connected boundary $\partial\mathbb D$.
For a final time $T>0$, set
\[
\mathbb D_T:=\mathbb D\times(0,T).
\]
We consider the nonstationary incompressible rotating MHD-Boussinesq system
with constant density and $p$-Laplacian viscosity, $p\geq2$:
\begin{align}
\label{1st equation on MHD-BSn}
\u_{t}
-\nu\operatorname{div}\!\big(|\nabla\u|^{p-2}\nabla\u\big)
+(\u\cdot\nabla)\u
+2\boldsymbol{\omega}_{r}\times\u
+\nabla P^{\ast}
+\B\times(\nabla\times\B)
&=\theta e_{3}+\mathbf f
&&\mbox{in }\mathbb D_T,
\\
\label{2nd equation on MHD-BSn}
\B_{t}
+\kappa\nabla\times(\nabla\times\B)
-\nabla\times(\u\times\B)
&=\mathbf g
&&\mbox{in }\mathbb D_T,
\\
\label{3rd equation on MHD-BSn}
\theta_{t}
-\sigma\Delta\theta
+\u\cdot\nabla\theta
&=\u\cdot e_{3}+\psi
&&\mbox{in }\mathbb D_T,
\\
\operatorname{div}\u&=0
&&\mbox{in }\mathbb D_T,
\\
\operatorname{div}\B&=0
&&\mbox{in }\mathbb D_T.
\end{align}
The system is supplemented with the initial conditions
\begin{align}
\label{initial condition for MHD bsn}
\u(x,0)=\u^{0},\qquad
\B(x,0)=\B^{0},\qquad
\theta(x,0)=\theta^{0},
\qquad x\in\mathbb D,
\end{align}
and the boundary conditions
\begin{align}
\label{boundary condition for MHD bsn}
\u=0,\qquad
\mathbf n\times(\nabla\times\B)=0,\qquad
\mathbf n\cdot\B=0,\qquad
\theta=0
\qquad\mbox{on }\partial\mathbb D.
\end{align}
Here $\mathbf n$ denotes the outward unit normal and
$\operatorname{div}\B^{0}=0$.
\noindent
The unknowns $\u$, $\B$, $P^{\ast}$, and $\theta$ denote, respectively,
the fluid velocity, magnetic induction, modified pressure, and temperature
fluctuation. The parameters $\nu,\kappa,\sigma>0$ denote the viscosity,
magnetic diffusivity, and thermal diffusivity, while
$\mathbf f$, $\mathbf g$, and $\psi$ are the body force, applied current, and heat source.
We assume that $\mathbf g$ is solenoidal so that the magnetic divergence constraint is
propagated in time. The momentum equation contains the buoyancy force $\theta e_3$ and the
Lorentz force $\B\times(\nabla\times\B)$, whereas the induction equation couples the magnetic
field and velocity through $\nabla\times(\u\times\B)$. The temperature equation contains
the reciprocal Boussinesq coupling $\u\cdot e_3$. Such systems arise naturally in
magnetoconvection \cite{hashizume2006numerical,smoletntsev2010mhd} and are closely related to
Rayleigh-B\'enard convection in electrically conducting fluids
\cite{marchioro1994mathematical,mulone2003necessary}.
\\\noindent
The prescribed angular velocity of the rotating frame is denoted by $\boldsymbol{\omega}_{r}\in\mathbb{R}^{3},$ and is assumed constant in space and time. For rotation about the vertical axis, we take
\[
\boldsymbol{\omega}_{r}=\Omega_{r}e_{3},
\qquad \Omega_{r}\geq 0,
\]
where $e_{3}=(0,0,1)^{\top}$. The momentum equation then contains the Coriolis force $2\boldsymbol{\omega}_{r}\times\u .$ The associated centrifugal term can be incorporated into the modified pressure since
\[
\boldsymbol{\omega}_{r}\times
\big(\boldsymbol{\omega}_{r}\times x\big)
=
-\nabla\left(\frac12
\left|\boldsymbol{\omega}_{r}\times x\right|^{2}\right).
\]
For vertical rotation,
\[
2\boldsymbol{\omega}_{r}\times\u
=
2\Omega_{r}
\begin{pmatrix}
-u_{2}\\
u_{1}\\
0
\end{pmatrix}.
\]
Moreover, the Coriolis force is energy neutral:
\begin{equation}\label{eq-coriolis-energy}
\big(\boldsymbol{\omega}_{r}\times\v,\v\big)=0
\qquad
\forall\,\v\in\mathbf L^{2}(\mathbb D),
\end{equation}
and, in particular,
\[
\big(2\boldsymbol{\omega}_{r}\times\u,\u\big)=0.
\]
Thus rotation modifies the flow dynamics without directly producing or dissipating
kinetic energy, a structure that is retained by the fully discrete scheme.
\\ \noindent
Two structural conventions are important. First, the magnetic boundary conditions in
\eqref{boundary condition for MHD bsn} play different roles:
$\mathbf n\cdot\B=0$ is an essential condition and, together with
$\operatorname{div}\B=0$, is incorporated into the solenoidal space $\mathcal K$ introduced
in Section~\ref{subsec-spaces}, whereas
$\mathbf n\times(\nabla\times\B)=0$ is a natural boundary condition recovered from the weak
formulation. Thus the tangential trace $\mathbf n\times\B$ is not prescribed, and the natural
trial and test space for the magnetic field is
$\mathcal W=\mathbf H(\operatorname{curl};\mathbb D)$.
Second, the nonlinear diffusion is formulated in terms of $\nabla\u$ rather than the symmetric
gradient $\mathbf D\u$. This standard simplification preserves the monotonicity and coercivity
needed in the analysis; the corresponding symmetric-gradient formulation may be treated using
Korn's inequality in $\mathbf W^{1,p}$.

\subsection{State of the art}\label{subsec-literature}

\paragraph{MHD-Boussinesq systems.}
In two dimensions, global well-posedness was established by Bian
\cite{bian2016initial}, and for temperature-dependent coefficients by Bian and Gui
\cite{bian2016boussinesq,bian2017initial}; Ye \cite{ye2021global} treated the case of vanishing
thermal diffusion, while a first-order semi-implicit Euler scheme was analysed in
\cite{MR4783950}. Finite element approximations of stationary thermally coupled MHD systems
were studied in \cite{Meir1995,MeirSchmidt2001,Cheng2009,TabataTagami2005}. For the
evolutionary problem, Ravindran \cite{Ravindran2019} derived error estimates for a
Crank-Nicolson scheme, while Qiu \cite{Qiu2020} considered a fully discrete semi-implicit
Euler method.

These approaches employ Lagrange elements for the magnetic field and therefore rely on
$\B\in\mathbf H^{1}(\mathbb D)$, a regularity property that may fail on nonconvex polyhedra
\cite{CostabelDauge1998,CostabelDauge2000}. This motivates the use of
$\mathbf H(\operatorname{curl})$-conforming N\'ed\'elec edge elements. For the isothermal
MHD system, edge-element formulations were studied in
\cite{Dauge1989,CostabelDauge2000,Prohl2008}; related results for variable-density,
discontinuous Galerkin, and fully discrete MHD approximations can be found in
\cite{LiQiuYang2022,BanasProhl2010,LiuWalkington2007,Walkington2005,LiLi2024,ding2022convergence}.

\paragraph{Finite element methods for power-law diffusion.}

 The nonlinear diffusion associated with the $p$-Laplacian has motivated a substantial body of work on the stability and accuracy of finite element approximations. Barrett and Liu \cite{BarrettLiu1994} analysed backward Euler finite element approximations using linear elements and established error estimates for the parabolic $p$-Laplacian. For nonlinear parabolic systems with $p$-structure, quasi-norm techniques were developed in \cite{DieningEbmeyerRuzicka2007,BerselliRuzicka2022}, and subsequent work by Breit et al. \cite{BreitDieningStornWichmann2021} removed restrictions coupling the spatial mesh size to the temporal step. In the setting of incompressible non-Newtonian fluids, Berselli et al. \cite{BerselliDieningRuzicka2015} established optimal quasi-norm error bounds for the $p$-Navier–Stokes equations. Error analysis for fully discrete approximations of the (p)-Stokes problem was presented in \cite{EcksteinRuzicka2018}, while higher-order temporal discretisation strategies were investigated in \cite{Hansen2007,EmmrichWroblewska2013}. These studies provide analytical tools for treating nonlinear constitutive laws and motivate their extension to coupled electromagnetic and thermal systems.
\paragraph{Research gap.}
Although numerical methods for MHD-Boussinesq equations and finite element techniques for power-law fluids have been studied extensively, their combination introduces additional analytical difficulties. In particular, the three-dimensional rotating MHD-Boussinesq system considered here couples nonlinear viscous diffusion with magnetic induction, buoyancy-driven temperature evolution, and Coriolis forces. To the best of our knowledge, a fully discrete finite element analysis addressing this combination, with an $\mathbf H(\operatorname{curl};\mathbb D))$-conforming approximation of the magnetic field, has not been established in the cited literature.

A closely related starting point is the analysis of the three-dimensional incompressible MHD system with $p$-Laplacian viscosity by Le, Panda, and Sarkar ~\cite{LePandaSarkar2026}, which addresses stability, convergence, and a priori error estimation. The present study develops this framework for rotating magnetoconvection by incorporating the temperature equation, the Boussinesq coupling, and the Coriolis force. These additional couplings require corresponding extensions of the discrete energy estimates, compactness arguments, and error analysis. At the same time, the skew-symmetry of the rotational contribution provides an important structural property for retaining the energy estimates used in the analysis.
\subsection{Contributions}\label{subsec-contributions}

We construct and analyse a fully discrete method combining conforming finite elements for
$\u$ and $\theta$, N\'ed\'elec edge elements for $\B$, and a semi-implicit backward Euler
time discretisation. The main contributions are as follows.

\begin{enumerate}

\item[(i)] \emph{A structure-preserving semi-implicit scheme.}
The Lorentz force and induction coupling are evaluated using the same magnetic field
$\B_h^{m-1}$, so that the triple-product identity \eqref{eq-triple-product} yields exact
cancellation in the discrete energy balance. The Coriolis term is treated at the current
time level and satisfies
\[
\big(\boldsymbol{\omega}_{r}\times\u_h^m,\u_h^m\big)=0.
\]
Together with the skew-symmetric forms $a_1$ and $a_2$, these properties give existence,
uniqueness, and energy stability without any coupling condition between $\Delta t$ and $h$.
Moreover, since $\nabla S_h\subset\mathcal W_h$, the constraint
$(\B_h^m,\nabla s_h)=0$ provides the discrete counterpart of
$\operatorname{div}\B=0$ and $\mathbf n\cdot\B=0$.

\item[(ii)] \emph{Compactness without a velocity time-derivative bound.}
The $p$-Laplacian prevents a uniform bound on
$\partial_t\u_{h,\Delta t}$, so the classical Aubin-Lions argument is not directly available
for the velocity. Instead, we establish a time-translate estimate in which the Lorentz term is
controlled through the Hodge mapping \eqref{eq-hodge}, while the linear Coriolis contribution
is controlled directly by the uniform velocity bound, and then apply Simon's compactness
criterion.

\item[(iii)] \emph{Identification of the nonlinear flux.}
The weak limit of
$|\nabla\u_{h,\Delta t}|^{p-2}\nabla\u_{h,\Delta t}$
is identified by Minty's monotonicity method using an energy $\limsup$ inequality derived from
the discrete energy identity. Compactness and passage to the limit in all remaining terms hold
for every $p\ge2$, while identification of the nonlinear flux is obtained for
$p=2$ \ or \ $p\ge\frac83.$
In three dimensions, the additional restriction is dictated by the Lorentz force rather than
the convective term; the Coriolis force introduces no further restriction because it is linear
and energy neutral.

\item[(iv)] \emph{Unconditional error estimates.}
Under Assumption~\ref{Assumption} and for a pointwise divergence-free velocity pair, we derive
the estimate
\[
\mathcal O\!\left(\Delta t+h^{\frac{lp}{2(p-1)}}\right)
\]
in $L^\infty(L^2)$ for the velocity, magnetic induction, and temperature, reducing to the
classical
$\mathcal O(\Delta t+h^l)$ rate when $p=2$. The analysis employs a linear Stokes projection,
so that the $p$-Laplacian contribution appears explicitly as a consistency term, while the
pointwise divergence-free property allows a quadratic Gr\"onwall argument. The additional
Coriolis contribution is linear and is controlled without changing the resulting rate.

\item[(v)] \emph{Numerical validation.}
Three-dimensional experiments with manufactured solutions illustrate energy stability,
first-order temporal convergence, and the expected spatial convergence behaviour in both the
Newtonian and shear-thickening regimes.

\end{enumerate}

\subsection{Organization of the paper}\label{subsec-organization}

Section~\ref{sec-functional-setting} introduces the functional setting and weak formulation,
while Section~\ref{sec-fem} presents the finite element spaces and fully discrete scheme.
Sections~\ref{sec-existence-stability}, \ref{sec-convergence}, and \ref{sec-error} establish,
respectively, well-posedness and stability, convergence to a weak solution, and a~priori error
estimates. Section~\ref{sec-numerics} presents the numerical experiments, and
Section~\ref{sec-conclusion} concludes the paper.

\vspace{0.2cm}

\section{Functional Setting and Mathematical Formulation of the Problem}\label{sec-functional-setting}

This section specifies the functional framework for analysing the
rotating MHD-Boussinesq system
\eqref{1st equation on MHD-BSn}-\eqref{3rd equation on MHD-BSn}
under the initial and boundary conditions
\eqref{initial condition for MHD bsn}-\eqref{boundary condition for MHD bsn}.
We define the spaces required for the velocity, magnetic induction,
and temperature, and collect the inequalities and compactness results
used in the subsequent analysis. We also establish the algebraic
identities associated with the convective, rotational, and
electromagnetic terms that are needed for the energy estimates.

\subsection{Standing assumptions and notation}\label{subsec-notation}

\paragraph{Domain, time interval, and flow index.}

We work on a bounded Lipschitz polyhedral domain
$\mathbb D\subset\mathbb R^3$ that is simply connected and has
a connected boundary $\partial\mathbb D$. The outward unit normal
is denoted by $\mathbf n$, and the time interval is $(0,T)$,
where $T>0$. We use the notation
$\mathbb D_T=\mathbb D\times(0,T)$.
The assumed topology of the domain is used in the magnetic-field
estimates, particularly in the Poincar\'e-type inequality
\eqref{eq-poincare-curl}. The power-law exponent is restricted to
$p\geq 2$, with $p=2$ representing the Newtonian case and $p>2$
the shear-thickening case. We write
$p'=p/(p-1)$ for the conjugate exponent. The rotation vector
$\boldsymbol{\omega}_r\in\mathbb R^3$ is assumed to be constant
in space and time.

\paragraph{Function spaces and notation.}
For $j\ge1$ and $1\le q\le\infty$, $W^{j,q}(\mathbb D)$ denotes the usual Sobolev space,
with norm $\|\cdot\|_{j,q}$, and $H^j(\mathbb D)=W^{j,2}(\mathbb D)$.
A non-integer first index refers to the corresponding Sobolev-Slobodeckij space.
For a Banach space $X$, $L^q(0,T;X)$ denotes the usual Bochner space, and
\[
\|v\|_{L^\infty(0,T;X)}
=
\operatorname*{ess\,sup}_{0\le t\le T}\|v(t)\|_X.
\]
We write $\mathcal C^j(\mathbb D)$ for $j$-times continuously differentiable functions
and $\mathcal C_0^\infty(\mathbb D)$ for smooth compactly supported functions; see
\cite{GiraultRaviart1986}.
\\ \noindent
Vector- and tensor-valued quantities and spaces are written in boldface; for instance,
$\u=(u_1,u_2,u_3)$ and
$\mathbf L^2(\mathbb D)=(L^2(\mathbb D))^3$, with analogous notation for
$\mathbf W^{j,q}$ and $\mathbf H^j$. The $L^2$ inner product is denoted by $(\phi,\psi)=\int_{\mathbb D}\phi\,\psi\,dx,$ with corresponding norm $\|\cdot\|_{L^2}$; the same notation is used componentwise for
vector and tensor fields. We also write
$\|\mathbf v\|_{j,q}:=\|\mathbf v\|_{\mathbf W^{j,q}}$ and set
$e_3=(0,0,1)^{\top}$. Throughout, $C>0$ denotes a generic constant independent of
$h$ and $\Delta t$, whose value may change from line to line.

\subsection{Function spaces}\label{subsec-spaces}

We now define the spaces for the three unknown fields in accordance
with their respective boundary conditions and divergence constraints \eqref{boundary condition for MHD bsn}.
The velocity space incorporates the no-slip condition, the magnetic
space accounts for the curl and normal-trace requirements, and the
temperature space incorporates its homogeneous Dirichlet condition.

\vspace{0.1cm}

\noindent
{\bf Velocity and pressure.}
The no-slip condition and the $p$-Laplace structure lead to
\begin{equation}\label{eq-spaces-velocity}
\mathcal X=\mathbf W_0^{1,p}(\mathbb D),\qquad
\mathcal X_0=
\left\{
\mathbf v\in\mathcal X:
\operatorname{div}\mathbf v=0
\text{ in }\mathbb D
\right\},
\end{equation}
together with
\[
Q=L_0^2(\mathbb D)
=
\left\{
q\in L^2(\mathbb D):
\int_{\mathbb D}q\,dx=0
\right\}.
\]
We equip $\mathcal X$ and $\mathcal X_0$ with the norm
$\|\nabla\cdot\|_{\mathbf L^p}$, equivalent to the full
$\mathbf W^{1,p}$-norm on $\mathbf W_0^{1,p}(\mathbb D)$. The pressure space $Q$ is used
for the discrete inf-sup condition; the discrete scheme is posed on a divergence-free
velocity space and therefore does not contain the pressure explicitly.
\\ \noindent
{\bf Magnetic induction.}
Since $\mathbf n\cdot\B=0$ is imposed essentially whereas
$\mathbf n\times(\nabla\times\B)=0$ is a natural boundary condition, we set
\begin{equation}\label{eq-spaces-magnetic}
\mathcal W
=
\mathbf H(\operatorname{curl};\mathbb D)
=
\left\{
\mathbf w\in\mathbf L^2(\mathbb D):
\nabla\times\mathbf w\in\mathbf L^2(\mathbb D)
\right\},
\end{equation}
with norm
\[
\|\mathbf w\|_{\mathcal W}
=
\left(
\|\mathbf w\|_{\mathbf L^2}^2
+
\|\nabla\times\mathbf w\|_{\mathbf L^2}^2
\right)^{1/2}.
\]
No tangential boundary condition is imposed on $\mathcal W$.
\\ \noindent
We further introduce
\begin{align}\label{eq-spaces-div}
\mathbf H(\operatorname{div};\mathbb D)
&=
\left\{
\mathbf w\in\mathbf L^2(\mathbb D):
\operatorname{div}\mathbf w\in L^2(\mathbb D)
\right\},
\notag\\
\mathbf H_0(\operatorname{div};\mathbb D)
&=
\left\{
\mathbf w\in\mathbf H(\operatorname{div};\mathbb D):
\mathbf w\cdot\mathbf n=0
\text{ on }\partial\mathbb D
\right\}.
\end{align}
The solenoidal magnetic space is
\begin{equation}\label{eq-space-K}
\mathcal K
=
\left\{
\mathbf b\in
\mathcal W\cap\mathbf H_0(\operatorname{div};\mathbb D):
\operatorname{div}\mathbf b=0
\right\}.
\end{equation}
Thus $\mathcal K$ incorporates both
$\operatorname{div}\B=0$ and $\mathbf n\cdot\B=0$. The normal trace is understood in
$H^{-1/2}(\partial\mathbb D)$. Since $\mathbb D$ is simply connected with connected boundary,
$\mathcal K$ contains no nontrivial harmonic fields; consequently,
$\|\nabla\times\cdot\|_{\mathbf L^2}$ is a norm on $\mathcal K$.

\vspace{0.1cm}

\noindent
{\bf Temperature.}
The homogeneous Dirichlet condition on the temperature leads to
\begin{equation}\label{eq-spaces-temperature}
\mathcal Y=H^1(\mathbb D),
\qquad
\mathcal Y_0=H_0^1(\mathbb D).
\end{equation}

\vspace{0.1cm}

\noindent
{\bf Solenoidal spaces.}
We shall also use
\begin{align}\label{eq-spaces-test}
\mathcal J
&=
\left\{
\mathbf w\in\mathcal C_0^\infty(\mathbb D):
\operatorname{div}\mathbf w=0
\right\},
\notag\\
\mathcal G
&=
\left\{
\mathbf w\in\mathcal C^\infty(\overline{\mathbb D}):
\operatorname{div}\mathbf w=0
\text{ in }\mathbb D,\quad
\mathbf w\cdot\mathbf n=0
\text{ on }\partial\mathbb D
\right\},
\notag\\
\mathbb H
&=
\left\{
\mathbf v\in\mathbf L^2(\mathbb D):
\operatorname{div}\mathbf v=0
\text{ in }\mathbb D,\quad
\mathbf v\cdot\mathbf n=0
\text{ on }\partial\mathbb D
\right\}.
\end{align}
Equivalently, $\mathbb H$ is the closure of $\mathcal J$ in
$\mathbf L^2(\mathbb D)$. The spaces $\mathcal J$ and $\mathcal G$ are used as smooth
test spaces for the momentum and induction equations, respectively; in particular,
$\mathcal G$ is dense in $\mathcal K$.

\subsection{Embedding, Poincar\'e, and compactness results}
\label{subsec-embeddings}

We first recall the compact embedding for curl-div spaces on Lipschitz polyhedra
\cite{AmroucheBernardiDaugeGirault1998,GiraultRaviart1986}.

\begin{lemma}\label{Embedding lemma}
There exists $\delta_1>0$ such that
\[
\mathcal V(\mathbb D)
:=
\mathcal W\cap\mathbf H_0(\operatorname{div};\mathbb D)
\hookrightarrow\hookrightarrow
\mathbf L^{3+\delta_1}(\mathbb D),
\]
where
\[
\|\mathbf v\|_{\mathcal V(\mathbb D)}
=
\left(
\|\nabla\times\mathbf v\|_{\mathbf L^2}^2
+
\|\operatorname{div}\mathbf v\|_{\mathbf L^2}^2
\right)^{1/2}.
\]
\end{lemma}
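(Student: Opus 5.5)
The plan is to reduce the lemma to a fractional Sobolev regularity result for curl--div fields on Lipschitz polyhedra, followed by a Sobolev embedding. The key input is the regularity theorem of Amrouche--Bernardi--Dauge--Girault \cite{AmroucheBernardiDaugeGirault1998}, stated in terms of the norm $\|\cdot\|_{\mathcal V(\mathbb D)}$.

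\textbf{Step 1: fractional regularity.} I will invoke the theorem that on a bounded Lipschitz polyhedron there is $s>1/2$ with
\[
\mathcal V(\mathbb D)\hookrightarrow \mathbf H^{s}(\mathbb D),
\qquad
\|\mathbf v\|_{\mathbf H^{s}}\le C\big(\|\mathbf v\|_{\mathbf L^2}+\|\nabla\times\mathbf v\|_{\mathbf L^2}+\|\operatorname{div}\mathbf v\|_{\mathbf L^2}\big).
\]
This is the step I regard as the main obstacle and would not reprove. Its proof splits $\mathbf v$ into a regular part and a gradient part $\nabla\varphi$. Here $\varphi$ solves a Neumann problem whose solution lies in $H^{3/2+\epsilon}$ on Lipschitz polyhedra, by Jerison--Kenig/Dauge corner-edge analysis. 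For general Lipschitz domains one only gets $s=1/2$, which is too weak, so the polyhedral structure is essential.

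\textbf{Step 2: removing the $\mathbf L^2$ term.} Since $\mathbb D$ is simply connected with connected boundary, $\mathcal V(\mathbb D)$ contains no nonzero field with vanishing curl and divergence and zero normal trace. A standard compactness--contradiction argument then gives the Friedrichs/Poincar\'e inequality $\|\mathbf v\|_{\mathbf L^2}\le C\|\mathbf v\|_{\mathcal V(\mathbb D)}$. The compactness used there is Weber's compact embedding $\mathcal V\hookrightarrow\hookrightarrow\mathbf L^2$, which also follows from Step 1 and Rellich. This shows that $\|\cdot\|_{\mathcal V(\mathbb D)}$ is a norm equivalent to the full graph norm, so the stated norm is legitimate.

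\textbf{Step 3: embedding into $\mathbf L^{3+\delta_1}$.} Write $s=\tfrac12+\epsilon$ with $\epsilon>0$. The fractional Sobolev embedding in three dimensions gives $\mathbf H^{s}(\mathbb D)\hookrightarrow\mathbf L^{q}(\mathbb D)$ with
\[
\frac1q=\frac12-\frac s3,\qquad q=\frac{6}{3-2s}=\frac{6}{2-2\epsilon}>3.
\]
On a Lipschitz domain this holds via an extension operator. Choose any $\delta_1$ with $3+\delta_1<q$, for example $\delta_1=\tfrac12(q-3)$. The embedding $\mathbf H^{s}\hookrightarrow\mathbf L^{r}$ is compact for every $r<q$: one can interpolate between the compact embedding $\mathbf H^s\hookrightarrow\hookrightarrow\mathbf L^2$ and the continuous one into $\mathbf L^q$, using $\|\cdot\|_{L^r}\le\|\cdot\|_{L^2}^{\lambda}\|\cdot\|_{L^q}^{1-\lambda}$. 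Composing with Step 1 yields the compact embedding $\mathcal V(\mathbb D)\hookrightarrow\hookrightarrow\mathbf L^{3+\delta_1}(\mathbb D)$, which is the claim of the lemma.
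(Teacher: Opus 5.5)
Your proposal is correct and matches the paper's approach: the paper gives no separate argument and simply cites Amrouche--Bernardi--Dauge--Girault (and Girault--Raviart). The content of that citation is what you spell out: the embedding $\mathcal V(\mathbb D)\hookrightarrow\mathbf H^{s}$ for some $s>\tfrac12$ on Lipschitz polyhedra, then the Sobolev embedding into $\mathbf L^{6/(3-2s)}$, with interpolation against the compact $\mathbf L^{2}$ embedding giving compactness into $\mathbf L^{3+\delta_1}$; your Step~2 (no harmonic fields under simple connectivity, so $\|\cdot\|_{\mathcal V(\mathbb D)}$ is a norm) is the same topological fact the paper uses when it asserts that $\|\nabla\times\cdot\|_{\mathbf L^2}$ is a norm on $\mathcal K$.
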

\noindent
Since $\mathcal K\subset\mathcal V(\mathbb D)$ and
$\operatorname{div}\mathbf v=0$ for $\mathbf v\in\mathcal K$, this yields
\[
\|\mathbf v\|_{\mathbf L^{3+\delta_1}}
\le
C\|\nabla\times\mathbf v\|_{\mathbf L^2},
\qquad
\mathbf v\in\mathcal K,
\]
which provides the additional integrability needed to control the Lorentz force.
\\ \noindent
We shall also use the following Poincar\'e and Sobolev estimates on bounded Lipschitz
polyhedra \cite{Monk2003}:
\begin{align}
\label{eq-poincare-velocity}
\|\mathbf v\|_{\mathbf L^{r}}
&\le
c_1\|\nabla\mathbf v\|_{\mathbf L^2},
&&
1\le r\le6,
&&
\forall\,\mathbf v\in\mathcal X_0,
\\
\label{eq-poincare-curl}
\|\mathbf v\|_{\mathbf L^2}
&\le
c_2\|\nabla\times\mathbf v\|_{\mathbf L^2},
&&
\forall\,\mathbf v\in\mathcal K,
\\
\label{eq-sobolev-Linfty}
\|\mathbf v\|_{\mathbf L^\infty}
&\le
c_3\|\mathbf v\|_{1+s,2},
&&
s>\frac12,
\\
\label{eq-sobolev-W13}
\|\mathbf v\|_{1,3}
&\le
c_4\|\mathbf v\|_{1+s,2},
\qquad
\|\nabla\times\mathbf v\|_{\mathbf L^3}
\le
c_5\|\nabla\times\mathbf v\|_{s,2}.
\end{align}
\noindent
For later use, we recall the classical Aubin-Lions compactness lemma.

\begin{lemma}[Aubin-Lions]\label{Aubin Lions}
Let $\mathbb B_0$ and $\mathbb B_1$ be reflexive Banach spaces and $\mathbb B$ a Banach
space such that
\[
\mathbb B_0
\hookrightarrow\hookrightarrow
\mathbb B
\hookrightarrow
\mathbb B_1.
\]
For $1<p_0,p_1<\infty$,
\[
\left\{
w\in L^{p_0}(0,T;\mathbb B_0):
\partial_t w\in L^{p_1}(0,T;\mathbb B_1)
\right\}
\hookrightarrow\hookrightarrow
L^{p_0}(0,T;\mathbb B).
\]
\end{lemma}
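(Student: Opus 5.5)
The plan is the classical Aubin--Lions argument: combine the Ehrling (interpolation) inequality coming from the compact embedding with a compactness criterion in $L^{p_0}(0,T;\mathbb B_1)$ supplied by the time-derivative bound. Let $\mathcal F$ be a set bounded in the space
\[
\mathcal E:=\{w\in L^{p_0}(0,T;\mathbb B_0):\partial_t w\in L^{p_1}(0,T;\mathbb B_1)\},
\]
normed by $\|w\|_{L^{p_0}(\mathbb B_0)}+\|\partial_t w\|_{L^{p_1}(\mathbb B_1)}$. We must show that every sequence $(w_n)\subset\mathcal F$ has a subsequence that is Cauchy in $L^{p_0}(0,T;\mathbb B)$.

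\emph{Step 1 (Ehrling's lemma).} From $\mathbb B_0\hookrightarrow\hookrightarrow\mathbb B\hookrightarrow\mathbb B_1$ we get: for every $\eta>0$ there is $C_\eta$ with
\[
\|v\|_{\mathbb B}\le \eta\|v\|_{\mathbb B_0}+C_\eta\|v\|_{\mathbb B_1}\qquad\forall v\in\mathbb B_0 .
\]
This is proved by contradiction. If it failed, there would be $v_k$ with $\|v_k\|_{\mathbb B_0}=1$ and $\|v_k\|_{\mathbb B}>\eta+k\|v_k\|_{\mathbb B_1}$. Compactness gives $v_k\to v$ in $\mathbb B$ along a subsequence. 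Boundedness of $\|v_k\|_{\mathbb B}$ forces $\|v_k\|_{\mathbb B_1}\to0$, hence $v=0$ by the continuous embedding. This contradicts $\|v\|_{\mathbb B}\ge\eta$.

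\emph{Step 2 (reduction to $\mathbb B_1$).} Integrating Ehrling's inequality in time gives
\[
\|w_n-w_m\|_{L^{p_0}(\mathbb B)}\le 2\eta M+C_\eta\|w_n-w_m\|_{L^{p_0}(\mathbb B_1)},
\]
where $M$ bounds $\mathcal F$ in $L^{p_0}(\mathbb B_0)$. Since $\eta$ is arbitrary, it suffices to extract a subsequence that is Cauchy in $L^{p_0}(0,T;\mathbb B_1)$.

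\emph{Step 3 (compactness in $\mathbb B_1$).} Because $\partial_t w\in L^{p_1}(\mathbb B_1)\subset L^1(\mathbb B_1)$, each $w$ has a representative in $C([0,T];\mathbb B_1)$. This representative satisfies
\[
\|w(t+\tau)-w(t)\|_{\mathbb B_1}\le \tau^{1-1/p_1}\|\partial_t w\|_{L^{p_1}(\mathbb B_1)},
\]
so $\mathcal F$ is equicontinuous in $C([0,T];\mathbb B_1)$. To apply Arzel\`a--Ascoli we also need pointwise relative compactness in $\mathbb B_1$. Write
\[
w(t)=\frac1s\int_t^{t+s}w\,dr-\frac1s\int_t^{t+s}\big(w(r)-w(t)\big)\,dr .
\]
The first average is bounded in $\mathbb B_0$ by $s^{-1/p_0}M$, hence lies in a compact set of $\mathbb B_1$. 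The second term is $O(s^{1-1/p_1})$ in $\mathbb B_1$ uniformly over $\mathcal F$ (with the obvious modification near $t=T$). So $\{w(t):w\in\mathcal F\}$ is totally bounded in $\mathbb B_1$. Arzel\`a--Ascoli then gives relative compactness of $\mathcal F$ in $C([0,T];\mathbb B_1)$, hence in $L^{p_0}(0,T;\mathbb B_1)$. Combined with Step 2, this proves the claim.

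The main obstacle is Step 3, specifically the pointwise compactness, since $w(t)$ itself need not lie in $\mathbb B_0$; the averaging trick above resolves it. Reflexivity is not strictly needed for this route, though it would enter in the alternative weak-compactness proof. Only the bound $\partial_t w\in L^1(\mathbb B_1)$ is used, which is consistent with $p_1\ge1$.
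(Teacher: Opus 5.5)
Your proof is correct. The paper gives no proof of this lemma; it only recalls it as the classical Aubin--Lions result, so what follows compares your argument with the classical Lions proof.

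\textbf{Comparison with the classical proof.} The classical proof uses reflexivity of $\mathbb B_0,\mathbb B_1$ and $1<p_0,p_1<\infty$ to extract a subsequence converging weakly in $\{w\in L^{p_0}(0,T;\mathbb B_0):\partial_t w\in L^{p_1}(0,T;\mathbb B_1)\}$. Ehrling's inequality then reduces the problem to strong convergence in $L^{p_0}(0,T;\mathbb B_1)$. That convergence comes from pointwise convergence in $\mathbb B_1$ plus dominated convergence. The pointwise convergence uses the same averaging identity you use: the average converges weakly in $\mathbb B_0$, hence strongly in $\mathbb B$.

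You instead follow Simon's route. You replace weak compactness plus dominated convergence by total boundedness of the pointwise values plus equicontinuity, i.e.\ Arzel\`a--Ascoli in $C([0,T];\mathbb B_1)$. This gives more than the statement asks:
\begin{itemize}
\item reflexivity is never used;
\item any $p_0\in[1,\infty]$ is allowed;
\item you already obtain relative compactness in $C([0,T];\mathbb B_1)$.
\end{itemize}
This is essentially Simon's Corollary~4, and it fits naturally with the paper's use of Lemma~\ref{Simon} for the velocity. The classical route is shorter once weak compactness is available, but it needs the reflexivity hypothesis.

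\textbf{Correction to your closing remark.} It is not true that only $\partial_t w\in L^1(0,T;\mathbb B_1)$ is used. Both the uniform modulus $\tau^{1-1/p_1}$ in the equicontinuity estimate and the uniform smallness $O(s^{1-1/p_1})$ of the averaging remainder require $p_1>1$.

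For $p_1=1$ your route genuinely fails. Consider spikes $w_n(t)=\max\{0,1-n|t-t_0|\}\,v_n$ with $\|v_n\|_{\mathbb B_1}\le 1$ and $\|v_n\|_{\mathbb B_0}\le n^{1/p_0}$. These are bounded in $L^{p_0}(0,T;\mathbb B_0)$, and their time derivatives are bounded in $L^1(0,T;\mathbb B_1)$. Yet $w_n(t_0)=v_n$ need not have a convergent subsequence in $\mathbb B_1$; take, e.g., normalized high-frequency sines with $\mathbb B_0=H^1_0(0,1)$ and $\mathbb B_1=L^2(0,1)$. So neither equicontinuity nor pointwise relative compactness holds.

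Compactness in $L^{p_0}(0,T;\mathbb B)$ for $p_0<\infty$ still holds when $p_1=1$, but proving it needs a time-translate argument in $L^{p_0}$ rather than Arzel\`a--Ascoli. Since the lemma assumes $p_1>1$, this does not affect your proof.
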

\noindent
For the velocity, however, no suitable uniform bound on
$\partial_t\u_{h,\Delta t}$ is available. We therefore use Simon's translation criterion
\cite[Theorem~5]{Simon1987}, replacing the time-derivative estimate by a time-translate bound.

\begin{lemma}[Simon]\label{Simon}
Let
\[
\mathbb B_0\hookrightarrow\hookrightarrow\mathbb B\hookrightarrow\mathbb B_1,
\]
and let $\mathcal F$ be bounded in $L^q(0,T;\mathbb B_0)$,
$1\le q<\infty$. If
\[
\sup_{w\in\mathcal F}
\|w(\cdot+\tau)-w\|_{L^q(0,T-\tau;\mathbb B_1)}
\longrightarrow0
\qquad\text{as }\tau\to0^+,
\]
then $\mathcal F$ is relatively compact in $L^q(0,T;\mathbb B)$.
\end{lemma}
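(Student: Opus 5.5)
The plan is the classical Fréchet--Kolmogorov route to Simon's criterion: reduce to a compactness statement in $C([0,T];\mathbb B)$ for time averages, and control the remainder by the translate hypothesis through an Ehrling-type interpolation. Since $\mathbb B_0\hookrightarrow\hookrightarrow\mathbb B\hookrightarrow\mathbb B_1$, the first step is Ehrling's lemma: for every $\eta>0$ there is $C_\eta$ with
\[
\|v\|_{\mathbb B}\le \eta\|v\|_{\mathbb B_0}+C_\eta\|v\|_{\mathbb B_1},\qquad v\in\mathbb B_0 .
\]
I would prove it by contradiction: a normalised sequence in $\mathbb B_0$ has a subsequence converging in $\mathbb B$, and the limit must vanish because its $\mathbb B_1$-norm tends to zero, which contradicts $\|v_n\|_{\mathbb B}>\eta$. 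By the Riesz--Fréchet--Kolmogorov characterisation in $L^q(0,T;\mathbb B)$, it is then enough to show that $\mathcal F$ is totally bounded in $L^q(0,T;\mathbb B)$.

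For $a>0$ and $w\in\mathcal F$ I would introduce the averages
\[
(M_a w)(t)=\frac1a\int_t^{t+a}w(s)\,ds,\qquad t\in[0,T-a].
\]
By Hölder, $\|(M_aw)(t)\|_{\mathbb B_0}\le a^{-1/q}\|w\|_{L^q(\mathbb B_0)}$, so for fixed $t$ the set $\{(M_aw)(t):w\in\mathcal F\}$ is bounded in $\mathbb B_0$ and hence relatively compact in $\mathbb B$. The family is also equicontinuous in $t$ with values in $\mathbb B$, since $(M_aw)(t_2)-(M_aw)(t_1)$ is a difference of two integrals over intervals of length $|t_2-t_1|$, each bounded by $a^{-1}|t_2-t_1|^{1-1/q}\|w\|_{L^q(\mathbb B_0)}$. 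Arzelà--Ascoli then gives relative compactness of $\{M_aw\}$ in $C([0,T-a];\mathbb B)$, and therefore in $L^q(0,T-a;\mathbb B)$.

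The next step is to show that $w-M_aw$ is small in $L^q(0,T-a;\mathbb B)$ uniformly over $\mathcal F$. Writing $w(t)-(M_aw)(t)=\frac1a\int_0^a\big(w(t)-w(t+\tau)\big)d\tau$ and applying Minkowski gives
\[
\|w-M_aw\|_{L^q(0,T-a;\mathbb B_1)}\le\sup_{0<\tau\le a}\|w(\cdot+\tau)-w\|_{L^q(0,T-\tau;\mathbb B_1)},
\]
which tends to $0$ uniformly by hypothesis. In $\mathbb B_0$ the same difference is only bounded, by $2\sup_{\mathcal F}\|w\|_{L^q(\mathbb B_0)}$. Ehrling's inequality, taking $\eta$ small first and then $a$ small, turns these two facts into uniform smallness in $\mathbb B$.

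I expect the main obstacle to be the endpoint interval $(T-a,T)$, where $M_aw$ is not defined. To handle it, I would use backward averages $\frac1a\int_{t-a}^t w\,ds$ on $[a,T]$, which are compact by the same argument, and combine the two on overlapping subintervals. Alternatively, I would use the fact that the $L^q(\mathbb B)$-norm over a short interval is small by Ehrling together with the translate bound applied backwards. Total boundedness then follows by covering $\mathcal F$ with finitely many $\varepsilon$-balls, and the proof concludes; if a reference suffices, this is exactly \cite[Theorem~5]{Simon1987}.
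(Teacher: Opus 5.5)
The paper does not prove this lemma; it only cites \cite[Theorem~5]{Simon1987}. Your outline (Ehrling, time averages $M_aw$, Arzel\`a--Ascoli, uniform approximation of $w$ by $M_aw$, total boundedness) is essentially Simon's own argument with his Theorem~1 unfolded, and it works for $1<q<\infty$. There is, however, a genuine gap at $q=1$, which the statement explicitly includes. Your equicontinuity bound
\[
\|(M_aw)(t_2)-(M_aw)(t_1)\|_{\mathbb{B}_0}\le 2a^{-1}|t_2-t_1|^{1-1/q}\,\|w\|_{L^q(0,T;\mathbb{B}_0)}
\]
reduces to the factor $|t_2-t_1|^{0}=1$ when $q=1$, so it gives no modulus of continuity. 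Boundedness in $L^1(0,T;\mathbb{B}_0)$ alone cannot supply one. For $w_n=n\,\chi_{(t_0,t_0+1/n)}\,v$ with a fixed $v\in\mathbb{B}_0$, the average $M_aw_n$ jumps from $0$ to $a^{-1}v$ over an interval of length $1/n$. Without equicontinuity, Arzel\`a--Ascoli cannot be invoked, so for $q=1$ the compactness of $\{M_aw\}$ is not established.

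The repair uses only tools you already have, applied in Simon's order. First apply Ehrling pointwise in $t$ to transfer the translate hypothesis into $\mathbb{B}$:
\[
\|w(\cdot+h)-w\|_{L^q(0,T-h;\mathbb{B})}\le 2\eta\,\sup_{\mathcal F}\|w\|_{L^q(0,T;\mathbb{B}_0)}+C_\eta\,\|w(\cdot+h)-w\|_{L^q(0,T-h;\mathbb{B}_1)}.
\]
This tends to $0$ uniformly on $\mathcal F$: fix $\eta$, then let $h\to0$. Next write $(M_aw)(t+h)-(M_aw)(t)=\frac1a\int_t^{t+a}\bigl(w(s+h)-w(s)\bigr)\,ds$ and apply H\"older to get
\[
\|(M_aw)(t+h)-(M_aw)(t)\|_{\mathbb{B}}\le a^{-1/q}\|w(\cdot+h)-w\|_{L^q(0,T-h;\mathbb{B})}.
\]
This is uniform equicontinuity for every $q\ge1$. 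The estimate for $w-M_aw$ can then also be done directly in $\mathbb{B}$ by Minkowski.

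Your endpoint treatment is fine: forward averages on $[0,T/2]$ and backward averages on $[T/2,T]$, concatenated, give a set contained in a product of two relatively compact sets. One minor correction: passing from total boundedness to relative compactness uses only the completeness of $L^q(0,T;\mathbb{B})$. It does not need the Riesz--Fr\'echet--Kolmogorov theorem, which is essentially what you are re-proving.
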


\subsection{The nonlinear and trilinear forms}\label{subsec-forms}

For $\u,\mathbf w,\mathbf v\in\mathcal X$ and
$\theta,\phi\in\mathcal Y$, define
\begin{align}
\label{eq-form-a0}
a_0(\u)
&=
\nu|\nabla\u|^{p-2}\nabla\u,
\qquad
(a_0(\u),\nabla\mathbf v)
=
\nu\int_{\mathbb D}
|\nabla\u|^{p-2}\nabla\u:\nabla\mathbf v\,dx,
\\
\label{eq-form-a1}
a_1(\mathbf w,\u,\mathbf v)
&=
\frac12
\left[
\int_{\mathbb D}
(\mathbf w\cdot\nabla)\u\cdot\mathbf v\,dx
-
\int_{\mathbb D}
(\mathbf w\cdot\nabla)\mathbf v\cdot\u\,dx
\right],
\\
\label{eq-form-a2}
a_2(\mathbf w,\theta,\phi)
&=
\frac12
\left[
\int_{\mathbb D}
(\mathbf w\cdot\nabla\theta)\phi\,dx
-
\int_{\mathbb D}
(\mathbf w\cdot\nabla\phi)\theta\,dx
\right].
\end{align}
The skew-symmetric definitions imply
\begin{equation}\label{eq-skew}
a_1(\mathbf w,\u,\u)=0,
\qquad
a_2(\mathbf w,\theta,\theta)=0,
\qquad
a_1(\mathbf w,\u,\mathbf v)
=
-a_1(\mathbf w,\mathbf v,\u).
\end{equation}
These identities hold without requiring $\mathbf w$ to be divergence-free.
\\ \noindent
The Coriolis contribution is a bounded zeroth-order term:
\[
\left|
(\boldsymbol{\omega}_{r}\times\u,\mathbf v)
\right|
\le
|\boldsymbol{\omega}_{r}|
\|\u\|_{\mathbf L^2}
\|\mathbf v\|_{\mathbf L^2}.
\]
By the scalar triple-product identity,
\[
(\boldsymbol{\omega}_{r}\times\u,\mathbf v)
=
-(\boldsymbol{\omega}_{r}\times\mathbf v,\u),
\]
and hence
\begin{equation}\label{coriolis skew property}
(\boldsymbol{\omega}_{r}\times\u,\u)=0.
\end{equation}
Thus the Coriolis force is energy neutral and requires no additional regularity of the
velocity.
\\ \noindent
For $p\ge2$, the nonlinear diffusion operator is strongly monotone
\cite{Lindqvist2006}:
\begin{equation}\label{eq-strong-monotonicity}
\big(
a_0(\u)-a_0(\mathbf w),
\nabla\u-\nabla\mathbf w
\big)
\ge
\nu\,2^{2-p}
\|\nabla\u-\nabla\mathbf w\|_{\mathbf L^p}^{p},
\qquad
\forall\,\u,\mathbf w\in\mathcal X.
\end{equation}
Notice that, since $a_0$ is nonlinear,
\[
a_0(\u)-a_0(\mathbf w)
\neq
a_0(\u-\mathbf w)
\]
in general.
\\ \noindent
Finally, the electromagnetic terms satisfy the triple-product identity
\begin{equation}\label{eq-triple-product}
\big(
\mathbf b\times(\nabla\times\mathbf c),
\mathbf v
\big)
=
\big(
\mathbf v\times\mathbf b,
\nabla\times\mathbf c
\big).
\end{equation}
Consequently, the Lorentz force and induction coupling cancel exactly in the continuous
energy balance, and the same cancellation is preserved by the fully discrete scheme.

\subsection{Weak formulation}\label{subsec-weakform}

We assume
\[
\mathbf f\in L^2(0,T;\mathbf L^2(\mathbb D)),
\qquad
\mathbf g\in L^2(0,T;\mathbb H),
\qquad
\psi\in L^2(0,T;L^2(\mathbb D)),
\]
and
\[
\u^0\in\mathbb H,
\qquad
\B^0\in\mathbb H,
\qquad
\theta^0\in L^2(\mathbb D).
\]

\begin{definition}[Weak solution]\label{def-weak-solution}
A triple $(\u,\B,\theta)$ is called a weak solution of
\eqref{1st equation on MHD-BSn}-\eqref{3rd equation on MHD-BSn} if
\[
\u\in
L^\infty(0,T;\mathbb H)
\cap
L^p(0,T;\mathcal X_0),
\]
\[
\B\in
L^\infty(0,T;\mathbb H)
\cap
L^2(0,T;\mathcal K),
\qquad
\theta\in
L^\infty(0,T;L^2(\mathbb D))
\cap
L^2(0,T;\mathcal Y_0),
\]
the initial conditions
\[
\u(0)=\u^0,\qquad
\B(0)=\B^0,\qquad
\theta(0)=\theta^0
\]
are attained, and for all
\[
(\mathbf v,\mathbf C,\phi)
\in
\mathcal X_0\times\mathcal K\times\mathcal Y_0,
\]
the identities
\begin{align}
\label{1st weak form}
(\u_t,\mathbf v)
&+
(a_0(\u),\nabla\mathbf v)
+
a_1(\u,\u,\mathbf v)
+
2(\boldsymbol{\omega}_{r}\times\u,\mathbf v)+
(\B\times(\nabla\times\B),\mathbf v)
=
(\theta e_3,\mathbf v)
+
(\mathbf f,\mathbf v),
\\
\label{2nd weak equation}
(\B_t,\mathbf C)
&+
\kappa(\nabla\times\B,\nabla\times\mathbf C)
-
(\u\times\B,\nabla\times\mathbf C)
=
(\mathbf g,\mathbf C),
\\
\label{3rd weak equation}
(\theta_t,\phi)
&+
\sigma(\nabla\theta,\nabla\phi)
+
a_2(\u,\theta,\phi)
=
(\u\cdot e_3,\phi)
+
(\psi,\phi)
\end{align}
hold in the sense of distributions in time. Here the brackets denote the appropriate
duality pairings.
\end{definition}

By \eqref{coriolis skew property}, the additional rotational term is well defined under
the above regularity and vanishes when the momentum equation is tested with $\u$.
Hence it does not alter the basic energy structure of the weak formulation.


\vspace{0.3cm}

\section{The Fully Discrete Scheme}\label{sec-fem}

We discretise the weak formulation
\eqref{1st weak form}-\eqref{3rd weak equation}
by conforming finite elements in space and a semi-implicit backward Euler method in time.
We first introduce the discrete spaces and the auxiliary tools used in the analysis,
including the Hodge mapping, the discrete inf-sup condition, and inverse estimates.

\subsection{Triangulation and polynomial spaces}\label{subsec-mesh}

Let $\mathcal T_h$ be a regular, quasi-uniform triangulation of $\mathbb D$ into closed
tetrahedra, and set
\[
h:=\max_{K\in\mathcal T_h}\operatorname{diam}(K).
\]
For $k\ge1$, let $P_k(K)$ denote the space of polynomials of total degree at most $k$
on $K$, and let $\widetilde P_k(K)$ denote the space of homogeneous polynomials of
degree exactly $k$. Their vector-valued counterparts are denoted by
$\mathbf P_k(K)$ and $\widetilde{\mathbf P}_k(K)$, respectively.

\subsection{Discrete velocity, pressure, and temperature spaces}
\label{subsec-XQY}

Let $(\mathcal X_h,Q_h)$ be a conforming finite element pair satisfying the discrete
inf-sup condition of Lemma~\ref{lemma:inf-sup}, and let
$\mathcal Y_h\subset\mathcal Y_0$ be a conforming finite element space for the temperature.
We assume
\begin{align}\label{eq-spaces-XQY}
\mathcal X_h
&\subset
\left\{
\mathbf v_h\in\mathcal X:
\mathbf v_h|_K\in\mathbf P_k(K),
\quad \forall K\in\mathcal T_h
\right\},
\notag\\
Q_h
&\subset
\left\{
q_h\in Q:
q_h|_K\in P_{k-1}(K),
\quad \forall K\in\mathcal T_h
\right\},
\notag\\
\mathcal Y_h
&\subset
\left\{
\phi_h\in\mathcal Y_0:
\phi_h|_K\in P_k(K),
\quad \forall K\in\mathcal T_h
\right\}.
\end{align}
The associated discretely divergence-free velocity space is
\begin{equation}\label{eq-space-X0h}
\mathcal X_{0h}
=
\left\{
\mathbf v_h\in\mathcal X_h:
(\operatorname{div}\mathbf v_h,q_h)=0
\quad\forall q_h\in Q_h
\right\}.
\end{equation}
Since the velocity is sought directly in $\mathcal X_{0h}$, the pressure does not appear
explicitly in the discrete scheme.
\\ \noindent
For the error analysis in Section~\ref{sec-error}, we additionally assume that the velocity
pair is pointwise divergence-free:
\begin{equation}\label{eq-exactly-divfree}
\operatorname{div}\mathbf v_h=0
\quad\text{in }\mathbb D
\qquad
\forall\,\mathbf v_h\in\mathcal X_{0h},
\qquad\text{equivalently}\qquad
\mathcal X_{0h}\subset\mathcal X_0.
\end{equation}
This property is satisfied, for example, by suitable Scott-Vogelius pairs on
barycentrically refined tetrahedral meshes
\cite{Zhang2005,GuzmanNeilan2014}.
Condition~\eqref{eq-exactly-divfree} is required only for the error analysis; the
well-posedness, stability, and convergence results use only the discrete inf-sup condition.

\subsection{Discrete space for the magnetic induction}\label{subsec-Wh}

For the magnetic field, we use N\'ed\'elec edge elements of the first family. On each
$K\in\mathcal T_h$, define
\begin{equation}\label{eq-nedelec-local}
\mathcal N_k(K)
=
\mathbf P_{k-1}(K)
\oplus
\left\{
\mathbf q\in\widetilde{\mathbf P}_k(K):
\mathbf q(x)\cdot x=0
\quad\forall x\in K
\right\}.
\end{equation}
The corresponding global space is
\begin{equation}\label{eq-nedelec-global}
\mathcal W_h
=
\left\{
\mathbf A_h\in\mathbf H(\operatorname{curl};\mathbb D):
\mathbf A_h|_K\in\mathcal N_k(K)
\quad\forall K\in\mathcal T_h
\right\}.
\end{equation}
The space $\mathcal W_h$ is $\mathbf H(\operatorname{curl})$-conforming and is therefore
well suited to the induction equation. On nonconvex polyhedra, the magnetic field need
not belong to $\mathbf H^1(\mathbb D)$
\cite[Remark~3.3.1]{GerbeauLeBrisLelievre2006}, which motivates the use of edge elements
rather than $\mathbf H^1$-conforming Lagrange elements; see
\cite[Section~3.4.2]{GiraultRaviart1986,Monk2003}.
No tangential boundary condition is imposed on $\mathcal W_h$, consistently with the
continuous magnetic space $\mathcal W$.
\\ \noindent
Let
\begin{equation}\label{eq-space-Sh}
S_h
=
\left\{
s_h\in H^1(\mathbb D)\cap L_0^2(\mathbb D):
s_h|_K\in P_k(K)
\quad\forall K\in\mathcal T_h
\right\}.
\end{equation}
We define the discrete solenoidal magnetic space by
\begin{equation}\label{eq-space-W0h}
\mathcal W_{0h}
=
\left\{
\mathbf c_h\in\mathcal W_h:
(\mathbf c_h,\nabla s_h)=0
\quad\forall s_h\in S_h
\right\}.
\end{equation}
Since $\nabla S_h\subset\mathcal W_h$, $\mathcal W_{0h}$ is the
$\mathbf L^2$-orthogonal complement of $\nabla S_h$ in $\mathcal W_h$. Thus
\[
(\mathbf c_h,\nabla s_h)=0
\qquad\forall s_h\in S_h
\]
is the discrete counterpart of
\[
\operatorname{div}\mathbf c=0,
\qquad
\mathbf c\cdot\mathbf n=0,
\]
that is, of membership in the continuous space $\mathcal K$.

\subsection{The Hodge mapping and discrete compactness}\label{subsec-hodge}

The connection between $\mathcal W_{0h}$ and the continuous solenoidal space
$\mathcal K$ is provided by the Hodge mapping \cite{Hiptmair2002}
\begin{equation}\label{eq-hodge}
\mathcal Z:
\mathcal W_{0h}\longrightarrow\mathcal K,
\qquad
\nabla\times\mathcal Z(\mathbf b_h)
=
\nabla\times\mathbf b_h,
\quad
\forall\,\mathbf b_h\in\mathcal W_{0h}.
\end{equation}
It satisfies
\begin{equation}\label{ine-zbh}
\|\mathbf b_h-\mathcal Z(\mathbf b_h)\|_{\mathbf L^2(\mathbb D)}
\le
C h^{\frac12+\varrho}
\|\nabla\times\mathbf b_h\|_{\mathbf L^2(\mathbb D)},
\qquad
\forall\,\mathbf b_h\in\mathcal W_{0h},
\end{equation}
for some $\varrho=\varrho(\mathbb D)>0$.
Together with Lemma~\ref{Embedding lemma},
\eqref{eq-hodge}-\eqref{ine-zbh} yield the following discrete compactness property.

\begin{lemma}\label{Lemma 1}
Let $h\to0$ and let
$\{\mathbf C_h\}\subset\mathcal W_{0h}$ be uniformly bounded in $\mathcal W$.
Then there exist a subsequence, still denoted by $\{\mathbf C_h\}$, and
$\mathbf C\in\mathcal K$ such that
\[
\mathbf C_h\rightharpoonup\mathbf C
\quad\text{in }\mathcal W,
\qquad
\mathbf C_h\to\mathbf C
\quad\text{in }\mathbf L^2(\mathbb D).
\]
\end{lemma}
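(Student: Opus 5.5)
The plan is to pass through the Hodge mapping: replace each $\mathbf C_h$ by its continuous counterpart $\mathcal Z(\mathbf C_h)\in\mathcal K$, obtain compactness for these fields from Lemma~\ref{Embedding lemma}, and transfer the result back to $\mathbf C_h$ using \eqref{ine-zbh}.

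\textbf{Step 1 (weak limit).} Since $\{\mathbf C_h\}$ is bounded in the Hilbert space $\mathcal W$, there is a subsequence with $\mathbf C_h\rightharpoonup\mathbf C$ in $\mathcal W$ for some $\mathbf C\in\mathcal W$. It remains to show that $\mathbf C\in\mathcal K$ and that the convergence is strong in $\mathbf L^2$.

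\textbf{Step 2 (bounds on $\mathcal Z(\mathbf C_h)$).} Set $\mathbf Z_h:=\mathcal Z(\mathbf C_h)\in\mathcal K$. By \eqref{eq-hodge},
\[
\nabla\times\mathbf Z_h=\nabla\times\mathbf C_h ,
\]
and $\operatorname{div}\mathbf Z_h=0$. Hence $\|\mathbf Z_h\|_{\mathcal V(\mathbb D)}=\|\nabla\times\mathbf C_h\|_{\mathbf L^2}\le C$. The Poincar\'e inequality \eqref{eq-poincare-curl} also gives a uniform bound on $\|\mathbf Z_h\|_{\mathbf L^2}$.

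\textbf{Step 3 (compactness of $\mathbf Z_h$).} Lemma~\ref{Embedding lemma} gives $\mathcal V(\mathbb D)\hookrightarrow\hookrightarrow\mathbf L^{3+\delta_1}\hookrightarrow\mathbf L^2$ on the bounded domain. Passing to a further subsequence, $\mathbf Z_h\to\mathbf Z$ strongly in $\mathbf L^2$ and $\mathbf Z_h\rightharpoonup\mathbf Z$ weakly in $\mathcal V(\mathbb D)$.

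The space $\mathcal K$ is a closed subspace of $\mathcal W\cap\mathbf H_0(\operatorname{div};\mathbb D)$, hence weakly closed. Concretely, $(\mathbf Z_h,\nabla\phi)=0$ for all $\phi\in H^1(\mathbb D)$ encodes both $\operatorname{div}=0$ and the vanishing normal trace, and this identity passes to the $\mathbf L^2$ limit. Therefore $\mathbf Z\in\mathcal K$.

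\textbf{Step 4 (identification of the limit).} By \eqref{ine-zbh},
\[
\|\mathbf C_h-\mathbf Z_h\|_{\mathbf L^2}\le Ch^{\frac12+\varrho}\|\nabla\times\mathbf C_h\|_{\mathbf L^2}\to0 .
\]
Consequently $\mathbf C_h\to\mathbf Z$ strongly in $\mathbf L^2$. Strong $\mathbf L^2$ convergence implies weak $\mathbf L^2$ convergence, and the weak $\mathcal W$ limit $\mathbf C$ is in particular a weak $\mathbf L^2$ limit. Uniqueness of weak limits gives $\mathbf C=\mathbf Z\in\mathcal K$.

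Both claimed convergences then hold along the extracted subsequence. No step is genuinely hard. The only point requiring care is Step 3: checking that the limit satisfies the essential normal boundary condition, i.e.\ that $\mathcal K$ is closed under these limits. This is handled by the weak characterization through $\nabla H^1$ test functions described above.
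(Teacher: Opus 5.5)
Your proposal is correct and follows the paper's proof essentially step for step: extract a weak $\mathcal W$-limit, obtain strong $\mathbf L^2$ compactness of $\mathcal Z(\mathbf C_h)$ from Lemma~\ref{Embedding lemma}, transfer it to $\mathbf C_h$ via \eqref{ine-zbh}, and identify the two limits by uniqueness of weak limits. Your Step~3 also justifies why the strong limit of $\mathcal Z(\mathbf C_h)$ lies in $\mathcal K$, by testing against $\nabla H^1$ and using weak closedness, which the paper only asserts.
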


\begin{proof}
Since $\{\mathbf C_h\}$ is bounded in $\mathcal W$, there exists a subsequence such that
\[
\mathbf C_h\rightharpoonup\mathbf C
\quad\text{in }\mathcal W
\]
for some $\mathbf C\in\mathcal W$. By \eqref{eq-hodge},
$\mathcal Z(\mathbf C_h)\in\mathcal K$ and
\[
\|\mathcal Z(\mathbf C_h)\|_{\mathcal V(\mathbb D)}
=
\|\nabla\times\mathbf C_h\|_{\mathbf L^2}
\le C.
\]
Hence Lemma~\ref{Embedding lemma} gives, after extracting a further subsequence,
\[
\mathcal Z(\mathbf C_h)\to\mathbf C^\ast
\qquad\text{strongly in }\mathbf L^2(\mathbb D)
\]
for some $\mathbf C^\ast\in\mathcal K$. Moreover, by \eqref{ine-zbh},
\[
\|\mathbf C_h-\mathcal Z(\mathbf C_h)\|_{\mathbf L^2}
\le
C h^{\frac12+\varrho}
\|\nabla\times\mathbf C_h\|_{\mathbf L^2}
\longrightarrow0.
\]
Therefore $\mathbf C_h\to\mathbf C^\ast$ strongly in $\mathbf L^2(\mathbb D)$.
Since the weak $\mathcal W$-limit is unique, $\mathbf C^\ast=\mathbf C$, and hence
$\mathbf C\in\mathcal K$.
\end{proof}
 \noindent
The Hodge mapping also provides the spatial integrability required for the Lorentz force.

\begin{lemma}\label{Z=curl}
For every $\mathbf b_h\in\mathcal W_{0h}$,
\[
\|\mathcal Z(\mathbf b_h)\|_{\mathbf L^{3+\delta_1}(\mathbb D)}
\le
C\|\nabla\times\mathbf b_h\|_{\mathbf L^2(\mathbb D)},
\]
where $\delta_1>0$ is given by Lemma~\ref{Embedding lemma} and $C$ is independent of $h$.
\end{lemma}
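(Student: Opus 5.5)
The estimate follows by composing the Hodge mapping \eqref{eq-hodge} with the embedding of Lemma~\ref{Embedding lemma}. Fix $\mathbf b_h\in\mathcal W_{0h}$ and set $\mathbf z:=\mathcal Z(\mathbf b_h)$. By construction of the Hodge mapping, $\mathbf z\in\mathcal K$. Hence $\mathbf z\in\mathcal W\cap\mathbf H_0(\operatorname{div};\mathbb D)=\mathcal V(\mathbb D)$ with $\operatorname{div}\mathbf z=0$.

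First I compute the $\mathcal V(\mathbb D)$-norm of $\mathbf z$. Since $\operatorname{div}\mathbf z=0$ and $\nabla\times\mathbf z=\nabla\times\mathbf b_h$,
\[
\|\mathbf z\|_{\mathcal V(\mathbb D)}
=\big(\|\nabla\times\mathbf z\|_{\mathbf L^2}^2+\|\operatorname{div}\mathbf z\|_{\mathbf L^2}^2\big)^{1/2}
=\|\nabla\times\mathbf b_h\|_{\mathbf L^2}.
\]
Next I apply the continuous embedding $\mathcal V(\mathbb D)\hookrightarrow\mathbf L^{3+\delta_1}(\mathbb D)$ of Lemma~\ref{Embedding lemma}, in the form stated right after it:
\[
\|\mathbf v\|_{\mathbf L^{3+\delta_1}}\le C\|\nabla\times\mathbf v\|_{\mathbf L^2},\qquad \mathbf v\in\mathcal K.
\]
Taking $\mathbf v=\mathbf z$ gives $\|\mathcal Z(\mathbf b_h)\|_{\mathbf L^{3+\delta_1}}\le C\|\nabla\times\mathbf b_h\|_{\mathbf L^2}$. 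The constant $C$ is the embedding constant of Lemma~\ref{Embedding lemma}. It depends only on $\mathbb D$ and $\delta_1$, not on $h$, because the map $\mathcal Z$ enters only through the identity of curls.

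The one point needing care is that $\|\cdot\|_{\mathcal V(\mathbb D)}$ is a genuine norm, controlling the full $\mathbf L^2$ part, on the relevant subspace. If the embedding is read with the full graph norm $\|\mathbf v\|_{\mathbf L^2}+\|\nabla\times\mathbf v\|_{\mathbf L^2}+\|\operatorname{div}\mathbf v\|_{\mathbf L^2}$, I would first use the Poincaré-type inequality \eqref{eq-poincare-curl} on $\mathcal K$, namely $\|\mathbf z\|_{\mathbf L^2}\le c_2\|\nabla\times\mathbf z\|_{\mathbf L^2}$. This inequality relies on the domain being simply connected with connected boundary, so that $\mathcal K$ contains no harmonic fields. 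With it, the $\mathbf L^2$ term is absorbed into $\|\nabla\times\mathbf b_h\|_{\mathbf L^2}$ and the same conclusion follows.
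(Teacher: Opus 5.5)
Your proof is correct and is essentially the paper's argument: $\mathcal Z(\mathbf b_h)\in\mathcal K\subset\mathcal V(\mathbb D)$ is divergence-free, so by \eqref{eq-hodge} its $\mathcal V(\mathbb D)$-norm equals $\|\nabla\times\mathbf b_h\|_{\mathbf L^2}$, and Lemma~\ref{Embedding lemma} then gives the $h$-independent bound. Your extra remark about absorbing a possible $\mathbf L^2$ term via \eqref{eq-poincare-curl} is a harmless safeguard that the paper leaves implicit.
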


\begin{proof}
Since
$\mathcal Z(\mathbf b_h)\in\mathcal K\subset\mathcal V(\mathbb D)$ and
$\operatorname{div}\mathcal Z(\mathbf b_h)=0$, Lemma~\ref{Embedding lemma} gives
\[
\|\mathcal Z(\mathbf b_h)\|_{\mathbf L^{3+\delta_1}}
\le
C\|\mathcal Z(\mathbf b_h)\|_{\mathcal V(\mathbb D)}
=
C\|\nabla\times\mathcal Z(\mathbf b_h)\|_{\mathbf L^2}.
\]
Using \eqref{eq-hodge}, the desired estimate follows.
\end{proof}

\subsection{Discrete inf-sup condition and inverse estimates}
\label{subsec-discrete-tools}

\begin{lemma}[Discrete inf-sup condition]\label{lemma:inf-sup}
Let $(\mathcal X_h,Q_h)$ satisfy the Babuska-Brezzi condition.
Then there exists $\beta_1>0$, independent of $h$, such that
\[
\inf_{0\neq q_h\in Q_h}
\sup_{0\neq\mathbf v_h\in\mathcal X_h}
\frac{(\operatorname{div}\mathbf v_h,q_h)}
{\|\mathbf v_h\|_{1,2}\|q_h\|_{L^2}}
\ge
\beta_1.
\]
\end{lemma}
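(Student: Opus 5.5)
The statement is essentially a reformulation of the Babuška–Brezzi hypothesis, so the plan is to unpack that hypothesis and check that the constant and norms match. Concretely, I will take the Babuška–Brezzi condition for $(\mathcal X_h,Q_h)$ in its usual form: there is $\beta_1>0$, independent of $h$, such that for every $q_h\in Q_h$ some nonzero $\mathbf v_h\in\mathcal X_h$ satisfies
\[
(\operatorname{div}\mathbf v_h,q_h)\ge\beta_1\|\mathbf v_h\|_{1,2}\|q_h\|_{L^2}.
\]
Dividing by $\|\mathbf v_h\|_{1,2}\|q_h\|_{L^2}$ for $q_h\neq0$, then taking the supremum over $\mathbf v_h$ and the infimum over $q_h$, gives the stated inequality directly.

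If one instead wants to derive the condition from the continuous inf-sup property, I would use Fortin's criterion. First I recall the continuous result on the Lipschitz domain $\mathbb D$ (Nečas/Bogovskiĭ): for each $q\in L_0^2(\mathbb D)$ there is $\mathbf v\in\mathbf H_0^1(\mathbb D)$ with $\operatorname{div}\mathbf v=q$ and $\|\mathbf v\|_{1,2}\le C\|q\|_{L^2}$. Next I would assume a Fortin operator $\Pi_h:\mathbf H_0^1\to\mathcal X_h$ satisfying
\[
(\operatorname{div}(\mathbf v-\Pi_h\mathbf v),q_h)=0\quad\forall q_h\in Q_h,
\qquad
\|\Pi_h\mathbf v\|_{1,2}\le C_\Pi\|\mathbf v\|_{1,2}.
\]
Such an operator is available for the standard pairs, namely Taylor–Hood and Scott–Vogelius on barycentric meshes (cf. \cite{Zhang2005,GuzmanNeilan2014}). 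Testing with $\mathbf v_h=\Pi_h\mathbf v$ then gives
\[
(\operatorname{div}\mathbf v_h,q_h)=\|q_h\|^2_{L^2}\ge(CC_\Pi)^{-1}\|\mathbf v_h\|_{1,2}\|q_h\|_{L^2},
\]
so one may take $\beta_1=(CC_\Pi)^{-1}$.

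Two points need checking. One is that $\mathcal X_h\subset\mathbf W_0^{1,p}$ is finite dimensional, so the $\mathbf H^1$-norm in the statement is meaningful on it. The other is the main obstacle: confirming that $C_\Pi$ is independent of $h$. This rests on shape regularity and quasi-uniformity of $\mathcal T_h$, and I would cite the standard construction rather than reprove it.
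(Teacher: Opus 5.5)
Your proposal is correct and agrees with the paper, which gives no proof of this lemma: it is simply the Babuska--Brezzi condition written out as an $h$-uniform inf-sup inequality, and the paper imposes it as a standing hypothesis on the pair $(\mathcal X_h,Q_h)$ when introducing the discrete spaces, so your first paragraph is the whole argument. Your Fortin-operator derivation from the continuous inf-sup property is also valid, but it is an optional supplement showing when the hypothesis holds for concrete pairs, not something the lemma as stated requires.
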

\noindent
We shall also use the standard inverse estimate on quasi-uniform meshes:
\begin{equation}\label{eq-inverse estimate}
\|\mathbf v_h\|_{\mu_2,q_2}
\le
C_{\mathrm{inv}}\,
h^{\mu_1-\mu_2+3(1/q_2-1/q_1)}
\|\mathbf v_h\|_{\mu_1,q_1},
\qquad
\forall\,\mathbf v_h\in\mathcal X_h,
\end{equation}
where
\[
0\le\mu_1\le\mu_2\le1,
\qquad
1\le q_1\le q_2\le\infty,
\]
and $C_{\mathrm{inv}}$ is independent of $h$. Analogous inverse estimates are used
for functions in $\mathcal W_h$ and $\mathcal Y_h$ whenever required.

\subsection{The fully discrete scheme}\label{subsec-scheme}

\paragraph{Time grid, initial data, and source terms.}
Let $N\ge1$, set
\[
\Delta t=\frac{T}{N},
\qquad
t_m=m\Delta t,
\qquad
m=0,\ldots,N,
\]
and define
\[
d_t z^m
=
\frac{z^m-z^{m-1}}{\Delta t}.
\]
For the convergence analysis, it is sufficient to choose
\[
\u_h^0\in\mathcal X_{0h},
\qquad
\B_h^0\in\mathcal W_{0h},
\qquad
\theta_h^0\in\mathcal Y_h
\]
such that
\begin{equation}\label{eq-initial-convergence}
\u_h^0\to\u^0
\quad\text{in }\mathbf L^2(\mathbb D),
\qquad
\B_h^0\to\B^0
\quad\text{in }\mathbf L^2(\mathbb D),
\qquad
\theta_h^0\to\theta^0
\quad\text{in }L^2(\mathbb D).
\end{equation}
No additional spatial regularity of the initial data is required for the convergence result.
\\ \noindent
For the error analysis, we specialise these approximations to the projections
\[
\mathcal P_h:\mathcal X_0\to\mathcal X_{0h},
\qquad
\mathcal F_h:\mathcal K\to\mathcal W_{0h},
\qquad
\mathcal R_h:\mathcal Y_0\to\mathcal Y_h,
\]
defined in Section~\ref{sec-error}, and set
\begin{equation}\label{eq-initial-discrete}
\u_h^0=\mathcal P_h\u^0,
\qquad
\B_h^0=\mathcal F_h\B^0,
\qquad
\theta_h^0=\mathcal R_h\theta^0.
\end{equation}
The corresponding regularity assumptions are imposed only in
Section~\ref{sec-error}. In particular, with $l=\min\{k,s\}$,
\begin{equation}\label{eq-Ph-W1p}
\|\nabla(\mathbf v-\mathcal P_h\mathbf v)\|_{\mathbf L^p}
\le
C_e h^l
\|\mathbf v\|_{1+l,p},
\qquad
\mathbf v\in\mathbf W^{1+l,p}\cap\mathcal X_0.
\end{equation}
\noindent
The source terms are approximated by their cell averages,
\begin{equation}\label{eq-data-averages}
\mathbf f^m
=
\frac1{\Delta t}
\int_{t_{m-1}}^{t_m}\mathbf f(t)\,dt,
\qquad
\mathbf g^m
=
\frac1{\Delta t}
\int_{t_{m-1}}^{t_m}\mathbf g(t)\,dt,
\qquad
\psi^m
=
\frac1{\Delta t}
\int_{t_{m-1}}^{t_m}\psi(t)\,dt.
\end{equation}
This choice is suitable for data that are merely square-integrable in time. By Jensen's
inequality,
\begin{align}\label{eq-data-bound}
\Delta t\sum_{m=1}^{N}
\left(
\|\mathbf f^m\|_{\mathbf L^2}^{2}
+
\|\mathbf g^m\|_{\mathbf L^2}^{2}
+
\|\psi^m\|_{L^2}^{2}
\right)
\le&
\|\mathbf f\|_{L^2(0,T;\mathbf L^2)}^{2}
+
\|\mathbf g\|_{L^2(0,T;\mathbf L^2)}^{2} +
\|\psi\|_{L^2(0,T;L^2)}^{2}.
\end{align}

\paragraph{The scheme.}
Given $(\u_h^0,\B_h^0,\theta_h^0)
\in
\mathcal X_{0h}\times\mathcal W_{0h}\times\mathcal Y_h,$ for each $m=1,\ldots,N$ we seek
\[
(\u_h^m,\B_h^m,\theta_h^m)
\in
\mathcal X_{0h}\times\mathcal W_{0h}\times\mathcal Y_h
\]
such that, for all $(\mathbf v_h,\mathbf w_h,\phi_h)
\in
\mathcal X_{0h}\times\mathcal W_{0h}\times\mathcal Y_h,$
\begin{align}
\label{eq-FEM-1}
(d_t\u_h^m,\mathbf v_h)
&+
a_1(\u_h^{m-1},\u_h^m,\mathbf v_h)
+
\nu
\big(
|\nabla\u_h^m|^{p-2}\nabla\u_h^m,
\nabla\mathbf v_h
\big)
\nonumber\\
&\quad+
2\big(
\boldsymbol{\omega}_{r}\times\u_h^m,
\mathbf v_h
\big)
+
\big(
\B_h^{m-1}\times(\nabla\times\B_h^m),
\mathbf v_h
\big) =
(\mathbf f^m,\mathbf v_h)
+
(\theta_h^{m-1}e_3,\mathbf v_h),
\\
\label{eq-FEM-2}
(d_t\B_h^m,\mathbf w_h)
&+
\kappa
(\nabla\times\B_h^m,\nabla\times\mathbf w_h)
-
(\u_h^m\times\B_h^{m-1},
\nabla\times\mathbf w_h)
=
(\mathbf g^m,\mathbf w_h),
\\
\label{eq-FEM-3}
(d_t\theta_h^m,\phi_h)
&+
\sigma(\nabla\theta_h^m,\nabla\phi_h)
+
a_2(\u_h^{m-1},\theta_h^m,\phi_h)
=
(\u_h^m\cdot e_3,\phi_h)
+
(\psi^m,\phi_h).
\end{align}

\begin{remark}[Structure of the scheme]\label{rem-scheme-design}
The scheme is semi-implicit: the transport velocities and one magnetic factor are evaluated
at the previous time level, while the dissipative terms, the remaining coupled variables,
and the Coriolis term are treated implicitly. Since $\theta_h^{m-1}$ is known, the coupled
$(\u_h^m,\B_h^m)$ subsystem can be solved first, followed by \eqref{eq-FEM-3} for
$\theta_h^m$.
\\ \noindent
The chosen time levels preserve the fundamental energy structure:
\[
a_1(\u_h^{m-1},\u_h^m,\u_h^m)=0,
\qquad
a_2(\u_h^{m-1},\theta_h^m,\theta_h^m)=0,
\]
while the Coriolis contribution satisfies
\begin{equation}\label{eq-discrete-coriolis}
2\big(
\boldsymbol{\omega}_{r}\times\u_h^m,
\u_h^m
\big)
=0.
\end{equation}
Moreover, by the scalar triple-product identity \eqref{eq-triple-product},
\[
\big(
\B_h^{m-1}\times(\nabla\times\B_h^m),
\u_h^m
\big)
=
\big(
\u_h^m\times\B_h^{m-1},
\nabla\times\B_h^m
\big).
\]
Thus the electromagnetic coupling terms cancel exactly and the Coriolis force is
energy neutral in the discrete energy estimate. The buoyancy terms occur at different
time levels and therefore do not cancel; the scheme is energy stable rather than
energy preserving.
\end{remark}

\begin{remark}[Discrete solenoidality of $\B_h^m$]
\label{rem-divfree-B}
Since $\B_h^m$ is sought directly in $\mathcal W_{0h}$,
\[
(\B_h^m,\nabla s_h)=0
\qquad
\forall\,s_h\in S_h
\]
holds by construction for every $m$, providing the discrete counterpart of
$\operatorname{div}\B=0$ together with $\mathbf n\cdot\B=0$.
\\ \noindent
This condition should not be obtained by choosing
$\mathbf w_h=\nabla s_h$ in \eqref{eq-FEM-2}, since
$\nabla s_h\notin\mathcal W_{0h}$ unless $\nabla s_h=0$.
As solenoidality is built into the trial space, no additional Lagrange multiplier is
required \cite{Prohl2008}.
\end{remark}

\medskip

\section{Well-posedness and stability}\label{sec-existence-stability}

The $p$-Laplacian makes the discrete momentum equation nonlinear, so the full scheme cannot
be treated directly by Lax-Milgram. Its triangular structure, however, is particularly useful:
since the buoyancy term in \eqref{eq-FEM-1} is evaluated at $\theta_h^{m-1}$, the pair
$(\u_h^m,\B_h^m)$ is determined first from \eqref{eq-FEM-1}-\eqref{eq-FEM-2}, after which
\eqref{eq-FEM-3} is linear in $\theta_h^m$. We exploit this structure to prove existence,
uniqueness, and energy stability. The key ingredients are the skew-symmetry
\eqref{eq-skew}, the electromagnetic cancellation \eqref{eq-triple-product}, and the strong
monotonicity \eqref{eq-strong-monotonicity}. No estimate below requires any coupling between
$\Delta t$ and $h$.

\subsection{Existence of a discrete solution}\label{subsec-existence}

We use the following standard finite-dimensional consequence of Brouwer's fixed point theorem.

\begin{lemma}[{\cite[Ch.~II, Lemma~1.4]{Temam1977}}]
\label{lemma:finite-dimensional-coercivity}
Let $V$ be a finite-dimensional Hilbert space and $G:V\to V$ be continuous. If
\[
\frac{(G(z),z)_V}{\|z\|_V}\longrightarrow+\infty
\qquad\text{as }\|z\|_V\to\infty,
\]
then there exists $z\in V$ such that $G(z)=0$.
\end{lemma}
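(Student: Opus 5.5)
This is the finite-dimensional form of Brouwer's theorem, cited from Temam. The plan is the classical argument by contradiction, combining a radial retraction with Brouwer's fixed point theorem on a closed ball of $V$.

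\emph{Step 1 (choice of radius).} By the coercivity hypothesis, choose $R>0$ such that $(G(z),z)_V>0$ for every $z\in V$ with $\|z\|_V=R$. This is possible because $(G(z),z)_V/\|z\|_V\to+\infty$; any $R$ beyond the threshold where this ratio is positive will do.

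\emph{Step 2 (contradiction hypothesis and construction of a self-map).} Suppose $G(z)\neq0$ for all $z$ in the closed ball $\overline B_R=\{z\in V:\|z\|_V\le R\}$. Then the map
\[
\Phi(z)=-R\,\frac{G(z)}{\|G(z)\|_V}
\]
is well defined and continuous on $\overline B_R$, since $G$ is continuous and does not vanish there. It maps $\overline B_R$ into its boundary sphere, in particular into $\overline B_R$ itself. Since $V$ is finite dimensional, $\overline B_R$ is homeomorphic to a closed Euclidean ball. Brouwer's theorem therefore yields a fixed point $z^\ast=\Phi(z^\ast)$.

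\emph{Step 3 (contradiction).} From $z^\ast=\Phi(z^\ast)$ we get $\|z^\ast\|_V=R$. Moreover,
\[
(G(z^\ast),z^\ast)_V=-R\,\frac{\|G(z^\ast)\|_V^2}{\|G(z^\ast)\|_V}=-R\|G(z^\ast)\|_V<0,
\]
which contradicts Step 1. Hence $G$ has a zero in $\overline B_R$.

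No step is a real obstacle. The only point needing care is that the coercivity is used only on the sphere of radius $R$, so the zero found lies in $\overline B_R$. This localisation is what later provides the a priori bound in the existence proof for the scheme, where $G$ will be built from \eqref{eq-FEM-1}--\eqref{eq-FEM-2} on $V=\mathcal X_{0h}\times\mathcal W_{0h}$.
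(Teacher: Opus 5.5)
Your argument is correct and is the classical proof of Temam's Ch.~II, Lemma~1.4, which the paper cites without reproducing. The coercivity hypothesis gives $(G(z),z)_V>0$ on some sphere $\|z\|_V=R$; if $G$ had no zero in the closed ball, the Brouwer fixed point of $z\mapsto -R\,G(z)/\|G(z)\|_V$ would lie on that sphere and violate the sign condition.
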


\begin{theorem}[Existence of a discrete solution]
\label{thm-existence}
For every $m=1,\ldots,N$, $p\ge2$, and admissible $h,\Delta t>0$, the scheme
\eqref{eq-FEM-1}--\eqref{eq-FEM-3} admits at least one solution
\[
(\u_h^m,\B_h^m,\theta_h^m)
\in
\mathcal X_{0h}\times\mathcal W_{0h}\times\mathcal Y_h .
\]
In particular, no coupling condition between $\Delta t$ and $h$ is required.
\end{theorem}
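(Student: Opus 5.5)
We argue by induction on $m$, exploiting the triangular structure noted in Remark~\ref{rem-scheme-design}. Assume $(\u_h^{m-1},\B_h^{m-1},\theta_h^{m-1})\in\mathcal X_{0h}\times\mathcal W_{0h}\times\mathcal Y_h$ is known. We first solve the coupled nonlinear subsystem \eqref{eq-FEM-1}--\eqref{eq-FEM-2} for $(\u_h^m,\B_h^m)$. We then solve the linear problem \eqref{eq-FEM-3} for $\theta_h^m$, in which $\u_h^m$ now enters only as data.

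For the first step, set $V=\mathcal X_{0h}\times\mathcal W_{0h}$ with the $\mathbf L^2\times\mathbf L^2$ inner product; this is a finite-dimensional Hilbert space. Define $G:V\to V$ by Riesz representation: for $z=(\u,\B)$, let $(G(z),(\mathbf v_h,\mathbf w_h))_V$ be the left-hand sides of \eqref{eq-FEM-1} and \eqref{eq-FEM-2} minus their right-hand sides, with $(\u_h^m,\B_h^m)$ replaced by $(\u,\B)$. Continuity of $G$ holds because every term is polynomial in the coefficients, except $|\nabla\u|^{p-2}\nabla\u$, which is continuous for $p\ge2$. All norms are equivalent on $V$.

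Testing with $z$ itself:
\begin{itemize}
\item the skew-symmetry \eqref{eq-skew} kills $a_1(\u_h^{m-1},\u,\u)$;
\item \eqref{coriolis skew property} kills the Coriolis term;
\item the triple-product identity \eqref{eq-triple-product} makes the Lorentz term $(\B_h^{m-1}\times(\nabla\times\B),\u)$ cancel the induction term $-(\u\times\B_h^{m-1},\nabla\times\B)$ exactly.
\end{itemize}
This cancellation is the only genuinely coupled point, and it relies on both terms using the same lagged factor $\B_h^{m-1}$. What remains is
\[
(G(z),z)_V\ge \frac{1}{\Delta t}\big(\|\u\|_{\mathbf L^2}^2+\|\B\|_{\mathbf L^2}^2\big)+\nu\|\nabla\u\|_{\mathbf L^p}^p+\kappa\|\nabla\times\B\|_{\mathbf L^2}^2 - C_m\|z\|_V .
\]
The constant $C_m$ collects the data terms $\frac{1}{\Delta t}(\u_h^{m-1},\u)+\frac1{\Delta t}(\B_h^{m-1},\B)$, $(\mathbf f^m,\u)$, $(\theta_h^{m-1}e_3,\u)$ and $(\mathbf g^m,\B)$, each bounded by Cauchy--Schwarz. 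Dropping the nonnegative diffusion terms gives $(G(z),z)_V/\|z\|_V\ge \|z\|_V/\Delta t-C_m\to\infty$. Lemma~\ref{lemma:finite-dimensional-coercivity} then yields a zero of $G$, which is a solution $(\u_h^m,\B_h^m)$.

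For the temperature step, \eqref{eq-FEM-3} is a square linear system on $\mathcal Y_h$. Its bilinear form $\frac1{\Delta t}(\theta,\phi)+\sigma(\nabla\theta,\nabla\phi)+a_2(\u_h^{m-1},\theta,\phi)$ is coercive, because $a_2(\u_h^{m-1},\theta,\theta)=0$. Lax--Milgram, or simply injectivity in finite dimensions, gives a unique $\theta_h^m$. Induction over $m$ completes the proof. None of the steps uses any relation between $h$ and $\Delta t$: coercivity holds for arbitrary $\Delta t>0$, which gives the final claim.

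The main point requiring care is verifying the exact electromagnetic cancellation when testing with $z$. A second point is checking that the Coriolis term, which is treated implicitly, does not spoil coercivity. Both reduce to the algebraic identities already established, so no real obstacle is expected.
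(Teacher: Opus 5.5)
Your proposal is correct and follows essentially the same route as the paper. Both apply the finite-dimensional Brouwer-type lemma on $\mathcal X_{0h}\times\mathcal W_{0h}$ with the $\mathbf L^2\times\mathbf L^2$ inner product, get coercivity $\|z\|_V/\Delta t-C_m\to\infty$ from the vanishing of the convective and Coriolis terms and the exact Lorentz--induction cancellation, and then obtain $\theta_h^m$ by Lax--Milgram before inducting over $m$.
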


\begin{proof}
Fix $m$ and assume that
$(\u_h^{m-1},\B_h^{m-1},\theta_h^{m-1})$ is known.

\smallskip
\noindent
\emph{Step 1: Velocity--magnetic subsystem.}
Set
\[
\mathbb V_h:=\mathcal X_{0h}\times\mathcal W_{0h},
\qquad
\|\mathbb U_h\|_{0,h}^2
:=
\|\u_h\|_{\mathbf L^2}^2
+
\|\B_h\|_{\mathbf L^2}^2,
\]
where
\[
\mathbb U_h=(\u_h,\B_h)\in\mathbb V_h,
\qquad
\eta_h=(\v_h,\C_h)\in\mathbb V_h.
\]
Define $F^m:\mathbb V_h\to\mathbb V_h'$ by
\begin{align}
\label{eq-def-Fm}
\langle F^m(\mathbb U_h),\eta_h\rangle
&=
\frac1{\Delta t}(\u_h,\v_h)
+a_1(\u_h^{m-1},\u_h,\v_h)
+\nu
\bigl(
|\nabla\u_h|^{p-2}\nabla\u_h,
\nabla\v_h
\bigr)
\nonumber\\
&\quad
+
2\bigl(
\boldsymbol{\omega}_{r}\times\u_h,
\v_h
\bigr)
+
\bigl(
\B_h^{m-1}\times(\nabla\times\B_h),
\v_h
\bigr)
\nonumber\\
&\quad
+
\frac1{\Delta t}(\B_h,\C_h)
+
\kappa
(\nabla\times\B_h,\nabla\times\C_h)
-
(\u_h\times\B_h^{m-1},
\nabla\times\C_h),
\end{align}
and $L^m\in\mathbb V_h'$ by
\begin{align}
\label{eq-def-Lm}
\langle L^m,\eta_h\rangle
&=
\frac1{\Delta t}(\u_h^{m-1},\v_h)
+
(\mathbf f^m,\v_h)
+
(\theta_h^{m-1}e_3,\v_h)+
\frac1{\Delta t}(\B_h^{m-1},\C_h)
+
(\mathbf g^m,\C_h).
\end{align}
Thus \eqref{eq-FEM-1}--\eqref{eq-FEM-2} is equivalent to finding
$\mathbb U_h^m\in\mathbb V_h$ such that
\[
F^m(\mathbb U_h^m)=L^m.
\]
\\ \noindent
After identifying $\mathbb V_h'$ with $\mathbb V_h$ through the Riesz map,
define $G^m:=F^m-L^m$. Since $\mathbb V_h$ is finite-dimensional,
$\xi\mapsto|\xi|^{p-2}\xi$ is continuous for $p\ge2$, and all remaining
terms are multilinear, $G^m$ is continuous.

\smallskip
\noindent
\emph{Step 2: Coercivity.}
For arbitrary $\mathbb U_h=(\u_h,\B_h)\in\mathbb V_h$, take
$\eta_h=\mathbb U_h$. By \eqref{eq-skew},
\[
a_1(\u_h^{m-1},\u_h,\u_h)=0,
\]
while \eqref{eq-triple-product} gives
\[
\bigl(
\B_h^{m-1}\times(\nabla\times\B_h),
\u_h
\bigr)
-
\bigl(
\u_h\times\B_h^{m-1},
\nabla\times\B_h
\bigr)
=0.
\]
Moreover, by the skew-symmetry of the Coriolis term,
\[
2\bigl(
\boldsymbol{\omega}_{r}\times\u_h,
\u_h
\bigr)=0.
\]
Consequently,
\begin{equation}\label{eq-coercivity-Fm}
\langle F^m(\mathbb U_h),\mathbb U_h\rangle
=
\frac1{\Delta t}\|\mathbb U_h\|_{0,h}^2
+
\nu\|\nabla\u_h\|_{\mathbf L^p}^p
+
\kappa\|\nabla\times\B_h\|_{\mathbf L^2}^2.
\end{equation}
Since $L^m$ is a bounded linear functional on the finite-dimensional space
$\mathbb V_h$, there exists $C_m>0$ such that
\[
|\langle L^m,\mathbb U_h\rangle|
\le
C_m\|\mathbb U_h\|_{0,h}.
\]
Hence
\[
\frac{
\langle G^m(\mathbb U_h),\mathbb U_h\rangle
}{
\|\mathbb U_h\|_{0,h}
}
\ge
\frac{\|\mathbb U_h\|_{0,h}}{\Delta t}
-
C_m
\longrightarrow+\infty
\]
as $\|\mathbb U_h\|_{0,h}\to\infty$.
Lemma~\ref{lemma:finite-dimensional-coercivity} therefore yields
$\mathbb U_h^m=(\u_h^m,\B_h^m)\in\mathbb V_h$ satisfying
\eqref{eq-FEM-1}--\eqref{eq-FEM-2}.

Notice that no absorption of an $L^2$-velocity term into the $p$-Laplacian
is required. Consequently, the argument also remains valid for $p=2$,
unlike an estimate of the form
\[
\|\u_h\|_{\mathbf L^2}^2
\le
\delta\|\nabla\u_h\|_{\mathbf L^p}^p
+
C_\delta,
\]
which is available with arbitrary $\delta>0$ only when $p>2$.

\smallskip
\noindent
\emph{Step 3: Temperature equation.}
With $\u_h^m$ known, find $\theta_h^m\in\mathcal Y_h$ such that
\[
\mathcal B_m(\theta_h^m,\phi_h)
=
\frac1{\Delta t}
(\theta_h^{m-1},\phi_h)
+
(\u_h^m\cdot e_3,\phi_h)
+
(\psi^m,\phi_h),
\qquad
\forall\,\phi_h\in\mathcal Y_h,
\]
where
\[
\mathcal B_m(\theta,\phi)
=
\frac1{\Delta t}(\theta,\phi)
+
\sigma(\nabla\theta,\nabla\phi)
+
a_2(\u_h^{m-1},\theta,\phi).
\]
The bilinear form $\mathcal B_m$ is continuous on
$\mathcal Y_h\times\mathcal Y_h$, and by \eqref{eq-skew},
\[
\mathcal B_m(\phi_h,\phi_h)
=
\frac1{\Delta t}\|\phi_h\|_{L^2}^2
+
\sigma\|\nabla\phi_h\|_{L^2}^2.
\]
Hence $\mathcal B_m$ is coercive. The Lax--Milgram theorem therefore gives
a unique $\theta_h^m\in\mathcal Y_h$.

Starting from the prescribed initial data and repeating the argument for
$m=1,\ldots,N$ proves the result.
\end{proof}

\subsection{Uniqueness of the discrete solution}\label{subsec-uniqueness}

The triangular structure also yields uniqueness. At a fixed time level, the previous-step
velocity, magnetic field, and temperature are common data. Thus the convective term vanishes
after subtraction and testing by the velocity difference, the electromagnetic terms cancel,
and the nonlinear diffusion is controlled by strong monotonicity. Once the velocity and
magnetic fields coincide, uniqueness of the temperature follows separately.

\begin{lemma}[Uniqueness of the discrete solution]\label{lem-uniqueness}
Let $m\in\{1,\ldots,N\}$ and $p\ge2$. For prescribed
$(\u_h^{m-1},\B_h^{m-1},\theta_h^{m-1})$ and
$\mathbf f^m,\mathbf g^m,\psi^m$, the scheme
\eqref{eq-FEM-1}-\eqref{eq-FEM-3} has at most one solution in
$\mathcal X_{0h}\times\mathcal W_{0h}\times\mathcal Y_h$ for every $\Delta t>0$.
\end{lemma}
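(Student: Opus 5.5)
We argue by contradiction on two solutions and use the triangular structure of the scheme. Suppose $(\u_1,\B_1,\theta_1)$ and $(\u_2,\B_2,\theta_2)$ in $\mathcal X_{0h}\times\mathcal W_{0h}\times\mathcal Y_h$ both satisfy \eqref{eq-FEM-1}--\eqref{eq-FEM-3} with the same previous-step data $(\u_h^{m-1},\B_h^{m-1},\theta_h^{m-1})$ and the same sources. Set $\mathbf e=\u_1-\u_2\in\mathcal X_{0h}$, $\mathbf E=\B_1-\B_2\in\mathcal W_{0h}$, $\vartheta=\theta_1-\theta_2\in\mathcal Y_h$. The right-hand sides of \eqref{eq-FEM-1} and \eqref{eq-FEM-2} cancel on subtraction: they involve only $\mathbf f^m$, $\mathbf g^m$, $\theta_h^{m-1}$ and the terms $\u_h^{m-1}/\Delta t$, $\B_h^{m-1}/\Delta t$ from $d_t$. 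Every remaining term is linear in the unknowns, except the $p$-Laplacian, because the transport velocity $\u_h^{m-1}$ and the magnetic factor $\B_h^{m-1}$ are common data.

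\emph{Step 1 (velocity--magnetic pair).} Subtract the two copies of \eqref{eq-FEM-1}, test with $\mathbf v_h=\mathbf e$. Subtract the two copies of \eqref{eq-FEM-2}, test with $\mathbf w_h=\mathbf E$. Then add the results.
\begin{itemize}
\item By \eqref{eq-skew}, $a_1(\u_h^{m-1},\mathbf e,\mathbf e)=0$.
\item By \eqref{coriolis skew property}, $2(\boldsymbol\omega_r\times\mathbf e,\mathbf e)=0$.
\item By \eqref{eq-triple-product}, $(\B_h^{m-1}\times(\nabla\times\mathbf E),\mathbf e)-(\mathbf e\times\B_h^{m-1},\nabla\times\mathbf E)=0$.
\end{itemize}
This is exactly the cancellation used in \eqref{eq-coercivity-Fm}. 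We obtain
\[
\frac1{\Delta t}\bigl(\|\mathbf e\|_{\mathbf L^2}^2+\|\mathbf E\|_{\mathbf L^2}^2\bigr)
+\nu\bigl(|\nabla\u_1|^{p-2}\nabla\u_1-|\nabla\u_2|^{p-2}\nabla\u_2,\nabla\mathbf e\bigr)
+\kappa\|\nabla\times\mathbf E\|_{\mathbf L^2}^2=0 .
\]
Strong monotonicity \eqref{eq-strong-monotonicity} makes the middle term at least $\nu 2^{2-p}\|\nabla\mathbf e\|_{\mathbf L^p}^p\ge0$. All terms are therefore nonnegative and must vanish, which gives $\mathbf e=0$ and $\mathbf E=0$. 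Only monotonicity is used, not its quantitative form, so this holds for every $\Delta t>0$ and $p\ge2$, including $p=2$.

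\emph{Step 2 (temperature).} Since $\u_1=\u_2$, the right-hand sides of \eqref{eq-FEM-3} coincide. Subtracting gives $\mathcal B_m(\vartheta,\phi_h)=0$ for all $\phi_h\in\mathcal Y_h$, with $\mathcal B_m$ as in the proof of Theorem~\ref{thm-existence}. Testing with $\phi_h=\vartheta$ and using $a_2(\u_h^{m-1},\vartheta,\vartheta)=0$ yields $\Delta t^{-1}\|\vartheta\|_{L^2}^2+\sigma\|\nabla\vartheta\|_{L^2}^2=0$, hence $\vartheta=0$. Alternatively, this follows from the Lax--Milgram uniqueness already noted there.

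\emph{Main obstacle.} The only genuinely nonlinear term is the $p$-Laplacian, and it is handled by strong monotonicity. The step needing care is checking that the coupling terms cancel exactly after subtraction. This works only because the scheme lags $\u_h^{m-1}$ in $a_1$ and $\B_h^{m-1}$ in both the Lorentz and induction terms. With those factors taken implicitly, the differences would produce cross terms requiring a smallness condition on $\Delta t$.
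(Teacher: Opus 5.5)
Your proposal is correct and follows the paper's proof essentially step for step. You subtract the two solutions and test with the differences. The convective, Coriolis, and Lorentz/induction coupling terms then vanish by skew-symmetry and the triple-product identity, and strong monotonicity of the $p$-Laplacian forces $\mathbf e=\mathbf E=0$. Uniqueness of the temperature follows because the temperature equation is linear and coercive and now has identical right-hand sides.
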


\begin{proof}
Let
$(\u_{ih}^m,\B_{ih}^m,\theta_{ih}^m)$, $i=1,2$, be two solutions and define
\[
\mathbf e_{\mathbf{u}}^m:=\u_{1h}^m-\u_{2h}^m,\qquad
\mathbf e_{\mathbf{b}}^m:=\B_{1h}^m-\B_{2h}^m,\qquad
e_\theta^m:=\theta_{1h}^m-\theta_{2h}^m.
\]
\noindent
Subtracting the momentum equations and testing with $\mathbf e_{\mathbf{u}}^m$ gives
\begin{align}
\label{eq-uniq-mom}
\frac1{\Delta t}\|\mathbf e_{\mathbf{u}}^m\|_{\mathbf L^2}^2
&+
a_1(\u_h^{m-1},\mathbf e_{\mathbf{u}}^m,\mathbf e_{\mathbf{u}}^m)
+
\bigl(
a_0(\u_{1h}^m),\nabla\mathbf e_{\mathbf{u}}^m
\bigr)-\bigl(a_0(\u_{2h}^m),
\nabla\mathbf e_{\mathbf{u}}^m
\bigr)+2\big(
\boldsymbol{\omega}_{r}\times\mathbf e_{\mathbf{u}}^m,
\mathbf e^m_\u
\big) +
\bigl(
\B_h^{m-1}\times(\nabla\times\mathbf e_{\mathbf{b}}^m),
\mathbf e_{\mathbf{u}}^m
\bigr)
=0.
\end{align}
Similarly, subtraction of the induction equations and testing with $\mathbf e_{\mathbf{b}}^m$ yields
\begin{equation}
\label{eq-uniq-ind}
\frac1{\Delta t}\|\mathbf e_{\mathbf{b}}^m\|_{\mathbf L^2}^2
+
\kappa\|\nabla\times\mathbf e_{\mathbf{b}}^m\|_{\mathbf L^2}^2
-
(\mathbf e_{\mathbf{u}}^m\times\B_h^{m-1},
\nabla\times\mathbf e_{\mathbf{b}}^m)
=0.
\end{equation}
Using \eqref{eq-skew}, \eqref{eq-triple-product}, and
\eqref{eq-strong-monotonicity}, we obtain
\begin{align}
\label{eq-discrete-uniqueness-energy}
\frac1{\Delta t}\|\mathbf e_{\mathbf{u}}^m\|_{\mathbf L^2}^2
+
\frac1{\Delta t}\|\mathbf e_{\mathbf{b}}^m\|_{\mathbf L^2}^2
+
\nu2^{2-p}\|\nabla\mathbf e_{\mathbf{u}}^m\|_{\mathbf L^p}^p
+
\kappa\|\nabla\times\mathbf e_{\mathbf{b}}^m\|_{\mathbf L^2}^2
\le0.
\end{align}
Hence
\[
\mathbf e_{\mathbf{u}}^m=\mathbf e_{\mathbf{b}}^m=0.
\]
\noindent
Subtracting the temperature equations and testing with $e_\theta^m$ then gives
\[
\frac1{\Delta t}\| \mathbf e_\theta^m\|_{L^2}^2
+
\sigma\|\nabla \mathbf e_\theta^m\|_{L^2}^2
=0,
\]
so $\mathbf e_\theta^m=0$. Therefore the discrete solution is unique.
\end{proof}

\begin{corollary}[Well-posedness]\label{cor-wellposed}
For every admissible $h>0$ and every $\Delta t>0$, the fully discrete scheme
\eqref{eq-FEM-1}-\eqref{eq-FEM-3} determines a unique sequence
\[
\{(\u_h^m,\B_h^m,\theta_h^m)\}_{m=1}^N
\subset
\mathcal X_{0h}\times\mathcal W_{0h}\times\mathcal Y_h .
\]
\end{corollary}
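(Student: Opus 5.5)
The corollary follows by induction on the time level $m$, combining Theorem~\ref{thm-existence} and Lemma~\ref{lem-uniqueness}; no new estimate is needed. The base case is the prescribed initial triple $(\u_h^0,\B_h^0,\theta_h^0)\in\mathcal X_{0h}\times\mathcal W_{0h}\times\mathcal Y_h$. The data $\mathbf f^m,\mathbf g^m,\psi^m$ are determined uniquely by the cell averages \eqref{eq-data-averages}, so at each step the only input is the previous triple.

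For the inductive step, suppose $(\u_h^{m-1},\B_h^{m-1},\theta_h^{m-1})$ has been uniquely determined in $\mathcal X_{0h}\times\mathcal W_{0h}\times\mathcal Y_h$. Theorem~\ref{thm-existence} then supplies at least one solution at level $m$. Its proof uses Lemma~\ref{lemma:finite-dimensional-coercivity} on the $(\u_h^m,\B_h^m)$ subsystem, relying on the coercivity identity \eqref{eq-coercivity-Fm}, and Lax--Milgram for $\theta_h^m$. Lemma~\ref{lem-uniqueness}, applied with the same previous-level data and sources, shows this solution is the only one. Hence the level-$m$ triple is a well-defined function of the level-$(m-1)$ triple, and the induction proceeds for $m=1,\ldots,N$.

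The one point to check is that both results hold for arbitrary $h>0$ and $\Delta t>0$ without any coupling condition. Existence uses only $1/\Delta t>0$ in the coercivity bound. Uniqueness uses the skew-symmetry \eqref{eq-skew}, the Coriolis neutrality, the electromagnetic cancellation \eqref{eq-triple-product}, and the strong monotonicity \eqref{eq-strong-monotonicity}, none of which constrains the step sizes. With that confirmed, the conclusion follows and the argument is routine.
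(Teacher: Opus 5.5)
Your proposal is correct and follows the paper's own reasoning. The corollary is stated without a separate proof because it is exactly the level-by-level combination, by induction from the initial triple for $m=1,\ldots,N$, of Theorem~\ref{thm-existence} and Lemma~\ref{lem-uniqueness}, and you are right that neither result imposes any coupling between $h$ and $\Delta t$.
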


\subsection{Energy stability}\label{subsec-stability}

We next derive mesh-independent a~priori estimates. Because the buoyancy terms occur at
different time levels, the argument requires only a time-step bound
$\Delta t\le\Delta t_0$ independent of $h$; in particular, there is no CFL-type coupling
between $\Delta t$ and the mesh size.

\begin{theorem}\label{bounds}
Let
\[
\{(\u_h^m,\B_h^m,\theta_h^m)\}_{m=0}^N
\subset
\mathcal X_{0h}\times\mathcal W_{0h}\times\mathcal Y_h
\]
solve \eqref{eq-FEM-1}-\eqref{eq-FEM-3}. There exists
$\Delta t_0>0$, depending only on the domain and model parameters and independent of $h$,
such that, for $\Delta t\le\Delta t_0$,
\begin{align}
\max_{0\le m\le N}
\left(
\|\u_h^m\|_{\mathbf L^2}^2
+\|\B_h^m\|_{\mathbf L^2}^2
+\|\theta_h^m\|_{L^2}^2
\right)
&\le C,
\label{eq-bounds-1}
\\
\Delta t\sum_{m=1}^N
\left(
\nu\|\nabla\u_h^m\|_{\mathbf L^p}^p
+\kappa\|\nabla\times\B_h^m\|_{\mathbf L^2}^2
+\sigma\|\nabla\theta_h^m\|_{L^2}^2
\right)
&\le C,
\label{eq-bounds-2}
\\
\sum_{m=1}^N
\left(
\|\u_h^m-\u_h^{m-1}\|_{\mathbf L^2}^2
+\|\B_h^m-\B_h^{m-1}\|_{\mathbf L^2}^2
+\|\theta_h^m-\theta_h^{m-1}\|_{L^2}^2
\right)
&\le C,
\label{eq-bounds-3}
\end{align}
where $C>0$ is independent of $h$ and $\Delta t$ and depends only on $T$, the domain,
the model parameters, the data norms, and the initial energy.
\end{theorem}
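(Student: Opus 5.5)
The plan is to test the scheme with the discrete solution itself and sum the resulting energy identities over time, using a discrete Gr\"onwall inequality to handle the buoyancy cross terms.

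\emph{Testing.} In \eqref{eq-FEM-1}--\eqref{eq-FEM-3} choose $(\mathbf v_h,\mathbf w_h,\phi_h)=(\u_h^m,\B_h^m,\theta_h^m)$. For each field we use the algebraic identity
\[
2(z^m-z^{m-1},z^m)=\|z^m\|^2-\|z^{m-1}\|^2+\|z^m-z^{m-1}\|^2 .
\]
We then add the momentum and induction equations. By \eqref{eq-skew} the convective terms $a_1(\u_h^{m-1},\u_h^m,\u_h^m)$ and $a_2(\u_h^{m-1},\theta_h^m,\theta_h^m)$ vanish. By \eqref{eq-discrete-coriolis} the Coriolis term vanishes. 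By \eqref{eq-triple-product}, as in Remark~\ref{rem-scheme-design}, the Lorentz and induction terms cancel. This yields, after multiplying by $2\Delta t$,
\begin{align*}
&\|\u_h^m\|^2-\|\u_h^{m-1}\|^2+\|\u_h^m-\u_h^{m-1}\|^2
+\|\B_h^m\|^2-\|\B_h^{m-1}\|^2+\|\B_h^m-\B_h^{m-1}\|^2
+\|\theta_h^m\|^2-\|\theta_h^{m-1}\|^2
\\
&\quad+\|\theta_h^m-\theta_h^{m-1}\|^2
+2\Delta t\bigl(\nu\|\nabla\u_h^m\|_{\mathbf L^p}^p+\kappa\|\nabla\times\B_h^m\|^2+\sigma\|\nabla\theta_h^m\|^2\bigr)
\\
&=2\Delta t\bigl[(\mathbf f^m,\u_h^m)+(\mathbf g^m,\B_h^m)+(\psi^m,\theta_h^m)+(\theta_h^{m-1}e_3,\u_h^m)+(\u_h^m\cdot e_3,\theta_h^m)\bigr].
\end{align*}

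\emph{Right-hand side.} Every term on the right is bounded by Cauchy--Schwarz and Young in $\mathbf L^2$ only, for example
\[
2\Delta t(\mathbf f^m,\u_h^m)\le\Delta t\|\mathbf f^m\|^2+\Delta t\|\u_h^m\|^2 .
\]
The lagged buoyancy term satisfies
\[
2\Delta t(\theta_h^{m-1}e_3,\u_h^m)\le\Delta t\|\theta_h^{m-1}\|^2+\Delta t\|\u_h^m\|^2 .
\]
No absorption into the $p$-Laplacian is needed, which matters for $p=2$ and keeps everything $h$-independent. Writing
\[
E^m:=\|\u_h^m\|^2+\|\B_h^m\|^2+\|\theta_h^m\|^2,
\]
we obtain
\[
E^m-E^{m-1}+(\text{increments})+2\Delta t(\text{dissipation})\le C\Delta t\,E^m+\Delta t\,E^{m-1}+\Delta t\bigl(\|\mathbf f^m\|^2+\|\mathbf g^m\|^2+\|\psi^m\|^2\bigr),
\]
where $C$ is an absolute constant (about $3$).

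\emph{Summation and Gr\"onwall.} Summing from $m=1$ to $n$ and using \eqref{eq-data-bound}, the data contribute a bound independent of $\Delta t$. The initial energy $E^0$ is bounded uniformly by \eqref{eq-initial-convergence}. The resulting inequality is implicit in $E^n$, so we apply the implicit discrete Gr\"onwall lemma. This requires $C\Delta t\le\tfrac12$, which fixes
\[
\Delta t_0:=\frac{1}{2C}.
\]
Here $C$ depends only on the coupling constants (the buoyancy coefficient is $1$, and $|e_3|=1$), not on $h$. Gr\"onwall then gives
\[
\max_n E^n\le C\exp(CT),
\]
which is \eqref{eq-bounds-1}. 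Reinserting this bound into the summed inequality bounds the dissipation sum \eqref{eq-bounds-2} and the increment sum \eqref{eq-bounds-3}.

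\emph{Main obstacle.} The only delicate point is the non-cancelling buoyancy pair at mismatched time levels, $(\theta_h^{m-1}e_3,\u_h^m)$ and $(\u_h^m\cdot e_3,\theta_h^m)$. These force the implicit Gr\"onwall argument and are the source of the restriction $\Delta t\le\Delta t_0$. Since they are controlled purely in $\mathbf L^2$, the restriction carries no mesh dependence.
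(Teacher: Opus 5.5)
Your proposal is correct and follows the paper's proof essentially step for step: you test with the discrete solution, use the polarization identity together with \eqref{eq-skew}, \eqref{eq-discrete-coriolis} and \eqref{eq-triple-product} to remove the convective, Coriolis and Lorentz--induction terms, bound the data and lagged buoyancy terms by Young's inequality purely in $\mathbf L^2$, and fix $\Delta t_0=1/(2C)$ before applying the discrete Gr\"onwall lemma. The only cosmetic difference is the order of operations: the paper first absorbs to a one-step inequality with factor $(1+C\Delta t)A_{m-1}$ and then sums, whereas you sum first and then invoke the implicit form of Gr\"onwall directly.
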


\begin{proof}
Testing \eqref{eq-FEM-1}-\eqref{eq-FEM-3} with
$2\Delta t\,\u_h^m$, $2\Delta t\,\B_h^m$, and
$2\Delta t\,\theta_h^m$, respectively, and using $2(a-b,a)=\|a\|^2-\|b\|^2+\|a-b\|^2,$ together with \eqref{eq-skew} and \eqref{eq-triple-product}, gives
\begin{align}
&A_m
+
\|\u_h^m-\u_h^{m-1}\|_{\mathbf L^2}^2
+
\|\B_h^m-\B_h^{m-1}\|_{\mathbf L^2}^2
+
\|\theta_h^m-\theta_h^{m-1}\|_{L^2}^2 +
2\Delta t
\left(
\nu\|\nabla\u_h^m\|_{\mathbf L^p}^p
+
\kappa\|\nabla\times\B_h^m\|_{\mathbf L^2}^2
+
\sigma\|\nabla\theta_h^m\|_{L^2}^2
\right)
\nonumber\\
&=
A_{m-1}
+
2\Delta t(\mathbf f^m,\u_h^m)
+
2\Delta t(\mathbf g^m,\B_h^m)
+
2\Delta t(\psi^m,\theta_h^m) +
2\Delta t(\theta_h^{m-1}e_3,\u_h^m)
+
2\Delta t(\u_h^m\cdot e_3,\theta_h^m),
\label{eq:stability-identity}
\end{align}
where
\[
A_m
:=
\|\u_h^m\|_{\mathbf L^2}^2
+
\|\B_h^m\|_{\mathbf L^2}^2
+
\|\theta_h^m\|_{L^2}^2 .
\]
\noindent
Let
\[
\mathcal F_m
:=
\|\mathbf f^m\|_{\mathbf L^2}^2
+
\|\mathbf g^m\|_{\mathbf L^2}^2
+
\|\psi^m\|_{L^2}^2.
\]
By Cauchy-Schwarz and Young's inequalities,
\[
\text{RHS of \eqref{eq:stability-identity}}
\le
A_{m-1}
+
C_0\Delta t
\bigl(A_m+A_{m-1}+\mathcal F_m\bigr).
\]
Choose
\begin{equation}\label{eq-dt0}
\Delta t_0:=\frac1{2C_0}.
\end{equation}
Then, for $\Delta t\le\Delta t_0$,
\begin{align}
A_m
&+
\|\u_h^m-\u_h^{m-1}\|_{\mathbf L^2}^2
+
\|\B_h^m-\B_h^{m-1}\|_{\mathbf L^2}^2
+
\|\theta_h^m-\theta_h^{m-1}\|_{L^2}^2
\nonumber\\
&+
\Delta t
\left(
\nu\|\nabla\u_h^m\|_{\mathbf L^p}^p
+
\kappa\|\nabla\times\B_h^m\|_{\mathbf L^2}^2
+
\sigma\|\nabla\theta_h^m\|_{L^2}^2
\right) \le
(1+C\Delta t)A_{m-1}
+
C\Delta t\,\mathcal F_m .
\label{eq:stability-step-noE}
\end{align}
Summing over $m$ and using \eqref{eq-data-bound} gives
\begin{align}
A_n
&+
\sum_{m=1}^n
\left(
\|\u_h^m-\u_h^{m-1}\|_{\mathbf L^2}^2
+
\|\B_h^m-\B_h^{m-1}\|_{\mathbf L^2}^2
+
\|\theta_h^m-\theta_h^{m-1}\|_{L^2}^2
\right)
\nonumber\\
&+
\Delta t\sum_{m=1}^n
\left(
\nu\|\nabla\u_h^m\|_{\mathbf L^p}^p
+
\kappa\|\nabla\times\B_h^m\|_{\mathbf L^2}^2
+
\sigma\|\nabla\theta_h^m\|_{L^2}^2
\right)
\nonumber\\
&\le
A_0
+
C\Delta t\sum_{m=1}^n A_{m-1}
+
C\Delta t\sum_{m=1}^n\mathcal F_m .
\label{eq:stability-summed-noE}
\end{align}
The discrete Gr\"onwall lemma
\cite[Lemma~5.1]{HeywoodRannacher1990} now gives \eqref{eq-bounds-1};
substitution into \eqref{eq:stability-summed-noE} yields
\eqref{eq-bounds-2} and \eqref{eq-bounds-3}.
\end{proof}

\subsection{Consequences of the stability estimates}\label{subsec-stability-consequences}

The $p$-energy estimate also yields an $L^2$-in-space bound for the velocity gradient.

\begin{corollary}\label{cor-L2gradient}
Under the assumptions of Theorem~\ref{bounds},
\begin{equation}\label{eq-bounds-2-L2}
\Delta t
\sum_{m=1}^N
\|\nabla\u_h^m\|_{\mathbf L^2}^2
\le C,
\end{equation}
where $C$ is independent of $h$ and $\Delta t$.
\end{corollary}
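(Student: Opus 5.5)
The bound follows from \eqref{eq-bounds-2} by comparing the $\mathbf L^2$ and $\mathbf L^p$ norms of the gradient on the bounded domain, and then the $\ell^2$ and $\ell^p$ sums in time.

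\emph{Step 1 (space).} Since $p\ge2$ and $|\mathbb D|<\infty$, H\"older's inequality with exponents $p/2$ and $p/(p-2)$ gives, for each $m$,
\[
\|\nabla\u_h^m\|_{\mathbf L^2}
\le
|\mathbb D|^{\frac12-\frac1p}\|\nabla\u_h^m\|_{\mathbf L^p}.
\]
For $p=2$ this is an identity with constant one.

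\emph{Step 2 (time).} Square the bound from Step~1 and apply the discrete H\"older inequality in time, again with exponents $p/2$ and $p/(p-2)$. Using $N\Delta t=T$, this gives
\[
\Delta t\sum_{m=1}^N\|\nabla\u_h^m\|_{\mathbf L^2}^2
\le
|\mathbb D|^{1-\frac2p}\,T^{1-\frac2p}
\Bigl(\Delta t\sum_{m=1}^N\|\nabla\u_h^m\|_{\mathbf L^p}^p\Bigr)^{2/p}.
\]

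\emph{Step 3.} By \eqref{eq-bounds-2}, the bracketed sum is at most $C/\nu$. The right-hand side is therefore bounded by a constant depending only on $T$, $|\mathbb D|$, $\nu$, $p$ and the constant of Theorem~\ref{bounds}, and hence independent of $h$ and $\Delta t$. This is \eqref{eq-bounds-2-L2}.

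Young's inequality $a^2\le 1+a^p$ would serve equally well in place of Step~2. There is no real obstacle; the only point is to keep the constant free of $h$ and $\Delta t$, which is automatic since only $|\mathbb D|$ and $T$ enter.
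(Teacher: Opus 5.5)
Your proposal is correct and follows essentially the same route as the paper: the spatial H\"older bound $\|\nabla\u_h^m\|_{\mathbf L^2}\le|\mathbb D|^{\frac12-\frac1p}\|\nabla\u_h^m\|_{\mathbf L^p}$, followed by discrete H\"older in time with $N\Delta t=T$ and the bound \eqref{eq-bounds-2}. The Young-inequality alternative you mention ($a^2\le 1+a^p$) is also valid and gives the same conclusion.
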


\begin{proof}
For $p=2$ the claim follows directly from \eqref{eq-bounds-2}. For $p>2$, since
$\mathbb D$ is bounded,
\[
\|\nabla\u_h^m\|_{\mathbf L^2}
\le
|\mathbb D|^{\frac12-\frac1p}
\|\nabla\u_h^m\|_{\mathbf L^p}.
\]
H\"older's inequality in the discrete time sum gives
\[
\Delta t\sum_{m=1}^N
\|\nabla\u_h^m\|_{\mathbf L^p}^2
\le
\left(
\Delta t\sum_{m=1}^N
\|\nabla\u_h^m\|_{\mathbf L^p}^p
\right)^{2/p}
T^{1-2/p},
\]
and \eqref{eq-bounds-2} yields \eqref{eq-bounds-2-L2}.
\end{proof}
\noindent
Estimates \eqref{eq-bounds-1}-\eqref{eq-bounds-3} and
\eqref{eq-bounds-2-L2} provide the compactness needed below:
\eqref{eq-bounds-1}-\eqref{eq-bounds-2} yield weak compactness,
\eqref{eq-bounds-3} controls the difference between the time interpolants, and
\eqref{eq-bounds-2-L2} supplies the $\mathbf L^2$-gradient estimate used in the
time-translate argument of Lemma~\ref{time translate}.

\medskip

\section{Convergence of the fully discrete scheme}\label{sec-convergence}

We prove that, as $h,\Delta t\to0$, a subsequence of the solutions of
\eqref{eq-FEM-1}-\eqref{eq-FEM-3} converges to a weak solution in the sense of
Definition~\ref{def-weak-solution}. Two points require special care. Since the $p$-Laplacian
does not yield a uniform bound on the discrete time derivative of the velocity, compactness is
obtained from a time-translate estimate and Simon's criterion. The nonlinear flux
$|\nabla\u_{h,\Delta t}|^{p-2}\nabla\u_{h,\Delta t}$ is then identified by Minty's method
through an energy $\limsup$ inequality. For the magnetic induction and temperature we use the
time-derivative estimates of Lemmas~\ref{time derivative} and \ref{time derivative-theta}.

\subsection{Interpolants in time}\label{subsec-interpolants}

For $z\in\{\u,\B,\theta\}$ let $z_{h,\Delta t}$ denote the piecewise linear interpolant in time
of the discrete solution $\{z^{m}_{h}\}_{m=0}^{N}$, i.e.
$$
z_{h, \Delta t}(\cdot,t)=\frac{t-t_{m-1}}{\Delta t} z_{h}^{m}(\cdot)+\frac{t_{m}-t}{\Delta t} z_{h}^{m-1}(\cdot),
\qquad t\in(t_{m-1},t_{m}],\quad m=1,\ldots,N,
$$
and let $\mathring z_{h,\Delta t}$ and $\breve z_{h,\Delta t}$ denote the piecewise constant
extensions equal to $z^{m}_{h}$ and to $z^{m-1}_{h}$ respectively on $(t_{m-1},t_{m}]$;
similarly $\mathring{\mathbf f}_{\Delta t},\mathring{\mathbf g}_{\Delta t}{,\mathring\psi_{\Delta t}}$
for the data $\{\mathbf f^{m}\},\{\mathbf g^{m}\}{,\{\psi^{m}\}}$ of
\eqref{eq-data-averages}. Throughout this section $\xi\in\mathcal{C}^{\infty}_{c}([0,T))$ is
an arbitrary scalar cut-off in time with $\xi(T)=0$, used to encode the initial condition after
integration by parts in $t$.
 
\subsection{Statement of the main result}\label{subsec-conv-statement}

 \begin{theorem}[Convergence of the fully discrete scheme]\label{Thm-Conv-scheme}
        Let $p\ge2$ and assume that $\mathring{\mathbf{f}}_{\Delta t} \rightarrow \mathbf{f} $ in $L^{2}(0,T;\mathbf{L^{2}(\mathbb{D})}) $, $\mathring{\mathbf{g}}_{\Delta t} \rightarrow \mathbf{g} $ in $L^{2}(0,T;\mathbf{L^{2}(\mathbb{D})})$ and $ \mathring{\psi}_{\Delta t}\rightarrow{\psi}$ in $L^{2}(0,T;{L^{2}(\mathbb{D})})$. Then, as both the mesh-width and the time-step size tend to zero, the sequence of fully discrete solutions $\left\{(\mathring{\u}_{h,\Delta t},\mathring{\B}_{h,\Delta t},\mathring{\theta}_{h,\Delta t})\right\}$ admits a subsequence converging to a limit $(\u,\B,\theta)$ with
\[
\u\in L^{\infty}(0,T;\mathbf L^{2})\cap L^{p}(0,T;\mathcal X_{0}),\quad
\B\in L^{\infty}(0,T;\mathbb H)\cap L^{2}(0,T;\mathcal K),\quad
\theta\in L^{\infty}(0,T;L^{2})\cap L^{2}(0,T;\mathcal Y_{0}),
\]
which satisfies the induction equation \eqref{2nd weak equation}, the temperature equation
\eqref{3rd weak equation}, and the momentum equation \eqref{1st weak form} with the
$p$-Laplacian flux $a_{0}(\u)$ replaced by a weak limit
$\bar A\in L^{p'}(0,T;\mathbf L^{p'}(\mathbb D))$ of
$\nu|\nabla\mathring\u_{h,\Delta t}|^{p-2}\nabla\mathring\u_{h,\Delta t}$.
\\\noindent
If, in addition,
\begin{equation}\label{eq-p-threshold}
p=2\qquad\text{or}\qquad p\ge \tfrac{8}{3},
\end{equation}
then $\bar A=\nu|\nabla\u|^{p-2}\nabla\u$, and consequently $(\u,\B,\theta)$ is a weak
solution of the rotating $p$-MHD-Boussinesq system in the sense of
Definition~\ref{def-weak-solution}.
    \end{theorem}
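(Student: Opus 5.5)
The proof proceeds in three stages: uniform bounds and weak compactness, strong compactness, and passage to the limit followed by a Minty argument.

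\emph{Stage 1: weak compactness.} By Theorem~\ref{bounds} and Corollary~\ref{cor-L2gradient}, the interpolants $\mathring\u_{h,\Delta t}$, $\breve\u_{h,\Delta t}$, $\u_{h,\Delta t}$ are bounded in $L^\infty(0,T;\mathbf L^2)\cap L^p(0,T;\mathbf W^{1,p}_0)$. The magnetic interpolants are bounded in $L^\infty(0,T;\mathbf L^2)\cap L^2(0,T;\mathcal W)$, and the temperature interpolants in $L^\infty(0,T;L^2)\cap L^2(0,T;\mathcal Y_0)$. The flux $\nu|\nabla\mathring\u_{h,\Delta t}|^{p-2}\nabla\mathring\u_{h,\Delta t}$ is bounded in $L^{p'}(0,T;\mathbf L^{p'})$. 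Estimate \eqref{eq-bounds-3} gives
\[
\|\mathring z_{h,\Delta t}-z_{h,\Delta t}\|_{L^2(L^2)}^2+\|\mathring z_{h,\Delta t}-\breve z_{h,\Delta t}\|_{L^2(L^2)}^2\le C\Delta t ,
\]
so all three interpolants of each field share the same limit. We extract weak/weak-$\ast$ limits $\u,\B,\theta,\bar A$. The limit $\u$ is solenoidal: combine the inf-sup condition of Lemma~\ref{lemma:inf-sup} with the approximation property of $Q_h$ to pass to the limit in $(\operatorname{div}\mathring\u_{h,\Delta t},q_h)=0$. The limit $\B(t)\in\mathcal K$ by Lemma~\ref{Lemma 1}.

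\emph{Stage 2: strong compactness.}
\begin{itemize}
\item Velocity. For the velocity I follow the route announced in the paper. I sum \eqref{eq-FEM-1} over $m$ between $t$ and $t+\tau$ and test with $\mathring\u_{h,\Delta t}(t+\tau)-\mathring\u_{h,\Delta t}(t)$, which lies in $\mathcal X_{0h}$. This bounds $\int_0^{T-\tau}\|\mathring\u(t+\tau)-\mathring\u(t)\|_{\mathbf L^2}^2\,dt$ by $C\tau^{\gamma}$ for some $\gamma>0$. The individual terms are handled as follows.
\begin{itemize}
\item The $p$-Laplacian term is controlled by H\"older in time with the $L^{p'}$ flux bound.
\item The convective term uses $a_1$, the $L^\infty(L^2)$ bound and \eqref{eq-bounds-2-L2} together with $\|\cdot\|_{L^3}$ interpolation.
\item The Coriolis and buoyancy terms are immediate from the $L^2$ bounds.
\item For the Lorentz term I write $\breve\B_h=\mathcal Z(\breve\B_h)+(\breve\B_h-\mathcal Z(\breve\B_h))$. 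The first part is estimated in $\mathbf L^{3+\delta_1}$ via Lemma~\ref{Z=curl}. The second part is estimated by \eqref{ine-zbh} combined with an inverse estimate on the velocity test function, so that the factor $h^{1/2+\varrho}$ offsets the inverse-estimate loss.
\end{itemize}
Lemma~\ref{Simon} with $\mathbb B_0=\mathbf W^{1,p}_0$ (or $\mathbf H^1_0$), $\mathbb B=\mathbb B_1=\mathbf L^2$ then gives $\mathring\u_{h,\Delta t}\to\u$ strongly in $L^2(0,T;\mathbf L^2)$, hence in $L^q(\mathbf L^r)$ for intermediate exponents by interpolation.
\item Magnetic field and temperature. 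Here I bound the discrete time derivatives in a dual norm, in the spirit of Lemmas~\ref{time derivative} and \ref{time derivative-theta}. Combining this with Lemma~\ref{Lemma 1}, applied through an Aubin--Lions/Simon type argument, gives $\mathring\B_{h,\Delta t}\to\B$ and $\mathring\theta_{h,\Delta t}\to\theta$ strongly in $L^2(0,T;L^2)$.
\end{itemize}

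\emph{Stage 3: limit passage and identification of the flux.} Take smooth test functions multiplied by $\xi(t)$. For the momentum equation I use $\mathbf v\in\mathcal J$, projected onto $\mathcal X_{0h}$; for the induction equation I use elements of $\mathcal G$, projected onto $\mathcal W_{0h}$; for the temperature equation I use Lagrange interpolants. Integration by parts in time encodes the initial data through \eqref{eq-initial-convergence}. Each product term is a strong$\times$weak pairing:
\begin{itemize}
\item $\breve\u\otimes\mathring\u$ in the convection term;
\item $\breve\B\times\nabla\times\mathring\B$ in the Lorentz term, with $\breve\B$ strong in $L^2$ and $\nabla\times\mathring\B$ weak;
\item $\mathring\u\times\breve\B$ in the induction term;
\item $\breve\u\,\mathring\theta$ in the temperature transport term.
\end{itemize}
Density of $\mathcal J$ in $\mathcal X_0$, of $\mathcal G$ in $\mathcal K$, and of smooth functions in $\mathcal Y_0$ then yields the three equations with $\bar A$ in place of $a_0(\u)$, for every $p\ge2$.

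For identification I use Minty's trick. Summing the discrete energy identity \eqref{eq:stability-identity} and using weak lower semicontinuity gives
\[
\limsup \int_0^t (a_0(\mathring\u_{h,\Delta t}),\nabla\mathring\u_{h,\Delta t})\le \tfrac12\|\u^0\|^2-\tfrac12\|\u(t)\|^2+\cdots .
\]
Here the electromagnetic contributions cancel at the discrete level, the Coriolis term vanishes by \eqref{eq-discrete-coriolis}, and the buoyancy and source terms converge by strong convergence. I then compare this with the limit energy identity, obtained by testing the limit momentum equation with $\u$ itself and the limit induction equation with $\B$.

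\emph{Main obstacle.} The hard step is justifying that energy identity for the limit. It requires $\u_t\in L^{p'}(0,T;\mathcal X_0')$, and in particular that the Lorentz force $\B\times(\nabla\times\B)$ pairs with $\u\in L^p(0,T;\mathcal X_0)$. The plan for this step is:
\begin{itemize}
\item Interpolate $\B\in L^\infty(\mathbf L^2)\cap L^2(\mathbf L^{3+\delta_1})$ and pair with $\nabla\times\B\in L^2(\mathbf L^2)$.
\item Use the Sobolev embedding $\mathbf W^{1,p}_0\hookrightarrow\mathbf L^{3p/(3-p)}$ for $p<3$ (and $\mathbf L^\infty$ for $p>3$).
\item Apply H\"older in time and space. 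I expect the exponent bookkeeping to close exactly when $p\ge\frac83$. The convective term should already close at a lower threshold such as $11/5$, and $p=2$ follows from the classical Hilbert-space Lions--Magenes argument.
\end{itemize}
With the energy identity in hand, monotonicity \eqref{eq-strong-monotonicity} gives
\[
\int_0^T(\bar A-a_0(\mathbf w),\nabla\u-\nabla\mathbf w)\ge0\qquad\text{for all }\mathbf w.
\]
Choosing $\mathbf w=\u-\lambda\mathbf v$ and letting $\lambda\to0$ then yields $\bar A=a_0(\u)$.
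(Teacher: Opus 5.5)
Your architecture is the same as the paper's: uniform bounds; Simon's criterion via time translates for $\u$; dual time-derivative bounds plus Aubin--Lions for $\B$ and $\theta$; limit passage against smooth test functions; and Minty through an energy $\limsup$ inequality justified by Lions--Magenes under \eqref{eq-p-threshold}. The one genuine difference is how you obtain the $\limsup$ inequality.

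The paper tests only the momentum equation, see \eqref{eq-discrete-energy-identity}. There the discrete Lorentz term $\int_0^T\!\int_{\mathbb D}(\breve\B_{h,\Delta t}\times(\nabla\times\mathring\B_{h,\Delta t}))\cdot\mathring\u_{h,\Delta t}$ survives and must be passed to the limit. The justification offered pairs a factor converging weakly in $L^2(0,T;\L^2)$ with two strongly convergent factors that are only square integrable in time, so the H\"older count in time does not close as written. The $h^{\frac12+\varrho}$ remainder with the inverse estimate has the same defect.

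You instead use the coupled momentum--induction identity, where Lorentz and induction terms cancel exactly at the discrete level by \eqref{eq-triple-product}. What remains are strong--strong products, plus $\kappa\int_0^T\|\nabla\times\mathring\B_{h,\Delta t}\|^2_{\L^2}\,dt$ and $\|\B_h^N\|^2_{\L^2}$, which weak lower semicontinuity handles. This is cleaner and avoids the trilinear limit altogether.

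The price, which you did not state, is that the limit energy equality must also be justified for the induction equation. That needs $\B_t\in L^2(0,T;\mathcal K')$, i.e.\ $\u\times\B\in L^2(0,T;\L^2)$. With $\u\in L^p(0,T;\L^{p^*})$ this needs $\B\in L^{2p/(p-2)}(0,T;\L^s)$ with $\frac1s=\frac12-\frac1{p^*}$, which is exactly the Lorentz condition of Remark~\ref{rem-p-threshold}, so the threshold is unchanged. Take only the momentum and induction parts of \eqref{eq:stability-identity}; otherwise you must also justify the temperature energy equality.

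Your direct strong--weak pairing in the limit passage also works and makes the paper's $\mathcal Z$-splitting there unnecessary. Indeed $\breve\B_{h,\Delta t}\times\v_h\to\B\times\v$ strongly in $L^2(0,T;\L^2)$, against $\nabla\times\mathring\B_{h,\Delta t}$ weakly.

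\textbf{Corrections.}

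First, your claim that $p=2$ follows from the Hilbert-space Lions--Magenes argument is false in three dimensions. The available bounds put the convective term only in $L^{4/3}(0,T;\mathcal X_0')$ and the Lorentz force only in $L^r(0,T;\mathcal X_0')$ with $r<2$; this is exactly why $p=2$ falls outside the exponent count. The conclusion is still immediate: for $p=2$ the flux is linear, so $\nu\nabla\mathring\u_{h,\Delta t}\rightharpoonup\nu\nabla\u$ identifies $\bar A$ without Minty, as the paper does.

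Second, if $\mathbf w$ in your Minty step ranges over velocity fields, $\mathbf w=\u-\lambda\v$ only gives $\int_0^T(\bar A-a_0(\u),\nabla\v)\,dt=0$. That suffices for the weak formulation but not for the pointwise identity $\bar A=\nu|\nabla\u|^{p-2}\nabla\u$ asserted in the statement. Since monotonicity is pointwise, test with arbitrary $\mathbf G\in L^p(\mathbb D_T)^{3\times3}$.

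Third, make the magnetic compactness mechanism explicit. Aubin--Lions cannot be applied to $\B_{h,\Delta t}$ in $\mathcal W$, which is not compactly embedded in $\L^2$. The paper applies it to $\mathcal Z(\B_{h,\Delta t})\in\mathcal K$ and transfers the limit through \eqref{ine-zbh}.
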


\noindent
Condition \eqref{eq-p-threshold} is needed only to identify
$\bar A=a_{0}(\u)$; the compactness arguments and the limit passage in all other terms hold
for every $p\ge2$. Its three-dimensional origin is explained in
Remark~\ref{rem-p-threshold}.

\begin{remark}\label{rem-p-threshold}
Condition \eqref{eq-p-threshold} is needed only in Step~2 to identify the weak limit
$\bar A$ of the $p$-Laplacian flux by Minty's method. This requires using $\u$ itself as a
test function in the limit momentum equation \eqref{eq-p-laplace convergence}. By the
Lions-Magenes lemma, it is therefore sufficient to have $\u_t\in L^{p'}(0,T;\mathcal X_0').$ The flux, buoyancy and forcing terms impose no additional restriction. For the convective
term, the interpolation
\[
\u\in L^\infty(0,T;\mathbf L^2)
\cap L^p(0,T;\mathcal X_0)
\quad\Longrightarrow\quad
\u\in L^{5p/3}(\mathbb D_T)
\]
gives the familiar three-dimensional condition $p\ge \frac{11}{5}$ from the $p$-Navier-Stokes theory \cite{DieningRuzickaWolf2010}. The stronger restriction comes from the Lorentz force. For $2\le p<3$, let $p^*=\frac{3p}{3-p}.$ Using
\[
|(\B\times(\nabla\times\B),\mathbf v)|
\le
\|\B\|_{\mathbf L^s}
\|\nabla\times\B\|_{\mathbf L^2}
\|\mathbf v\|_{\mathbf L^{p^*}},
\qquad
\frac1s=\frac12-\frac1{p^*},
\]
and interpolating $\B$ between $L^\infty(0,T;\mathbb H)$ and $L^2(0,T;\mathbf L^{3+\delta_1}),$ one obtains the compatibility conditions
\[
\theta\ge
\frac{2(3+\delta_1)(3-p)}{3p(1+\delta_1)},
\qquad
\theta\le\frac{p-2}{p}.
\]
These are compatible whenever
\begin{equation}\label{eq-pstar}
p\ge
p_\star:=
\frac{3\gamma+2}{1+\gamma},
\qquad
\gamma:=
\frac{2(3+\delta_1)}{3(1+\delta_1)}.
\end{equation}
Since $p_\star<\frac83$ for every $\delta_1>0$, the domain-independent condition $p\ge\frac83$ is sufficient. In particular, the classical threshold $p\ge\frac{11}{5}$ is not enough here;
the additional restriction is caused by the Lorentz force.

For $p=2$, the diffusion operator is linear and the flux is identified directly by weak
convergence. Hence the Newtonian case is also covered. The intermediate range
$2<p<\frac83$ would require additional tools, such as Lipschitz truncation
\cite{DieningRuzickaWolf2010,DieningKreuzerSuli2013,SuliTscherpel2020}. Thus, the threshold
\eqref{eq-p-threshold} arises from the loss of the $L^\infty$ embedding in three dimensions.
\end{remark}

\subsection{A time-translate estimate for the velocity}\label{subsec-timetranslate}

A direct time-derivative estimate for the velocity is unavailable because testing
\eqref{eq-FEM-1} with $d_t\u_h^m$ produces
$\nu(|\nabla\u_h^m|^{p-2}\nabla\u_h^m,\nabla d_t\u_h^m)$, which is not controlled by the
energy estimate for $p>2$. We therefore use time translates and Simon's criterion
(Lemma~\ref{Simon}).

 \begin{lemma}[Time-translate estimate for the velocity]\label{time translate}
         For the sequences $\left\{\left(\u_{h,\Delta t},\B_{h,\Delta t},\theta_{h,\Delta t}\right)\right\}$, there exists a constant $C \equiv C(T)$ independent of time-step $\Delta t$ and mesh-size $h$ such that 
         $$\Delta t\sum_{n=1}^{N-j} \|\u^{n+j}_h- \u^{n}_h\|_{\mathbf{L}^{2}}^2\leq C\,t_j^\alpha ,
\qquad 1\le j\le N-1,$$
where $\alpha:=\min\bigl\{\tfrac14,\tfrac1p\bigr\}>0$.
        
    \end{lemma}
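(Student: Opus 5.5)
We follow the classical time-translate technique (Temam, Prohl), adapted to the $p$-Laplacian and the Lorentz force. Fix $j$ and $n$, and sum the momentum equation \eqref{eq-FEM-1} over $m=n+1,\dots,n+j$. Multiplying by $\Delta t$ gives, for every $\mathbf v_h\in\mathcal X_{0h}$,
\begin{align*}
(\u_h^{n+j}-\u_h^n,\mathbf v_h)
=\Delta t\sum_{m=n+1}^{n+j}\Big[&-a_1(\u_h^{m-1},\u_h^m,\mathbf v_h)
-\bigl(a_0(\u_h^m),\nabla\mathbf v_h\bigr)
-2(\boldsymbol\omega_r\times\u_h^m,\mathbf v_h)
\\
&-\bigl(\B_h^{m-1}\times(\nabla\times\B_h^m),\mathbf v_h\bigr)
+(\mathbf f^m+\theta_h^{m-1}e_3,\mathbf v_h)\Big].
\end{align*}
We choose $\mathbf v_h=\u_h^{n+j}-\u_h^n\in\mathcal X_{0h}$, multiply by $\Delta t$, and sum over $n=1,\dots,N-j$. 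Each resulting term has the form $\Delta t\sum_n\Delta t\sum_{m=n+1}^{n+j}T_m(\mathbf v_h^{n})$. We interchange the order of summation, so that each $m$ is counted for at most $j$ values of $n$, and we use $\|\mathbf v_h^n\|\le\|\u_h^{n+j}\|+\|\u_h^n\|$ in the relevant norm.

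\emph{Step 1 (linear and lower-order terms).} The Coriolis, forcing and buoyancy terms are bounded by $\|\mathbf v_h\|_{\mathbf L^2}\le C$ via \eqref{eq-bounds-1}. Cauchy--Schwarz in $m$ then gives a bound of order $t_j^{1/2}$, using \eqref{eq-data-bound} and the uniform $L^2$ bounds.

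\emph{Step 2 ($p$-Laplacian term).} We use
\[
|(a_0(\u_h^m),\nabla\mathbf v_h)|\le\nu\|\nabla\u_h^m\|_{\mathbf L^p}^{p-1}\|\nabla\mathbf v_h\|_{\mathbf L^p}.
\]
H\"older's inequality in the inner sum gives $\Delta t\sum_{m}\|\nabla\u_h^m\|_{\mathbf L^p}^{p-1}\le t_j^{1/p}\bigl(\Delta t\sum\|\nabla\u_h^m\|^p_{\mathbf L^p}\bigr)^{1/p'}$. The outer sum $\Delta t\sum_n\|\nabla\mathbf v_h^n\|_{\mathbf L^p}$ is bounded by \eqref{eq-bounds-2}, so this term is $O(t_j^{1/p})$.

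\emph{Step 3 (convection).} By the skew form, $|a_1(\mathbf w,\u,\mathbf v)|\le C\|\mathbf w\|_{\mathbf L^4}(\|\nabla\u\|_{\mathbf L^2}\|\mathbf v\|_{\mathbf L^4}+\|\nabla\mathbf v\|_{\mathbf L^2}\|\u\|_{\mathbf L^4})$. Gagliardo--Nirenberg gives $\|\mathbf w\|_{\mathbf L^4}\le C\|\mathbf w\|^{1/4}_{\mathbf L^2}\|\nabla\mathbf w\|^{3/4}_{\mathbf L^2}$, and together with the $L^\infty(L^2)$ bound, Corollary~\ref{cor-L2gradient}, and \eqref{eq-poincare-velocity} (which requires discrete Sobolev embedding on $\mathcal X_{0h}$, valid since $\mathcal X_{0h}\subset\mathbf H^1_0$), the integrand is controlled by products like $\|\nabla\u_h^{m-1}\|^{3/4}\|\nabla\u_h^m\|\,\|\nabla\mathbf v_h^n\|^{3/4}$. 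We then distribute exponents using H\"older in time: the $m$-sum factor $\|\nabla\u^{m-1}\|^{3/4}\|\nabla\u^m\|$ has total power $7/4<2$, which leaves a factor $t_j^{1/8}$. We prefer to bound more crudely, using $\|\nabla\|^2$ integrability on the $n$-sum as well. Care is needed to keep the total power at most $2$ in each time sum; if it exceeds $2$, we use the $L^p$-gradient bound with $p\ge2$. The target is a rate $t_j^{1/4}$, which explains the exponent $\tfrac14$ in $\alpha$.

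\emph{Step 4 (Lorentz force), the main obstacle.} Testing with $\mathbf v_h$ gives $\|\B_h^{m-1}\|_{\mathbf L^s}\|\nabla\times\B_h^m\|_{\mathbf L^2}\|\mathbf v_h\|_{\mathbf L^{r}}$. Since $\B_h^{m-1}$ is only in $\mathbf H(\mathrm{curl})$, we split $\B_h^{m-1}=\mathcal Z(\B_h^{m-1})+(\B_h^{m-1}-\mathcal Z(\B_h^{m-1}))$ using \eqref{eq-hodge}. For the first part, Lemma~\ref{Z=curl} gives $\|\mathcal Z\B\|_{\mathbf L^3}\le C\|\nabla\times\B\|_{\mathbf L^2}$, and we pair it with $\|\mathbf v_h\|_{\mathbf L^6}\le C\|\nabla\mathbf v_h\|_{\mathbf L^2}$. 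Interpolating $\mathcal Z\B$ between $\mathbf L^2$ (uniformly bounded, via \eqref{ine-zbh}) and $\mathbf L^{3}$ gives a curl power strictly below one, so the inner sum carries a power of $\|\nabla\times\B\|$ below $2$ and yields a positive power of $t_j$. For the remainder part, \eqref{ine-zbh} gives an $\mathbf L^2$ bound $Ch^{1/2+\varrho}\|\nabla\times\B_h^{m-1}\|$. We pair it with $\|\mathbf v_h\|_{\mathbf L^\infty}\le Ch^{-1/2}\|\nabla\mathbf v_h\|_{\mathbf L^2}$ from the inverse estimate \eqref{eq-inverse estimate}, so the $h$-powers cancel favorably. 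The difficulty is balancing exponents so that every time sum uses only \eqref{eq-bounds-2} and \eqref{eq-bounds-2-L2} while still extracting a factor $t_j^{\beta}$ with $\beta\ge\tfrac14$. We expect this to work via H\"older in $m$ with exponent pair $(4/3,4)$.

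Collecting Steps 1--4 gives the bound $C(t_j^{1/2}+t_j^{1/p}+t_j^{1/4})\le C\,t_j^{\alpha}$, since $t_j\le T$.
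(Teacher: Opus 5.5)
Your setup and Steps 1--2 match the paper's proof and are fine. The two terms that actually determine $\alpha$, however, are not brought to the stated exponent. In Step 3 the only computation you carry out is H\"older applied directly to $g_{m-1}^{3/4}g_m w_n^{3/4}$, where $g_m=\|\nabla\u_h^m\|_{\mathbf L^2}$ and $w_n=\|\nabla(\u_h^{n+j}-\u_h^n)\|_{\mathbf L^2}$. That gives $t_j^{1/8}$, and $t_j^{1/8}\le Ct_j^{\alpha}$ fails for every $p<8$. Naming $t_j^{1/4}$ as the target does not produce it.

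The missing step in Step 3 is to apply Young before H\"older:
$g_{m-1}^{3/4}g_mw_n^{3/4}\le\frac12g_m^2+\frac12g_{m-1}^{3/2}w_n^{3/2}$.
\begin{itemize}
\item The first piece gives $Ct_j$, because each $m$ is counted for at most $j$ values of $n$.
\item The second piece, by H\"older with exponents $(\frac43,4)$ separately in $m$ and in $n$, gives $C(\Delta t)^2(\Delta t)^{-3/4}j^{1/4}(\Delta t)^{-3/4}N^{1/4}=CT^{1/4}t_j^{1/4}$.
\item The other half of the skew form, $g_{m-1}^{3/4}g_m^{3/4}w_n$, gives $t_j^{1/4}$ directly.
\end{itemize}

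Step 4 is where your plan actually fails. After the Hodge splitting, both pieces are bounded by $C\,b_{m-1}b_mw_n$ with $b_m=\|\nabla\times\B_h^m\|_{\mathbf L^2}$; the remainder piece carries an extra $h^{\varrho}$. The inner sum is exactly quadratic in $b$, so H\"older in $m$ extracts no power of $t_j$.

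Your proposed fix does not close this:
\begin{itemize}
\item Interpolating with $\mathbf L^3$ itself gains nothing. Interpolating $\mathcal Z(\B_h)$ between $\mathbf L^2$ and $\mathbf L^{3+\delta_1}$ lowers the curl power only by $\lambda=\frac{2\delta_1}{3(1+\delta_1)}<\frac23$, and yields $t_j^{\lambda/2}$.
\item This reaches $t_j^{1/4}$, and your $(\frac43,4)$ pairing becomes available, only when $\delta_1\ge3$. That is essentially $\mathcal K\hookrightarrow\mathbf L^6$, which is precisely what fails on the nonconvex polyhedra this framework targets.
\item The remainder piece gets no $t_j$-extraction at all.
\end{itemize}
At best you would obtain a translate bound with a small, domain-dependent exponent. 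That is still enough for Simon's criterion, but it is not the lemma as stated.

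The idea you are missing is an $L^1$--$L^\infty$ interpolation in the outer index. Set $S_n=\sum_{i=1}^j(b_{n+i-1}^2+b_{n+i}^2)$. Then $\max_nS_n\le C/\Delta t$, and by the same counting $\sum_nS_n\le 2j\sum_mb_m^2\le Cj/\Delta t$, so $\sum_nS_n^2\le Cj/(\Delta t)^2$. Cauchy--Schwarz in $n$ against $\sum_nw_n^2\le C/\Delta t$ gives $(\Delta t)^2\sum_nw_nS_n\le Ct_j^{1/2}$ for both pieces. This needs no interpolation of $\mathcal Z(\B_h)$ and has no dependence on $\delta_1$.

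Incidentally, the uniform $\mathbf L^2$ bound on $\mathcal Z(\B_h)$ does not come from \eqref{ine-zbh}. It comes from the $\mathbf L^2$-orthogonality of $\mathcal Z(\B_h)$ to the gradient field $\B_h-\mathcal Z(\B_h)$.
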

 \begin{proof}
      Putting  $d_{t}\u^{m}=\frac{\u^{m}-\u^{m-1}}{\Delta t}$ in \eqref{eq-FEM-1}, we obtain,
{\small{
     \begin{align*}
        &\left(\u^{m}_h-\u^{m-1}_h,\v_{h}\right)+\Delta t a_{1}\left(\u^{m-1}_h,\u^{m}_h,\v_{h}\right)+{\nu\Delta t\left(|\nabla\u^{m}_h|^{p-2}\nabla\u^{m}_h,\nabla\v_{h}\right) } 
         +\Delta t\left(\B^{m-1}_h\times(\nabla \times\B^{m}_h),\v_{h}\right)=\Delta t (\mathbf{f}^{m},\v_{h})+\Delta t(\theta^{m-1}_he_3,\v_{h})\,.
     \end{align*}
}}
\noindent
Summing over $m=n+1,\ldots,n+j$ gives
\begin{align}\label{eq-u^n+l}
        &\left(\u^{n+j}_h-\u^{n}_h,\v_{h}\right)+\Delta t \sum_{i=1}^{j} a_{1}\left(\u^{n-1+i}_h,\u^{n+i}_h,\v_{h}\right)+{\nu\Delta t\sum_{i=1}^{j}\left(|\nabla\u^{n+i}_h|^{p-2}\nabla\u^{n+i}_h,\nabla\v_{h}\right)} \nonumber \\
        &
       2\Delta t\sum_{i=1}^j\big(
\boldsymbol{\omega}_{r}\times\u_h^{n+i},
\mathbf v_h
\big) +\Delta t\sum_{i=1}^{j}\left(\B^{n-1+i}_h\times(\nabla \times\B^{n+i}_h),\v_{h}\right)=\Delta t\sum_{i=1}^{j} (\mathbf{f}^{n+i},\v_{h})+\Delta t\sum_{i=1}^{j}(\theta^{n+i-1}_he_3,\v_{h}).
  \end{align}
\noindent
Choose $\v_{h}=\Delta t(\u^{n+j}_h-\u^{n}_h)$ in \eqref{eq-u^n+l} and take the sum over $n$ from $1$ to $N-j$ to get,
\begin{align}\label{eq-sum-over-n-u}
      &\sum_{n=1}^{N-j}\left(\u^{n+j}_h-\u^{n}_h,\Delta t(\u^{n+j}_h-\u^{n}_h)\right)+\Delta t \sum_{n=1}^{N-j}\sum_{i=1}^{j} a_{1}\left(\u^{n-1+i}_h,\u^{n+i}_h,\Delta t(\u^{n+j}_h-\u^{n}_h)\right)+ 2\Delta t \sum_{n=1}^{N-j}\sum_{i=1}^{j}\big(
\boldsymbol{\omega}_{r}\times\u_h^{n+i},
\Delta t(\u^{n+j}_h-\u^{n}_h)
\big)\nonumber \\\nonumber
        &+{\nu\Delta t\sum_{n=1}^{N-j}\sum_{i=1}^{j}\left(|\nabla\u^{n+i}_h|^{p-2}\nabla\u^{n+i}_h,\nabla\left(\Delta t(\u^{n+j}_h-\u^{n}_h)\right)\right)}
        +\Delta t\sum_{n=1}^{N-j}\sum_{i=1}^{j}\left(\B^{n-1+i}_h\times(\nabla \times\B^{n+i}_h),\Delta t(\u^{n+j}_h-\u^{n}_h)\right) \\
        &=\Delta t\sum_{n=1}^{N-j}\sum_{i=1}^{j} \left(\mathbf{f}^{n+i},\Delta t(\u^{n+j}_h-\u^{n}_h)\right)+\Delta t\sum_{n=1}^{N-j}\sum_{i=1}^{j}\left(\theta^{n+i-1}_he_3,\Delta t(\u^{n+j}_h-\u^{n}_h)\right).
\end{align}
\noindent
We use the elementary counting inequality
\begin{equation}\label{eq-counting}
\sum_{n=1}^{N-j}\sum_{i=1}^{j}c_{n+i}\ \le\ j\sum_{m=1}^{N}c_{m},
\qquad c_{m}\ge0,
\end{equation}
together with H\"older's inequality applied first in $i$ and then in $n$. Set
\[
g_{m}:=\|\nabla\u^{m}_{h}\|_{\mathbf L^{2}},\qquad
b_{m}:=\|\nabla\times\B^{m}_{h}\|_{\mathbf L^{2}},\qquad
w_{n}:=\|\nabla(\u^{n+j}_{h}-\u^{n}_{h})\|_{\mathbf L^{2}},
\]
so that, by \eqref{eq-bounds-2} and Corollary~\ref{cor-L2gradient},
\begin{equation}\label{eq-abbrev-bounds}
\Delta t\sum_{m=1}^{N}g_{m}^{2}\le C,
\qquad
\Delta t\sum_{m=1}^{N}b_{m}^{2}\le C,
\qquad
\Delta t\sum_{n=1}^{N-j}w_{n}^{2}\le C .
\end{equation}
In particular $\max_{m}g_{m}^{2}\le C/\Delta t$ and $\max_{m}b_{m}^{2}\le C/\Delta t$.\\
\noindent
The first term is
     \begin{align}
         \sum_{n=1}^{N-j}\left(\u^{n+j}_h-\u^{n}_h,\Delta t(\u^{n+j}_h-\u^{n}_h)\right) = \Delta t \sum_{n=1}^{N-j} \|\u^{n+j}_h-\u^{n}_h\|_{\mathbf{L}^2}^{2}\,.
     \end{align}
\noindent
For the convective term, Young's inequality gives
     \begin{align}
       \nonumber  &(\Delta t)^2\sum_{n=1}^{N-j}\sum_{i=1}^{j} a_{1}\left(\u^{n-1+i}_h,\u^{n+i}_h,\u^{n+j}_h-\u^{n}_h\right) \nonumber \\
         &\le C(\Delta t)^2\sum_{n=1}^{N-j}\sum_{i=1}^{j}\|\nabla \u^{n+i}_h\|_{\mathbf{L}^2}^{2}+ C(\Delta t)^2\sum_{n=1}^{N-j}\sum_{i=1}^{j}\|\u^{n-1+i}_h\|_{\mathbf{L}^4}^{2}\|\u^{n+j}_h-\u^{n}_h\|_{\mathbf{L}^4}^{2}.
         \end{align}

\noindent
Using \eqref{eq-bounds-2} and $t_{j}=\Delta t\cdot j$,  we further get
      \begin{align}\label{eq-Lem8}
          &\le Ct_{j}+C(\Delta t)^2\sum_{n=1}^{N-j}\sum_{i=1}^{j}\|\u^{n-1+i}_h\|_{\mathbf{L}^4}^{2}\|\u^{n+j}_h-\u^{n}_h\|_{\mathbf{L}^4}^{2}. 
      \end{align}
\noindent
For the second term of \eqref{eq-Lem8} we use the Ladyzhenskaya inequality in three dimensions,
$\|\v\|_{\mathbf L^{4}}\le C\|\v\|^{1/4}_{\mathbf L^{2}}\|\nabla\v\|^{3/4}_{\mathbf L^{2}}$,
together with \eqref{eq-bounds-1}, to obtain
\begin{align}
C(\Delta t)^2\sum_{n=1}^{N-j}\sum_{i=1}^{j}\|\u^{n-1+i}_h\|_{\mathbf{L}^4}^{2}\|\u^{n+j}_h-\u^{n}_h\|_{\mathbf{L}^4}^{2}
\ \le\ C(\Delta t)^{2}\sum_{n=1}^{N-j}w_{n}^{\frac32}\sum_{i=1}^{j}g_{n+i-1}^{\frac32}.
\end{align}
H\"older's inequality in $i$ with exponents $(\tfrac43,4)$ and \eqref{eq-abbrev-bounds} give
$\sum_{i=1}^{j}g_{n+i-1}^{3/2}\le\bigl(\sum_{i=1}^{j}g_{n+i-1}^{2}\bigr)^{3/4}j^{1/4}
\le(C/\Delta t)^{3/4}j^{1/4}$, and the same inequality in $n$ gives
$\sum_{n=1}^{N-j}w_{n}^{3/2}\le(C/\Delta t)^{3/4}N^{1/4}$. Hence
\begin{align}
C(\Delta t)^{2}\sum_{n=1}^{N-j}w_{n}^{\frac32}\sum_{i=1}^{j}g_{n+i-1}^{\frac32}
\ \le\ C(\Delta t)^{2}\,(\Delta t)^{-\frac34}j^{\frac14}\,(\Delta t)^{-\frac34}N^{\frac14}
\ =\ C\,T^{\frac14}\,t_{j}^{\frac14},
\end{align}
since $\Delta t\,N=T$ and $\Delta t\,j=t_{j}$. Note that every index has remained inside its own
sum.\\
{ The Coriolis force term we can handle as follows 
\begin{align}
 \nonumber2\Delta t \sum_{n=1}^{N-j}\sum_{i=1}^{j}\big(
\boldsymbol{\omega}_{r}\times\u_h^{n+i},
\Delta t(\u^{n+j}_h-\u^{n}_h)
\big) &\le   \Delta t \sum_{n=1}^{N-j}\sum_{i=1}^{j}|\boldsymbol{\omega}_{r}|\|\u_h^{n+i}\|_{\L^2}^2  +(\Delta t)^2 \sum_{n=1}^{N-j}\sum_{i=1}^{j}\|\u^{n+j}_h-\u^{n}_h\|_{\L^2}^2\\
&\le C(T)t_j|\boldsymbol{\omega}_r|
\end{align}}


\smallskip

\noindent
For the Lorentz term, add and subtract $\mathcal Z(\B^{n-1+i}_h)$ and use
\eqref{eq-triple-product}:
\begin{align}\label{I_2}
\nonumber (\Delta t)^2\sum_{n=1}^{N-j}\sum_{i=1}^{j}\left(\B^{n-1+i}_h\times(\nabla \times\B^{n+i}_h),\u^{n+j}_h-\u^{n}_h\right) &\le C(\Delta t)^2\sum_{n=1}^{N-j}\sum_{i=1}^{j}\|\B^{n-1+i}_h-\mathcal{Z}(\B^{n-1+i}_h)\|_{\mathbf{L}^2}\, b_{n+i}\,\|\u^{n+j}_h-\u^{n}_h\|_{\mathbf{L}^\infty}\\
&\quad + C(\Delta t)^2\sum_{n=1}^{N-j}\sum_{i=1}^{j}\|\mathcal{Z}(\B^{n-1+i}_h)\|_{\mathbf{L}^3}\, b_{n+i}\,\|\u^{n+j}_h-\u^{n}_h\|_{\mathbf{L}^6}
=: I_{1}+I_{2}.
\end{align}
For $I_{1}$ we use \eqref{ine-zbh} and the inverse estimate \eqref{eq-inverse estimate} with
$(\mu_1,q_1)=(0,6)$, $(\mu_2,q_2)=(0,\infty)$, which contributes the factor $h^{-1/2}$:
\[
\|\B^{n-1+i}_h-\mathcal{Z}(\B^{n-1+i}_h)\|_{\mathbf{L}^2}\|\u^{n+j}_h-\u^{n}_h\|_{\mathbf{L}^\infty}
\le C h^{\frac12+\varrho}b_{n+i-1}\,h^{-\frac12}\|\u^{n+j}_h-\u^{n}_h\|_{\mathbf{L}^6}
\le C h^{\varrho} b_{n+i-1}w_{n},
\]
where $\mathbf W^{1,p}_{0}\hookrightarrow\mathbf L^{6}$ for $p\ge2$ was used in the last step.
For $I_{2}$, Lemma~\ref{Z=curl} and the boundedness of $\mathbb D$ give
$\|\mathcal Z(\B^{m}_{h})\|_{\mathbf L^{3}}\le C\|\mathcal Z(\B^{m}_{h})\|_{\mathbf L^{3+\delta_{1}}}\le C\,\B_{m}$.
Since $h\le1$, both terms are therefore bounded by the same expression:
\begin{equation}\label{eq-I11}
I_{1}+I_{2}\ \le\ C(\Delta t)^{2}\sum_{n=1}^{N-j}w_{n}\sum_{i=1}^{j}b_{n+i-1}b_{n+i}.
\end{equation}
Write $S_{n}:=\sum_{i=1}^{j}\bigl(b_{n+i-1}^{2}+b_{n+i}^{2}\bigr)$, so that
$\sum_{i}b_{n+i-1}b_{n+i}\le\tfrac12 S_{n}$ by Young's inequality. By
\eqref{eq-abbrev-bounds}, $S_{n}\le 2\Delta t^{-1}C$ for every $n$, while
\eqref{eq-counting} gives $\sum_{n=1}^{N-j}S_{n}\le 2j\sum_{m=1}^{N}b_{m}^{2}\le 2jC/\Delta t$;
consequently
\[
\sum_{n=1}^{N-j}S_{n}^{2}\ \le\ \Bigl(\max_{n}S_{n}\Bigr)\sum_{n=1}^{N-j}S_{n}
\ \le\ \frac{C\,j}{(\Delta t)^{2}} .
\]
The Cauchy-Schwarz inequality in $n$ and \eqref{eq-abbrev-bounds} then yield
\begin{align}\label{eq-I22}
I_{1}+I_{2}\ \le\ C(\Delta t)^{2}\Bigl(\sum_{n=1}^{N-j}S_{n}^{2}\Bigr)^{\frac12}\Bigl(\sum_{n=1}^{N-j}w_{n}^{2}\Bigr)^{\frac12}
\ \le\ C(\Delta t)^{2}\,\frac{j^{\frac12}}{\Delta t}\,\frac{1}{(\Delta t)^{\frac12}}
\ =\ C\,t_{j}^{\frac12}.
\end{align}
\noindent
For the $p$-Laplacian term, using \eqref{eq-bounds-2}, \eqref{eq-bounds-3}, and the
embedding results, we obtain
 \begin{align}
       \nonumber  \nu\Delta t\sum_{i=1}^{j}\left(|\nabla\u^{n+i}_h|^{p-2}\nabla\u^{n+i}_h,\nabla\v_{h}\right) &=\nu(\Delta t)^{2}\sum_{i=1}^{j}\sum_{n=1}^{N-j}\left(|\nabla\u^{n+i}|^{p-2}\nabla\u^{n+i}_h,\nabla(\u^{n+j}_h-\u^{n}_h)\right)
       \\ &\le C \left((\Delta t)^{2}\sum_{n=1}^{N-j}\|\nabla(\u^{n+j}_h-\u^{n}_h)\|_{\mathbf{L}^p}\sum_{i=1}^{j}\|  \nabla\u^{n+i}_h\|^{p-1}_{\mathbf{L}^p}\right).
          \end{align}
          By H\"older's inequality and \eqref{eq-bounds-2},
          \begin{align}
        \nonumber  &\le C (\Delta t)\sum_{n=1}^{N-j}\|\nabla(\u^{n+j}_h-\u^{n}_h)\|_{\mathbf{L}^p}(\Delta t)^{\frac{1}{p}}\left(\sum_{i=1}^{j}(\Delta t)\|\nabla\u^{n+i}_h\|^{p}_{\mathbf{L}^p}\right)^{\frac{p-1}{p}}\left(\sum_{i=1}^{j}1\right)^{\frac{1}{p}}\\
          &\le C t_{j}^{\frac{1}{p}}(\Delta t)^{\frac{1}{p'}}\left(\sum_{n=1}^{N-j}(\Delta t)\|\nabla(\u^{n+j}_h-\u^{n}_h)\|_{\mathbf{L}^p}^{p}\right)^{\frac{1}{p}}\left(\sum_{n=1}^{N-j}1\right)^{\frac{1}{p'}} \le C(T) t_{j}^{\frac{1}{p}}\,,
          \end{align}
\noindent
For the two terms on the right of \eqref{eq-sum-over-n-u}, first consider the forcing term. H\"older's inequality in $i$ and then in $n$, with \eqref{eq-bounds-1} and
$\Delta t\sum_{m}\|\mathbf f^{m}\|^{2}_{\mathbf L^{2}}\le C$, gives
\begin{align*}
(\Delta t)^{2}\sum_{n=1}^{N-j}\sum_{i=1}^{j}\left(\mathbf f^{n+i},\u^{n+j}_h-\u^{n}_h\right)
&\le (\Delta t)^{2}\sum_{n=1}^{N-j}\sum_{i=1}^{j}\|\mathbf f^{n+i}\|_{\mathbf L^{2}}\|\u^{n+j}_h-\u^{n}_h\|_{\mathbf L^{2}}\\
&\le C\,\Delta t\sum_{n=1}^{N-j}\Big(\Delta t\sum_{i=1}^{j}\|\mathbf f^{n+i}\|^{2}_{\mathbf L^{2}}\Big)^{\frac12}t_{j}^{\frac12}
\ \le\ C(T)\,t_{j}^{\frac12},
\end{align*}
and the same computation applied to $\{\theta^{n+i-1}_h\}$, with \eqref{eq-bounds-1} in place of
the bound on the data, yields $C(T)t_{j}$ for the buoyancy term. Both are of order
$t_{j}^{1/2}$ or better, hence dominated by $C(T)t_{j}^{\alpha}$ since $\alpha\le\frac14$.
Collecting all the estimates,
$$\Delta t\sum_{n=1}^{N-j}\|\u^{n+j}_h-\u^{n}_h\|_{\mathbf{L}^{2}}^{2}\le C\, t_{j}^\alpha.$$

 \end{proof}
 
\begin{corollary}\label{cor-Hbeta}
By \cite[Remark 3.4]{DroniouGoldysLe2022}, the time-translate estimate of
Lemma~\ref{time translate} implies a bound on $\u_{h,\Delta t}$ in
$H^{\beta}(0,T;\L^2(\mathbb{D}))$, uniform in $h$ and $\Delta t$, for every
$\beta\in\bigl(0,\tfrac{\alpha}{2}\bigr)\cap\bigl(0,\tfrac12\bigr)$.
\end{corollary}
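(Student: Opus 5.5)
We derive the corollary from the discrete time-translate bound of Lemma~\ref{time translate} by estimating the Sobolev--Slobodeckij seminorm
\[
|\u_{h,\Delta t}|_{H^\beta(0,T;\L^2)}^2=\int_0^T\!\!\int_0^T\frac{\|\u_{h,\Delta t}(t)-\u_{h,\Delta t}(s)\|_{\L^2}^2}{|t-s|^{1+2\beta}}\,ds\,dt ,
\]
which equals $2\int_0^T \tau^{-1-2\beta}\,\|\u_{h,\Delta t}(\cdot+\tau)-\u_{h,\Delta t}\|_{L^2(0,T-\tau;\L^2)}^2\,d\tau$. The $L^2(0,T;\L^2)$ part of the norm is bounded by \eqref{eq-bounds-1}, since $\u_{h,\Delta t}$ is a convex combination of $\u_h^m,\u_h^{m-1}$ on each interval.

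The key step is to convert the discrete estimate into a continuous one: for all $\tau\in(0,T)$,
\[
\|\u_{h,\Delta t}(\cdot+\tau)-\u_{h,\Delta t}\|_{L^2(0,T-\tau;\L^2)}^2\le C\max\{\tau,\Delta t\}^{\alpha}.
\]
\begin{itemize}
\item[(a)] For $\tau\ge\Delta t$, write $\tau=j\Delta t+r$ with $0\le r<\Delta t$. On each subinterval, $\u_{h,\Delta t}(t+\tau)-\u_{h,\Delta t}(t)$ is a convex combination of differences $\u_h^{n+j'}-\u_h^{n'}$ with $j'\in\{j-1,j,j+1,j+2\}$. Integrating in $t$ and using convexity of $\|\cdot\|^2$ bounds the quantity by finitely many sums $\Delta t\sum_n\|\u_h^{n+j'}-\u_h^n\|_{\L^2}^2\le Ct_{j'}^\alpha\le C\tau^\alpha$.
\item[(b)] Boundary indices ($n=0$, or $j'=0$) are covered by \eqref{eq-bounds-1}, contributing $C\Delta t\le C\tau^\alpha$.
\item[(c)] For $\tau<\Delta t$, the difference is $\frac{\tau}{\Delta t}(\u_h^m-\u_h^{m-1})$ or a sum of two such pieces. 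By \eqref{eq-bounds-3} the squared translate is at most $C\tau^2/\Delta t\le C\tau$, and $\tau\le\tau^\alpha$ for $\tau\le T$ after adjusting constants (using $\alpha\le 1$).
\end{itemize}
Hence $\|\u_{h,\Delta t}(\cdot+\tau)-\u_{h,\Delta t}\|^2_{L^2(0,T-\tau;\L^2)}\le C\tau^\alpha$ for all $\tau$, uniformly in $h$ and $\Delta t$.

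Inserting this bound into the seminorm gives $\int_0^T\tau^{\alpha-1-2\beta}\,d\tau<\infty$ exactly when $\beta<\alpha/2$; the restriction $\beta<\tfrac12$ is automatic and is only recorded to stay in the fractional range. This is the content of \cite[Remark 3.4]{DroniouGoldysLe2022}, which we would cite after verifying the hypotheses in the form above.

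The main obstacle is the bookkeeping in step~(a): the piecewise linear interpolant mixes neighbouring indices, and the constant must not depend on $\Delta t$. This holds because Lemma~\ref{time translate} is uniform in $j$, so shifting $j$ by $\pm1$ or $+2$ only changes $t_j$ to at most $t_j+2\Delta t\le 3\tau$.
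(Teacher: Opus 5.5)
Your argument is correct and follows the same route as the paper, which simply delegates it to \cite[Remark 3.4]{DroniouGoldysLe2022}: you convert the discrete estimate of Lemma~\ref{time translate} into $\|\u_{h,\Delta t}(\cdot+\tau)-\u_{h,\Delta t}\|^2_{L^2(0,T-\tau;\L^2)}\le C\tau^\alpha$ and then integrate $\tau^{\alpha-1-2\beta}$ in the Slobodeckij seminorm. Your explicit treatment of the $n=0$ terms (excluded from the lemma's sum) via \eqref{eq-bounds-1} and of the regime $\tau<\Delta t$ via \eqref{eq-bounds-3} supplies what the citation leaves implicit, and you are right that the restriction $\beta<\tfrac12$ is vacuous here because $\alpha\le\tfrac14$.
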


\subsection{Time-derivative estimates for the magnetic field and the temperature}\label{subsec-timederivative}

For the magnetic induction and temperature, the discrete time derivatives can be controlled in
dual norms, allowing the use of compactness arguments based on Lemma~\ref{Aubin Lions}.

 \begin{lemma}[Time-derivative of the magnetic field]\label{time derivative}
             For the sequences $\left\{\left(\u_{h,\Delta t},\B_{h,\Delta t},\theta_{h,\Delta t}\right)\right\}$, there exists a constant $C \equiv C(T)$ independent of time-step $\Delta t$ and mesh-size $h$ such that 
        
         $$ \|\partial_{t}\B_{h,\Delta t}\|_{L^{r}\left(0,T;\left(\mathcal{W}\cap \mathbf{W}^{2, 2}(\mathbb{D})\right)'\right)}\leq C ,$$
with $r=\frac{2}{2-\zeta}, \zeta=\min \left\{\varrho,\left(\frac{1}{2}+\varrho\right) \frac{2 \delta_{1}}{3\left(1+\delta_{1}\right)}, \frac{2 \delta_{1}}{3\left(1+\delta_{1}\right)}, \frac{1}{4},\frac{1}{p}\right\}.$   
\end{lemma}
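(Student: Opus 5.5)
Since $\partial_t\B_{h,\Delta t}=d_t\B_h^m$ on $(t_{m-1},t_m]$, the plan is to bound $\Delta t\sum_{m=1}^N\|d_t\B_h^m\|_{(\mathcal W\cap\mathbf W^{2,2})'}^r\le C$. Fix $\mathbf w\in\mathcal W\cap\mathbf W^{2,2}(\mathbb D)$. We cannot test \eqref{eq-FEM-2} with $\mathbf w$ directly, because $\mathbf w\notin\mathcal W_{0h}$. We therefore split $\mathbf w$ by a discrete Helmholtz decomposition, $\mathbf w=\mathbf w_0+\nabla s$ with $\mathbf w_0$ solenoidal, and take $\mathbf w_h\in\mathcal W_{0h}$ as the $\mathbf L^2$-orthogonal projection onto $\mathcal W_{0h}$ of a Nédélec interpolant of $\mathbf w$. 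Since $d_t\B_h^m\in\mathcal W_{0h}$ is orthogonal to $\nabla S_h$, we get
\[
(d_t\B_h^m,\mathbf w)=(d_t\B_h^m,\mathbf w_h)+(d_t\B_h^m,\mathbf w-\mathbf w_h).
\]
For the first term we use \eqref{eq-FEM-2} with test function $\mathbf w_h$:
\[
(d_t\B_h^m,\mathbf w_h)=-\kappa(\nabla\times\B_h^m,\nabla\times\mathbf w_h)+(\u_h^m\times\B_h^{m-1},\nabla\times\mathbf w_h)+(\mathbf g^m,\mathbf w_h),
\]
and rely on the stability $\|\mathbf w_h\|_{\mathcal W}+\|\nabla\times\mathbf w_h\|_{\mathbf L^3}\le C\|\mathbf w\|_{2,2}$. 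The diffusion and source terms give $C(b_m+\|\mathbf g^m\|)\|\mathbf w\|_{2,2}$, which is square-summable by \eqref{eq-bounds-2} and \eqref{eq-data-bound}.

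The coupling term is the main obstacle, because $\B_h^{m-1}$ has only $\mathbf L^2$ regularity uniformly. As in the proof of Lemma~\ref{time translate}, we write $\B_h^{m-1}=\mathcal Z(\B_h^{m-1})+(\B_h^{m-1}-\mathcal Z(\B_h^{m-1}))$.

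For the $\mathcal Z$-part we interpolate $\|\mathcal Z\B_h^{m-1}\|_{\mathbf L^s}$ between $\mathbf L^2$ and $\mathbf L^{3+\delta_1}$. The $\mathbf L^2$ bound comes from \eqref{eq-bounds-1} together with \eqref{ine-zbh} and an inverse estimate; the $\mathbf L^{3+\delta_1}$ bound comes from Lemma~\ref{Z=curl}. This gives $\|\mathcal Z\B_h^{m-1}\|_{\mathbf L^s}\le Cb_{m-1}^{1-\zeta'}$ with $\zeta'\sim\frac{2\delta_1}{3(1+\delta_1)}$. Pairing with $\|\u_h^m\|_{\mathbf L^6}\le Cg_m$ and $\|\nabla\times\mathbf w_h\|_{\mathbf L^3}$, we get a product $g_m b_{m-1}^{1-\zeta'}$. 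Hölder in time shows this lies in $L^r$ with $r=2/(2-\zeta)$, since $\frac1r=\frac12+\frac{1-\zeta}{2}$.

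For the remainder part, \eqref{ine-zbh} gives a factor $h^{1/2+\varrho}b_{m-1}$, and an inverse estimate on $\u_h^m$ (from $\mathbf L^6$ to $\mathbf L^\infty$) costs $h^{-1/2}$. This leaves $h^\varrho g_mb_{m-1}$. Combining it with the crude bound $b_{m-1}\le C\Delta t^{-1/2}$ produces the exponents involving $\varrho$ and $(\frac12+\varrho)\frac{2\delta_1}{3(1+\delta_1)}$ in $\zeta$.

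The second term, $(d_t\B_h^m,\mathbf w-\mathbf w_h)$, is handled by the interpolation error $\|\mathbf w-\mathbf w_h\|_{\mathbf L^2}\le Ch\|\mathbf w\|_{2,2}$ modulo gradients. It is absorbed using $\|d_t\B_h^m\|_{\mathbf L^2}\le\Delta t^{-1}\|\B_h^m-\B_h^{m-1}\|$ together with \eqref{eq-bounds-3} and the inverse estimate.

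Where no mesh–time relation is available, we instead interpolate the time-translate information. This is where the exponents $\frac14$ and $\frac1p$ borrowed from Lemma~\ref{time translate} enter, and taking the minimum of all of these exponents yields $\zeta$. Finally, summing $\Delta t\sum_m(\cdots)^r$ with Hölder in $m$ and \eqref{eq-bounds-1}--\eqref{eq-bounds-2} gives the claim.
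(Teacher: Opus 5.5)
The step that fails is your treatment of $(d_t\B_h^m,\mathbf w-\mathbf w_h)$. You create this term yourself by projecting the N\'ed\'elec interpolant $I_h\mathbf w$ rather than $\mathbf w$.

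\begin{itemize}
\item Because $d_t\B_h^m\in\mathcal W_{0h}$, the term equals $(d_t\B_h^m,\mathbf w-I_h\mathbf w)$, which does not vanish.
\item The only uniform $\mathbf L^2$-information on $d_t\B_h^m$ is \eqref{eq-bounds-3}, i.e.\ $\Delta t\sum_m\|d_t\B_h^m\|_{\mathbf L^2}^2\le C/\Delta t$.
\item Combined with $\|\mathbf w-I_h\mathbf w\|_{\mathbf L^2}\le Ch\|\mathbf w\|_{2,2}$, this gives at best $C\,h\,\Delta t^{-1/2}$ in $L^r(0,T)$. That is bounded only under a coupling $h\lesssim\Delta t^{1/2}$, which the lemma excludes.
\item An inverse estimate trades derivatives of discrete functions for negative powers of $h$, so it can only make this worse.
\item Lemma~\ref{time translate} is not a remedy either. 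It controls multi-step translates of $\u_h$, not single-step quotients of $\B_h$.
\end{itemize}
The entries $\frac14,\frac1p$ in $\zeta$ need no mechanism at all. Decreasing $\zeta$ decreases $r$, and an $L^r(0,T)$ bound implies every smaller-$r$ bound.

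The same mixing of $h$ and $\Delta t$ spoils two other steps. In the Hodge-remainder part, using $b_{m-1}\le C\Delta t^{-1/2}$ leaves $h^{\varrho}\Delta t^{-1/2}$. Your $\mathbf L^2$ bound for $\mathcal Z(\B_h^{m-1})$ via \eqref{ine-zbh} plus an inverse estimate only yields $1+h^{\varrho-1/2}$.

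The paper avoids the remainder entirely by testing \eqref{eq-FEM-2} with $\Pi^{\mathcal W_0}_h\mathbf w$, the $\mathbf L^2$-projection of $\mathbf w$ itself onto $\mathcal W_{0h}$. Then $(d_t\B_h^m,\mathbf w-\Pi^{\mathcal W_0}_h\mathbf w)=0$ exactly.

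The price is that only the $\mathbf L^2$-norm of the curl is readily controlled:
\begin{itemize}
\item Since $\mathcal W_h=\mathcal W_{0h}\oplus\nabla S_h$, one has $\nabla\times\Pi^{\mathcal W_0}_h\mathbf w=\nabla\times\Pi_h\mathbf w$.
\item Comparing with $I_h\mathbf w$ via $\|\nabla\times(\Pi_h\mathbf w-I_h\mathbf w)\|_{\mathbf L^2}\le Ch^{-1}\|\mathbf w-I_h\mathbf w\|_{\mathbf L^2}$ gives $\|\nabla\times\Pi^{\mathcal W_0}_h\mathbf w\|_{\mathbf L^2}\le C\|\mathbf w\|_{2,2}$ on quasi-uniform meshes.
\end{itemize}
So you must pair the coupling term with an $\mathbf L^2$ curl. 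That requires $\u_h^m\times\B_h^{m-1}\in\mathbf L^2$, and this is where the $\mathcal Z$-splitting is genuinely needed. The paper does this splitting exactly as for $I_1,I_2$ in Lemma~\ref{time translate}.

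To keep the splitting free of $\Delta t$, use $h$-independent bounds. $\B_h-\mathcal Z(\B_h)$ is curl-free, hence a gradient, hence $\mathbf L^2$-orthogonal to $\mathcal Z(\B_h)\in\mathcal K$. So both pieces are bounded by $\|\B_h\|_{\mathbf L^2}\le C$.
\begin{itemize}
\item Interpolating with Lemma~\ref{Z=curl} gives $\|\mathcal Z(\B_h^{m-1})\|_{\mathbf L^3}\le Cb_{m-1}^{1-\zeta'}$ with $\zeta'=\frac{2\delta_1}{3(1+\delta_1)}$.
\item Interpolating with \eqref{ine-zbh} gives $\|\B_h^{m-1}-\mathcal Z(\B_h^{m-1})\|_{\mathbf L^2}\le C(h^{1/2+\varrho}b_{m-1})^{\lambda}$ with $\lambda=\frac1{1+2\varrho}$. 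This exactly cancels the $h^{-1/2}$ from $\|\u_h^m\|_{\mathbf L^\infty}\le Ch^{-1/2}g_m$.
\end{itemize}
H\"older in time then puts $g_mb_{m-1}^{1-\zeta'}$ and $g_mb_{m-1}^{\lambda}$ in $L^{r}(0,T)$ for the stated $r$, with no condition linking $h$ and $\Delta t$.
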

\begin{proof}Let $\Pi^{\mathcal W_{0}}_{h}:\mathbf L^{2}(\mathbb D)\to\mathcal W_{0h}$ denote
the $\mathbf L^{2}$-orthogonal projection onto the constrained space $\mathcal W_{0h}$.
{
Moreover $\B_{h,\Delta t}\in\mathcal W_{0h}$ gives
$(\mathbf w-\Pi^{\mathcal W_{0}}_{h}\mathbf w,\partial_{t}\B_{h,\Delta t})=0$. With this choice
of test function,} $\left(\u_{h,\Delta t}, \B_{h,\Delta t}\right)$ satisfy
\begin{align}
    &\int_{0}^{T}\left(\partial_{t} \B_{h,\Delta t}, \Pi^{\mathcal W_{0}}_{h} \mathbf{w}\right) dt+\kappa\int_{0}^{T}\left( \nabla \times \mathring{\B}_{h, \Delta t}, \nabla \times \Pi^{\mathcal W_{0}}_{h} \mathbf{w}\right) dt \nonumber
    \\ &\qquad +\int_{0}^{T}\left((\nabla \times \Pi^{\mathcal W_{0}}_{h} \mathbf{w} )\times \breve{\B}_{h, \Delta t}, \mathring{\u}_{h, \Delta t}\right) dt=\int_{0}^{T}\left(\mathring{\mathbf{g}}_{\Delta t}, \Pi^{\mathcal W_{0}}_{h} \mathbf{w}\right) dt\,.
\end{align}
By applying the Cauchy-Schwarz and interpolation inequalities, for any $\C\in L^{r'}(0,T;\mathcal{W}\cap\mathbf{W}^{2, 2}(\mathbb{D}))$ we obtain, splitting $\breve\B_{h,\Delta t}=(\breve\B_{h,\Delta t}-\mathcal Z(\breve\B_{h,\Delta t}))+\mathcal Z(\breve\B_{h,\Delta t})$ and estimating the two contributions exactly as $I_{1}$ and $I_{2}$ in the proof of Lemma~\ref{time translate} (see also \cite[Lemma~12]{DING2025116470}), that
\begin{align*}
   &\int_{0}^{T}\left | (\mathring{\u}_{h,\Delta t} \times \breve{\B}_{h, \Delta t}, \nabla \times\Pi^{\mathcal W_{0}}_{h}\C)\right| dt
\le C\|\nabla \times\Pi^{\mathcal W_{0}}_{h}\C\|_{L^{r'}(0,T;\mathbf{L}^{2}(\mathbb{D}))}.
\end{align*} 
Since $1<r<2$, we  derive $$\int_{0}^{T}\left|\left(\nabla \times\mathring{\B}_{h,\Delta t},\nabla \times\Pi^{\mathcal W_{0}}_{h}\C\right)\right|dt \le C \|\nabla \times\Pi^{\mathcal W_{0}}_{h}\C\|_{L^{r'}(0,T;\mathbf{L}^{2}(\mathbb{D}))}.$$It remains to bound $\|\nabla\times\Pi^{\mathcal W_{0}}_{h}\C\|_{\mathbf L^{2}}$. We do
not invoke the commuting-diagram property here: that property belongs to the canonical
N\'ed\'elec interpolant and not to the $\mathbf L^{2}$-projection, which is a different
operator. Instead we use the $\mathbf H(\operatorname{curl})$-stability of the
$\mathbf L^{2}$-projection onto $\mathcal W_{h}$ on quasi-uniform meshes
\cite[Ch.~5]{Monk2003}, together with
$\nabla\times\Pi^{\mathcal W_{0}}_{h}\C=\nabla\times\Pi^{\mathcal W}_{h}\C$, to obtain
$$\|\nabla \times\Pi^{\mathcal W_{0}}_{h}\C\|_{\mathbf{L}^{2}(\mathbb{D})}\le c \|\C\|_{{\mathbf W}^{2,2}(\mathbb{D})}+c\|\nabla \times\C\|_{{\mathbf L}^{2}(\mathbb{D})}.$$
$$\int_{0}^{T}{\left|\left(\partial_{t}\B_{h,\Delta t}, \C\right)\right|}dt\le C\|\C\|_{L^{r'}(0,T;\mathcal{W}\cap\mathbf{W}^{2, 2}(\mathbb{D}))}.$$
This implies that
$$\|\partial_{t}\B_{h,\Delta t}\|_{L^{r}\left(0,T;\left(\mathcal{W}\cap \mathbf{W}^{2, 2}(\mathbb{D})\right)'\right)}\le C.$$
    \end{proof}

\begin{lemma}[Time derivative for temperature]\label{time derivative-theta}For the sequences $\left\{\left(\u_{h,\Delta t},\B_{h,\Delta t},\theta_{h,\Delta t}\right)\right\}$, there exists a constant $C \equiv C(T)$ independent of time-step $\Delta t$ and mesh-size $h$ such that 
   $$ \|\partial_{t}\theta_{h \Delta t}\|_{L^{\frac{4}{3}}(0,T;(\mathcal{Y}_{0}\cap H^{1+s}(\mathbb{D}))^{'})}\le C. $$
\end{lemma}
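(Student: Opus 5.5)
We mimic the proof of Lemma~\ref{time derivative}, using the $L^2$-orthogonal projection $\Pi^{\mathcal Y}_h:L^2(\mathbb D)\to\mathcal Y_h$. For $\phi\in L^4(0,T;\mathcal Y_0\cap H^{1+s}(\mathbb D))$, the identity $(\partial_t\theta_{h,\Delta t},\phi-\Pi^{\mathcal Y}_h\phi)=0$ lets us test \eqref{eq-FEM-3} with $\Pi^{\mathcal Y}_h\phi$. This gives
\[
\int_0^T(\partial_t\theta_{h,\Delta t},\phi)\,dt
=\int_0^T\Big[-\sigma(\nabla\mathring\theta_{h,\Delta t},\nabla\Pi^{\mathcal Y}_h\phi)
-a_2(\breve\u_{h,\Delta t},\mathring\theta_{h,\Delta t},\Pi^{\mathcal Y}_h\phi)
+(\mathring\u_{h,\Delta t}\cdot e_3+\mathring\psi_{\Delta t},\Pi^{\mathcal Y}_h\phi)\Big]dt .
\]
We then bound each term by $C\|\phi\|_{L^4(0,T;H^{1+s})}$, which is the dual statement with exponent $4/3$.

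The linear terms are routine. The $H^1$-stability of $\Pi^{\mathcal Y}_h$ on quasi-uniform meshes gives $\|\nabla\Pi^{\mathcal Y}_h\phi\|_{L^2}\le C\|\phi\|_{1,2}$. Combined with \eqref{eq-bounds-2}, this makes the diffusion term bounded by $C\|\phi\|_{L^2(0,T;H^1)}$. The source and buoyancy-coupling terms are controlled by \eqref{eq-bounds-1}, \eqref{eq-data-bound} and the $L^2$-stability of the projection. All of these are dominated by the $L^4(0,T;H^{1+s})$ norm.

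The main obstacle is the convective term $a_2$. We split it into its two halves.
\begin{itemize}
\item For $(\breve\u\cdot\nabla\mathring\theta,\Pi\phi)$, we use $\|\breve\u\|_{L^3}\|\nabla\mathring\theta\|_{L^2}\|\Pi\phi\|_{L^6}$. Ladyzhenskaya-type interpolation gives $\|\breve\u\|_{L^3}\le C\|\breve\u\|_{L^2}^{1/2}\|\nabla\breve\u\|_{L^2}^{1/2}$, so by \eqref{eq-bounds-1} and Corollary~\ref{cor-L2gradient} the velocity factor lies in $L^4(0,T)$. The factor $\nabla\mathring\theta$ lies in $L^2(0,T)$. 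H\"older in time with $\frac14+\frac12+\frac14=1$ therefore yields the bound $C\|\phi\|_{L^4(0,T;H^1)}$; this is exactly where the exponent $4/3$ comes from.
\item For $(\breve\u\cdot\nabla\Pi\phi,\mathring\theta)$, we use $\|\breve\u\|_{L^3}\|\mathring\theta\|_{L^2}\|\nabla\Pi\phi\|_{L^6}$. Here we need $\|\nabla\Pi^{\mathcal Y}_h\phi\|_{L^6}\le C\|\phi\|_{1+s,2}$, which holds for $s\ge1$ by the $W^{1,6}$-stability of the projection on quasi-uniform meshes. For smaller $s$ we would write $\Pi\phi=(\Pi\phi-I_h\phi)+I_h\phi$ and use the inverse estimate \eqref{eq-inverse estimate} with approximation bounds. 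The temperature factor is uniformly bounded by \eqref{eq-bounds-1}, so this term is bounded by $C\|\breve\u\|_{L^4(0,T;L^3)}\|\phi\|_{L^{4/3}(0,T;H^{1+s})}$, which is better than required.
\end{itemize}

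Collecting the estimates and taking the supremum over $\phi$ gives the claim. The only delicate point is the $W^{1,6}$-type stability of $\Pi^{\mathcal Y}_h$. If that becomes awkward, a fallback is to integrate the second half by parts, using the discrete divergence constraint only up to a consistency error. The simplest route, however, is to take $s$ large enough that $H^{1+s}\hookrightarrow W^{1,6}$ and to invoke stability of the projection.
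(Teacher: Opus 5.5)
\textbf{Gap in the second half of the convective term.} Your overall strategy matches the paper: test \eqref{eq-FEM-3} with $\Pi^{\mathcal Y}_h\phi$, then apply H\"older in time with $\tfrac14+\tfrac12+\tfrac14=1$. Your treatment of the linear terms and of the first half $(\breve\u_{h,\Delta t}\cdot\nabla\mathring\theta_{h,\Delta t},\Pi^{\mathcal Y}_h\phi)$ of $a_2$ is also correct. The gap is in the second half. The bound $\|\breve\u_{h,\Delta t}\|_{\mathbf L^3}\|\mathring\theta_{h,\Delta t}\|_{L^2}\|\nabla\Pi^{\mathcal Y}_h\phi\|_{\mathbf L^6}$ needs $\|\nabla\Pi^{\mathcal Y}_h\phi\|_{\mathbf L^6}\le C\|\phi\|_{1+s,2}$ uniformly in $h$, and this is false for $s<1$.

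Indeed, $\Pi^{\mathcal Y}_h\phi\to\phi$ in $H^1$. A uniform bound would therefore pass to the weak limit and force $\nabla\phi\in\mathbf L^6$ for every $\phi\in\mathcal Y_0\cap H^{1+s}$, which fails in three dimensions unless $s\ge1$.

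Your inverse-estimate fallback does not rescue this. Passing from $L^2$ to $L^6$ costs $h^{-1}$, so $\|\nabla(\Pi^{\mathcal Y}_h\phi-I_h\phi)\|_{\mathbf L^6}\le Ch^{s-1}\|\phi\|_{1+s,2}$, which blows up. Moreover, $\|\nabla I_h\phi\|_{\mathbf L^6}$ is not controlled by $\|\phi\|_{1+s,2}$ either.

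Taking $s\ge1$ is not a fix for the stated lemma. The $s$ there is the paper's domain-dependent $s>\tfrac12$, and nonconvex polyhedra are the whole reason for the edge-element setting. Enlarging $s$ enlarges the dual space, so it proves a weaker estimate. That weaker estimate would still suffice for the Aubin--Lions step, but it is not the stated lemma.

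\textbf{The repair.} Redistribute the H\"older exponents so that the temperature, not the test function, carries the $L^6$ norm:
\[
|(\breve\u_{h,\Delta t}\cdot\nabla\Pi^{\mathcal Y}_h\phi,\mathring\theta_{h,\Delta t})|
\le\|\breve\u_{h,\Delta t}\|_{\mathbf L^3}\|\nabla\Pi^{\mathcal Y}_h\phi\|_{\mathbf L^2}\|\mathring\theta_{h,\Delta t}\|_{L^6}
\le C\|\nabla\breve\u_{h,\Delta t}\|_{\mathbf L^2}^{1/2}\|\nabla\mathring\theta_{h,\Delta t}\|_{L^2}\|\nabla\Pi^{\mathcal Y}_h\phi\|_{\mathbf L^2}.
\]
This uses $\mathcal Y_h\subset H^1_0\hookrightarrow L^6$ and the uniform $\mathbf L^2$ velocity bound. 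The result has the same form as your first half, and it is exactly the bound the paper writes for all of $a_2$. After H\"older in time only the $H^1$-stability of $\Pi^{\mathcal Y}_h$ is needed, so the argument works for every $s$.

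Your integration-by-parts fallback would also close, and it needs no consistency argument. The divergence remainder satisfies
\[
|((\operatorname{div}\breve\u_{h,\Delta t})\mathring\theta_{h,\Delta t},\Pi^{\mathcal Y}_h\phi)|\le C\|\nabla\breve\u_{h,\Delta t}\|_{\mathbf L^2}\|\mathring\theta_{h,\Delta t}\|_{L^3}\|\Pi^{\mathcal Y}_h\phi\|_{L^6},
\]
and this is handled by the time exponents $\tfrac12+\tfrac14+\tfrac14=1$.
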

\begin{proof}
    We denote by $\Pi^{\mathcal Y}_{h}$ the $L^2$-orthogonal projection onto $\mathcal Y_{h}$.Then by \eqref{eq-FEM-3}, we know that there exists $(\u_{h \Delta t},\theta_{h \Delta t})$ such that
    \begin{align}
        (d_{t}\theta_{h \Delta t},\Pi^{\mathcal Y}_{h}\phi_{h})+\sigma(\nabla \mathring{\theta}_{h\Delta t},\nabla\Pi^{\mathcal Y}_{h}\phi_{h})+a_{2}(\breve{\u}_{h\Delta t},\mathring{\theta}_{h\Delta t},\Pi^{\mathcal Y}_{h}\phi_{h})=(\mathring{\u}_{h\Delta t}\cdot e_3,\Pi^{\mathcal Y}_{h}\phi_{h})+(\mathring{\psi}_{\Delta t},\Pi^{\mathcal Y}_{h}\phi_{h}).
        \end{align}
        For any $\phi\in L^4(0,T;\mathcal{Y}_{0}\cap H^{1+s}(\mathbb{D})),$ Then by the help of Theorem \ref{bounds}, the Cauchy-Schwarz inequality and the interpolation inequality, there holds (see lemma 4.2 in \cite{ding2022convergence})
        \begin{align}
       \nonumber |a_{2}(\breve{\u}_{h\Delta t},\mathring{\theta}_{h\Delta t},\Pi^{\mathcal Y}_{h}\phi)| &=\frac{1}{2}\int_{0}^{T}|((\breve{\u}_{h\Delta t}\cdot\nabla \mathring{\theta}_{h\Delta t}),\Pi^{\mathcal Y}_{h}\phi) dt-\frac{1}{2}\int_{0}^{T}((\breve{\u}_{h\Delta t}\cdot\nabla \Pi^{\mathcal Y}_{h}\phi),\mathring{\theta}_{h\Delta t}) |dt\\
&\le C \int_{0}^{T}\|\nabla\breve{\u}_{h\Delta t}\|_{\L^2}^{\frac{1}{2}}\|\nabla\mathring{\theta}_{h\Delta t}\|_{L^2}\|\nabla\Pi^{\mathcal Y}_{h}\phi\|_{L^2} dt 
\le C\|\nabla\Pi^{\mathcal Y}_{h}\phi\|_{L^4(0,T;\L^2(\mathbb{D}))},
\end{align}
and
\begin{align}
   \sigma \int_{0}^{T}|(\nabla \mathring{\theta}_{h\Delta t},\nabla\Pi^{\mathcal Y}_{h}\phi_{h})|dt\le C\|\nabla \mathring{\theta}_{h\Delta t}\|_{L^2(0,T;\L^2(\mathbb{D}))}\|\nabla\Pi^{\mathcal Y}_{h}\phi\|_{L^2(0,T;\L^2(\mathbb{D}))}.
\end{align}
Combining these inequalities  we get that
\begin{align}
    \int_{0}^{T}|(\partial_{t}\theta_{h,\Delta t},\Pi^{\mathcal Y}_{h}\phi)|dt\le C\|\nabla\Pi^{\mathcal Y}_{h}\phi\|_{L^4(0,T;\L^2(\mathbb{D}))}.
\end{align}
Hence we get
 $$ \|\partial_{t}\theta_{h \Delta t}\|_{L^{\frac{4}{3}}(0,T;(\mathcal{Y}_{0}\cap H^{1+s}(\mathbb{D}))^{'})}\le C.$$
\end{proof}

\subsection{Proof of Theorem \textup{\ref{Thm-Conv-scheme}}}\label{subsec-conv-proof}

\begin{proof}
Step~1 gives compactness, while Steps~2-4 pass to the limit in the momentum, induction, and
temperature equations.

\smallskip
\noindent
\textbf{Step 1: compactness.}\, For $h, \Delta t\rightarrow 0$, there exists a convergent subsequence of $\left(\mathring{\u}_{h,\Delta t},\mathring{\B}_{h,\Delta t}, \mathring{\theta}_{h,\Delta t}\right)$ and the functions
        $$\u \in L^{\infty}(0, T ; \mathbf{L}^{2}) \cap {L^{p}}\left(0, T ; \mathbf{W}^{1, p}_{0}\right)\,, \quad  \B \in L^{\infty}(0, T ; \mathbb{H}) \cap L^{2}(0, T; \mathcal{K}), \quad \mbox{and} \quad \theta \in L^{\infty}(0, T ;{L^2}) \cap L^{2}(0, T; {{H}^{1}_{0}})$$
such that
\begin{align*}
    &\mathring{\u}_{h, \Delta t}, \breve{\u}_{h, \Delta t}, \u_{h, \Delta t} \xrightarrow{\text {weakly }} \u \quad \mbox{in } \ {L^{p}\left(0, T ; \mathbf W^{1,p}_{0}(\mathbb D)\right)},\\
&\mathring{\u}_{h, \Delta t}, \breve{\u}_{h, \Delta t}, \u_{h, \Delta t} \xrightarrow{\text {weakly* }} \u \quad \mbox{in } \ L^{\infty}\left(0, T ;\mathbf {L}^{2}(\mathbb{D})\right),\\
&\mathring{\B}_{h, \Delta t, } \breve{\mathbf{b}}_{h, \Delta t}, \B_{h, \Delta t} \xrightarrow{\text { weakly }} \mathbf{b} \quad \mbox{in } \ {L^{2}(0, T ; \mathcal{W})},\\
&\mathring{\B}_{h, \Delta t, } \breve{\B}_{h, \Delta t}, \B_{h, \Delta t} \xrightarrow{\text { weakly* }} \mathbf{b} \quad \mbox{in } \ L^{\infty}(0, T ; \mathbf{L}^{2}(\mathbb{D})),\\
&\mathring{\theta}_{h, \Delta t, } \breve{\theta}_{h, \Delta t}, \theta_{h, \Delta t} \xrightarrow{\text { weakly }} \theta \quad \mbox{in } \ {L^{2}(0, T ; {H}^{1}_{0}(\mathbb{D}))},\\
&\mathring{\theta}_{h, \Delta t, } \breve{\theta}_{h, \Delta t}, \theta_{h, \Delta t} \xrightarrow{\text { weakly* }} \theta \quad \mbox{in } \ L^{\infty}(0, T ; {L}^{2}(\mathbb{D})),\\
&\partial_{t} \B_{h,\Delta t} \xrightarrow{\text { weakly }} \B_{t} \quad \mbox{in } \ L^{r}\left(0, T ;\left(\mathcal{W} \cap \mathbf{W}^{2, 2}(\mathbb{D})\right)^{\prime}\right),\\
&\partial_{t} \theta_{h, \Delta t} \xrightarrow{\text { weakly } }\theta_{t} \quad \mbox{in } \ L^{\frac{4}{3}}\left(0, T ;\left(\mathcal{Y}_{0} \cap {H}^{1+s}(\mathbb{D})\right)^{\prime}\right).
\end{align*}

\noindent
The weak limit of the velocity is stated in $L^{p}(0,T;\mathbf W^{1,p}_{0}(\mathbb D))$  because for a general inf-sup stable pair the discrete space
$\mathcal X_{0h}$ is only discretely divergence-free and
$\mathcal X_{0h}\not\subset\mathcal X_{0}$; the constraint on the limit has to be recovered.
Let $q\in\mathcal C^{\infty}_{0}(\mathbb D)$ and let $q_{h}\in Q_{h}$ satisfy
$q_{h}\to q$ in $L^{2}(\mathbb D)$. Then, for every $\zeta\in\mathcal C^{\infty}_{c}(0,T)$,
\[
\int_{0}^{T}(\operatorname{div}\mathring\u_{h,\Delta t},q)\,\zeta\,dt
=\underbrace{\int_{0}^{T}(\operatorname{div}\mathring\u_{h,\Delta t},q_{h})\,\zeta\,dt}_{=\,0\ \text{by }\eqref{eq-space-X0h}}
+\int_{0}^{T}(\operatorname{div}\mathring\u_{h,\Delta t},q-q_{h})\,\zeta\,dt ,
\]
and the last term is bounded by
$C\|\nabla\mathring\u_{h,\Delta t}\|_{L^{2}(0,T;\mathbf L^{2})}\|q-q_{h}\|_{L^{2}}\to0$ by
Corollary~\ref{cor-L2gradient}. Passing to the limit on the left by weak convergence gives
$\int_{0}^{T}(\operatorname{div}\u,q)\zeta\,dt=0$ for all such $q$ and $\zeta$, so
$\operatorname{div}\u=0$ and hence $\u\in L^{p}(0,T;\mathcal X_{0})$. The same argument applied
to $\B$ with $s_{h}\in S_{h}$ in place of $q_{h}$, using \eqref{eq-space-W0h}, gives
$\B\in L^{2}(0,T;\mathcal K)$.
\\ \noindent
We have that
\begin{align*}
   \left\|\u_{h,\Delta t}-\mathring{\u}_{h,\Delta t}\right\|_{L^{2}(0,T;{\mathbf{L}^{2}}(\mathbb{D}))}^{2} &=\left\|\frac{t-t_{m-1}}{\Delta t}\u_{h}^{m}
    +\frac{t_{m}-t}{\Delta t}\u_{h}^{m-1}-{\u}_{h}^{m}\right\|_{L^{2}(0,T;{\mathbf{L}^{2}}(\mathbb{D}))}^{2}\\
    &=\left\|\frac{t-t_{m}}{\Delta t}\u_{h}^{m}
    -\frac{t-t_{m}}{\Delta t}\u_{h}^{m-1}\right\|_{L^{2}(0,T;{\mathbf{L}^{2}}(\mathbb{D}))}^{2}\\
    &=\sum_{m=1}^{N}\left\|\u_{h}^{m}-\u_{h}^{m-1}\right\|_{{\mathbf{L}^{2}}(\mathbb{D})}^{2}\int_{t_{m-1}}^{t_{m}}\left(\frac{t-t_{m}}{\Delta t}\right)^2 dt =\frac{\Delta t}{3}\sum_{m=1}^{N}\left\|\u_{h}^{m}-\u_{h}^{m-1}\right\|_{{\mathbf{L}^{2}}(\mathbb{D})}^{2}.
\end{align*}
\noindent
By \eqref{eq-bounds-3} the last sum is bounded by a constant independent of $h$ and $\Delta t$,
whence
\begin{equation}\label{eq-interp-vs-const}
\left\|\u_{h,\Delta t}-\mathring{\u}_{h,\Delta t}\right\|^{2}_{L^{2}(0,T;\mathbf{L}^{2}(\mathbb{D}))}
\le \frac{C\,\Delta t}{3}\longrightarrow 0
\qquad\text{as } \Delta t\to0,
\end{equation}
and identically for $\|\u_{h,\Delta t}-\breve{\u}_{h,\Delta t}\|_{L^{2}(0,T;\mathbf{L}^{2})}\longrightarrow 0
\qquad\text{as } \Delta t\to0$.

\noindent
Combining \eqref{eq-interp-vs-const} with the time-translate estimate of
Lemma~\ref{time translate}, the family $\{\u_{h,\Delta t}\}$ is bounded in
$L^{2}(0,T;\mathcal X_{0})$, its time translates satisfy
$\|\u_{h,\Delta t}(\cdot+\tau)-\u_{h,\Delta t}\|_{L^{2}(0,T-\tau;\mathbf L^{2})}\le C\tau^{\alpha/2}\to0$,
and $\mathcal X_{0}\hookrightarrow\hookrightarrow\mathbf L^{2}\hookrightarrow\mathbf L^{2}$.
Lemma~\ref{Simon} therefore yields a
subsequence along which $(\u_{h,\Delta t}),(\mathring{\u}_{h,\Delta t}),(\breve{\u}_{h,\Delta t})$
converge strongly to the same limit $\u$ in $L^{2}(0,T;\mathbf{L}^{2}(\mathbb{D}))$.
By interpolation with the uniform bound in $L^{p}(0,T;\mathbf L^{\frac{3p}{3-p}})$
(resp.\ $L^{p}(0,T;\mathbf L^{q})$ for every $q<\infty$ when $p\ge3$) the same subsequence
converges strongly in $L^{2}(0,T;\mathbf L^{r}(\mathbb D))$ for every $r<\min\{6,\tfrac{3p}{3-p}\}$
when $p<3$, and for every $r<\infty$ when $p\ge3$; in particular in
$L^{2}(0,T;\mathbf L^{4}(\mathbb D))$, which is the form used in Steps~2-4.\\
\noindent
By \eqref{eq-bounds-3},
\begin{align}
 \nonumber \|\B_{h,\Delta t}-\mathring{\B}_{h,\Delta t}\|^{2}_{L^{2}(0,T;\mathbf{L}^{2}(\mathbb{D}))}
 &=\left\|\frac{t-t_{m-1}}{\Delta t}\B^{m}_{h}+\frac{t_{m}-t}{\Delta t}\B^{m-1}_{h}-\B^{m}_{h} \right\|^{2}_{L^{2}(0,T;\mathbf{L}^{2}(\mathbb{D}))}\\
\nonumber =&\sum_{m=1}^{N}\|\B^{m}_{h}-\B^{m-1}_{h}\|^{2}_{\mathbf{L}^{2}(\mathbb{D})}\int_{t_{m-1}}^{t_{m}}\left(\frac{t-t_{m-1}}{\Delta t}\right)^{2} dt\\
 =&\frac{\Delta t}{3}\sum_{m=1}^{N}\|\B^{m}_{h}-\B^{m-1}_{h}\|^{2}_{\mathbf{L}^{2}(\mathbb{D})} \longrightarrow 0 \quad \mbox{as} \quad h, \Delta t\rightarrow 0.
  \end{align}  
Similarly, we have 
\begin{equation}
    \|\B_{h,\Delta t}-\breve{\B}_{h,\Delta t}\|^{2}_{L^{2}(0,T;\mathbf{L}^{2}(\mathbb{D}))}\longrightarrow 0\quad \mbox{as} \quad h, \Delta t\rightarrow 0.
\end{equation}
Lemma~\ref{Aubin Lions} cannot be applied to $\B_{h,\Delta t}$ directly: the natural
ambient space $\mathcal W=\mathbf H(\operatorname{curl};\mathbb D)$ is not compactly
embedded in $\mathbf L^{2}(\mathbb D)$, since gradients lie in the kernel of the curl, and
$\B_{h,\Delta t}$ does not belong to $\mathbf H(\operatorname{div};\mathbb D)$. We therefore
pass through the Hodge mapping \eqref{eq-hodge}. Since $\mathcal Z$ is linear and
$\nabla\times\mathcal Z(\B^{m}_{h})=\nabla\times\B^{m}_{h}$, the family
$\mathcal Z(\B_{h,\Delta t})$ lies in the fixed space $\mathcal K$ and is bounded in
$L^{2}(0,T;\mathcal K)$ by \eqref{eq-bounds-2}, while
$\partial_{t}\mathcal Z(\B_{h,\Delta t})=\mathcal Z(\partial_{t}\B_{h,\Delta t})$ is bounded in
$L^{r}(0,T;(\mathcal W\cap\mathbf W^{2,2})')$ by Lemma~\ref{time derivative} and the
$\mathcal W$-boundedness of $\mathcal Z$. Because
$\mathcal K\hookrightarrow\hookrightarrow\mathbf L^{2}(\mathbb D)$ by
Lemma~\ref{Embedding lemma}, Lemma~\ref{Aubin Lions} applies to
$\mathcal Z(\B_{h,\Delta t})$ and yields a subsequence converging strongly in
$L^{2}(0,T;\mathbf L^{2}(\mathbb D))$. Finally \eqref{ine-zbh} gives
\[
\|\B_{h,\Delta t}-\mathcal Z(\B_{h,\Delta t})\|_{L^{2}(0,T;\mathbf L^{2})}
\le Ch^{\frac12+\varrho}\|\nabla\times\B_{h,\Delta t}\|_{L^{2}(0,T;\mathbf L^{2})}
\longrightarrow0,
\]
so $\B_{h,\Delta t}$ and $\mathcal Z(\B_{h,\Delta t})$ share the same strong limit, which by
Lemma~\ref{Lemma 1} lies in $\mathcal K$. The same conclusion holds for
$\breve{\B}_{h,\Delta t}$ and $\mathring{\B}_{h,\Delta t}$, so all three converge strongly to
$\B$ in $L^{2}(0,T;\mathbf L^{2}(\mathbb D))$.
By \eqref{eq-bounds-3},
\begin{align}
 \nonumber \|\theta_{h,\Delta t}-\mathring{\theta}_{h,\Delta t}\|^{2}_{L^{2}(0,T;L^{q}(\mathbb{D}))}
 &=\frac{\Delta t}{3}\sum_{n=1}^{{N}}\|\theta^{n}_{h}-\theta^{n-1}_{h}\|^2_{L^q(\mathbb{D})}\\
\nonumber &\le\frac{\Delta t}{3}\sum_{n=1}^{{N}}\|\theta^{n}_{h}-\theta^{n-1}_{h}\|^{2{\vartheta}}_{L^1(\mathbb{D})}\|\theta^{n}_{h}-\theta^{n-1}_{h}\|^{2-2{\vartheta}}_{L^6(\mathbb{D})}\\
  &\le C(\Delta t)^{{\vartheta}}\left(\sum_{n=1}^{{N}}\|\theta^{n}_{h}-\theta^{n-1}_{h}\|^{2}_{L^2(\mathbb{D})}\right)^{{\vartheta}}\left(\Delta t\sum_{n=1}^{{N}}\|\theta^{n}_{h}-\theta^{n-1}_{h}\|^{2}_{L^6(\mathbb{D})}\right)^{1-{\vartheta}}{\longrightarrow 0},
 \end{align}
\noindent
where $\vartheta\in(0,1]$ is determined by
\[
\frac{1}{q}=\frac{\vartheta}{1}+\frac{1-\vartheta}{6},
\]
so that the admissible range is $1\le q<6$. 
Lemma~\ref{time translate}. The first term is bounded by \eqref{eq-bounds-3}; for the
second, the Sobolev embedding $H^{1}_{0}\hookrightarrow L^{6}$ gives
$\|\theta^{n}_{h}-\theta^{n-1}_{h}\|_{L^{6}}\le C\bigl(\|\nabla\theta^{n}_{h}\|_{L^{2}}+\|\nabla\theta^{n-1}_{h}\|_{L^{2}}\bigr)$,
whence $\Delta t\sum_{n}\|\theta^{n}_{h}-\theta^{n-1}_{h}\|^{2}_{L^{6}}\le C$ by
\eqref{eq-bounds-2}. Consequently the whole expression is $O\bigl((\Delta t)^{\vartheta}\bigr)$
and tends to zero.
 Similarly, we get
 \begin{equation}
    \|\theta_{h,\Delta t}-\breve{\theta}_{h,\Delta t}\|^{2}_{L^{2}(0,T;L^{q}(\mathbb{D}))}\longrightarrow 0\quad \mbox{as} \quad h, \Delta t\rightarrow 0.
    \end{equation}
    By Lemmas~\ref{Aubin Lions} and \ref{time derivative-theta}, $(\theta_{h,\Delta t}),(\breve{\theta}_{h,\Delta t}), (\mathring{\theta}_{h,\Delta t})$ converges strongly to $\theta$ as $h, \Delta t\rightarrow 0$ in the space $L^{2}(0,T;{L}^{q}(\mathbb{D}))$ where $1\le q<6$.

\medskip
\noindent
\textbf{Step 2: the momentum equation.}\, By \eqref{eq-FEM-1}, the discrete solution satisfies 
\begin{align}\label{eq-convergence}
   \nonumber &\int_{0}^{T}\int_{\mathbb{D}}\partial_{t}\u_{h, \Delta t}\cdot\mathbf{v}_{h} dx dt+\int_{0}^{T}a_{1}(\breve{\u}_{h, \Delta t},\mathring{\u}_{h, \Delta t},\mathbf{v}_{h}) dt+\nu \int_{0}^{T}\int_{\mathbb{D}}|\nabla \mathring{\u}_{h, \Delta t}|^{p-2}\nabla\mathring{\u}_{h, \Delta t}\cdot \nabla \mathbf{v}_{h} dx dt\\
    &+{{2\int_0^T\int_{\mathbb D}(\boldsymbol{\omega_r}\times\mathring{\u}_{h,\Delta t})\cdot\v_hdxdt}}+\int_{0}^{T}\int_{\mathbb{D}}(\breve{\B}_{h, \Delta t}\times(\nabla \times\mathring{\B}_{h, \Delta t}))\cdot \mathbf{v}_{h} dx dt=\int_{0}^{T}\int_{\mathbb{D}}\mathring{\mathbf{f}}_{\Delta t}\cdot\mathbf{v}_{h} dx dt+\int_{0}^{T}\int_{\mathbb{D}}\breve{\theta}_{h,\Delta t}\, e_3\cdot\mathbf{v}_{h} dx dt.
\end{align}
For $\mathbf v\in\mathcal J$, choose $\v_h\in\mathcal X_{0h}$ such that
$\v_{h}\rightarrow\v$ in $\mathbf{W}^{1,q}(\mathbb{D})$ for every $q<\infty$; this is possible
because $\mathbf v$ is smooth and compactly supported. 
\paragraph{The time-derivative term.}
Since the initial data are well approximated, integration by parts in time gives
\begin{align}
   \nonumber &\left|\int_{0}^{T}\int_{\mathbb{D}}(\partial_{t}\u_{h, \Delta t}\cdot\mathbf{v}_{h}-\partial_{t}\u\cdot\mathbf{v})\xi(t) dx dt\right|\\
  \nonumber  &=\left|\int_{0}^{T}\left[\left(-\u_{h, \Delta t},\partial_{t}(\mathbf{v}_{h}\xi(t))\right)+\left(\u,\partial_{t}(\mathbf{v}\xi(t))\right)-\left(\u^{0}_{h},\v_{h}\xi(0)\right)+\left(\u^{0},\v\xi(0)\right)\right]dt\right|\\
     &\le C \|\u_{h, \Delta t}-\u\|_{L^2(0,T;\mathbf{L}^2(\mathbb{D}))}\|\partial_{t}(\mathbf{v}_{h}\xi(t))\|_{L^2(0,T;\mathbf{L}^2(\mathbb{D}))}+\|\u\|_{L^2(0,T;\mathbf{L}^2(\mathbb{D}))}\|\partial_{t}[(\v_{h}-\v)\xi(t)]\|_{L^2(0,T;\mathbf{L}^2(\mathbb{D}))}\longrightarrow 0.
\end{align}
$\mbox{as} \quad h,\Delta t\rightarrow 0.$\\
\paragraph{The convective term.}
Following \cite[Theorem 3]{DING2025116470},
\begin{align}
   \nonumber& \left|\int_{0}^{T}\left[a_{1}(\breve{\u}_{h, \Delta t},\mathring{\u}_{h, \Delta t},\mathbf{v}_{h})-a_{1}(\u,\u,\v)\right]\xi(t) dt\right|\\
  \nonumber&\le \tfrac12\left|\int_{0}^{T}\!\!\int_{\mathbb{D}}\bigl[(\breve{\u}_{h, \Delta t}\otimes\mathbf v_{h})-(\u\otimes\v)\bigr]:\nabla\mathring{\u}_{h, \Delta t}\,\xi(t)\,dx\,dt\right|
  +\tfrac12\left|\int_{0}^{T}\!\!\int_{\mathbb{D}}(\u\otimes\v):\nabla(\mathring{\u}_{h, \Delta t}-\u)\,\xi(t)\,dx\,dt\right|\\
  \nonumber &\qquad+C \| \breve{\u}_{h, \Delta t}-\u\|_{L^2(0,T;\mathbf{L}^2(\mathbb{D}))}\|\mathring{\u}_{h, \Delta t}\|_{L^2(0,T;\mathbf{L}^4(\mathbb{D}))}\|\nabla \mathbf{v}_{h}\xi (t)\|_{L^{\infty}(0,T;\mathbf{L}^4(\mathbb{D}))}
  \\\nonumber &\qquad +C\|\u\|_{L^2(0,T;\mathbf{L}^4(\mathbb{D}))}\|\mathring{\u}_{h, \Delta t}-\u\|_{L^2(0,T;\mathbf{L}^2(\mathbb{D}))}\|\nabla \mathbf{v}_{h}\xi (t)\|_{L^{\infty}(0,T;\mathbf{L}^4(\mathbb{D}))}\\
&\qquad +C\|\u\|_{L^2(0,T;\mathbf{L}^4(\mathbb{D}))}\|\u\|_{L^2(0,T;\mathbf{L}^4(\mathbb{D}))}\|(\nabla \mathbf{v}_{h}-\nabla \v)\xi (t)\|_{L^{\infty}(0,T;\mathbf{L}^2(\mathbb{D}))}\longrightarrow 0 \quad \mbox{as} \quad h,\Delta t\rightarrow 0.
\end{align}
\noindent
In the first line the weakly convergent factor $\nabla\mathring\u_{h,\Delta t}$ is paired against
$\breve\u_{h,\Delta t}\otimes\mathbf v_{h}-\u\otimes\v$, which tends to zero strongly in
$L^{2}(0,T;\mathbf L^{2})$ by Step~1 and by $\mathbf v_{h}\to\v$, while
$\|\nabla\mathring\u_{h,\Delta t}\|_{L^{2}(0,T;\mathbf L^{2})}\le C$ by
Corollary~\ref{cor-L2gradient}; in the second part the fixed tensor
$\u\otimes\v\in L^{p'}(\mathbb D_{T})$ is paired against
$\nabla(\mathring\u_{h,\Delta t}-\u)\rightharpoonup0$ in $L^{p}(\mathbb D_{T})$. Once the limit
is reached, $\operatorname{div}\u=0$ allows $a_{1}(\u,\u,\v)$ to be rewritten as
$-\int_{\mathbb D}(\u\otimes\u):\nabla\v$, which is the form appearing in
\eqref{eq-p-laplace convergence}.\\
\paragraph{The Coriolis force.} We will do the addition and subtraction and have the following
\begin{align}
    \nonumber&\bigg|\int_0^T\int_{\mathbb{D}}\bigg(\boldsymbol{\omega}_r\times\mathring{\u}_{h,\Delta t})\cdot\v_h-(\boldsymbol{\omega}_r\times\u)\cdot\v\bigg)\xi(t)dxdt\bigg|\\
    \nonumber&\le \bigg|\int_0^T\int_{\mathbb{D}}\bigg(\boldsymbol{\omega}_r\times\mathring{\u}_{h,\Delta t})\cdot\v_h-(\boldsymbol{\omega}_r\times\u)\cdot\v_h+(\boldsymbol{\omega}_r\times\u)\cdot\v_h-(\boldsymbol{\omega}_r\times\u)\cdot\v\bigg)\xi(t)dxdt\bigg|\\
   & \le C|\boldsymbol{\omega}_r| \|\mathring{\u}_{h, \Delta t}-\u\|_{L^2(0,T;\mathbf{L}^2(\mathbb{D}))}\|\mathbf{v}_{h}\xi(t)\|_{L^2(0,T;\mathbf{L}^2(\mathbb{D}))}+|\boldsymbol{\omega}_r|\|\u\|_{L^2(0,T;\mathbf{L}^2(\mathbb{D}))}\|(\v_{h}-\v)\xi(t)\|_{L^2(0,T;\mathbf{L}^2(\mathbb{D}))}\longrightarrow 0
\end{align}
$\mbox{as} \quad h,\Delta t\rightarrow 0.$\\
\paragraph{The Lorentz force.}
Splitting $\breve{\B}_{h,\Delta t}=(\breve{\B}_{h,\Delta t}-\mathcal Z(\breve{\B}_{h,\Delta t}))+\mathcal Z(\breve{\B}_{h,\Delta t})$ and following \cite[Theorem 4.7]{ding2022convergence},
\begin{align}\label{eq-F}
    \nonumber&\left|\int_{0}^{T}\int_{\mathbb{D}}\left[(\breve{\B}_{h, \Delta t}\times(\nabla \times\mathring{\B}_{h, \Delta t}))\cdot \mathbf{v}_{h}-(\B\times(\nabla \times\B))\cdot\v\right]\xi(t) dx dt\right|\\
   \nonumber& =\bigg|
   \int_{0}^{T}\int_{\mathbb{D}}\Big[(\breve{\B}_{h, \Delta t}-\mathcal{Z}(\breve{\B}_{h,\Delta t}))\times(\nabla \times\mathring{\B}_{h, \Delta t}),\mathbf{v}_{h}) 
   \\ &\qquad \nonumber +(\mathcal{Z}(\breve{\B}_{h,\Delta t})\times(\nabla \times\mathring{\B}_{h,\Delta t}),\mathbf{v}_{h}) -(\B\times(\nabla \times\B),\v)\xi(t)\Big] dx dt\bigg|\\ 
   \nonumber &\le C h^{\varrho+\frac{1}{2}}\|(\nabla \times\breve{\B}_{h,\Delta t})\|_{L^{2}(0,T;\mathbf{L}^2(\mathbb{D}))}\|(\nabla \times\mathring{\B}_{h,\Delta t})\|_{L^{2}(0,T;\mathbf{L}^3(\mathbb{D}))}\|\v_{h}\xi(t)\|_{L^{\infty}(0,T;\mathbf{L}^{6}(\mathbb{D}))}\\
\nonumber \qquad&+\left|\int_{0}^{T}\int_{\mathbb{D}}\left[(\mathcal{Z}(\breve{\B}_{h,\Delta t})\times(\nabla \times\mathring{\B}_{h,\Delta t}),\mathbf{v}_{h})-(\B\times(\nabla \times\B),\v)\right]\xi(t) dx dt\right|\\
 &=: J_{h, 1}+|J_{h, 2}|.
\end{align}
\noindent
Since $\mathcal Z:\mathcal W_{0h}\to\mathcal K$ and
$\operatorname{div}(\mathcal Z(\B))=0$, 
\begin{align*}
    \|\mathcal{Z}(\breve{\B}_{h,\Delta t})\|_{L^{2}(0,T,\mathcal{V}(\mathbb{D})})=\|\nabla \times\mathcal{Z}(\breve{\B}_{h,\Delta t})\|_{L^{2}(0,T;\mathbf{L}^{2}(\mathbb{D}))}=\|\nabla \times\breve{\B}_{h,\Delta t}\|_{L^{2}(0,T;\mathbf{L}^{2}(\mathbb{D}))}\le C.
\end{align*}
\noindent
Also,
$\|\nabla\times\mathring{\B}_{h,\Delta t}\|_{L^{2}(0,T;\mathbf L^{2})}$ is bounded by
\eqref{eq-bounds-2}; hence $J_{h,1}\to0$ because of the factor
$h^{\varrho+\frac12}$.
Choose $\delta_2>0$ such that
\begin{align}
    \frac{1}{3+\delta_{1}}+\frac{1}{6-\delta_{2}}=\frac{1}{2}.
\end{align}
Then $H^{1}(\mathbb D)\hookrightarrow L^{6-\delta_2}(\mathbb D)$, and for the second
term in \eqref{eq-F},
\begin{align}
\nonumber|J_{h,2}| &=\left|\int_{0}^{T}\int_{\mathbb{D}}\left[(\mathcal{Z}(\breve{\B}_{h,\Delta t})\times\nabla \times\mathring{\B}_{h,\Delta t},\mathbf{v}_{h})-(\B\times\nabla \times\B,\v)\right]\xi(t) dx dt\right|\\
   \nonumber &\le C\|\mathcal{Z}(\breve{\B}_{h,\Delta t})-\B\|_{L^2(0,T;\mathbf{L}^{3}(\mathbb{D}))}\|\nabla \times\breve{\B}_{h,\Delta t}\|_{L^2(0,T;\mathbf{L}^{2}(\mathbb{D}))}\|\v_{h}\xi(t)\|_{L^\infty(0,T;\mathbf{L}^{3}(\mathbb{D}))}\\ \nonumber&\quad+C\|\mathcal{Z}(\breve{\B}_{h,\Delta t})\|_{L^{2}(0,T;\mathbf{L}^{3+\delta_{1}}(\mathbb{D}))}\|\nabla \times\B\|_{L^2(0,T;\mathbf{L}^{2}(\mathbb{D}))}\|(\v_{h}-\v)\xi(t)\|_{L^\infty(0,T;\mathbf{L}^{6-\delta_{2}}(\mathbb{D}))}
   \\  &\quad +C\left|\int_{0}^{T}\int_{\mathbb{D}}\left[(\mathcal{Z}(\breve{\B}_{h,\Delta t})\times(\nabla \times\mathring{\B}_{h,\Delta t}-\nabla \times\B),\mathbf{v}_{h})\right]\xi(t) dx dt\right|\longrightarrow 0\qquad \mbox{as}\qquad h,\Delta t\rightarrow 0.
\end{align} 
By the Sobolev inequality we infer
\begin{align}
    \nonumber\|\mathcal{Z}(\breve{\B}_{h,\Delta t})-\B\|_{L^2(0,T;\mathbf{L}^{3}(\mathbb{D}))}&\le\|\mathcal{Z}(\breve{\B}_{h,\Delta t})-\B\|_{L^2(0,T;\mathbf{L}^{2}(\mathbb{D}))}^{\frac{2\delta_{1}}{3(1+\delta_{1})}}\|\mathcal{Z}(\breve{\B}_{h,\Delta t})-\B\|_{L^2(0,T;\mathbf{L}^{3+\delta_{1}}(\mathbb{D}))}^{\frac{3+\delta_{1}}{3(1+\delta_{1})}}.\\
    \|\mathcal{Z}(\breve{\B}_{h,\Delta t})-\B\|_{L^2(0,T;\mathbf{L}^{2}(\mathbb{D}))}^{\frac{2\delta_{1}}{3(1+\delta_{1})}}\nonumber&\le\left( \|\mathcal{Z}(\breve{\B}_{h,\Delta t})-\breve{\B}_{h,\Delta t}\|_{L^2(0,T;\mathbf{L}^{2}(\mathbb{D}))}+\|\breve{\B}_{h,\Delta t}-\B\|_{L^2(0,T;\mathbf{L}^{2}(\mathbb{D}))}\right)^{\frac{2\delta_{1}}{3(1+\delta_{1})}}\\
    &\le \left(Ch^{\frac{1}{2}+\varrho}\|(\nabla \times\breve{\B}_{h,\Delta t})\|_{L^2(0,T;\mathbf{L}^{2}(\mathbb{D}))}+\|\breve{\B}_{h,\Delta t}-\B\|_{L^2(0,T;\mathbf{L}^{2}(\mathbb{D}))}\right)^{\frac{2\delta_{1}}{3(1+\delta_{1})}}.
\end{align}
\paragraph{The $p$-Laplacian flux.}
Set
\[
A_{h,\Delta t}(x,t):=\nu\,|\nabla \mathring{\u}_{h, \Delta t}|^{p-2}\nabla\mathring{\u}_{h, \Delta t},
\qquad
A(\mathbf G):=\nu\,|\mathbf G|^{p-2}\mathbf G .
\]
By \eqref{eq-bounds-2} the family $\{A_{h,\Delta t}\}$ is bounded in
$L^{p'}(0,T;\mathbf L^{p'}(\mathbb D))$, so along a further subsequence
$A_{h,\Delta t}\rightharpoonup \bar A$ weakly in $L^{p'}(0,T;\mathbf L^{p'}(\mathbb D))$.
Passing to the limit in \eqref{eq-convergence}
yields the limit momentum equation
\begin{align}\label{eq-p-laplace convergence}
   \nonumber &\int_{0}^{T}\int_{\mathbb{D}}\partial_{t}\u\cdot\mathbf{v} \,dx\, dt-\int_{0}^{T}\int_{\mathbb{D}}({\u}\otimes{\u})\cdot\nabla \mathbf{v} \,dx\, dt+ \int_{0}^{T}\int_{\mathbb{D}}\bar{A}(x,t):\nabla \v \,dx\, dt\\
    &+2\int_0^T\int_{\mathbb D}(\boldsymbol{\omega}_r\times\u)\cdot\v \,dx \,dt+\int_{0}^{T}\int_{\mathbb{D}}({\B}\times(\nabla \times{\B}))\cdot \mathbf{v} \,dx\, dt=\int_{0}^{T}\int_{\mathbb{D}}{\mathbf{f}}\cdot\mathbf{v}\,dx\, dt+\int_{0}^{T}\int_{\mathbb{D}}{\theta\, e_3}\cdot\mathbf{v}\,dx\, dt,
\end{align}
\noindent
with $\bar A$ in place of
$A(\nabla\u)$. It therefore remains to prove
\begin{equation}\label{eq-minty-goal}
\bar A=A(\nabla\u)\qquad\text{a.e.\ in }\mathbb D_{T}.
\end{equation}

\noindent
The key ingredient is the energy $\limsup$ inequality
\begin{equation}\label{eq-limsup-key}
\limsup_{h,\Delta t\to0}\int_{0}^{T}\!\!\int_{\mathbb{D}}A_{h,\Delta t}:\nabla \mathring{\u}_{h, \Delta t}\,dx\,dt
\ \le\
\int_{0}^{T}\!\!\int_{\mathbb{D}}\bar A:\nabla \u\,dx\,dt .
\end{equation}
To obtain \eqref{eq-limsup-key} we test \eqref{eq-FEM-1} with $\mathbf v_{h}=2\Delta t\,\u^{m}_{h}$
and sum over $m=1,\dots,N$. Exactly as in the proof of Theorem~\ref{bounds} the convective
term drops by skew-symmetry, so that
\begin{align}\label{eq-discrete-energy-identity}
\nonumber
\int_{0}^{T}\!\!\int_{\mathbb{D}}A_{h,\Delta t}:\nabla \mathring{\u}_{h, \Delta t}
&=\tfrac12\|\u^{0}_{h}\|^{2}_{\mathbf L^{2}}-\tfrac12\|\u^{N}_{h}\|^{2}_{\mathbf L^{2}}
-\tfrac12\sum_{m=1}^{N}\|\u^{m}_{h}-\u^{m-1}_{h}\|^{2}_{\mathbf L^{2}}\\
\nonumber&\quad
-2\int_0^T\int_{\mathbb D}(\boldsymbol{\omega}_r\times\mathring \u_{h,\Delta t})\cdot\mathring \u _{h,\Delta t}\,dx \,dt-\int_{0}^{T}\!\!\int_{\mathbb{D}}\big(\breve{\B}_{h,\Delta t}\times(\nabla\times\mathring{\B}_{h,\Delta t})\big)\cdot\mathring{\u}_{h,\Delta t}
\\&\qquad+\int_{0}^{T}\!\!\int_{\mathbb{D}}\mathring{\mathbf f}_{\Delta t}\cdot\mathring{\u}_{h,\Delta t}
+\int_{0}^{T}\!\!\int_{\mathbb{D}}\breve\theta_{h,\Delta t}\,e_{3}\cdot\mathring{\u}_{h,\Delta t}.
\end{align}
On the right of \eqref{eq-discrete-energy-identity} the fourth term is nonpositive  and the third term is zero; hence, these terms may be
discarded, $\|\u^{0}_{h}\|_{\mathbf L^{2}}\to\|\u^{0}\|_{\mathbf L^{2}}$ by the approximation of
the initial datum and weak lower semicontinuity together with
$\u_{h,\Delta t}(T)\rightharpoonup\u(T)$ in $\mathbf L^{2}(\mathbb D)$ gives
$\limsup(-\tfrac12\|\u^{N}_{h}\|^{2}_{\mathbf L^{2}})\le-\tfrac12\|\u(T)\|^{2}_{\mathbf L^{2}}$.
The three integral terms converge to their continuous counterparts:
$\mathring{\mathbf f}_{\Delta t}\to\mathbf f$ and $\mathring{\u}_{h,\Delta t}\to\u$ strongly in
$L^{2}(0,T;\mathbf L^{2})$ and $\breve\theta_{h,\Delta t}\to\theta$ strongly in
$L^{2}(0,T;L^{q})$, $1\le q<6$, by Step~1, while the Lorentz term is treated as in the estimate
of $J_{h,1}+J_{h,2}$ above with $\mathring{\u}_{h,\Delta t}$ in place of $\mathbf v_{h}$ 
legitimate because $\mathring{\u}_{h,\Delta t}\to\u$ strongly in $L^{2}(0,T;\mathbf L^{4})$,
$\nabla\times\mathring{\B}_{h,\Delta t}$ converges weakly in $L^{2}(0,T;\mathbf L^{2})$ and
$\mathcal Z(\breve{\B}_{h,\Delta t})$ strongly in $L^{2}(0,T;\mathbf L^{3})$. Hence
\begin{align}\label{eq-limsup-rhs}
\nonumber
\limsup_{h,\Delta t\to0}\int_{0}^{T}\!\!\int_{\mathbb{D}}A_{h,\Delta t}:\nabla \mathring{\u}_{h, \Delta t}
&\le \tfrac12\|\u^{0}\|^{2}_{\mathbf L^{2}}-\tfrac12\|\u(T)\|^{2}_{\mathbf L^{2}}\\
&\quad-2\int_0^T\int_{\mathbb D}(\boldsymbol{\omega}_r\times\u)\cdot\u -\int_{0}^{T}\!\!\int_{\mathbb{D}}\big(\B\times(\nabla\times\B)\big)\cdot\u
+\int_{0}^{T}\!\!\int_{\mathbb{D}}\mathbf f\cdot\u
+\int_{0}^{T}\!\!\int_{\mathbb{D}}\theta\,e_{3}\cdot\u .
\end{align}
\noindent
On the right of \eqref{eq-limsup-rhs} the third term is zero.
On the other hand, under \eqref{eq-p-threshold} every term of \eqref{eq-p-laplace convergence}
lies in $L^{p'}(0,T;\mathcal X_{0}')$  this is verified term by term in
Remark~\ref{rem-p-threshold}, the Lorentz force being the binding one  so
$\u_{t}\in L^{p'}(0,T;\mathcal X_{0}')$. Since also $\u\in L^{p}(0,T;\mathcal X_{0})$ and
$\mathcal X_{0}\subset\mathbb H\subset\mathcal X_{0}'$ is a Gelfand triple, the Lions-Magenes
lemma applies: $\u\in\mathcal C([0,T];\mathbb H)$, $\u$ is an admissible test function in
\eqref{eq-p-laplace convergence}, and the energy equality
\[
\tfrac12\|\u(T)\|^{2}_{\mathbf L^{2}}-\tfrac12\|\u^{0}\|^{2}_{\mathbf L^{2}}
+\int_{0}^{T}\!\!\int_{\mathbb{D}}\bar A:\nabla\u
+\int_{0}^{T}\!\!\int_{\mathbb{D}}\big(\B\times(\nabla\times\B)\big)\cdot\u
=\int_{0}^{T}\!\!\int_{\mathbb{D}}\mathbf f\cdot\u+\int_{0}^{T}\!\!\int_{\mathbb{D}}\theta\,e_{3}\cdot\u
\]
holds. Substituting this identity into
\eqref{eq-limsup-rhs} gives precisely \eqref{eq-limsup-key}.

\noindent
\textbf{Minty's monotonicity argument.}
For $p\ge2$ the map $\mathbf G\mapsto A(\mathbf G)$ is monotone, hemicontinuous and bounded from
$L^{p}(0,T;\mathbf L^{p})$ into its dual, so for arbitrary
$\mathbf G\in L^{p}(0,T;\mathbf L^{p}(\mathbb D))^{3\times3}$,
\[
\mathcal I_{h,\Delta t}:=\int_{0}^{T}\!\!\int_{\mathbb{D}}\big(A_{h,\Delta t}-A(\mathbf G)\big):\big(\nabla \mathring{\u}_{h,\Delta t}-\mathbf G\big)\,dx\,dt\ \ge\ 0 .
\]
Expanding,
\[
\mathcal I_{h,\Delta t}
=\underbrace{\int_{0}^{T}\!\!\int_{\mathbb{D}}A_{h,\Delta t}:\nabla \mathring{\u}_{h,\Delta t}}_{\text{(i)}}
-\underbrace{\int_{0}^{T}\!\!\int_{\mathbb{D}}A_{h,\Delta t}:\mathbf G}_{\text{(ii)}}
-\underbrace{\int_{0}^{T}\!\!\int_{\mathbb{D}}A(\mathbf G):\big(\nabla \mathring{\u}_{h,\Delta t}-\mathbf G\big)}_{\text{(iii)}} .
\]
Terms (ii) and (iii) pass to the limit by the weak convergences
$A_{h,\Delta t}\rightharpoonup\bar A$ in $L^{p'}$ and
$\nabla\mathring{\u}_{h,\Delta t}\rightharpoonup\nabla\u$ in $L^{p}$, one factor being
fixed in each. Term (i) contains two weakly convergent factors, so \eqref{eq-limsup-key} is essential. Taking $\limsup$ we obtain
\begin{equation}\label{eq-minty-ineq}
\int_{0}^{T}\!\!\int_{\mathbb{D}}\big(\bar A-A(\mathbf G)\big):\big(\nabla\u-\mathbf G\big)\,dx\,dt\ \ge\ 0
\qquad\forall\,\mathbf G\in L^{p}(0,T;\mathbf L^{p}(\mathbb D))^{3\times3}.
\end{equation}
Choosing $\mathbf G=\nabla\u+\varepsilon\Phi$ with
$\Phi\in L^{p}(0,T;\mathbf L^{p}(\mathbb D))^{3\times3}$ and $\varepsilon>0$,
\eqref{eq-minty-ineq} becomes
\[
-\varepsilon\int_{0}^{T}\!\!\int_{\mathbb{D}}\big(\bar A-A(\nabla\u+\varepsilon\Phi)\big):\Phi\,dx\,dt\ \ge\ 0 .
\]
Dividing by $\varepsilon>0$ and letting $\varepsilon\to0^{+}$, hemicontinuity of $A$ gives
$\int_{0}^{T}\!\int_{\mathbb{D}}(\bar A-A(\nabla\u)):\Phi\le 0$, and replacing $\Phi$ by
$-\Phi$ gives equality. Since $\Phi$ is arbitrary, \eqref{eq-minty-goal} follows.

\paragraph{The data and the buoyancy force.}
For the forcing term,

\begin{align}
  \nonumber & \int_{0}^{T}\int_{\mathbb{D}}(\mathring{\mathbf{f}}_{\Delta t}\cdot\mathbf{v}_{h}-\mathbf{f}\cdot \v)\xi(t) dx dt\\
  \nonumber&=  \int_{0}^{T}\int_{\mathbb{D}}\left((\mathring{\mathbf{f}}_{\Delta t}-\mathbf{f})\cdot\mathbf{v}_{h}+\mathbf{f}\cdot (\v_h- \v)\right)\xi(t) dx dt\\
  &\le C\|\mathring{\mathbf{f}}_{\Delta t}-\mathbf{f}\|_{L^2(0,T;\mathbf{L}^{2}(\mathbb{D}))}\|\v_{h}\xi(t)\|_{L^2(0,T;\mathbf{L}^{2}(\mathbb{D}))}+\|\mathbf{f}\|_{L^2(0,T;\mathbf{L}^{2}(\mathbb{D}))}\|(\v_h-\v)\xi(t)\|_{L^2(0,T;\mathbf{L}^{2}(\mathbb{D}))}\longrightarrow 0,
\end{align}
as $h,\Delta t\rightarrow 0$. For the buoyancy force,
\begin{align}
\nonumber&\int_{0}^{T}\int_{\mathbb{D}}(\breve{\theta}_{h,\Delta t}\, e_3\cdot\mathbf{v}_{h}-\theta\, e_3\cdot\mathbf{v} )\xi(t)dx dt\\
\nonumber&=\int_{0}^{T}\int_{\mathbb{D}}\big((\breve{\theta}_{h,\Delta t}-\theta)\, e_3\cdot\mathbf{v}_{h}+\theta\, e_3\cdot(\v_{h}-\mathbf{v})\big)\xi(t)dx dt\\
&\le C \|\breve{\theta}_{h,\Delta t}-\theta\|_{L^2(0,T;{L}^{2}(\mathbb{D}))}\|\v_{h}\xi(t)\|_{L^2(0,T;\mathbf{L}^{2}(\mathbb{D}))}+\|\theta\|_{L^2(0,T;{L}^{2}(\mathbb{D}))}\|(\v_h-\v)\xi(t)\|_{L^2(0,T;\mathbf{L}^{2}(\mathbb{D}))}\longrightarrow 0,
\end{align}
using the strong convergence $\breve\theta_{h,\Delta t}\to\theta$ in
$L^{2}(0,T;L^{2}(\mathbb D))$ from Step~1. 

\medskip
\noindent
\textbf{Step 3: the induction equation.}\, We now show that $\B$ satisfies \eqref{2nd weak equation}. By \eqref{eq-FEM-2}, the pair $(\u_{h,\Delta t},\B_{h,\Delta t})$ satisfies
\begin{align}\label{eq-2-convergence}
  \nonumber  &\int_{0}^{T}\int_{\mathbb{D}}\left(\partial_{t} \B_{h,\Delta t}\cdot\mathbf{w}_{h}\right) dxdt+\kappa\int_{0}^{T}\int_{\mathbb{D}}\left( \nabla \times \mathring{\B}_{h, \Delta t}\cdot \nabla \times\mathbf{w}_{h}\right)dx dt\\&\qquad+\int_{0}^{T}\int_{\mathbb{D}}\left(\mathring{\u}_{h, \Delta t} \times \breve{\B}_{h, \Delta t}\cdot(\nabla \times\mathbf{w}_{h})\right) dxdt=\int_{0}^{T}\int_{\mathbb{D}}\left(\mathring{\mathbf{g}}_{\Delta t}\cdot\mathbf{w}_{h}\right) dxdt.
\end{align}
For any $\mathbf{w}\in\mathcal{G}$ we may choose $\mathbf{w}_{h}\in \mathcal{W}_{0h}$ with
$\mathbf{w}_{h}\rightarrow \mathbf{w}$ in $\mathbf H(\operatorname{curl};\mathbb{D})$, and in
addition $\nabla\times\mathbf w_{h}\to\nabla\times\mathbf w$ in $\mathbf L^{q}(\mathbb D)$ for
every $2\le q<6$. This is possible precisely because $\mathcal W_{h}$ carries no
tangential boundary condition: elements of $\mathcal G$ satisfy only
$\mathbf w\cdot\mathbf n=0$, and $\mathcal G$ is dense in $\mathcal K$, the weak limit class of
$\mathcal W_{0h}$ identified in Lemma~\ref{Lemma 1}.\\
For the time-derivative term, integration by parts in time gives
\begin{align}
\nonumber&\left|\int_{0}^{T}\int_{\mathbb{D}}\left(\partial_{t} \B_{h,\Delta t}\cdot\mathbf{w}_{h}-\partial_{t}\B\cdot\mathbf{w}\right)\xi(t) dx dt\right|\\
  \nonumber &=\left|\int_{0}^{T}\left[(-\B_{h,\Delta t},\partial_{t}(\mathbf{w}_{h}\xi(t)))+(\B,\partial_{t}(\mathbf{w}\xi(t)))-(\B_{h}^{0},\mathbf{w}_{h}\xi(0))+(\B^{0},\mathbf{w}\xi(0))\right]dt\right|\\
 \nonumber &\le C\|\B_{h,\Delta t}-\B\|_{L^2(0,T;\mathbf{L}^{2}(\mathbb{D}))}\|\partial_{t}(\mathbf{w}_{h}\xi(t))\|_{L^2(0,T;\mathbf{L}^{2}(\mathbb{D}))}\\&\qquad+C\|\B\|_{L^2(0,T;\mathbf{L}^{2}(\mathbb{D}))}\|\partial_{t}\bigl((\mathbf{w}_{h}-\mathbf{w})\xi(t)\bigr)\|_{L^2(0,T;\mathbf{L}^{2}(\mathbb{D}))}\longrightarrow 0 \qquad \mbox{as}\qquad h,\Delta t\longrightarrow 0.
    \end{align}
    For the diffusive term,
    \begin{align}
       \nonumber &\left|\kappa\int_{0}^{T}\int_{\mathbb{D}}\left[(\nabla \times \mathring{\B}_{h, \Delta t}\cdot \nabla \times\mathbf{w}_{h})-(\nabla \times\B\cdot\nabla \times\mathbf{w})\right]\xi(t)dx dt\right|\\
       \nonumber& = \kappa\left|\int_{0}^{T}\int_{\mathbb{D}}\left[\bigl(\nabla \times (\mathring{\B}_{h, \Delta t}-\B)\bigr)\cdot \nabla \times\mathbf{w}_h+(\nabla \times \mathring{\B}_{h, \Delta t})\cdot \nabla \times(\mathbf{w}_{h}-\mathbf{w})\right]\xi(t)dx dt\right|\\
       \nonumber&\le C\left|\int_{0}^{T}\int_{\mathbb{D}}\bigl(\nabla \times (\mathring{\B}_{h, \Delta t}-\B)\bigr)\cdot \left((\nabla \times\mathbf{w}_h)\,\xi(t)\right)dx dt\right|\\  &\qquad+ C\|\nabla \times\mathring{\B}_{h, \Delta t}\|_{L^2(0,T;\mathbf{L}^{2}(\mathbb{D}))}\|(\nabla \times\mathbf{w}_{h}-\nabla \times\mathbf{w})\xi(t)\|_{L^2(0,T;\mathbf{L}^{2}(\mathbb{D}))}\longrightarrow 0 \qquad \mbox{as}\qquad h, \Delta t\rightarrow 0.
    \end{align}
\noindent
Here the first term tends to zero by the weak convergence of
$\nabla\times\mathring\B_{h,\Delta t}$ against the fixed function
$(\nabla\times\mathbf w_h)\xi$, while the second follows from
\eqref{eq-bounds-2} and $\mathbf w_h\to\mathbf w$.
    For the induction coupling we use \eqref{eq-inverse estimate} and \eqref{ine-zbh}:
    \begin{align}
&\left|\int_{0}^{T}\int_{\mathbb{D}}\left[(\mathring{\u}_{h, \Delta t} \times \breve{\B}_{h, \Delta t}\cdot\nabla \times\mathbf{w}_{h})-(\u\times\B\cdot\nabla \times\mathbf{w})\right] \xi(t)dxdt\right| \nonumber\\
&\le\left|\int_{0}^{T}\int_{\mathbb{D}}\left[(\mathring{\u}_{h, \Delta t} \times (\breve{\B}_{h, \Delta t}-\mathcal{Z}(\breve{\B}_{h, \Delta t}))\cdot\nabla \times\mathbf{w}_{h})+(\mathring{\u}_{h, \Delta t} \times\mathcal{Z}(\breve{\B}_{h, \Delta t}))\cdot\nabla \times\mathbf{w}_{h})-(\u\times\B\cdot\nabla \times\mathbf{w})\right]\xi(t) dxdt\right| \nonumber\\
\nonumber& \le Ch^{\varrho+\frac{1}{2}}\|\nabla \times\breve{\B}_{h, \Delta t}\|_{L^2(0,T;\mathbf{L}^{3}(\mathbb{D}))}\|\mathring{\u}_{h, \Delta t}\|_{L^\infty(0,T;\mathbf{L}^{2}(\mathbb{D}))}\|(\nabla \times\mathbf{w}_{h})\xi(t)\|_{L^2(0,T;\mathbf{L}^{6}(\mathbb{D}))} \\
\nonumber&\qquad +\left|\int_{0}^{T}\int_{\mathbb{D}}\left[(\mathring{\u}_{h, \Delta t} \times\mathcal{Z}(\breve{\B}_{h, \Delta t})\cdot\nabla \times\mathbf{w}_{h})-(\u\times\B\cdot\nabla \times\mathbf{w})\right]\xi(t) dxdt\right|\\
\nonumber& \le Ch^{\varrho}\|\nabla \times\breve{\B}_{h, \Delta t}\|_{L^2(0,T;\mathbf{L}^{2}(\mathbb{D}))}\|\mathring{\u}_{h, \Delta t}\|_{L^\infty(0,T;\mathbf{L}^{2}(\mathbb{D}))}\|(\nabla \times\mathbf{w}_{h})\xi(t)\|_{L^2(0,T;\mathbf{L}^{6}(\mathbb{D}))}\\
\nonumber&\qquad +\left|\int_{0}^{T}\int_{\mathbb{D}}\left[(\mathring{\u}_{h, \Delta t} \times\mathcal{Z}(\breve{\B}_{h, \Delta t})\cdot\nabla \times\mathbf{w}_{h})-(\u\times\B\cdot\nabla \times\mathbf{w})\right]\xi(t) dxdt\right|\\
&=:E_{1}+|E_{2}|\rightarrow 0
\end{align}
as $h,\Delta t\rightarrow 0$: $E_{1}\to0$ by the factor $h^{\varrho}$ and the bounds of
Theorem~\ref{bounds}, and $E_{2}\to0$ by the Step~1 strong convergences of
$\mathring{\u}_{h,\Delta t}$ in $L^{2}(0,T;\mathbf L^{4})$ and of
$\mathcal Z(\breve{\B}_{h,\Delta t})$ in $L^{2}(0,T;\mathbf L^{3})$, together with
$\nabla\times\mathbf w_{h}\to\nabla\times\mathbf w$; see also
\cite[Theorem 3]{DING2025116470}.

\noindent
Finally, for the right-hand side of \eqref{eq-2-convergence},
\begin{align}
    \nonumber&\int_{0}^{T}\int_{\mathbb{D}}\left(\mathring{\mathbf{g}}_{\Delta t}\cdot\mathbf{w}_{h}-\mathbf{g}\cdot\mathbf{w}\right) \xi(t)dx dt\\
    \nonumber&=\int_{0}^{T}\int_{\mathbb{D}}\left((\mathring{\mathbf{g}}_{\Delta t}-\mathbf{g})\cdot\mathbf{w}_{h}+\mathbf{g}\cdot(\mathbf{w}_{h}-\mathbf{w})\right)\xi(t) dx dt\\
    &\le C\|\mathring{\mathbf{g}}_{\Delta t}-\mathbf{g}\|_{L^2(0,T;\mathbf{L}^{2}(\mathbb{D}))}\|\mathbf{w}_{h}\xi(t)\|_{L^2(0,T;\mathbf{L}^{2}(\mathbb{D}))}+\|\mathbf{g}\|_{L^2(0,T;\mathbf{L}^{2}(\mathbb{D}))}\|(\mathbf{w}_{h}-\mathbf{w})\xi(t)\|_{L^2(0,T;\mathbf{L}^{2}(\mathbb{D}))}\longrightarrow 0\,,
\end{align}
as $h, \Delta t \longrightarrow 0$.
Hence \eqref{eq-2-convergence} converges to the induction equation.

\medskip
\noindent
\textbf{Step 4: the temperature equation.}\, Finally, by \eqref{eq-FEM-3},
 \begin{align}{\label{eq-3-convergence}}
        \nonumber&\int_{0}^{T}\int_{\mathbb{D}}\partial_{t}\theta_{h,\Delta t}\,\phi_{h}\, dx\, dt+\sigma\int_{0}^{T}\int_{\mathbb{D}}\nabla \mathring{\theta}_{h\Delta t}\cdot\nabla\phi_{h}\,dx\, dt+\int_{0}^{T}a_{2}(\breve{\u}_{h\Delta t},\mathring{\theta}_{h\Delta t},\phi_{h})\, dt\\&\qquad
        =\int_{0}^{T}(\mathring{\u}_{h\Delta t}\cdot e_3,\phi_{h})\, dt+\int_{0}^{T}\int_{\mathbb{D}}\mathring{\psi}_{\Delta t}\,\phi_{h}\,dx\,dt .
        \end{align}
\noindent
For $\phi\in \mathcal{C}_{0}^{\infty}(\mathbb D)$ let $\phi_{h}:=A_{h}\phi\in \mathcal{Y}_{h}$,
with $A_{h}$ the $H^{1}$-orthogonal projection onto $\mathcal{Y}_{h}$, so that
$\phi_{h}\rightarrow \phi$ in $H^{1}_{0}(\mathbb D)$.
\\ \noindent
For the time-derivative term, integration by parts in time gives
\begin{align}
\nonumber&\left|\int_{0}^{T}\int_{\mathbb{D}}\left(\partial_{t} \theta_{h,\Delta t}\cdot\phi_{h}-\partial_{t}\theta\cdot\phi\right)\xi(t) dx dt\right|\\
  \nonumber &=\left|\int_{0}^{T}\left[(-\theta_{h,\Delta t},\partial_{t}(\phi_{h}\xi(t)))+(\theta,\partial_{t}(\phi\xi(t)))-(\theta_{h}^{0},\phi_{h}\xi(0))+(\theta^{0},\phi\xi(0))\right]dt\right|\\
 \nonumber &\le C\|\theta_{h,\Delta t}-\theta\|_{L^2(0,T;{L}^{2}(\mathbb{D}))}\|\partial_{t}(\phi_{h}\xi(t))\|_{L^2(0,T;{L}^{2}(\mathbb{D}))}\\&\qquad+C\|\theta\|_{L^2(0,T;{L}^{2}(\mathbb{D}))}\|\partial_{t}\bigl((\phi_{h}-\phi)\xi(t)\bigr)\|_{L^2(0,T;{L}^{2}(\mathbb{D}))}\longrightarrow 0 \qquad \mbox{as}\qquad h,\Delta t\longrightarrow 0.
    \end{align}
      For the diffusive term,
    \begin{align}
       \nonumber &\left|\sigma\int_{0}^{T}\int_{\mathbb{D}}\left[(\nabla \mathring{\theta}_{h, \Delta t}\cdot \nabla\phi_{h})-(\nabla \theta\cdot\nabla\phi)\right]\xi(t)dx dt\right|\\
       \nonumber& = \sigma\left|\int_{0}^{T}\int_{\mathbb{D}}\left[\nabla(\mathring{\theta}_{h, \Delta t}-\theta)\cdot \nabla\phi_h+\nabla\mathring{\theta}_{h, \Delta t}\cdot \nabla(\phi_{h}-\phi)\right]\xi(t)dx dt\right|\\
       \nonumber&\le C\left|\int_{0}^{T}\int_{\mathbb{D}}\nabla(\mathring{\theta}_{h, \Delta t}-\theta)\cdot \bigl(\nabla\phi_h\,\xi(t)\bigr)dx dt\right|\\  &\qquad+ C\|\nabla \mathring{\theta}_{h, \Delta t}\|_{L^2(0,T;{L}^{2}(\mathbb{D}))}\|(\nabla\phi_{h}-\nabla \phi)\xi(t)\|_{L^2(0,T;{L}^{2}(\mathbb{D}))}\longrightarrow 0 \qquad \mbox{as}\qquad h, \Delta t\rightarrow 0,
    \end{align}
\noindent
by weak-strong convergence: the first term uses the weak convergence of
$\nabla\mathring\theta_{h,\Delta t}$ against the fixed function
$\nabla\phi_h\,\xi$, and the second follows from \eqref{eq-bounds-2} and
$\phi_h\to\phi$ in $H_0^1(\mathbb D)$.
    \noindent
    For the thermal transport term, \cite[Theorem 4.7]{ding2022convergence},
    \begin{align}
        \nonumber&\left|\int_{0}^{T}\int_{\mathbb{D}}\left(a_{2}(\breve{\u}_{h\Delta t},\mathring{\theta}_{h\Delta t},\phi_{h})-a_{2}(\u,\theta,\phi)\right) \xi(t)dt\right|\\
      \nonumber  &\le C\|\breve{\u}_{h,\Delta t}-\u\|_{L^2(0,T;\mathbf{L}^{4}(\mathbb{D}))}\|\nabla\mathring{\theta}_{h,\Delta t}\|_{L^2(0,T;\mathbf{L}^{2}(\mathbb{D}))}\|\phi_{h}\xi(t)\|_{L^\infty(0,T;{L}^{4}(\mathbb{D}))}\\
      \nonumber&\qquad+C\|\u\|_{L^2(0,T;\mathbf{L}^{4}(\mathbb{D}))}\|\nabla\theta\|_{L^2(0,T;\mathbf{L}^{2}(\mathbb{D}))}\|(\phi_{h}-\phi)\xi(t)\|_{L^\infty(0,T;{L}^{4}(\mathbb{D}))}\\
      \nonumber &\qquad +C\|\breve{\u}_{h,\Delta t}-\u\|_{L^2(0,T;\mathbf{L}^{4}(\mathbb{D}))}\|\nabla\phi_{h}\xi(t)\|_{L^\infty(0,T;\mathbf{L}^{2}(\mathbb{D}))}\|\mathring{\theta}_{h,\Delta t}\|_{L^2(0,T;{L}^{4}(\mathbb{D}))}\\
      \nonumber &\qquad +C\|\u\|_{L^2(0,T;\mathbf{L}^{4}(\mathbb{D}))}\|\nabla\phi_{h}\xi(t)\|_{L^\infty(0,T;\mathbf{L}^{2}(\mathbb{D}))}\|\mathring{\theta}_{h,\Delta t}-\theta\|_{L^2(0,T;{L}^{4}(\mathbb{D}))}\\
      \nonumber &\qquad +C\|\u\|_{L^2(0,T;\mathbf{L}^{4}(\mathbb{D}))}\|\nabla(\phi_{h}-\phi)\xi(t)\|_{L^\infty(0,T;\mathbf{L}^{2}(\mathbb{D}))}\|\mathring{\theta}_{h,\Delta t}\|_{L^2(0,T;{L}^{4}(\mathbb{D}))}\\
&\qquad+C\left|\int_{0}^{T}\int_{\mathbb{D}}\bigl(\u\cdot\nabla(\mathring{\theta}_{h,\Delta t}-\theta)\bigr)\phi_{h}\,\xi(t)\, dx dt\right|\longrightarrow 0 \qquad \mbox{as}\qquad h, \Delta t\rightarrow 0.
    \end{align}
\noindent
Only weak convergence of $\nabla\mathring\theta_{h,\Delta t}$ is available, so the last term
cannot be estimated directly. Since $\operatorname{div}\u=0$ by Step~1 and
$\phi_{h}\in\mathcal Y_{h}\subset H^{1}_{0}(\mathbb D)$, we integrate by parts to move the
derivative onto the test function:
\[
\int_{\mathbb{D}}\bigl(\u\cdot\nabla(\mathring{\theta}_{h,\Delta t}-\theta)\bigr)\phi_{h}\,dx
=-\int_{\mathbb{D}}\bigl(\u\cdot\nabla\phi_{h}\bigr)(\mathring{\theta}_{h,\Delta t}-\theta)\,dx ,
\]
whence
\[
\left|\int_{0}^{T}\!\!\int_{\mathbb{D}}\bigl(\u\cdot\nabla(\mathring{\theta}_{h,\Delta t}-\theta)\bigr)\phi_{h}\,\xi\,dx\,dt\right|
\le C\|\u\|_{L^{2}(0,T;\mathbf L^{4})}\|\nabla\phi_{h}\,\xi\|_{L^{\infty}(0,T;\mathbf L^{2})}
\|\mathring{\theta}_{h,\Delta t}-\theta\|_{L^{2}(0,T;L^{4})}\longrightarrow0,
\]
by the strong convergence $\mathring\theta_{h,\Delta t}\to\theta$ in $L^{2}(0,T;L^{q})$,
$1\le q<6$, established in Step~1.
\noindent
Finally, for the two terms on the right of \eqref{eq-3-convergence}: by Step~1
$\mathring{\u}_{h\Delta t}\to\u$ strongly in $L^{2}(0,T;\mathbf L^{2})$ and
$\phi_{h}\to\phi$ in $H^{1}_{0}(\mathbb D)$, hence
\begin{align*}
&\left|\int_{0}^{T}\big((\mathring{\u}_{h\Delta t}\cdot e_{3},\phi_{h})-(\u\cdot e_{3},\phi)\big)\xi(t)\,dt\right|\\
&\qquad\le
C\|\mathring{\u}_{h\Delta t}-\u\|_{L^{2}(0,T;\mathbf L^{2})}\|\phi_{h}\xi\|_{L^{2}(0,T;L^{2})}
+C\|\u\|_{L^{2}(0,T;\mathbf L^{2})}\|(\phi_{h}-\phi)\xi\|_{L^{2}(0,T;L^{2})}\longrightarrow0,
\end{align*}
and, in the same way, $|\int_{0}^{T}((\mathring{\psi}_{\Delta t},\phi_{h})-(\psi,\phi))\xi\,dt|\to0$
using $\mathring{\psi}_{\Delta t}\to\psi$ in $L^{2}(0,T;L^{2}(\mathbb D))$.
Collecting Steps~1-4, the limit $(\u,\B,\theta)$ satisfies
\eqref{1st weak form}-\eqref{3rd weak equation} for all test functions in
$\mathcal J\times\mathcal C^{\infty}_{0}\times\mathcal G$, hence by density for all
$(\mathbf v,\phi,\mathbf C)\in\mathcal X_{0}\times\mathcal Y_{0}\times\mathcal K$  note that
$\mathcal G$ is dense in $\mathcal K$. The initial
data are attained through the terms $\xi(0)$ retained in the integration by parts in time. This
completes the proof.
\end{proof}

\medskip

\section{Error analysis}\label{sec-error}

We now assume sufficient regularity of the exact solution and derive a~priori error
estimates in space and time for the velocity, magnetic induction, and temperature. The
estimate is mesh-independent: the admissible time step depends only on the data and on $T$.
\\ \noindent
Three points are essential. First, because $a_{0}$ is nonlinear, we use the linear Stokes
projection and retain the nonlinearity as a consistency term. Second, the velocity
dissipation is $\|\nabla E_{\u}\|^{p}_{\mathbf L^{p}}$; hence the pair $(p,p')$ is applied
only to consistency terms involving the exact solution. This motivates
\eqref{eq-exactly-divfree} and the $L^{\infty}$-bounds of
Lemma~\ref{eq-error-estimate bounds}. Third, the temporal consistency errors are estimated
through integral representations, avoiding terms of order $h^{2l}/\Delta t$.

\subsection{Regularity assumption and statement of the main result}\label{subsec-assumption}

\begin{assumption}
\label{Assumption}
    Suppose that $(\u,\B,\theta)$ satisfies the following regularity:
  \begin{center}
      $\u\in L^\infty(0,T;\mathbf{W}^{s+1,p}(\mathbb{D}))$,\qquad $\B\in L^\infty(0,T;\mathbf{W}^{s,2}(\mathbb{D}))$, \qquad $\theta\in L^\infty(0,T;{W}^{s+1,2}(\mathbb{D}))$ \\

       $\nabla\u,\ \nabla\times\B,\ \nabla\theta\in L^\infty(0,T;\mathbf{L}^{\infty}(\mathbb{D}))$,\\
       $\u_{t}\in L^{2}(0,T;\mathbf{W}^{1+l,2}(\mathbb{D}))$,\qquad
           $\B_{t}\in L^{2}(0,T;\mathbf{W}^{l,2}(\mathbb{D}))$,\qquad
           $\theta_{t}\in L^{2}(0,T;{W}^{1+l,2}(\mathbb{D}))$,\\
    $\partial_{t}\mathbf f,\ \partial_{t}\mathbf g\in L^{2}(0,T;\L^{2}(\mathbb{D}))$,\qquad
           $\partial_{t}\psi\in L^{2}(0,T;L^{2}(\mathbb{D}))$.\\
             $\u_{tt}\in L^2(0,T;\L^2(\mathbb{D}))$,\qquad $\B_{tt}\in L^2(0,T;\L^2(\mathbb{D}))$,\qquad $\theta_{tt}\in L^2(0,T;L^2(\mathbb{D}))$,\\
  \end{center} 
where $s>\frac12$ depends on $\mathbb D$, $l=\min\{k,s\}$, and the strict inequality is
required in \eqref{eq-sobolev-Linfty}-\eqref{eq-sobolev-W13}.
\end{assumption}

{\noindent
Since the discrete data \eqref{eq-data-averages} are time averages while the continuous
equations are evaluated at $t=t_n$, the resulting consistency error satisfies
\begin{equation}\label{eq-data-consistency}
\|\mathbf f(t_{n})-\mathbf f^{n}\|_{\L^{2}}
=\Big\|\frac{1}{\Delta t}\int_{t_{n-1}}^{t_{n}}\!\!\int_{s}^{t_{n}}\partial_{t}\mathbf f(\tau)\,d\tau\,ds\Big\|_{\L^{2}}
\le(\Delta t)^{\frac12}\|\partial_{t}\mathbf f\|_{L^{2}(t_{n-1},t_{n};\L^{2})},
\end{equation}
and similarly for $\mathbf g$ and $\psi$; these terms appear in
\eqref{error-u-truncation}-\eqref{error-theta-trucation}. The consistency errors
$M^{n}_{h,i}$ are estimated through
$d_{t}(I-\mathcal P_{h})\u^{n}=\Delta t^{-1}\!\int_{t_{n-1}}^{t_{n}}(I-\mathcal P_{h})\partial_{t}\u\,dt$,
rather than endpoint values, which would generate the unwanted factor
$h^{2l}/\Delta t$. The $L^{\infty}$-bounds keep all error factors in
$\mathbf L^{2}$, allowing quadratic Young inequalities throughout.}
\noindent
The main result is the following.
\begin{theorem}\label{Main theorem of error}
    Let Assumption~\ref{Assumption} hold, let the velocity pair satisfy
    \eqref{eq-exactly-divfree}, and let the initial approximations
    $\mathbf{U}^{0},\mathfrak{B}^{0},\Theta^{0}$ satisfy
    $$
    \|\u^{0}-\mathbf{U}^{0}\|_{\L^2}+\|\B^{0}-\mathfrak{B}^{0}\|_{\L^2}+\|\theta^{0}-\Theta^{0}\|_{L^2}\le C^{*}h^{l}\,,
    $$
    with $l=\min\{k,s\}$. Then there exists a positive constant $\Delta t_{0}$ such that when $\Delta t\le \Delta t_0$ the finite element system \eqref{eq-FEM-1}-\eqref{eq-FEM-3} admits a unique solution $(\mathbf{U}^{n},\mathfrak{B}^{n},\Theta^n)$, and for every $1\le m\le N$ the projected errors
    $E^{n}_{\u},E^{n}_{\B},E^{n}_{\theta}$ defined in \eqref{eq-def-projected-errors} satisfy
    \begin{align}\label{eq-main-error}
\nonumber
&\|E^{m}_{\u}\|^{2}_{\L^2}+\|E^{m}_{\B}\|^{2}_{\L^2}+\|E^{m}_{\theta}\|^{2}_{L^2}\\
&\qquad+(\Delta t)\sum_{n=1}^{m}\Big\{
\tfrac{\nu}{2}2^{2-p}\|\nabla E^{n}_{\u}\|^{p}_{\L^p}
+\tfrac{\kappa}{2}\|\nabla\times E^{n}_{\B}\|^{2}_{\L^2}
+\tfrac{\sigma}{2}\|\nabla E^{n}_{\theta}\|^{2}_{L^2}\Big\}
\ \le\ C^*\Big\{(\Delta t)^2+h^{\frac{lp}{p-1}}\Big\}.
\end{align}
Consequently, by the triangle inequality and
\eqref{eq-error approximation}-\eqref{eq-error approximation Lp},
\begin{align}\label{eq-main-error-total}
\max_{1\le m\le N}\Big(\|\u^{m}-\mathbf{U}^{m}\|_{\L^2}+\|\B^{m}-\mathfrak{B}^{m}\|_{\L^2}+\|\theta^{m}-\Theta^{m}\|_{L^2}\Big)
\le C^{*}\Big\{\Delta t+h^{\frac{lp}{2(p-1)}}\Big\},
\end{align}
 where $C^*$ is independent of $\Delta t$ and the mesh size $h$.
\end{theorem}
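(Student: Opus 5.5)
The proof follows the standard projection-splitting error analysis.

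\textbf{Splitting.} Write the total errors as $\u^n-\mathbf U^n=(\u^n-\mathcal P_h\u^n)+E^n_{\u}$, and similarly $\B^n-\mathfrak B^n=(\B^n-\mathcal F_h\B^n)+E^n_{\B}$ and $\theta^n-\Theta^n=(\theta^n-\mathcal R_h\theta^n)+E^n_\theta$. Here $\mathcal P_h$ is the linear Stokes projection, $\mathcal F_h$ the curl-curl (Ritz-type) projection onto $\mathcal W_{0h}$, and $\mathcal R_h$ the $H^1_0$-Ritz projection.

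\textbf{Error equations.} Evaluate the weak formulation \eqref{1st weak form}--\eqref{3rd weak equation} at $t=t_n$ and test with discrete functions. By \eqref{eq-exactly-divfree}, $\mathcal X_{0h}\subset\mathcal X_0$, so no pressure consistency term appears. Subtracting \eqref{eq-FEM-1}--\eqref{eq-FEM-3} gives error equations for $(E^n_{\u},E^n_{\B},E^n_\theta)$. Their right-hand sides consist of truncation terms of four kinds: $\u_t(t_n)-d_t\u^n$, the projection errors $d_t(I-\mathcal P_h)\u^n$ written as time integrals of $(I-\mathcal P_h)\partial_t\u$, the data mismatch \eqref{eq-data-consistency}, and the lagging errors $\u^n-\u^{n-1}$, $\B^n-\B^{n-1}$, $\theta^n-\theta^{n-1}$. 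Each is $O(\Delta t^{1/2})$ in $L^2(t_{n-1},t_n)$ via $\u_{tt},\B_{tt},\theta_{tt}$.

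\textbf{Energy estimate.} Test with $(2\Delta t E^n_{\u},2\Delta t E^n_{\B},2\Delta t E^n_\theta)$. The time-difference terms produce $\|E^n\|^2-\|E^{n-1}\|^2+\|E^n-E^{n-1}\|^2$. The Coriolis term with $E^n_{\u}$ vanishes by \eqref{coriolis skew property}, and its projection part is linear and bounded by $|\boldsymbol\omega_r|\,\|(I-\mathcal P_h)\u^n\|\,\|E^n_{\u}\|$. The Lorentz and induction terms are split as $\B^n\times\nabla\times\B^n-\mathfrak B^{n-1}\times\nabla\times\mathfrak B^n$. The purely discrete-error pieces $E^{n-1}_{\B}\times\nabla\times E^n_{\B}$ cancel against the matching induction pieces by \eqref{eq-triple-product}. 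The remaining pieces contain one factor of the exact solution. These are bounded using $\nabla\times\B\in L^\infty$ and the $L^\infty$ bounds on the discrete solution of Lemma~\ref{eq-error-estimate bounds}, so that all error factors stay in $L^2$ (or in $\nabla\times E_{\B}$, absorbed into $\kappa/2$). Convection is treated the same way: after skew-symmetry, $a_1(\u^{n-1},E_{\u}^n,E_{\u}^n)=0$ and the remaining terms use $\nabla\u\in L^\infty$. Temperature transport uses $\nabla\theta\in L^\infty$, and the buoyancy/reciprocal terms are linear in the errors.

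\textbf{The $p$-Laplacian (main obstacle).} Write
\[
(a_0(\u^n)-a_0(\mathbf U^n),\nabla E^n_{\u})=(a_0(\u^n)-a_0(\mathcal P_h\u^n),\nabla E^n_{\u})+(a_0(\mathcal P_h\u^n)-a_0(\mathbf U^n),\nabla E^n_{\u}).
\]
Strong monotonicity \eqref{eq-strong-monotonicity} bounds the second term below by $\nu 2^{2-p}\|\nabla E^n_{\u}\|_{L^p}^p$. For the first term, use $|a_0(\mathbf F)-a_0(\mathbf G)|\le C(|\mathbf F|+|\mathbf G|)^{p-2}|\mathbf F-\mathbf G|$ together with $\nabla\u\in L^\infty$ and the $W^{1,p}$-stability of $\mathcal P_h$, which follows from \eqref{eq-Ph-W1p}. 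This gives the bound $C\|\nabla(I-\mathcal P_h)\u^n\|_{L^p}\|\nabla E^n_{\u}\|_{L^p}$. Young's inequality with exponents $(p,p')$ absorbs half of the dissipation and leaves $C\|\nabla(I-\mathcal P_h)\u^n\|_{L^p}^{p'}\le Ch^{lp'}=Ch^{lp/(p-1)}$; this is the source of the rate. The step I am least sure of is the non-divergence-free/Stokes-projection interplay. Because $\mathcal P_h$ is linear rather than $p$-adapted, the term $(\nabla(I-\mathcal P_h)\u,\nabla E_{\u})$ does not vanish in the linear part. I would therefore keep the whole nonlinear difference as a consistency term, as the section introduction indicates.

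\textbf{Conclusion.} Summing over $n$ gives
\[
\|E^m\|^2+\Delta t\sum_n(\text{dissipation})\le C\|E^0\|^2+C\Delta t\sum_n\|E^n\|^2+C(\Delta t^2+h^{lp/(p-1)}).
\]
The discrete Gr\"onwall lemma \cite[Lemma~5.1]{HeywoodRannacher1990} with $\Delta t\le\Delta t_0$ yields \eqref{eq-main-error}, using the assumption on the initial error and $h^{2l}\le h^{lp/(p-1)}$ since $p'\le2$. Existence and uniqueness come from Corollary~\ref{cor-wellposed}. Finally, the triangle inequality with the projection estimates gives \eqref{eq-main-error-total}.
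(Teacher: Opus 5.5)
Your proposal is correct and follows essentially the same route as the paper: linear Stokes, Ritz and curl projections, strong monotonicity for the dissipation, the flux consistency term $(a_0(\u^n)-a_0(\mathcal P_h\u^n),\nabla E^n_{\u})$ handled by H\"older and $(p,p')$-Young to produce $h^{lp/(p-1)}$, the bounds of Lemma~\ref{eq-error-estimate bounds} to keep every error factor in $\mathbf L^2$, and a discrete Gr\"onwall argument. Two clarifications are needed. First, those $L^\infty$ and $W^{1,\infty}$ bounds are for the projections $\mathcal P_h\u$, $\mathcal F_h\B$, $\mathcal R_h\theta$, not for the discrete solution, so you must split $\mathbf U^{n-1}=\mathcal P_h\u^{n-1}+E^{n-1}_{\u}$ and $\mathfrak B^{n-1}=\mathcal F_h\B^{n-1}+E^{n-1}_{\B}$. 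Second, the essential use of \eqref{eq-exactly-divfree} is in the convective piece $a_1(E^{n-1}_{\u},\mathcal P_h\u^n,E^n_{\u})$: $\operatorname{div}E^{n-1}_{\u}=0$ lets you move the derivative onto $\mathcal P_h\u^n$ (the paper's \eqref{eq-a1-ibp}), which avoids a gradient of an error factor and the resulting non-quadratic $(p,p')$-Young term.
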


\begin{remark}\label{rem-rates}
Three points are noteworthy. (i) The spatial exponent
$\frac{lp}{p-1}=lp'$ comes from the $p$-Laplacian consistency term
$h^{lp'}\|\u^{n}\|^{p'}_{l+1,p}$. For $p=2$, $p'=2$, and
\eqref{eq-main-error} reduces to $C^{*}\{(\Delta t)^{2}+h^{2l}\}$. (ii) The gradient term
controls the \emph{$p$-th power} of the error:
$\|\nabla E_{\u}\|_{L^{p}(0,T;\mathbf L^{p})}\le
C^{*}\{(\Delta t)^{2}+h^{lp'}\}^{1/p}$. Thus, for $p=2$, $l=1$, the spatial rate is first
order. (iii) $\Delta t_{0}$ depends only on the data and on $T$, not on $h$.
\end{remark}
\subsection{Projections and their approximation properties}\label{subsec-projections}

For $\u\in\mathcal X_{0}$ and $\theta\in\mathcal Y_{0}$, define the linear projections
$\mathcal P_h\u\in\mathcal X_{0h}$ and $\mathcal R_h\theta\in\mathcal Y_h$ by
\begin{align}
    &(\nabla\mathcal{P}_{h}\u,\nabla \v_{h})=(\nabla\u,\nabla\v_{h})\qquad\forall\,\v_h\in\mathcal X_{0h},\\
    &\sigma(\nabla\mathcal{R}_{h}\theta,\nabla\phi_{h})=\sigma(\nabla\theta,\nabla\phi_{h})\qquad{\forall\,\phi_h\in\mathcal Y_{h}}.
\end{align}
{\noindent
Here $\mathcal P_h$ is the linear discrete Stokes projection and $\mathcal R_h$ the Ritz
projection. We do not use the nonlinear alternative
$(a_{0}(\mathcal P_{h}\u),\nabla\v_{h})=(a_{0}(\u),\nabla\v_{h})$, since the
$L^{2}$-estimate \eqref{eq-error approximation} is available for the linear Stokes projection
\cite{GiraultRaviart1986}, whereas the nonlinear projection is naturally treated in
quasi-norms \cite{BarrettLiu1993,DieningRuzicka2007}. The nonlinearity therefore appears as
the consistency term
$(a_{0}(\u^{n})-a_{0}(\mathcal P_{h}\u^{n}),\nabla\v_{h})$ in $R^{n}_{h,1}$.}
For $\B\in\mathcal K$, define $\mathcal F_h\B\in\mathcal W_{0h}$ by
\begin{align*}
   \kappa(\nabla \times\mathcal{F}_{h}\B,\nabla \times\C_{h})&=\kappa(\nabla \times\B,\nabla \times\C_{h}),\\
   (\mathcal{F}_{h}\B,\nabla {s_{h}})&=(\B,\nabla {s_{h}}).
\end{align*}
We use the following bounds (see \cite{GiraultRaviart1986}):
\begin{align}\label{eq-error approximation}
    \|\u-\mathcal{P}_{h}\u\|_{\L^{2}}+h\|\nabla(\u-\mathcal{P}_{h}\u)\|_{\L^{2}}&\le C_{e}h^{1+l}\|\u\|_{1+l,2},\\
    \|\B-\mathcal{F}_{h}\B\|_{\L^{2}}+\|\nabla \times(\B-\mathcal{F}_{h}\B)\|_{\L^{2}}&\le C_{e}h^{l}\left(\|\B\|_{l,2}+\|\nabla \times\B\|_{l,2}\right),\\
    \|\theta-\mathcal{R}_{h}\theta\|_{1,2}&\le C_{e}h^l\|\theta\|_{1+l,2},\\
    \label{eq-error approximation Lp}
\|\nabla(\u-\mathcal P_{h}\u)\|_{\L^{p}}&\le C_{e}h^{l}\|\u\|_{1+l,p},
\qquad
\|\nabla\mathcal P_{h}\u\|_{\L^{2}}\le C\|\nabla\u\|_{\L^{2}},
\end{align}
    where $l=\min\{k,s\}$, $k\ge1$, and $s>\frac12$.
{Estimate \eqref{eq-error approximation Lp} follows from the
$\mathbf W^{1,p}$-stability of $\mathcal P_h$ on quasi-uniform meshes
\cite{GiraultRaviart1986} and is used only for the $p$-Laplacian consistency term.}

\subsection{The error equations}\label{subsec-errorequations}

    Let $\u^{n},\B^{n},\theta^{n}$ denote the exact solution at $t_n$, and
    $\mathbf U^{n},\mathfrak B^{n},\Theta^{n}$ the discrete solution. At $t=t_n$,
    \eqref{1st weak form}-\eqref{3rd weak equation} give
    \begin{align}
       \label{eq-u-error-continuous} &\left(\u^{n}_{t},\v\right)+{(a_{0}(\u^{n}),\nabla\v)}+a_{1}(\u^{n},\u^{n},\v)+2\left(\boldsymbol{\omega}_r\times\u^n,\v\right)+(\B^{n} \times (\nabla \times \B^{n}), \v)={(\mathbf{f}(t_{n}), \v)}+{(\theta^{n} e_3,\v)},\\ 
    \label{eq-B-error-continuous}&\left( \B^{n}_{t}, \C\right)+\kappa\left( \nabla \times \B^{n}, \nabla \times \C\right)-(\u^{n} \times \B^{n}, \nabla \times \C)={(\mathbf{g}(t_{n}), \C)},\\
  \label{eq-theta-error-continuous} & {(\theta^{n}_{t},\phi)}+\sigma(\nabla\theta^{n},\nabla \phi)+{a_{2}(\u^{n},\theta^{n},\phi)}=(\u^n\cdot e_3,\phi)+{(\psi(t_{n}),\phi)}.
    \end{align}
     Taking $\v=\v_h$, $\C=\C_h$, and $\phi=\phi_h$ gives
    \begin{align}
          &\left(\u^{n}_{t},\v_h\right)+{(a_{0}(\u^{n}),\nabla\v_h)}+a_{1}(\u^{n},\u^{n},\v_h)+2\left(\boldsymbol{\omega}_r\times\u^n,\v_h\right)+(\B^{n} \times (\nabla \times \B^{n}), \v_h)=(\mathbf{f}(t_{n}), \v_h)+(\theta^{n} e_3,\v_h),\\ 
    &\left( \B^{n}_{t}, \C_h\right)+\kappa\left( \nabla \times \B^{n}, \nabla \times \C_h\right)-(\u^{n} \times \B^{n}, \nabla \times \C_h)=(\mathbf{g}(t_{n}), \C_h),\\
   & (\theta^{n}_{t},\phi_h)+\sigma(\nabla\theta^{n},\nabla \phi_h)+a_{2}(\u^{n},\theta^{n},\phi_h)=(\u^n\cdot e_3,\phi_h)+(\psi(t_{n}),\phi_h).
    \end{align}
\noindent
The discrete scheme gives
\begin{align}
\label{eq-u-error-scheme} &\left(d_{t} \mathbf{U}^{n}, \mathbf{v}_{h}\right)+a_{1}\left(\mathbf{U}^{n-1}, \mathbf{U}^{n}, \mathbf{v}_{h}\right)+(a_{0}(\mathbf{U}^{n}),\nabla\v_{h})
+2\left(\boldsymbol{\omega}_r\times\mathbf U^n,\v_h\right)+\left(\mathfrak{B}^{n-1} \times (\nabla \times \mathfrak{B}^{n}), \mathbf{v}_{h}\right)=\left(\mathbf{f}^{n}, \mathbf{v}_{h}\right)+\left(\Theta^{n-1}e_{3},\v_{h}\right),  \\ 
&\label{eq-B-error-scheme}\left(d_{t} \mathfrak{B}^{n}, \mathbf{w}_{h}\right)+\kappa\left(\nabla \times \mathfrak{B}^{n},\nabla \times \mathbf{w}_{h}\right)-\left(\mathbf{U}^{n} \times \mathfrak{B}^{n-1}, \nabla \times\mathbf{w}_{h}\right)=\left(\mathbf{g}^{{n}}, \mathbf{w}_{h}\right), 
\\& \label{eq-theta-error-scheme}(d_{t}\Theta^{n},\phi_{h})+\sigma(\nabla \Theta^{n},\nabla\phi_{h})+a_{2}(\mathbf{U}^{n-1},\Theta^{n},\phi_{h})=(\mathbf{U}^{n}\cdot e_3,\phi_{h})+(\psi^{{n}},\phi_{h}).
\end{align}

 \noindent   
Subtracting the continuous and discrete momentum equations gives
\begin{align}
    \nonumber  &\left(d_{t} \mathbf{U}^{n}, \mathbf{v}_{h}\right)-\left( \u^{n}_t, \mathbf{v}_{h}\right)=a_{1}\left(\u^{n}, \u^{n}, \mathbf{v}_{h}\right)-a_{1}\left(\mathbf{U}^{n-1}, \mathbf{U}^{n}, \mathbf{v}_{h}\right)+{(a_{0}(\u^{n}),\nabla\v_{h})-(a_{0}(\mathbf{U}^{n}),\nabla\v_{h}) }\\  
\nonumber\qquad &
+\left(\B^{n} \times (\nabla \times \B^{n}), \mathbf{v}_{h}\right)-\left(\mathfrak{B}^{n-1} \times (\nabla \times \mathfrak{B}^{n}), \mathbf{v}_{h}\right)+\left(\Theta^{n-1}e_{3},\v_{h}\right)-\left(\theta^{n-1}e_{3},\v_{h}\right)\\
\qquad&+2(\boldsymbol{\omega}_r\times\u^n,\v_h)-2\left(\boldsymbol{\omega}_r\times\mathbf U^n,\v_h\right),
\end{align}
Similarly, for the magnetic equation,
\begin{align}
&\left(d_{t} \mathfrak{B}^n-\B^{n}_{t}, {\C}_{h}\right)+\kappa\left(\nabla \times (\mathfrak{B}^n-\B^{n}),\nabla \times {\C}_{h}\right)=(\mathbf{U}^{n} \times \mathfrak{B}^{n-1}, \nabla \times{\C}_{h})-(\u^{n}\times\B^{n},\nabla \times\C_{h}),
\end{align}
and for the temperature equation,
\begin{align}
& (d_{t}\Theta^{n}-\theta^{n}_t,\phi_{h})+\sigma(\nabla (\Theta^{n}-\theta^n),\nabla\phi_{h})=a_{2}({\u^{n}},\theta^{n},\phi_{h})-a_{2}(\mathbf{U}^{n-1},\Theta^{n},\phi_{h})+(\mathbf{U}^{n}\cdot e_3,\phi_{h})-(\u^{n}\cdot e_3,\phi_{h}).
\end{align}
Set
\begin{equation}\label{eq-def-projected-errors}
(E^{n}_{\u},E^{n}_{\B},E^{n}_{\theta}) :=(\mathbf{U}^{n}-\mathcal{P}_{h}\u^{n},\mathfrak{B}^n-\mathcal{F}_{h}\B^{n},\Theta^n-\mathcal{R}_{h}\theta^{n}).
\end{equation}
The error equations are
    \begin{align}
       \label{error eq-1} &(d_{t}E^{n}_{\u}, \v_{h})+{(a_{0}(\mathbf{U}^{n}),\nabla\v_{h})-(a_{0}(\mathcal{P}_{h}\u^{n}),\nabla\v_{h})}+{2\left(\boldsymbol{\omega}_r\times E^n_\u,\v_h\right)}=(M_{h,1}^{n}+R_{h,1}^{n},\v_{h}),\\\label{error eq-2}
        &(d_{t}E^{n}_{\B}, \C_{h})+\kappa\left(\nabla \times E^{n}_{\B},\nabla \times\C_{h}\right)=(M_{h,2}^{n}+R_{h,2}^{n},\C_{h}),\\\label{error eq-3}
        &(d_{t}E^{n}_{\theta}, \phi_{h})+\sigma\left(\nabla E^{n}_{\theta},\nabla \phi_{h}\right)=(M_{h,3}^{n}+R_{h,3}^{n},\phi_{h}),
        \end{align}
where
\begin{align}
  & (M_{h,1}^{n},\v_h)=(\partial_{t}\u^{n}-d_{t}\mathcal{P}_{h}\u^{n},\v_{h}), \\
  & (M_{h,2}^{n},{\C_h})=(\partial_{t}\B^{n}-d_{t}\mathcal{F}_{h}\B^{n},\C_{h}), \\
 & (M_{h,3}^{n},{\phi_h})=(\partial_{t}\theta^{n}-d_{t}\mathcal{R}_{h}\theta^{n},{\phi_{h}}), \\  
 \nonumber\label{error-u-truncation}&(R_{h,1}^{n},\v_{h})=a_{1}(\u^{n},\u^{n},\v_{h})-a_{1}(\mathbf{U}^{n-1}, \mathbf{U}^{n}, \mathbf{v}_{h})+(a_{0}(\u^{n}),\nabla\v_{h})-(a_{0}(\mathcal{P}_{h}\u^{n}),\nabla\v_{h})\\ \nonumber&\qquad\qquad+\left(\mathbf{b}^{n} \times \nabla \times \mathbf{b}^{n}, \mathbf{v}_{h}\right)-\left(\mathfrak{B}^{n-1} \times \nabla \times \mathfrak{B}^n, \mathbf{v}_{h}\right)+\left(\Theta^{n-1}e_{3},\v_{h}\right)-\left(\theta^{n-1}e_{3},\v_{h}\right)+\left(\mathbf f^{n}-\mathbf f(t_{n}),\v_{h}\right)\\
 &\qquad\qquad+{2\left(\boldsymbol{\omega}_r\times(\u^n-\mathcal{P}_h\u^n),\v_h\right)},\\
& \label{error-B-trucation}(R_{h,2}^{n},\C_{h})=\left(\mathbf{U}^{n}\times\mathfrak{B}^{n-1},\nabla \times\C_{h}\right)-\left(\u^{n}\times\B^{n},\nabla \times\C_{h}\right)+\kappa\left(\nabla\times(\B^{n}-\mathcal{F}_{h}\B^{n}),\nabla\times\C_{h}\right)+\left(\mathbf g^{n}-\mathbf g(t_{n}),\C_{h}\right),\\
\nonumber\label{error-theta-trucation}&(R_{h,3}^n,\phi_h)=a_{2}(\u^{n},\theta^{n},\phi_{h})-a_{2}(\mathbf{U}^{n-1},\Theta^{n},\phi_{h})+(\mathbf{U}^{n}\cdot e_3,\phi_{h})-(\u^{n}\cdot e_3,\phi_{h})\\
~~~~~~~~&\qquad\qquad\qquad+\sigma(\nabla(\theta^n-\mathcal{R}_h\theta^n),\nabla\phi_h)+\left(\psi^{n}-\psi(t_{n}),\phi_{h}\right),
\end{align}
\noindent
The data differences arise from the time averages \eqref{eq-data-averages}. By
\eqref{eq-data-consistency} and Young's inequality, they contribute
$C\Delta t\bigl\{\|\partial_{t}\mathbf f\|^{2}_{L^{2}(t_{n-1},t_{n};\L^{2})}
+\|\partial_{t}\mathbf g\|^{2}_{L^{2}(t_{n-1},t_{n};\L^{2})}
+\|\partial_{t}\psi\|^{2}_{L^{2}(t_{n-1},t_{n};L^{2})}\bigr\}$
to $\lambda_n$; after multiplication by $\Delta t$ and summation, this is
$\mathcal O((\Delta t)^2)$.
\noindent
  where,
    \begin{align*}
       a_{1}(\u, \u, \v)&=\frac{1}{2}\left\{\int_{\mathbb{D}}(( \u \cdot \nabla) \u) \cdot \v d x-\int_{\mathbb{D}}(( \u \cdot \nabla) \v) \cdot \u \,d x \right \},\\
(a_{0}(\u),\nabla\v)&=\nu\int_{\mathbb{D}} \left|\nabla \u\right|^{p-2}\nabla \u\cdot \nabla \v\,dx,\\
a_{2}(\u, \theta, \phi)&=\frac{1}{2}\left\{\int_{\mathbb{D}}( \u \cdot \nabla \theta) \phi \, d x-\int_{\mathbb{D}}( \u \cdot \nabla \phi) \theta \,d x \right \}.
   \end{align*}
We also use the following uniform projection bounds, which keep all error factors in
$\mathbf L^{2}$.
   \begin{lemma}\label{eq-error-estimate bounds}
       Let $(\u,\B,\theta)$ satisfy Assumption~\ref{Assumption}. Then the following estimates hold uniformly in $h$ and in $t\in[0,T]$:
       \begin{align}
           \|\u-\mathcal{P}_{h}\u\|_{\L^{\infty}}+\|\nabla(\u-\mathcal{P}_{h}\u)\|_{\L^{3}}+\|\nabla\times(\B-\mathcal{F}_{h}\B)\|_{\L^3}+\|\theta-\mathcal{R}_{h}\theta\|_{L^{\infty}}+\|\nabla(\theta-\mathcal{R}_{h}\theta)\|_{L^{3}}\le C_r,
       \end{align}
\begin{align}\label{eq-Linfty-projections}
\|\mathcal{P}_{h}\u\|_{\L^{\infty}}+\|\nabla\mathcal{P}_{h}\u\|_{\L^{\infty}}
+\|\mathcal{F}_{h}\B\|_{\L^{\infty}}+\|\nabla\times\mathcal{F}_{h}\B\|_{\L^{\infty}}
+\|\mathcal{R}_{h}\theta\|_{L^{\infty}}+\|\nabla\mathcal{R}_{h}\theta\|_{L^{\infty}}\le C_r .
\end{align}
   \end{lemma}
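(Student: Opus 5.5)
The plan is to derive both groups of bounds from a single principle: every projection error is split against a good local interpolant, and the discrete part is handled by inverse estimates on the quasi-uniform mesh. The second estimate \eqref{eq-Linfty-projections} follows from the first by the triangle inequality. For instance, $\|\mathcal P_h\u\|_{\L^\infty}\le\|\u\|_{\L^\infty}+\|\u-\mathcal P_h\u\|_{\L^\infty}$, with $\|\u\|_{\L^\infty}\le c_3\|\u\|_{1+s,2}$ by \eqref{eq-sobolev-Linfty}, and $\nabla\u,\nabla\times\B,\nabla\theta\in L^\infty(0,T;\L^\infty)$ by Assumption~\ref{Assumption}. So the real work is to bound the gradient-type quantities $\|\nabla(\u-\mathcal P_h\u)\|_{\L^\infty}$ (and its analogues) uniformly, and then to obtain the $\L^3$ and $\L^\infty$ parts of the first estimate.

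For the $\L^\infty$ and $\L^3$ bounds in the first estimate, I would write
\[
\u-\mathcal P_h\u=(\u-I_h\u)+(I_h\u-\mathcal P_h\u),
\]
where $I_h$ is a Scott--Zhang (or divergence-preserving Fortin) interpolant. The continuous part is handled by local approximation with $\u\in\mathbf W^{1+s,2}$ and $s>\tfrac12$. This gives $\|\u-I_h\u\|_{\L^\infty}\le Ch^{s-1/2}\|\u\|_{1+s,2}$ and $\|\nabla(\u-I_h\u)\|_{\L^3}\le C\|\u\|_{1+s,2}$, using \eqref{eq-sobolev-W13}. The discrete part uses \eqref{eq-inverse estimate} with $(\mu_1,q_1)=(0,2)$:
\[
\|I_h\u-\mathcal P_h\u\|_{\L^\infty}\le Ch^{-3/2}\|I_h\u-\mathcal P_h\u\|_{\L^2}\le Ch^{-3/2}h^{1+l}\|\u\|_{1+l,2}.
\]
Here I use \eqref{eq-error approximation} and the analogous interpolation bound. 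This is bounded because $l\ge\tfrac12$ once $l=\min\{k,s\}$ with $s>\tfrac12$, $k\ge1$. Likewise $\|\nabla(I_h\u-\mathcal P_h\u)\|_{\L^3}\le Ch^{-1/2}h^{l}\le C$. The same pattern applies to $\theta-\mathcal R_h\theta$, using the $H^1$ estimate for $\mathcal R_h$, and to $\nabla\times(\B-\mathcal F_h\B)$. For the latter I split with the canonical Nédélec interpolant, whose commuting-diagram property yields $\nabla\times r_h\B=\Pi^{RT}_h(\nabla\times\B)$, stable in $\L^3$. The inverse estimate is then applied to curls of discrete fields, which lie in Raviart--Thomas spaces, and the approximation comes from \eqref{eq-error approximation}.

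For \eqref{eq-Linfty-projections} the gradient terms are the delicate ones. I would write
\[
\|\nabla\mathcal P_h\u\|_{\L^\infty}\le\|\nabla I_h\u\|_{\L^\infty}+Ch^{-3/2}\|\nabla(I_h\u-\mathcal P_h\u)\|_{\L^2}.
\]
The first term is bounded by $C\|\nabla\u\|_{\L^\infty}$, using the $W^{1,\infty}$-stability of Scott--Zhang. The second is bounded by $Ch^{-3/2}h^{l}\|\u\|_{1+l,2}$. This is uniformly bounded only when $l\ge\tfrac32$, so I expect this step to be the main obstacle.

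My first attempt would be to use the stronger regularity actually assumed: $\u\in L^\infty(0,T;\mathbf W^{s+1,p})$ together with the $\mathbf W^{1,p}$ estimate \eqref{eq-error approximation Lp} and the inverse estimate from $L^p$ to $L^\infty$. This gains the factor $h^{l-3/p}$, which is acceptable for $p\ge3/l$. Failing that, I would invoke the known $W^{1,\infty}$-stability of the Stokes and Ritz projections on quasi-uniform meshes in sufficiently regular settings (Girault--Nochetto--Scott), namely $\|\nabla\mathcal P_h\u\|_{\L^\infty}\le C\|\nabla\u\|_{\L^\infty}$ and $\|\nabla\mathcal R_h\theta\|_{L^\infty}\le C\|\nabla\theta\|_{L^\infty}$. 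For $\nabla\times\mathcal F_h\B$ I would argue analogously via the discrete Hodge decomposition. Uniformity in $t$ then follows because all right-hand sides involve only the $L^\infty(0,T;\cdot)$ norms from Assumption~\ref{Assumption}.
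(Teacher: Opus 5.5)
\textbf{Gap: the $\mathcal F_h$ bounds.} Your plan does not establish the $\mathcal F_h$ part of \eqref{eq-Linfty-projections}.

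First, $\|\mathcal F_h\B\|_{\mathbf L^\infty}\le C_r$ is never addressed. The triangle-inequality reduction would need $\|\B\|_{\mathbf L^\infty}$ and $\|\B-\mathcal F_h\B\|_{\mathbf L^\infty}$. The first group gives no $L^\infty$ control of $\B-\mathcal F_h\B$. Assumption~\ref{Assumption} only provides $\B\in\mathbf W^{s,2}$ with $s>\frac12$, which does not embed into $\mathbf L^\infty$ in three dimensions. This is not a technicality. On a nonconvex polyhedron, a field in $\mathcal K$ may contain gradients of Neumann singular functions, which behave like $r^{\pi/\omega-1}$ near a reentrant edge of angle $\omega>\pi$ and are unbounded. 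Since $\mathcal F_h\B\to\B$ in $\mathbf L^2$, a uniform $\mathbf L^\infty$ bound on $\mathcal F_h\B$ would force $\B\in\mathbf L^\infty$.

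Second, for $\|\nabla\times\mathcal F_h\B\|_{\mathbf L^\infty}$, ``argue analogously via the discrete Hodge decomposition'' has no result behind it. There is no maximum-norm stability theorem for the curl--curl Ritz projection on Lipschitz polyhedra that plays the role of Girault--Nochetto--Scott. The inverse-estimate route through the canonical N\'ed\'elec interpolant $r_h$ leaves the term $h^{-3/2}\|\nabla\times(\mathcal F_h\B-r_h\B)\|_{\mathbf L^2}\le Ch^{l-3/2}$, which is bounded only for $l\ge\frac32$.

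The paper does not prove these two bounds either. It states that no such $\mathbf L^\infty$-stability is available for $\mathcal F_h$ and \emph{imposes} $\|\mathcal F_h\B\|_{\mathbf L^\infty}+\|\nabla\times\mathcal F_h\B\|_{\mathbf L^\infty}\le C_r$ as an additional hypothesis. It notes this holds, e.g., when $\mathcal F_h$ coincides with the canonical N\'ed\'elec interpolant on the regularity class. Your proof must do the same, rather than claim these bounds follow from Assumption~\ref{Assumption}.

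\textbf{The rest matches the paper.} The first group is the standard interpolant-splitting plus inverse-estimate argument behind the cited Lemma~5.4 of Ding, Long and Mao. For $\nabla\mathcal P_h\u$ and $\nabla\mathcal R_h\theta$, the paper uses exactly your fallback: $W^{1,\infty}$-stability of the Stokes and Ritz projections plus the triangle inequality. The three-dimensional stability results of this type are established for convex polyhedra, so you and the paper rely on the same tacit restriction.

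One small repair is needed for $\|\theta-\mathcal R_h\theta\|_{L^\infty}$. Only the $H^1$ estimate is available for $\mathcal R_h$, so the $L^2\to L^\infty$ inverse estimate gives just $h^{l-3/2}$. Pass through $L^6$ instead, using $\|\phi_h\|_{L^\infty}\le Ch^{-1/2}\|\phi_h\|_{L^6}\le Ch^{-1/2}\|\nabla\phi_h\|_{L^2}$; this yields the bounded factor $h^{l-1/2}$.
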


\begin{proof}
The first group follows as in \cite[Lemma~5.4]{ding2022convergence}. For the Stokes and Ritz
projections, \eqref{eq-Linfty-projections} follows from Assumption~\ref{Assumption}, the
triangle inequality, and $W^{1,\infty}$-stability on quasi-uniform meshes
\cite{GiraultRaviart1986}. For $\mathcal F_h$, a comparable general
$\mathbf L^{\infty}$-stability result on Lipschitz polyhedra is not available, so we assume
$\|\mathcal F_{h}\B\|_{\L^{\infty}}+
\|\nabla\times\mathcal F_{h}\B\|_{\L^{\infty}}\le C_r$.
This holds, for example, when $\mathcal F_h$ coincides with the canonical N\'ed\'elec
interpolant on the regularity class of Assumption~\ref{Assumption}
\cite[Ch.~5]{Monk2003}. Thus no embedding
$\mathbf W^{1,p}_0\hookrightarrow\mathbf L^{\infty}$ is used.
   \end{proof}

\subsection{Proof of Theorem \textup{\ref{Main theorem of error}}}\label{subsec-errorproof}

   \begin{proof}
Step~1 derives the energy inequality; Steps~2-5 estimate the consistency, convective,
$p$-Laplacian, electromagnetic, and buoyancy terms; Step~6 applies discrete Gr\"onwall.

\medskip
\noindent
\textbf{Step 1: the energy inequality for the projected errors.}\,
       Testing \eqref{error eq-1}-\eqref{error eq-3} with
$(E^{n}_{\u},E^{n}_{\B},E^n_{\theta})$ gives
    \begin{align}
       \label{eq-error-system-1} &(d_{t}E^{n}_{\u},E^{n}_{\u})+\big(a_{0}(\mathbf{U}^{n})-a_{0}(\mathcal{P}_{h}\u^{n}),\nabla E^{n}_{\u}\big)=(M_{h,1}^{n}+R_{h,1}^{n},E^{n}_{\u}),\\
       \label{eq-error-system-2} &(d_{t}E^{n}_{\B},E^{n}_{\B})+\kappa\left(\nabla \times E^{n}_{\B},\nabla \times E^{n}_{\B}\right)=(M_{h,2}^{n}+R_{h,2}^{n},E^{n}_{\B}),\\
      \label{eq-error-system-3} &(d_{t}E^{n}_{\theta}, E^n_{\theta})+\sigma\left(\nabla E^{n}_{\theta},\nabla E^n_{\theta}\right)=(M_{h,3}^{n}+R_{h,3}^{n},E^n_{\theta}),
     \end{align}
      where we have abbreviated the convective parts of $R^{n}_{h,1}$ and $R^{n}_{h,3}$ by
$$(R_{C1h}^n,E_{\u}^n):=a_1(\u^n,\u^n, E_{\u}^n)-a_1(\mathbf{U}^{n-1}, \mathbf{U}^n, E_{\u}^n),\qquad
(R_{C2h}^n,E_{\theta}^n):=a_2(\u^n,\theta^n, E_{\theta}^n)-a_2(\mathbf{U}^{n-1}, \Theta^n, E_{\theta}^n).$$
       Using $2b(b-a)=b^2-a^2+(b-a)^2$ and \eqref{eq-strong-monotonicity},
       \begin{align}
       & (d_{t}E^{n}_{\u},E^{n}_{\u})+\big(a_{0}(\mathbf U^{n})-a_{0}(\mathcal P_{h}\u^{n}),\nabla E^{n}_{\u}\big)\ge
        \frac{1}{2}\left[d_{t}\|E^{n}_{\u}\|_{\L^{2}}^{2}+\Delta t\|d_{t}E^{n}_{\u}\|_{\L^{2}}^{2}\right]+\nu\,2^{2-p}{\|\nabla E^{n}_{\u}\|^{p}_{\L^{p}}},\\
      &  (d_{t}E^{n}_{\B}, E^{n}_{\B})+\kappa\left(\nabla \times E^{n}_{\B},\nabla \times E^{n}_{\B}\right)\ge \frac{1}{2}\left[d_{t}\|E^{n}_{\B}\|_{\L^{2}}^{2}+\Delta t\|d_{t}E^{n}_{\B}\|_{\L^{2}}^{2}\right]+\kappa\|\nabla \times E^{n}_{\B}\|_{\L^{2}}^{2},\\
     & (d_{t}E^{n}_{\theta}, E^n_{\theta})+\sigma\left(\nabla E^{n}_{\theta},\nabla E^n_{\theta}\right)\ge \frac{1}{2}\left[d_{t}\|E^{n}_{\theta}\|_{L^{2}}^{2}+\Delta t\|d_{t}E^{n}_{\theta}\|_{L^{2}}^{2}\right]+\sigma\|\nabla E^{n}_{\theta}\|_{L^{2}}^{2}.
    \end{align}
    \noindent
Since $a_{0}$ is nonlinear,
$a_{0}(\mathbf U^{n})-a_{0}(\mathcal P_h\u^{n})\neq a_{0}(E^n_{\u})$.
The lower bound follows from \eqref{eq-strong-monotonicity} with
$\boldsymbol\xi=\nabla\mathbf U^n$ and
$\boldsymbol\eta=\nabla\mathcal P_h\u^n$. The term
$(a_{0}(\u^{n})-a_{0}(\mathcal P_h\u^{n}),\nabla E^n_{\u})$ in $R^n_{h,1}$ is instead a
consistency error.

\medskip
\noindent
\textbf{Step 2: the temporal consistency terms.}\,
For $M^n_{h,1}$,
    \begin{align}
        \nonumber(M_{h,1}^{n},E^{n}_{\u})&=(\partial_{t}\u^{n}-d_{t}\mathcal{P}_{h}\u^{n},E^{n}_{\u}) =(\partial_{t}\u^{n}-d_{t}\u^{n},E^{n}_{\u})+(d_{t}\u^{n}-d_{t}\mathcal{P}_{h}\u^{n},E^{n}_{\u}).
    \end{align}
\noindent
The first summand is the standard backward-Euler truncation error,
\[
\|\partial_{t}\u^{n}-d_{t}\u^{n}\|_{\L^{2}}
=\Big\|\frac{1}{\Delta t}\int_{t_{n-1}}^{t_{n}}(t-t_{n-1})\,\partial_{tt}\u\,dt\Big\|_{\L^{2}}
\le(\Delta t)^{\frac12}\|\partial_{tt}\u\|_{L^{2}(t_{n-1},t_{n};\mathbf L^{2})} .
\]
Since $\mathcal P_h$ is linear and time independent, it commutes with $d_t$ and the time
integral:
\begin{equation}\label{eq-Mh1-integral}
\|d_{t}(I-\mathcal P_{h})\u^{n}\|_{\L^{2}}
=\Big\|\frac{1}{\Delta t}\int_{t_{n-1}}^{t_{n}}(I-\mathcal P_{h})\partial_{t}\u\,dt\Big\|_{\L^{2}}
\le \frac{C_{e}h^{l+1}}{\Delta t}\int_{t_{n-1}}^{t_{n}}\|\partial_{t}\u\|_{1+l,2}\,dt
\le \frac{C_{e}h^{l+1}}{(\Delta t)^{\frac12}}\|\partial_{t}\u\|_{L^{2}(t_{n-1},t_{n};\mathbf W^{1+l,2})},
\end{equation}
By Cauchy-Schwarz in time,
    \begin{align}
       \nonumber (M_{h,1}^{n},E^{n}_{\u})&=(\partial_{t}\u^{n}-d_{t}\mathcal{P}_{h}\u^{n},E^{n}_{\u})\\
      \nonumber  &=(\partial_{t}\u^{n}-d_{t}\u^{n},E^{n}_{\u})+(d_{t}\u^{n}-d_{t}\mathcal{P}_{h}\u^{n},E^{n}_{\u})\\
        &\le C\left\{(\Delta t)^{\frac{1}{2}}\|\partial_{tt}\u \|_{L^{2}(t_{n-1},t_{n};\mathbf{L}^{2})}+\frac{C_{e}h^{l+1}}{(\Delta t)^{\frac12}}\|\partial_{t}\u\|_{L^{2}(t_{n-1},t_{n};\mathbf W^{1+l,2})}\right\}\|E^{n}_{\u}\|_{\L^{2}}.
    \end{align}
    Similarly, 
    \begin{align}
       &(M_{h,2}^{n},E^{n}_{\B})\le C\left\{(\Delta t)^{\frac{1}{2}}\|\partial_{tt}\B \|_{L^{2}(t_{n-1},t_{n};\mathbf{L}^{2})}+\frac{C_{e}h^{l}}{(\Delta t)^{\frac12}}\|\partial_{t}\B\|_{L^{2}(t_{n-1},t_{n};\mathbf W^{l,2})}\right\}\|E^{n}_{\B}\|_{\L^{2}},\\
       & (M_{h,3}^{n},E^{n}_{\theta})\le C\left\{(\Delta t)^{\frac{1}{2}}\|\partial_{tt}\theta \|_{L^{2}(t_{n-1},t_{n};{L}^{2})}+\frac{C_{e}h^{l+1}}{(\Delta t)^{\frac12}}\|\partial_{t}\theta\|_{L^{2}(t_{n-1},t_{n};{W}^{1+l,2})}\right\}\|E^{n}_{\theta}\|_{L^{2}}.
    \end{align}
\noindent
Thus \eqref{eq-Mh1-integral} produces $(\Delta t)^{-1/2}$ rather than
$(\Delta t)^{-1}$; after Young's inequality, its weighted sum is
$h^{2(l+1)}\|\partial_t\u\|^2_{L^2(0,T;\cdot)}$, uniformly in $\Delta t$.

\medskip
\noindent
\textbf{Step 3: the convective terms.}\,
{By \eqref{eq-exactly-divfree}, the relevant discrete velocities are pointwise
divergence-free. Hence, for solenoidal $\mathbf a\in\mathcal X_0$,
$\mathbf b\in\mathbf W^{1,p}$, and $\mathbf c\in\mathcal X_0$,
\begin{equation}\label{eq-a1-ibp}
\int_{\mathbb D}(\mathbf a\cdot\nabla)\mathbf c\cdot\mathbf b\,dx
=-\int_{\mathbb D}(\mathbf a\cdot\nabla)\mathbf b\cdot\mathbf c\,dx,
\qquad\text{so}\qquad
a_{1}(\mathbf a,\mathbf b,\mathbf c)=\int_{\mathbb D}(\mathbf a\cdot\nabla)\mathbf b\cdot\mathbf c\,dx .
\end{equation}
Identity \eqref{eq-a1-ibp} moves derivatives onto the smooth factor, so
$E^n_{\u}$ remains in $\mathbf L^2$. Without exact divergence-freeness a divergence remainder
would produce $\|\nabla E^n_{\u}\|_{\mathbf L^2}$ and, after $(p,p')$-Young,
the undesirable term $\|E^{n-1}_{\u}\|^{p/(p-1)}_{\mathbf L^2}$.}

    For $R_{C1h}^{n}$, adding and subtracting the intermediate quantities gives
    \begin{align}
       \nonumber (R_{C1h}^{n},E^{n}_{\u})&=a_{1}(\u^{n}, \u^{n}, E^{n}_{\u})-a_{1}(\mathbf{U}^{n-1}, \mathbf{U}^{n}, E^{n}_{\u})\\\nonumber
        &=\underbrace{a_{1}(\u^{n}-\mathcal{P}_{h}\u^{n-1},\u^{n}, E^{n}_{\u})}_{\Lambda_{1,n}}+\underbrace{a_{1}(\mathcal{P}_{h}\u^{n-1},\u^{n}-\mathcal{P}_{h}\u^{n-1},E^{n}_{\u})}_{\Lambda_{3,n}}\\ \nonumber&\qquad+\underbrace{a_{1}(\mathcal{P}_{h}\u^{n-1},\mathcal{P}_{h}\u^{n-1}-\mathcal{P}_{h}\u^{n},E^{n}_{\u})}_{\Lambda_{4,n}}+\underbrace{a_{1}(\mathcal{P}_{h}\u^{n-1},\mathcal{P}_{h}\u^{n}-\mathbf{U}^{n},E^{n}_{\u})}_{\Lambda_{5,n}}\\
        &\qquad +\underbrace{a_{1}(\mathcal{P}_{h}\u^{n-1}-\mathbf{U}^{n-1},\mathcal{P}_{h}\u^{n},E^{n}_{\u})}_{\Lambda_{6,n}}-\underbrace{a_{1}(\mathcal{P}_{h}\u^{n-1}-\mathbf{U}^{n-1},\mathcal{P}_{h}\u^{n}-\mathbf{U}^{n},E^{n}_{\u})}_{\Lambda_{7,n}}. \end{align}
{\noindent
Since $\mathcal P_{h}\u^{n}-\mathbf U^{n}=-E^{n}_{\u}$ and $a_{1}(\mathbf a,\mathbf c,\mathbf c)=0$,}
    $$\Lambda_{5,n}=\Lambda_{7,n}=0.$$
    For the remaining terms, \eqref{eq-a1-ibp},
Lemma~\ref{eq-error-estimate bounds}, Assumption~\ref{Assumption}, and
\eqref{eq-error approximation} give
\begin{align}
\nonumber\Lambda_{1,n}
&=\int_{\mathbb D}\big((\u^{n}-\mathcal P_{h}\u^{n-1})\cdot\nabla\big)\u^{n}\cdot E^{n}_{\u}\,dx
\ \le\ \|\u^{n}-\mathcal P_{h}\u^{n-1}\|_{\L^{2}}\,\|\nabla\u^{n}\|_{\L^{\infty}}\,\|E^{n}_{\u}\|_{\L^{2}}\\
\nonumber&\le C_{r}\Big(\|\u^{n}-\u^{n-1}\|_{\L^{2}}+\|\u^{n-1}-\mathcal P_{h}\u^{n-1}\|_{\L^{2}}\Big)\|E^{n}_{\u}\|_{\L^{2}}\\
&\le C_{r}\Big\{(\Delta t)^{\frac12}\|\partial_{t}\u\|_{L^{2}(t_{n-1},t_{n};\mathbf L^{2})}+C_{e}h^{l+1}\|\u^{n-1}\|_{l+1,2}\Big\}\|E^{n}_{\u}\|_{\L^{2}} .
\end{align}
Similarly, using also the $H^1$-stability of $\mathcal P_h$,
\begin{align}
\Lambda_{3,n}&\le\|\mathcal P_{h}\u^{n-1}\|_{\L^{\infty}}\|\nabla(\u^{n}-\mathcal P_{h}\u^{n-1})\|_{\L^{2}}\|E^{n}_{\u}\|_{\L^{2}}
\le C_{r}\Big\{(\Delta t)^{\frac12}\|\partial_{t}\nabla\u\|_{L^{2}(t_{n-1},t_{n};\mathbf L^{2})}+C_{e}h^{l}\|\u^{n-1}\|_{l+1,2}\Big\}\|E^{n}_{\u}\|_{\L^{2}},\\
\Lambda_{4,n}&\le\|\mathcal P_{h}\u^{n-1}\|_{\L^{\infty}}\|\nabla\mathcal P_{h}(\u^{n-1}-\u^{n})\|_{\L^{2}}\|E^{n}_{\u}\|_{\L^{2}}
\le C_{r}(\Delta t)^{\frac12}\|\partial_{t}\nabla\u\|_{L^{2}(t_{n-1},t_{n};\mathbf L^{2})}\|E^{n}_{\u}\|_{\L^{2}} .
\end{align}
Finally,
\begin{equation}\label{eq-Lambda6}
\Lambda_{6,n}=-\int_{\mathbb D}(E^{n-1}_{\u}\cdot\nabla)\mathcal P_{h}\u^{n}\cdot E^{n}_{\u}\,dx
\ \le\ \|E^{n-1}_{\u}\|_{\L^{2}}\,\|\nabla\mathcal P_{h}\u^{n}\|_{\L^{\infty}}\,\|E^{n}_{\u}\|_{\L^{2}}
\ \le\ C_{r}\|E^{n-1}_{\u}\|_{\L^{2}}\|E^{n}_{\u}\|_{\L^{2}},
\end{equation}
Using \eqref{eq-Linfty-projections}, all factors remain quadratic in the
$\mathbf L^2$-errors. Hence Young's inequality gives
\begin{align}\label{eq-RC1h-bound}
\nonumber (R_{C1h}^{n},E^{n}_{\u})&\le \varepsilon\|E^{n}_{\u}\|^{2}_{\L^{2}}+C_{\varepsilon,r}\|E^{n-1}_{\u}\|^{2}_{\L^{2}}\\
&\quad+C_{\varepsilon,r}\Big\{\Delta t\|\partial_{t}\u\|^{2}_{L^{2}(t_{n-1},t_{n};\mathbf L^{2})}
+\Delta t\|\partial_{t}\nabla\u\|^{2}_{L^{2}(t_{n-1},t_{n};\mathbf L^{2})}
+h^{2l}\big(\|\u^{n}\|^{2}_{l+1,2}+\|\u^{n-1}\|^{2}_{l+1,2}\big)\Big\}.
\end{align}

\medskip
\noindent
\textbf{Step 4: 
the Coriolis force term.} We will deal the term as follows

\begin{align}
2\left|
\big(
\boldsymbol{\omega}_{r}\times
(\u^n-\mathcal P_h\u^n),
E_\u^n
\big)
\right|
&\le
2|\boldsymbol{\omega}_{r}|
\|\u^n-\mathcal P_h\u^n\|_{\mathbf L^2}
\|E_\u^n\|_{\mathbf L^2} \le
\varepsilon\|E_\u^n\|_{\mathbf L^2}^2
+
C_\varepsilon|\boldsymbol{\omega}_{r}|^2
h^{2(l+1)}
\|\u^n\|_{1+l,2}^2.
\end{align}

\medskip
\noindent
\textbf{Step 5: the $p$-Laplacian consistency term.}\,
   We now estimate the difference between the flux evaluated at the exact solution and at
   its Stokes projection, which is the term of $R^{n}_{h,1}$ carrying the nonlinearity:
    
    \begin{align}
      \nonumber  \left|(a_{0}(\u^{n}),\nabla E^{n}_{\u})-(a_{0}(\mathcal{P}_{h}\u^{n}),\nabla E^{n}_{\u})\right| & =\nu\left|\int_{\mathbb{D}}|\nabla\u^{n}|^{p-2}\nabla\u^{n}\cdot\nabla E^{n}_{\u} dx-\int_{\mathbb{D}}|\nabla\mathcal{P}_{h}\u^{n}|^{p-2}\nabla\mathcal{P}_{h}\u^{n}\cdot\nabla E^{n}_{\u} dx\right|\\\nonumber
     &=\nu\int_{\mathbb{D}}\left|\left(|\nabla\u^{n}|^{p-2}\nabla \u^{n}-|\nabla\mathcal{P}_{h}\u^{n}|^{p-2}\nabla\mathcal{P}_{h}\u^{n}\right)\right|\cdot|\nabla E^{n}_{\u}| dx.
     \end{align}
    Using
    $$\left||\xi|^{p-2}\xi-|\eta|^{p-2}\eta\right|\le C_1|\xi-\eta|(|\xi|+|\eta|)^{p-2},$$
together with H\"older's and Young's inequalities,
     \begin{align}
   \nonumber  &\le \nu\int_{\mathbb{D}}(|\nabla\u^{n}-\nabla\mathcal{P}_{h}\u^{n}|)(|\nabla\u^{n}|+|\nabla\mathcal{P}_{h}\u^{n}|)^{p-2}\cdot|\nabla E^{n}_{\u}|dx\\\nonumber
     &\le\nu \left(\int_{\mathbb{D}}|\nabla E^{n}_{\u}|^p \right)^{\frac{1}{p}}\left(\int_{\mathbb{D}}(|\nabla(\u^{n}-\mathcal{P}_{h}\u^{n})|)^{\frac{p}{p-1}}(|\nabla\u^{n}|+|\nabla\mathcal{P}_{h}\u^{n}|)^{\frac{p(p-2)}{p-1}}dx\right)^{\frac{p-1}{p}}\\\nonumber
    &\le\nu\|\nabla E^{n}_{\u}\|_{\L^p}\left(\int_{\mathbb{D}}|\nabla(\u^{n}-\mathcal{P}_{h}\u^{n})|^{p}dx\right)^{\frac{p-1}{p(p-1)}}\left(\int_{\mathbb{D}}(|\nabla\u^{n}|+|\nabla\mathcal{P}_{h}\u^{n}|)^{\frac{p(p-2)}{p-1}\frac{p-1}{p-2}} dx\right)^{\frac{p-1}{p}\cdot\frac{p-2}{p-1}}\\ \nonumber
   &\le\nu\|\nabla E^{n}_{\u}\|_{\L^p}\|\nabla(\u^{n}-\mathcal{P}_{h}\u^n)\|_{\L^p}\||\nabla \u^n|+|\nabla\mathcal{P}_{h}\u^n|\|^{p-2}_{\L^p}
   \\ \nonumber
   &\le \nu\left( \varepsilon_{0}^{p}\frac{\|\nabla E^{n}_{\u}\|^{p}_{\L^p}}{p}+\frac{p-1}{p} \left\{\|\nabla(\u^{n}-\mathcal{P}_{h}\u^{n} ) \|^{\frac{p}{p-1}}_{\L^p} \right\} \left\{ \||\nabla \u^n|+|\nabla\mathcal{P}_{h}\u^n|\|^{\frac{(p-2)p}{p-1}}_{\L^p} \right\} \right)\\
   & \le \nu\left( \varepsilon_{0}^{p}\frac{\|\nabla E^{n}_{\u}\|^{p}_{\L^p}}{p}+\frac{p-1}{p}\left\{ h^{\frac{lp}{p-1}}\|\u^{n}\|^{\frac{p}{p-1}}_{l+1,p}\||\nabla \u^n|+|\nabla\mathcal{P}_{h}\u^n|\|^{\frac{(p-2)p}{p-1}}_{\L^p} \right\}\right).
      \end{align}
\noindent
Using \eqref{eq-error approximation Lp}, choose $\varepsilon_0>0$ so that
$\varepsilon_0^p\le\tfrac{p}{4}2^{2-p}$; the first term is then absorbed by
$\nu2^{2-p}\|\nabla E^n_{\u}\|^p_{\L^p}$. This is the only use of $(p,p')$-Young on the
consistency error, producing the term $h^{lp/(p-1)}$ and hence the spatial rate in
Theorem~\ref{Main theorem of error}.

\medskip
\noindent
\textbf{Step 6: the electromagnetic and buoyancy couplings.}\,
Collecting the electromagnetic terms from \eqref{eq-error-system-1} and
\eqref{eq-error-system-2},
     \begin{align}
       \nonumber&\left|\left(\mathbf{b}^{n} \times (\nabla \times \mathbf{b}^{n}),E^{n}_{\u}\right )-\left(\mathfrak{B}^{n-1} \times (\nabla \times \mathfrak{B}^n),E^{n}_{\u}\right)+\left(\mathbf{U}^{n}\times\mathfrak{B}^{n-1},\nabla \times E^{n}_{\B}\right)-\left(\u^{n}\times\B^{n},\nabla \times E^{n}_{\B}\right)\right|\\ \nonumber
        &=\bigg|\left([\mathbf{b}^{n}-\mathfrak{B}^{n-1}]\times \nabla \times \mathbf{b}^{n},E^{n}_{\u}\right )+\left(\u^{n}\times[\mathfrak{B}^{n-1}-\B^{n}],\nabla \times E^{n}_{\B}\right)\\\nonumber&\qquad+\left([\mathcal{F}_{h}\B^{n-1}+E^{n-1}_{\B}]\times\nabla \times[\B^{n}-\mathcal{F}_{h}\B^{n}],E^{n}_{\u}\right)-\left([\u^{n}-\mathcal{P}_{h}\u^{n}]\times[\mathcal{F}_{h}\B^{n-1}+E^{n-1}_{\B}],\nabla \times E^{n}_{\B}\right)\bigg|\\\nonumber
        &\le C\bigg[\|\nabla\times\B^{n}\|_{\L^{3}}\|\mathbf{b}^{n}-\mathfrak{B}^{n-1}\|_{\L^{2}}\|E^{n}_{\u}\|_{\L^{6}}+\|\u^{n}\|_{\L^{\infty}}\|\mathbf{b}^{n}-\mathfrak{B}^{n-1}\|_{\L^{2}}\|\nabla \times E^{n}_{\B}\|_{\L^{2}}\\ \nonumber&\qquad+\|\mathcal{F}_{h}\B^{n-1}\|_{\L^{3}}\|\nabla \times(\B^{n}-\mathcal{F}_{h}\B^{n})\|_{\L^{2}}\|E^{n}_{\u}\|_{\L^{6}}+\|\nabla \times(\B^{n}-\mathcal{F}_{h}\B^{n})\|_{\L^{3}}\|E^{n-1}_{\B}\|_{\L^{6}}\|E^{n}_{\u}\|_{\L^{2}}\\&\qquad+\|\u^{n}-\mathcal{P}_{h}\u^{n}\|_{\L^{6}}\|\mathcal{F}_{h}\B^{n-1}\|_{\L^{3}}\|\nabla \times E^{n}_{\B}\|_{\L^{2}}+\|\u^{n}-\mathcal{P}_{h}\u^{n}\|_{\L^{\infty}}\|E^{n-1}_{\B}\|_{\L^{2}}\|\nabla\times E^{n}_{\B}\|_{\L^{2}}\bigg].
        \end{align}
\noindent
Using \eqref{eq-Linfty-projections} and Assumption~\ref{Assumption}, the error factors are
kept in $\mathbf L^2$. In particular, the first and third products are bounded by
$\|\nabla\times\B^{n}\|_{\L^{\infty}}\|\B^{n}-\mathfrak B^{n-1}\|_{\L^{2}}\|E^{n}_{\u}\|_{\L^{2}}$
and
$\|\mathcal F_{h}\B^{n-1}\|_{\L^{\infty}}\|\nabla\times(\B^{n}-\mathcal F_{h}\B^{n})\|_{\L^{2}}\|E^{n}_{\u}\|_{\L^{2}}$,
while
$\|E^{n-1}_{\B}\|_{\L^{6}}\le C\|\nabla\times E^{n-1}_{\B}\|_{\L^{2}}$.\
\noindent
Inserting the triangle inequality
\begin{align*}
\|\B^{n}-\mathfrak{B}^{n-1}\|_{\L^{2}} &\le\|\B^{n}-\B^{n-1}\|_{\L^{2}}+\|\B^{n-1}-\mathcal{F}_{h}\B^{n-1}\|_{\L^{2}}+\|E^{n-1}_{\B}\|_{\L^{2}}
\\ &\le(\Delta t)^{\frac12}\|\partial_{t}\B\|_{L^{2}(t_{n-1},t_{n};\mathbf L^{2})}+C_{e}h^{l}\|\B^{n-1}\|_{l,2}+\|E^{n-1}_{\B}\|_{\L^{2}},
\end{align*}
together with \eqref{eq-error approximation} and \eqref{eq-Linfty-projections}, and collecting
according to whether a term multiplies $\|E^{n}_{\u}\|_{\L^{2}}$ or
$\|\nabla\times E^{n}_{\B}\|_{\L^{2}}$,
  \begin{align}\label{eq-EM-collected}
     \nonumber   &\le C_{r}\Big\{(\Delta t)^{\frac12}\|\partial_{t}\B\|_{L^{2}(t_{n-1},t_{n};\mathbf L^{2})}
     + C_{e}h^{l}\big(\|\B^{n-1}\|_{l,2}+\|\nabla\times\B^{n}\|_{l,2}\big)
     +\|E^{n-1}_{\B}\|_{\L^{2}}+\|\nabla\times E^{n-1}_{\B}\|_{\L^{2}}\Big\}\|E^{n}_{\u}\|_{\L^{2}}\\
     &\qquad+C_{r}\Big\{(\Delta t)^{\frac12}\|\partial_{t}\B\|_{L^{2}(t_{n-1},t_{n};\mathbf L^{2})}
     + C_{e}h^{l}\big(\|\B^{n-1}\|_{l,2}+\|\u^{n}\|_{l+1,2}\big)
     +\|E^{n-1}_{\B}\|_{\L^{2}}\Big\}\|\nabla\times E^{n}_{\B}\|_{\L^{2}} .
     \end{align}
\noindent
Hence, for any $\varepsilon>0$, the quadratic Young inequality gives
     \begin{align}\label{eq-EM-young}
    \nonumber & \le \varepsilon\|E^{n}_{\u}\|^{2}_{\L^{2}}+\frac{\kappa}{8}\|\nabla\times E^{n}_{\B}\|^{2}_{\L^{2}}
    +C_{\varepsilon,r}\Big\{\|E^{n-1}_{\B}\|^{2}_{\L^{2}}+\|\nabla\times E^{n-1}_{\B}\|^{2}_{\L^{2}}\Big\}\\
    &\qquad+C_{\varepsilon,r}\Big\{\Delta t\|\partial_{t}\B\|^{2}_{L^{2}(t_{n-1},t_{n};\mathbf L^{2})}
    +h^{2l}\big(\|\B^{n-1}\|^{2}_{l,2}+\|\nabla\times\B^{n}\|^{2}_{l,2}+\|\u^{n}\|^{2}_{l+1,2}\big)\Big\} .
    \end{align}
{\noindent
Because \eqref{eq-Linfty-projections} keeps the velocity error in $\mathbf L^2$, no
$(p,p')$-Young inequality is needed here; otherwise the temporal rate would deteriorate to
$\mathcal O((\Delta t)^{p'})$. The previous-level term is absorbed after summation through
$\Delta t\sum_{n=1}^{m}\|\nabla\times E^{n-1}_{\B}\|^{2}_{\L^{2}}\le
\Delta t\|\nabla\times E^{0}_{\B}\|^{2}_{\L^{2}}+
\Delta t\sum_{n=1}^{m}\|\nabla\times E^{n}_{\B}\|^{2}_{\L^{2}}$.}

For the buoyancy term in \eqref{error-u-truncation},
\begin{align}
    \nonumber(\Theta^{n-1}e_{3},E^n_{\u})-\left(\theta^{n-1}e_{3},E^n_{\u}\right)&=(\Theta^{n-1}e_{3}-\theta^{n-1}e_{3}, E^n_{\u})\\
   \nonumber &\le C\|\Theta^{n-1}-\theta^{n-1}\|_{L^2}\|E^n_{\u}\|_{\L^2}\\
    \nonumber&\le C\|\Theta^{n-1}-\mathcal{R}_h\theta^{n-1}\|_{L^2}\|E^n_{\u}\|_{\L^2}+C\|\mathcal{R}_h\theta^{n-1}-\theta^{n-1}\|_{{L^2}}\|E^n_{\u}\|_{{\L^2}}\\
    &\le C \|{E^{n-1}_{\theta}}\|_{L^2}\|E^n_{\u}\|_{\L^2}+{C_{e}h^{l}\|\theta^{n-1}\|_{1+l,2}}\|E^n_{\u}\|_{\L^2}.
\end{align}
\noindent
Here $E^{n-1}_{\theta}$ appears because the buoyancy force is evaluated at the previous time
level. The $L^2$-$\mathbf L^2$ pairing preserves the factor $h^l$. For the reciprocal
coupling in \eqref{error-theta-trucation},
\begin{align}
   (\mathbf{U}^{n}\cdot e_3,E^n_{\theta})-(\u^{n}\cdot e_3,E^n_{\theta})&\le C \|E^n_{\theta}\|_{L^2}\|E^n_{\u}\|_{\L^2}+C_{e}h^{l+1}\|\u^{n}\|_{1+l,2}\|E^n_{\theta}\|_{L^2}.
\end{align}
 \noindent
For the temperature convection term $R_{C2h}^{n}$,
\begin{align}
    \nonumber(R_{C2h}^n,E^n_{\theta})&=a_2(\u^n,\theta^n,E^n_{\theta})-a_2(\mathbf{U}^{n-1},\Theta^n,E^n_{\theta})\\
   \nonumber&=\underbrace{ a_2(\u^n-\mathcal{P}_h\u^{n-1},\theta^n, E^n_{\theta})}_{\eta_{1,n}}+\underbrace{a_2(\mathcal{P}_h\u^{n-1}-\mathcal{P}_h\u^{n-1},\theta^n, E^{n}_{\theta})}_{\eta_{2,n}}+\underbrace{a_2(\mathcal{P}_{h}\u^{n-1},\theta^n-\mathcal{R}_h\theta^n,E^n_\theta)}_{\eta_{3,n}}\\
  \nonumber &\qquad+\underbrace{a_2(\mathcal{P}_{h}\u^{n-1},\mathcal{R}_h\theta^n-\mathcal{R}_h\theta^n,E^n_\theta)}_{\eta_{4,n}}+\underbrace{a_2(\mathcal{P}_{h}\u^{n-1},\mathcal{R}_h\theta^n-\Theta^n,E^n_\theta)}_{\eta_{5,n}}+\underbrace{a_2(\mathcal{P}_{h}\u^{n-1}-\mathbf{U}^{n-1},\mathcal{R}_h\theta^n,E^n_\theta)}_{\eta_{6,n}}\\
  &\qquad+\underbrace{a_2(\mathcal{P}_{h}\u^{n-1}-\mathbf{U}^{n-1},\mathcal{R}_h\theta^n-\Theta^n,E^n_\theta)}_{\eta_{7,n}}.
\end{align}
The terms $\eta_{2,n}$ and $\eta_{4,n}$ vanish identically, while
$\eta_{5,n}$ and $\eta_{7,n}$ vanish by
$a_{2}(\u,\phi,\phi)=0$ and
$\mathcal R_h\theta^n-\Theta^n=-E^n_{\theta}$:
$$\eta_{2,n}=\eta_{4,n}=\eta_{5,n}=\eta_{7,n}=0.$$

Using \eqref{eq-error approximation} and Lemma~\ref{eq-error-estimate bounds},
\begin{align}
   \nonumber \eta_{1,n} =  a_2(\u^n-\mathcal{P}_h\u^{n-1},\theta^n, E^n_{\theta}) &\le \|\u^n-\mathcal{P}_h\u^{n-1}\|_{\L^2}\|{\nabla\theta^{n}}\|_{{\mathbf L}^\infty}{\|E^n_{\theta}\|_{L^2}}\\
      \nonumber&\le C_r\left(\|\u^{n}-\u^{n-1}\|_{\mathbf{L}^{2}}+\|\u^{n-1}-\mathcal{P}_{h}\u^{n-1}\|_{\mathbf{L}^{2}}\right)\|E^{n}_{\theta}\|_{L^{2}}\\
        &\le C_r\left\{(\Delta t)^{1/2}\|\partial_{t}\u \|_{L^{2}(t_{n-1},t_{n};\mathbf{L}^{2})}+C_eh^{l+1}\|\u^{n-1}\|_{l+1,2}\right\}\|E^{n}_{\theta}\|_{{L}^{2}}.
   \end{align}
\noindent
As in the velocity case, the derivative is moved onto $\theta^n$, and
Assumption~\ref{Assumption} keeps the error factor in $L^2$.
   Similarly,
   \begin{align}
       \nonumber\eta_{3,n} =a_2(\mathcal{P}_{h}\u^{n-1},\theta^n-\mathcal{R}_h\theta^n,E^n_\theta) &\le\|\mathcal{P}_{h}\u^{n-1}\|_{\L^\infty}\|\nabla(\theta^n-\mathcal{R}_h\theta^n)\|_{L^2}\|E^n_\theta\|_{L^2} \le C_r h^{l}\|\theta^n\|_{1+l,2}\| E^n_{\theta}\|_{L^2}.
   \end{align}
   \begin{align}
      \eta_{6,n}&=a_2(\mathcal{P}_{h}\u^{n-1}-\mathbf{U}^{n-1},\mathcal{R}_h\theta^n,E^n_\theta)
      =-\int_{\mathbb D}\big(E^{n-1}_{\u}\cdot\nabla\mathcal R_{h}\theta^{n}\big)E^{n}_{\theta}\,dx \le C_r \|E^{n-1}_{\u}\|_{\L^2}\|E^n_{\theta}\|_{L^2},
   \end{align}
\noindent
using \eqref{eq-Linfty-projections}.
\noindent
   Combining these estimates gives
\begin{align}\label{eq-RC2h-bound}
(R_{C2h}^n,E^n_{\theta})&\le \varepsilon\|E^{n}_{\theta}\|^{2}_{L^{2}}+C_{\varepsilon,r}\|E^{n-1}_{\u}\|^{2}_{\L^{2}} +C_{\varepsilon,r}\Big\{\Delta t\|\partial_{t}\u\|^{2}_{L^{2}(t_{n-1},t_{n};\mathbf L^{2})}+h^{2l}\big(\|\u^{n-1}\|^{2}_{l+1,2}+\|\theta^{n}\|^{2}_{1+l,2}\big)\Big\}.
\end{align}
The remaining projection terms are estimated by \eqref{eq-error approximation} and
Young's inequality:
\begin{align}
    \kappa\left(\nabla\times(\B^{n}-\mathcal{F}_{h}\B^{n}),\nabla\times E^n_{\B}\right)&\le C\|\nabla\times(\B^{n}-\mathcal{F}_{h}\B^{n})\|_{\L^2}\|\nabla\times E^n_{\B}\|_{\L^2} \le \frac{\kappa}{8}\|\nabla\times E^n_{\B}\|^{2}_{\L^2}+C h^{2l}\|\nabla\times \B^n\|^{2}_{l,2},
\end{align}
\begin{align}
    \sigma\left(\nabla(\theta^n-\mathcal{R}_h\theta^n),\nabla E^n_{\theta}\right)\le \frac{\sigma}{8}\|\nabla E^n_{\theta}\|^{2}_{L^2}+C h^{2l}\|\theta^n\|^{2}_{1+l,2}.
\end{align}

\medskip
\noindent
\textbf{Step 7: collection and discrete Gr\"onwall.}\,
Combining the preceding estimates in
\eqref{eq-error-system-1}-\eqref{eq-error-system-3} and choosing
$\varepsilon,\varepsilon_0$ sufficiently small to absorb the dissipative terms gives
    \begin{align}\label{eq-error-gronwall}
       \nonumber &d_{t}\|E^{n}_{\u}\|^{2}_{\L^2}+d_t\|E^{n}_{\B}\|^{2}_{\L^2}+d_t\|E^{n}_{\theta}\|^{2}_{L^2}
       +(\Delta t)\left[\|d_{t}E^{n}_{\u}\|^{2}_{\L^2}+\|d_{t}E^{n}_{\B}\|^{2}_{\L^2}+\|d_{t}E^{n}_{\theta}\|^{2}_{L^2}\right]\\
       \nonumber&\qquad+\nu\,2^{2-p}\|\nabla E^{n}_{\u}\|^{p}_{\L^p}+\frac{\kappa}{2}\|\nabla\times E^{n}_{\B}\|^{2}_{\L^2}+\frac{\sigma}{2}\|\nabla E^{n}_{\theta}\|^{2}_{L^2}\\
       &\le c\left(\|E^{n}_{\u}\|^{2}_{\L^2}+\|E^{n-1}_{\u}\|^{2}_{\L^2}+\|E^{n}_{\B}\|^2_{\L^2}+\|E^{n-1}_{\B}\|^2_{\L^2}+\|E^{n}_{\theta}\|^{2}_{L^2}+{\|E^{n-1}_{\theta}\|^{2}_{L^2}}\right)
       +c\,\|\nabla\times E^{n-1}_{\B}\|^{2}_{\L^2}+\lambda_{n},
    \end{align}
\noindent
where all error terms on the right are quadratic and
    \begin{align}\label{eq-zeta_n}
        \nonumber\lambda_{n}&=C(\Delta t)\left\{\|\partial_{tt}\u\|^{2}_{L^2(t_{n-1},t_n;\L^2)}+\|\partial_{tt}\B\|^{2}_{L^2(t_{n-1},t_n;\L^2)}+\|\partial_{tt}\theta\|^{2}_{L^2(t_{n-1},t_n;L^2)}\right\}\\
        \nonumber&\qquad+C(\Delta t)\left\{\|\partial_{t}\u\|^{2}_{L^2(t_{n-1},t_n;\mathbf H^{1})}+\|\partial_{t}\B\|^{2}_{L^2(t_{n-1},t_n;\L^{2})}\right\}\\
        \nonumber &\qquad+\frac{C}{\Delta t}\left\{h^{2(l+1)}\|\partial_{t}\u\|^{2}_{L^{2}(t_{n-1},t_{n};\mathbf W^{1+l,2})}+h^{2l}\|\partial_{t}\B\|^{2}_{L^{2}(t_{n-1},t_{n};\mathbf W^{l,2})}+h^{2(l+1)}\|\partial_{t}\theta\|^{2}_{L^{2}(t_{n-1},t_{n};W^{1+l,2})}\right\}\\
        \nonumber&\qquad+ Ch^{2l}\left\{\|\u^{n}\|^{2}_{l+1,2}+\|\u^{n-1}\|^{2}_{l+1,2}+\|\B^{n}\|^2_{l,2}+\|\B^{n-1}\|^2_{l,2}+\|\nabla\times\B^{n}\|^{2}_{l,2}+\|\theta^{n}\|^{2}_{1+l,2}+{\|\theta^{n-1}\|^{2}_{1+l,2}}\right\} \\ &\qquad+Ch^{\frac{lp}{p-1}}\,\|\u^{n}\|^{\frac{p}{p-1}}_{l+1,p}\,\big\||\nabla \u^n|+|\nabla\mathcal{P}_{h}\u^n|\big\|^{\frac{(p-2)p}{p-1}}_{\L^p}.
    \end{align}
\noindent
Multiplying by $\Delta t$ and summing over $n=1,\dots,m$, the first two groups are
$O((\Delta t)^2)$, the projection-time terms are $O(h^{2(l+1)}+h^{2l})$, and the last two
groups are $O(h^{2l}+h^{lp/(p-1)})$. Since
$p'=\frac{p}{p-1}\le2$ and $h\le1$, Assumption~\ref{Assumption} yields
\begin{equation}\label{eq-lambda-sum}
\Delta t\sum_{n=1}^{m}\lambda_n\ \le\ C^{*}\left\{(\Delta t)^{2}+h^{\frac{lp}{p-1}}\right\}.
\end{equation}
The integral representation \eqref{eq-Mh1-integral} is precisely what removes the
undesired term $h^{2l}/\Delta t$.

\noindent
Finally, multiplying \eqref{eq-error-gronwall} by $\Delta t$, summing over $n=1,\dots,m$ and
using the index shift
$\Delta t\sum_{n=1}^{m}\|\nabla\times E^{n-1}_{\B}\|^{2}_{\L^{2}}
\le\Delta t\|\nabla\times E^{0}_{\B}\|^{2}_{\L^{2}}+\Delta t\sum_{n=1}^{m}\|\nabla\times E^{n}_{\B}\|^{2}_{\L^{2}}$
to absorb the term $c\,\|\nabla\times E^{n-1}_{\B}\|^{2}_{\L^{2}}$ into
$\frac{\kappa}{2}\|\nabla\times E^{n}_{\B}\|^{2}_{\L^{2}}$ (which requires $c\,\Delta t\le\kappa/4$,
i.e.\ $\Delta t\le\Delta t_{0}$ with $\Delta t_{0}$ independent of $h$), we obtain
\begin{align*}
&\|E^{m}_{\u}\|_{\L^2}^2+\| E_{\B}^m\|_{\L^2}^2+\|E^{m}_{\theta}\|_{L^2}^2+\Delta t
\sum_{n=1}^{m}\Big\{\frac{\nu}{2}2^{2-p}\|\nabla E^n_{\u}\|_{\L^p}^p+\frac{\kappa}{2}\|\nabla\times E^n_{\B}\|_{\L^2}^2+\frac{\sigma}{2}\|\nabla E^{n}_{\theta}\|_{L^2}^2\\
&\qquad\qquad +(\Delta t)\|d_{t}E^{n}_{\u}\|^{2}_{\L^2}+(\Delta t)\|d_{t}E^{n}_{\B}\|^{2}_{\L^2}+(\Delta t)\|d_{t}E^{n}_{\theta}\|^{2}_{L^2}\Big\}\\
&\qquad\le C h^{2l}+c\,\Delta t\sum_{n=1}^{m}\Big(\|E_{\u}^n\|^{2}_{\L^2}+\|E_{\B}^n\|^{2}_{\L^2}+\|E_{\theta}^n\|^{2}_{L^2}\Big)+\Delta t\sum_{n=1}^{m}\lambda_n ,
\end{align*}
Here $Ch^{2l}$ collects the initial errors. The discrete Gr\"onwall lemma
\cite[Lemma~4.9]{ding2022convergence}, together with \eqref{eq-lambda-sum}, gives
\eqref{eq-main-error}. Finally, \eqref{eq-main-error-total} follows from the triangle
inequality and \eqref{eq-error approximation}.
   \end{proof}

\vspace{0.2cm}

\section{Numerical experiments}\label{sec-numerics}


We present four three-dimensional experiments illustrating the discrete dynamics, the energy
behaviour of Theorem~\ref{bounds}, and the temporal and space-time convergence predicted by
Theorem~\ref{Main theorem of error}.

\subsection{Common setting}\label{subsec-setting}

Throughout, $\mathbb{D}=(0,1)^{3}$ is discretised by a uniform tetrahedral mesh, with
$T=1$ and
\[
\nu=1,\qquad \sigma=1,\qquad \kappa=1, \qquad \boldsymbol{\omega}_r=(0,0,1) .
\]
We use continuous $P_{2}$ elements for velocity and temperature, continuous $P_{1}$ elements
for pressure, and second-order N\'ed\'elec edge elements for the magnetic induction. Hence
$k=2$ and $l=\min\{2,s\}$, with $s$ from Assumption~\ref{Assumption}.

\subsection{Qualitative behaviour of the discrete solution}\label{subsec-qualitative}

\begin{example}[Temporal profiles of $\u$, $\B$ and $\theta$]\label{ex-temporal}
For $p=3$, we track the fields on the plane $z=0.25$ over $t\in[0,1]$. The exact velocity is
defined as the curl of a scalar potential and is therefore pointwise divergence-free:
\[
\mathbf{u}=\nabla\times\left(0,0,\psi\right),
\qquad
\psi(x,y,z,t)=\sin^{2}(\pi x)\sin^{2}(\pi y)\sin^{2}(\pi z)\sin(\omega t+\pi x),
\]
that is,
\[
\mathbf{u}(x,y,z,t)=
\begin{pmatrix}
\pi \sin^{2}(\pi x)\,
\sin(2\pi y)\,
\sin^{2}(\pi z)\,
\sin(\omega t+\pi x)
\\[1.2ex]
\begin{aligned}
&-\pi \sin(2\pi x)\,\sin^{2}(\pi y)\,\sin^{2}(\pi z)\,\sin(\omega t+\pi x)\\
&\qquad-\pi \sin^{2}(\pi x)\,\sin^{2}(\pi y)\,\sin^{2}(\pi z)\,\cos(\omega t+\pi x)
\end{aligned}
\\[1.2ex]
0
\end{pmatrix}.
\]
with $\omega=7$; the double zeros of the $\sin^{2}$ factors ensure that $\u=0$ on
$\partial\mathbb D$. The magnetic field is likewise defined as a curl,
$$
\mathbf{b}(x,y,z,t)=
\begin{pmatrix}
\pi \sin^{2}(\pi x)\sin(2\pi y)\sin^{2}(\pi z)
\sin\!\left(2\pi\bigl(x\cos(10t)+y\sin(10t)\bigr)\right)
\\
\qquad
+2\pi\sin(10t)\sin^{2}(\pi x)\sin^{2}(\pi y)\sin^{2}(\pi z)
\cos\!\left(2\pi\bigl(x\cos(10t)+y\sin(10t)\bigr)\right)
\\[1.5ex]
-\pi\sin(2\pi x)\sin^{2}(\pi y)\sin^{2}(\pi z)
\sin\!\left(2\pi\bigl(x\cos(10t)+y\sin(10t)\bigr)\right)
\\
\qquad
-2\pi\cos(10t)\sin^{2}(\pi x)\sin^{2}(\pi y)\sin^{2}(\pi z)
\cos\!\left(2\pi\bigl(x\cos(10t)+y\sin(10t)\bigr)\right)
\\[1.5ex]
0
\end{pmatrix}.
$$
The temperature is
$$\theta=
\sin(\pi z)\Big[2\sin(\pi x)\sin(\pi y)\cos(4t)
+
\frac{3}{2}\sin(3\pi x)\sin(2\pi y)\sin(7t)\Big].
$$

Figures~\ref{fig:solutions_different_times}-\ref{fig:theta_solutions_different_times} show
the computed velocity, magnetic field and temperature. The velocity forms counter-rotating
cells, the magnetic field follows the rotating phase
$2\pi(x\cos 10t+y\sin 10t)$, and the temperature changes from a centred hot spot to a
cellular pattern and then to a cold core. The convergence rates are examined in
Sections~\ref{subsec-temporal-conv} and \ref{subsec-spacetime-conv}.

\begin{figure}[!htbp]
    \centering

    \begin{subfigure}[b]{0.3\textwidth}
        \centering
        \includegraphics[width=\textwidth]{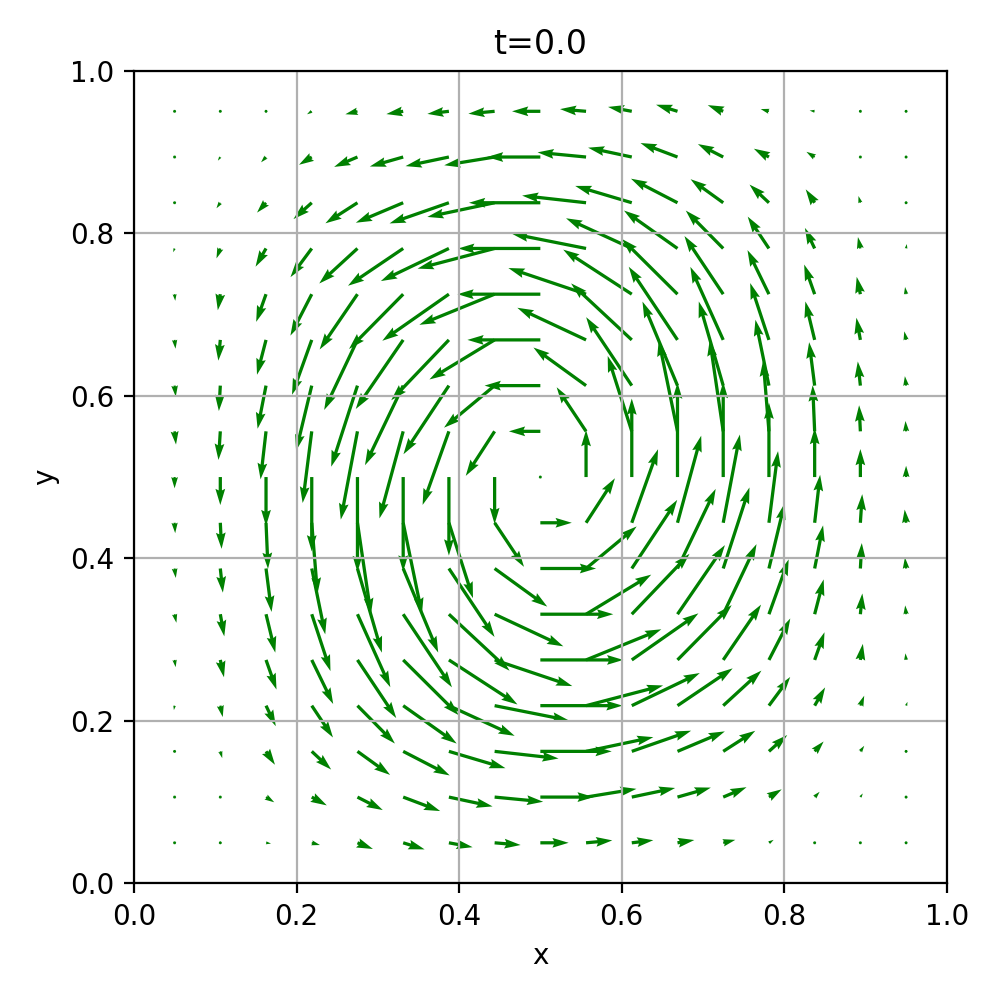}
        \caption{$t=0.0$}
        \label{fig:tu00}
    \end{subfigure}
    \hfill
    \begin{subfigure}[b]{0.3\textwidth}
        \centering
        \includegraphics[width=\textwidth]{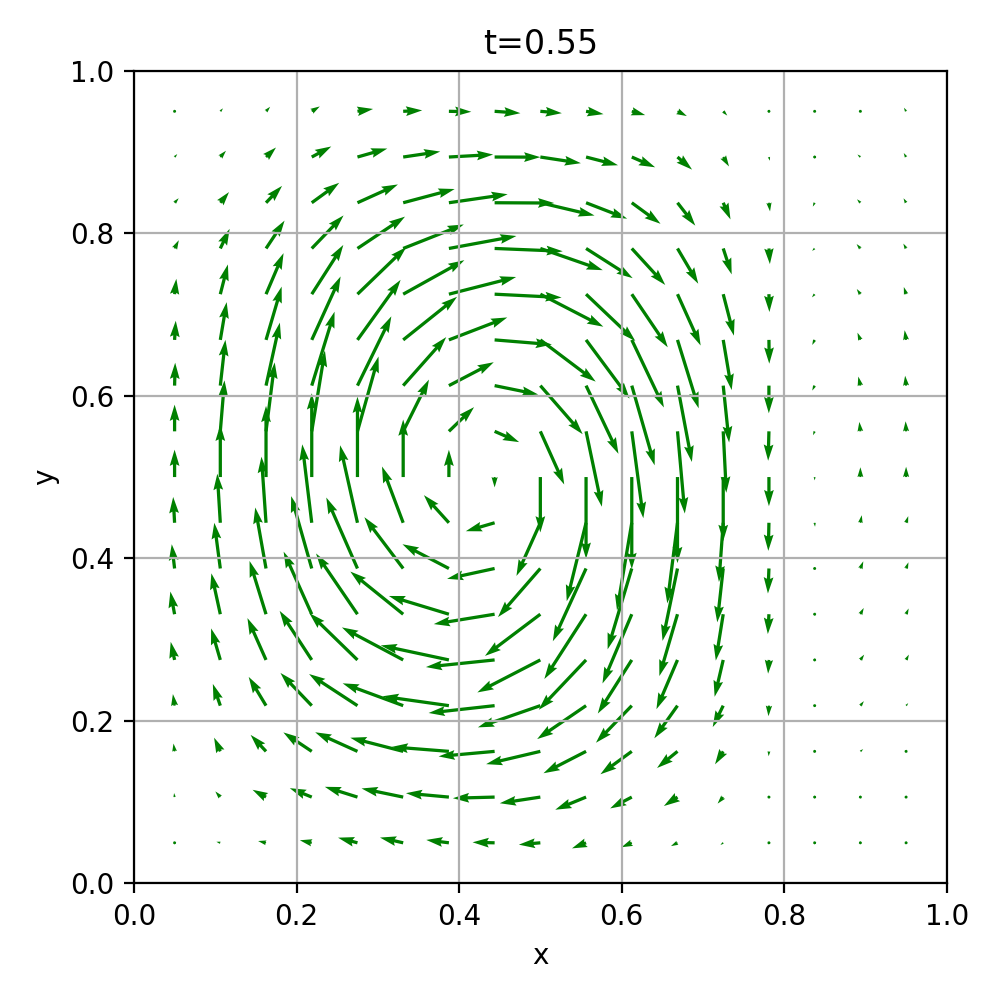}
        \caption{$t=0.55$}
        \label{fig:t055}
    \end{subfigure}
    \hfill
    \begin{subfigure}[b]{0.3\textwidth}
        \centering
        \includegraphics[width=\textwidth]{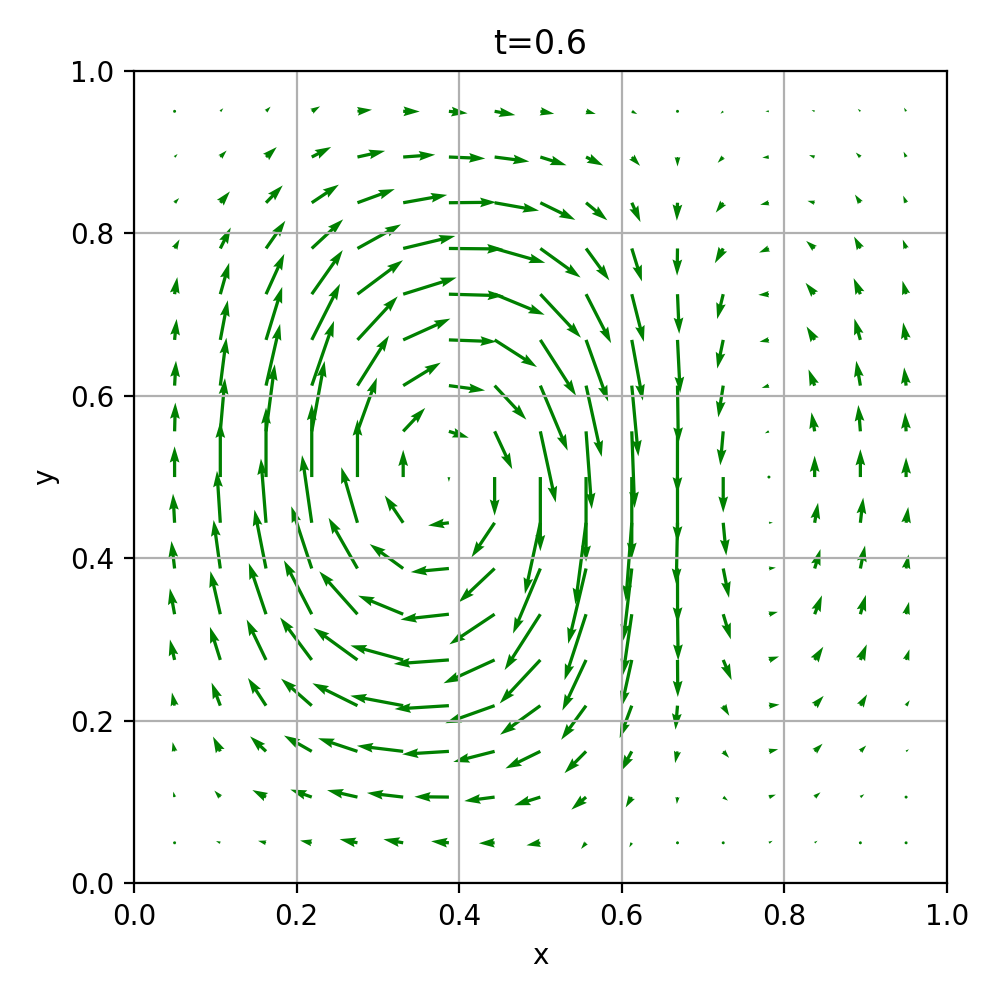}
        \caption{$t=0.6$}
        \label{fig:t06}
    \end{subfigure}

    \vspace{0.4cm}

    \begin{subfigure}[b]{0.3\textwidth}
        \centering
        \includegraphics[width=\textwidth]{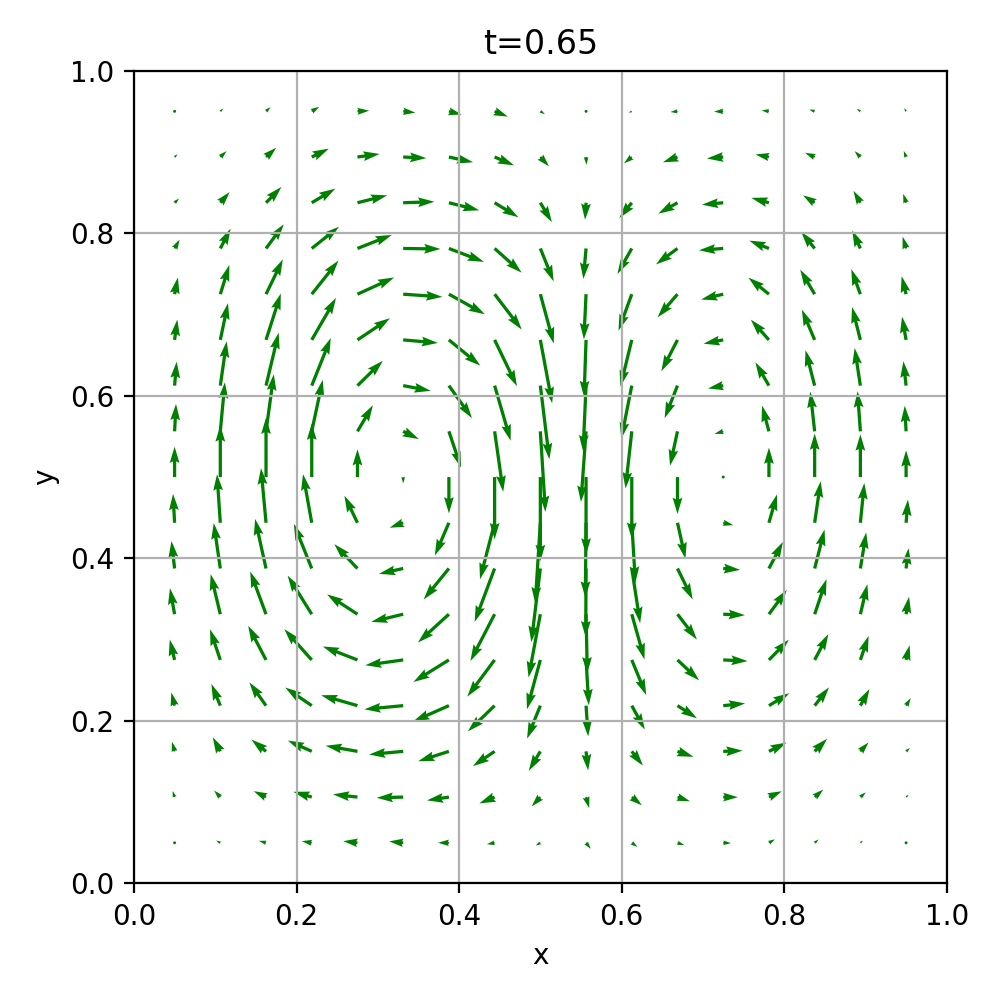}
        \caption{$t=0.65$}
        \label{fig:t065}
    \end{subfigure}
    \hfill
    \begin{subfigure}[b]{0.3\textwidth}
        \centering
        \includegraphics[width=\textwidth]{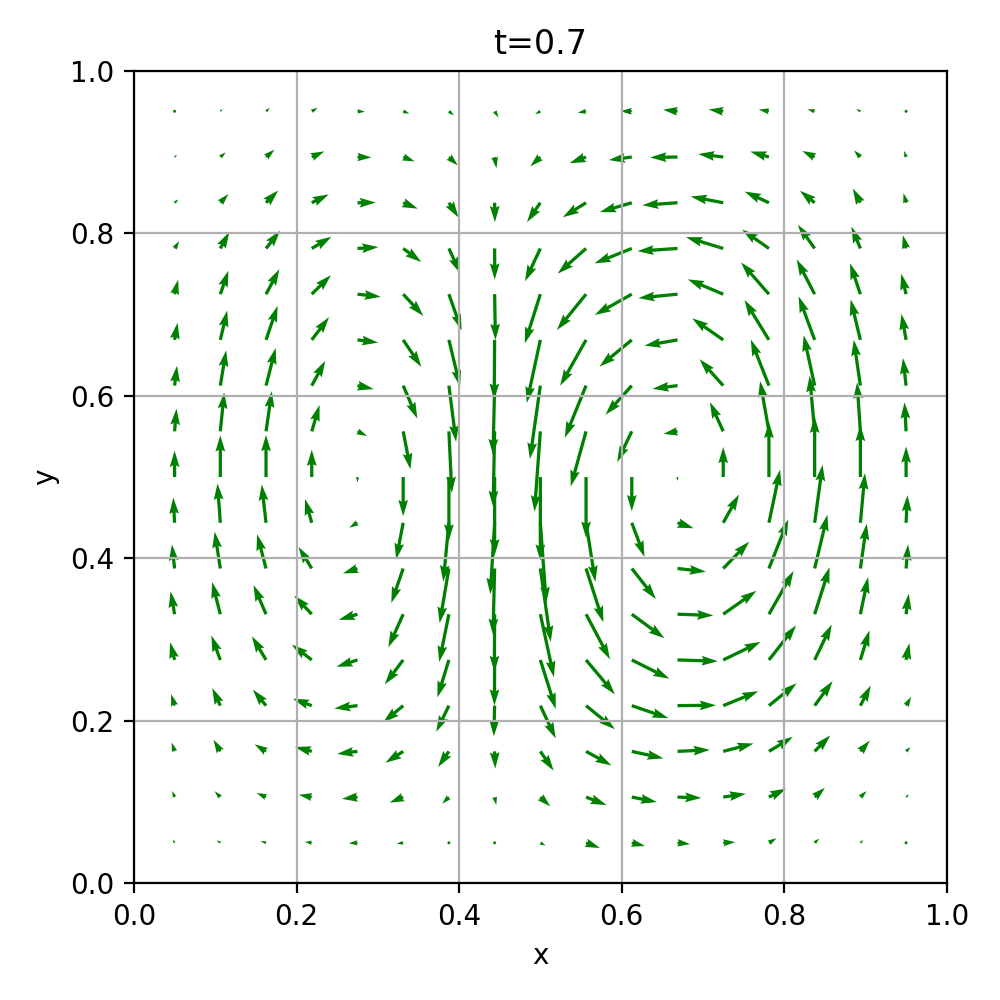}
        \caption{$t=0.7$}
        \label{fig:t07}
    \end{subfigure}
    \hfill
    \begin{subfigure}[b]{0.3\textwidth}
        \centering
        \includegraphics[width=\textwidth]{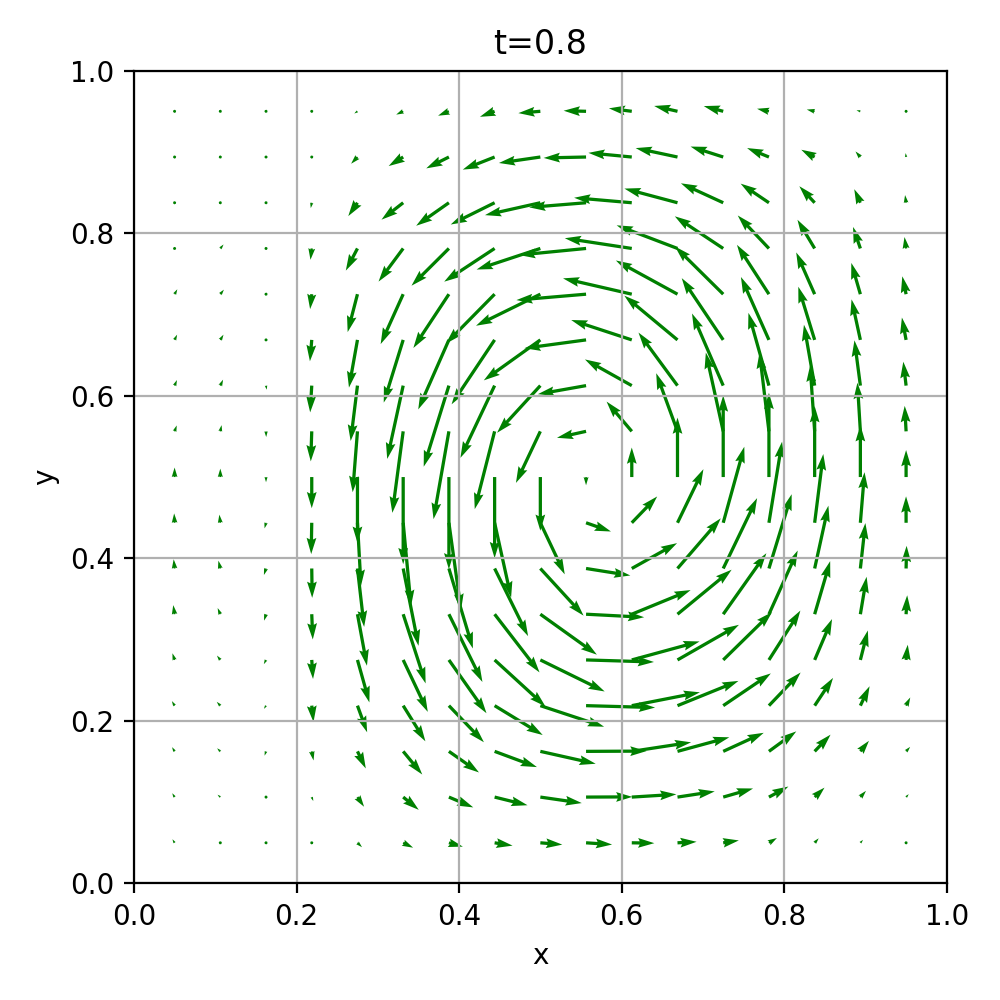}
        \caption{$t=0.8$}
        \label{fig:t08}
    \end{subfigure}
  \vspace{0.4cm}

    \begin{subfigure}[b]{0.3\textwidth}
        \centering
        \includegraphics[width=\textwidth]{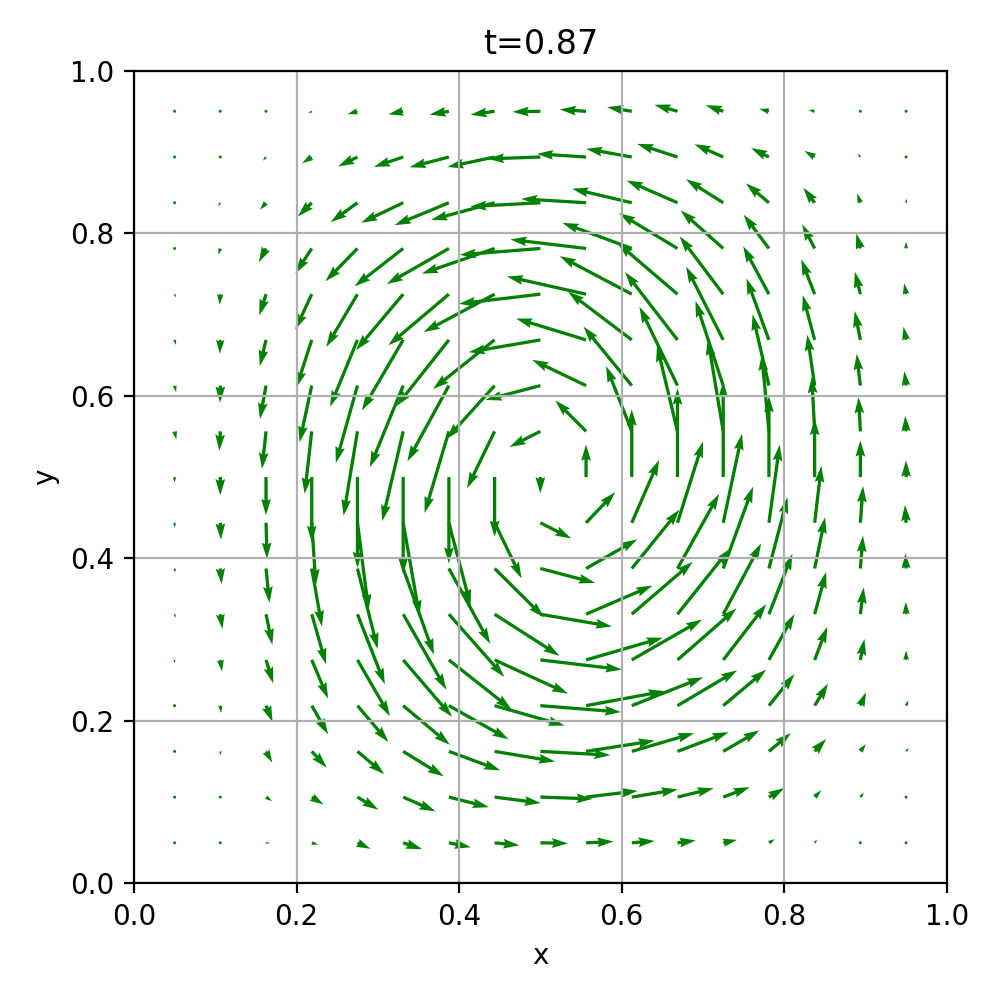}
        \caption{$t=0.87$}
        \label{fig:t087}
    \end{subfigure}
    \hfill
      \begin{subfigure}[b]{0.3\textwidth}
        \centering
        \includegraphics[width=\textwidth]{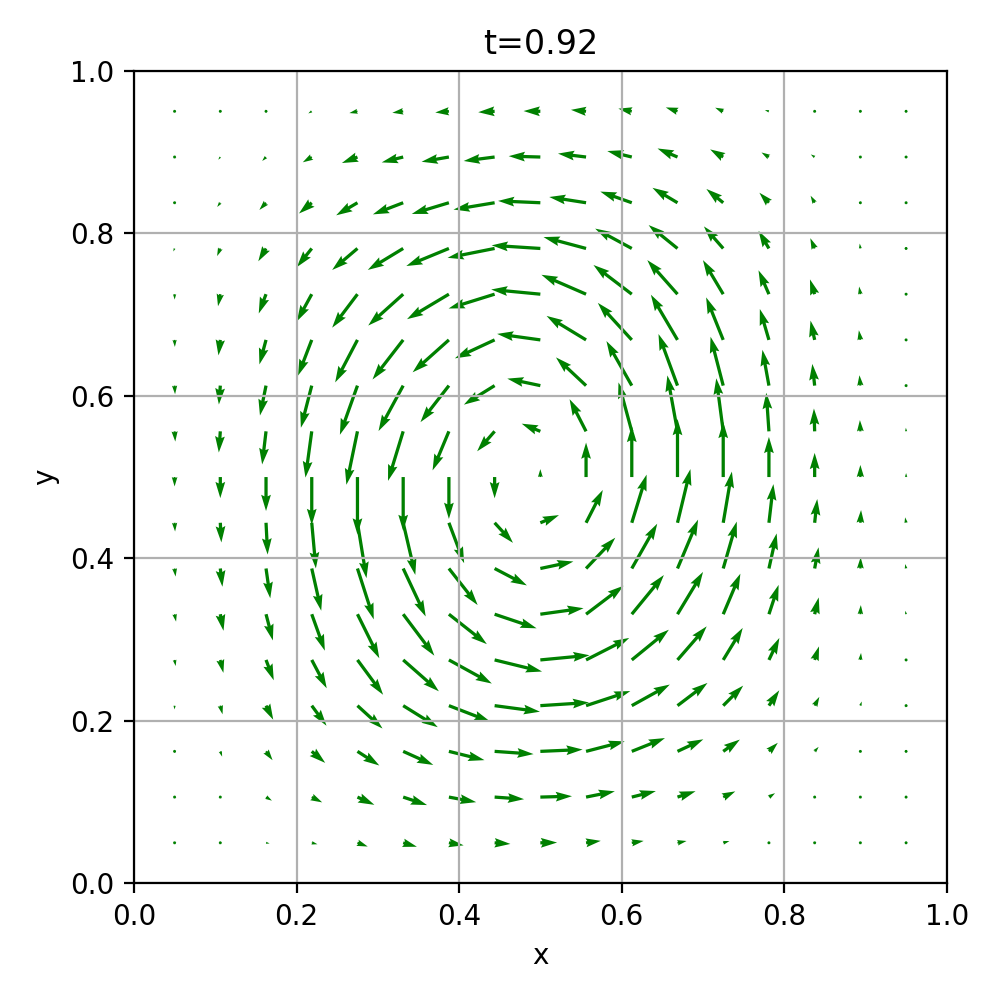}
        \caption{$t=0.92$}
        \label{fig:t092}
    \end{subfigure}
    \hfill
    \begin{subfigure}[b]{0.3\textwidth}
        \centering
        \includegraphics[width=\textwidth]{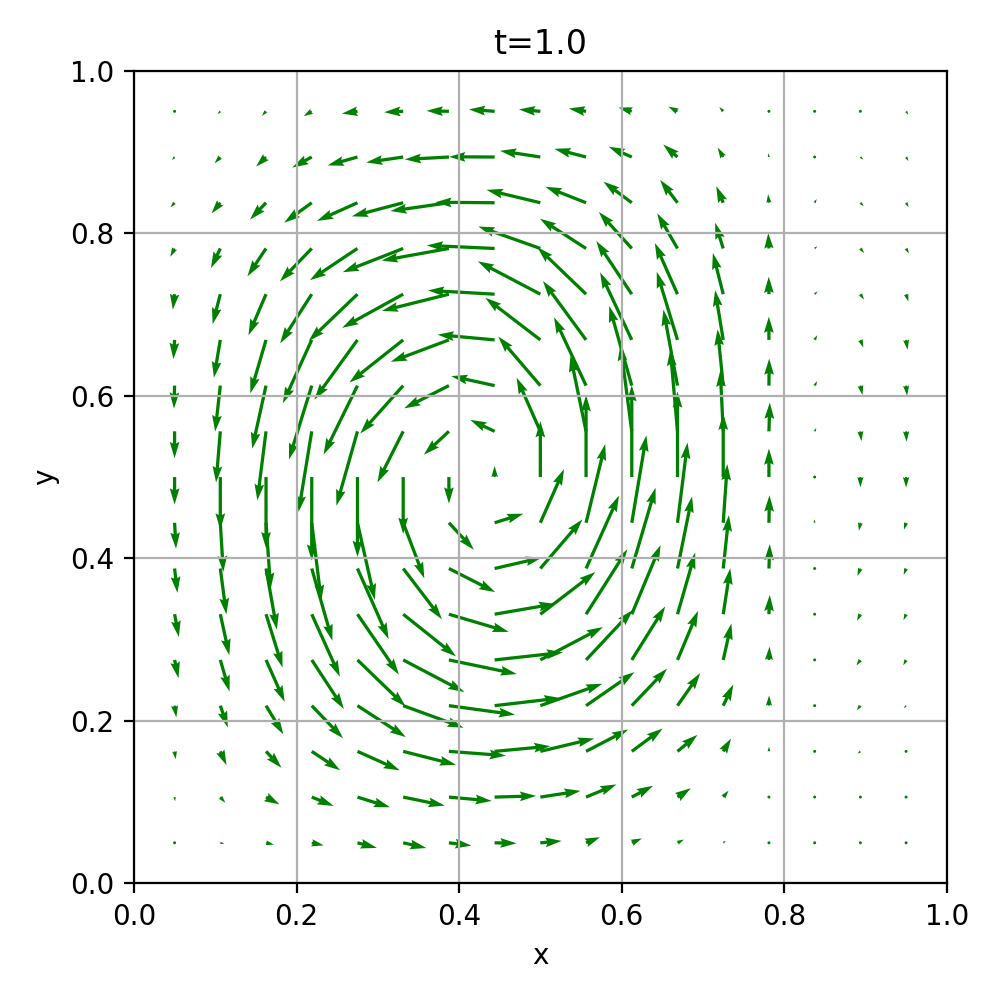}
        \caption{$t=1.0$}
        \label{fig:t10}
    \end{subfigure}
    \caption{Temporal evolution of the computed velocity field $\u$ (shown in green) at the plane $z=0.25$ for different time instances.}
    \label{fig:solutions_different_times}
\end{figure}

\begin{figure}[!htbp]
    \centering

    \begin{subfigure}[b]{0.3\textwidth}
        \centering
        \includegraphics[width=\textwidth]{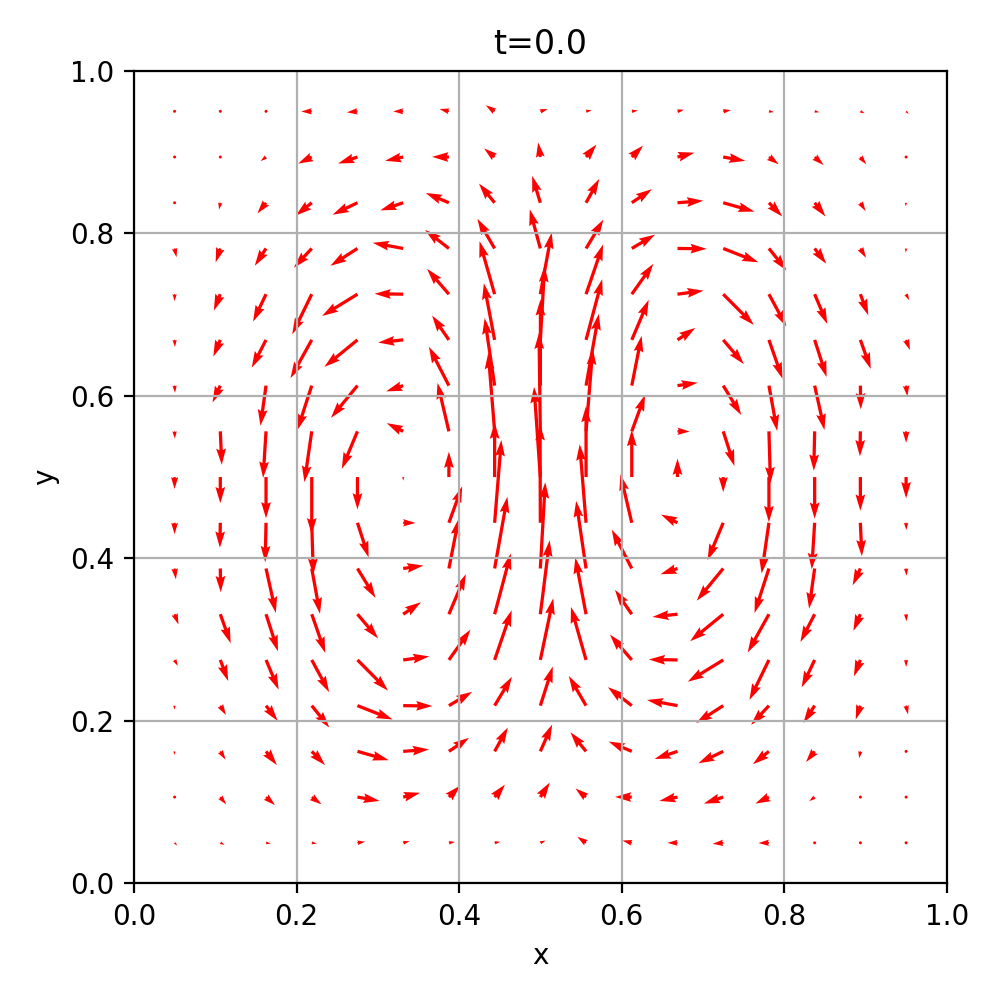}
        \caption{$t=0.0$}
        \label{fig:Bt00}
    \end{subfigure}
    \hfill
    \begin{subfigure}[b]{0.3\textwidth}
        \centering
        \includegraphics[width=\textwidth]{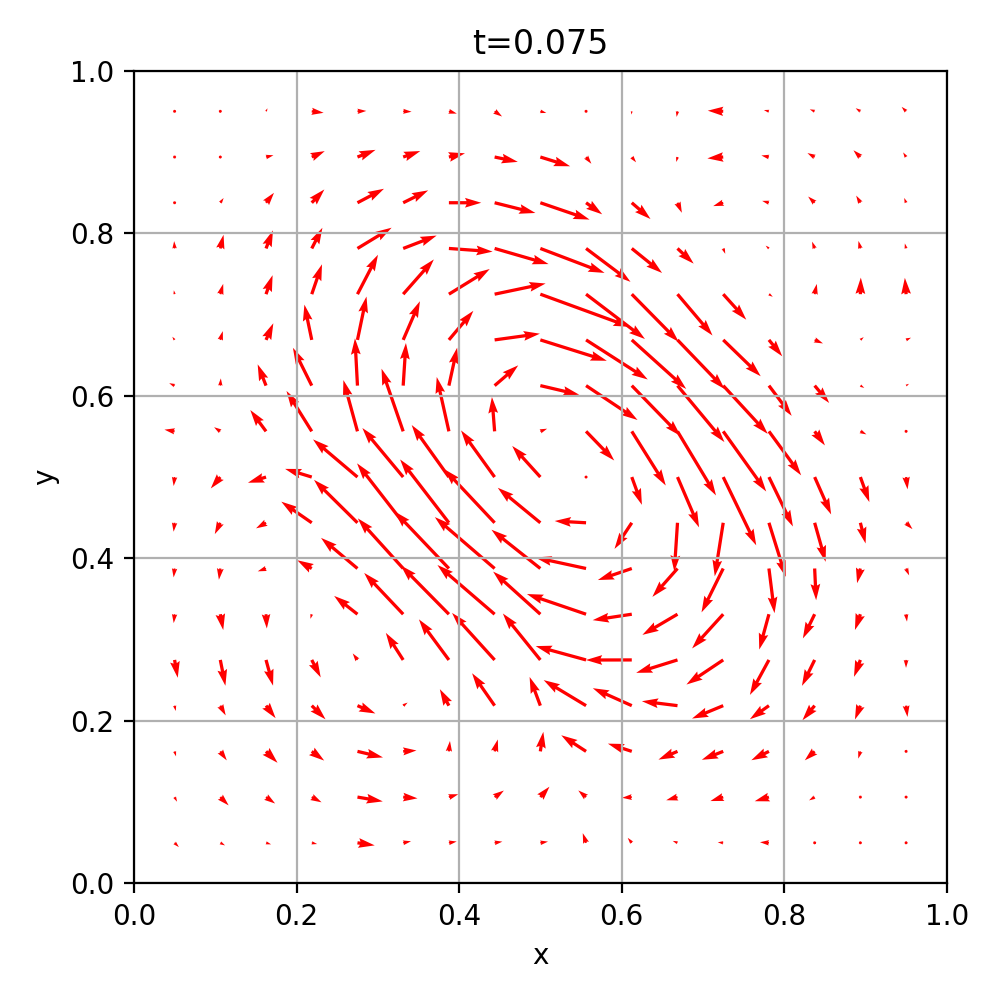}
        \caption{$t=0.075$}
        \label{fig:Bt0075}
    \end{subfigure}
    \hfill
    \begin{subfigure}[b]{0.3\textwidth}
        \centering
        \includegraphics[width=\textwidth]{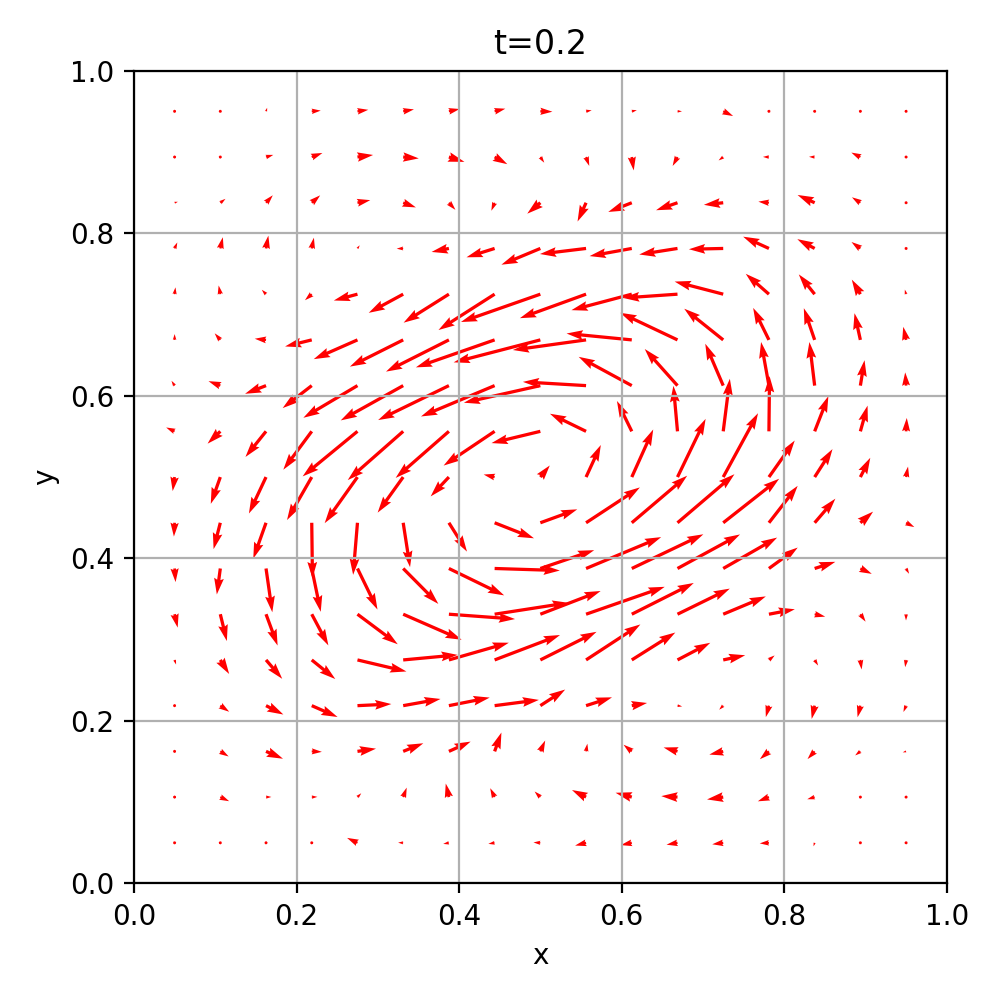}
        \caption{$t=0.2$}
        \label{fig:Bt02}
    \end{subfigure}

    \vspace{0.4cm}

    \begin{subfigure}[b]{0.3\textwidth}
        \centering
        \includegraphics[width=\textwidth]{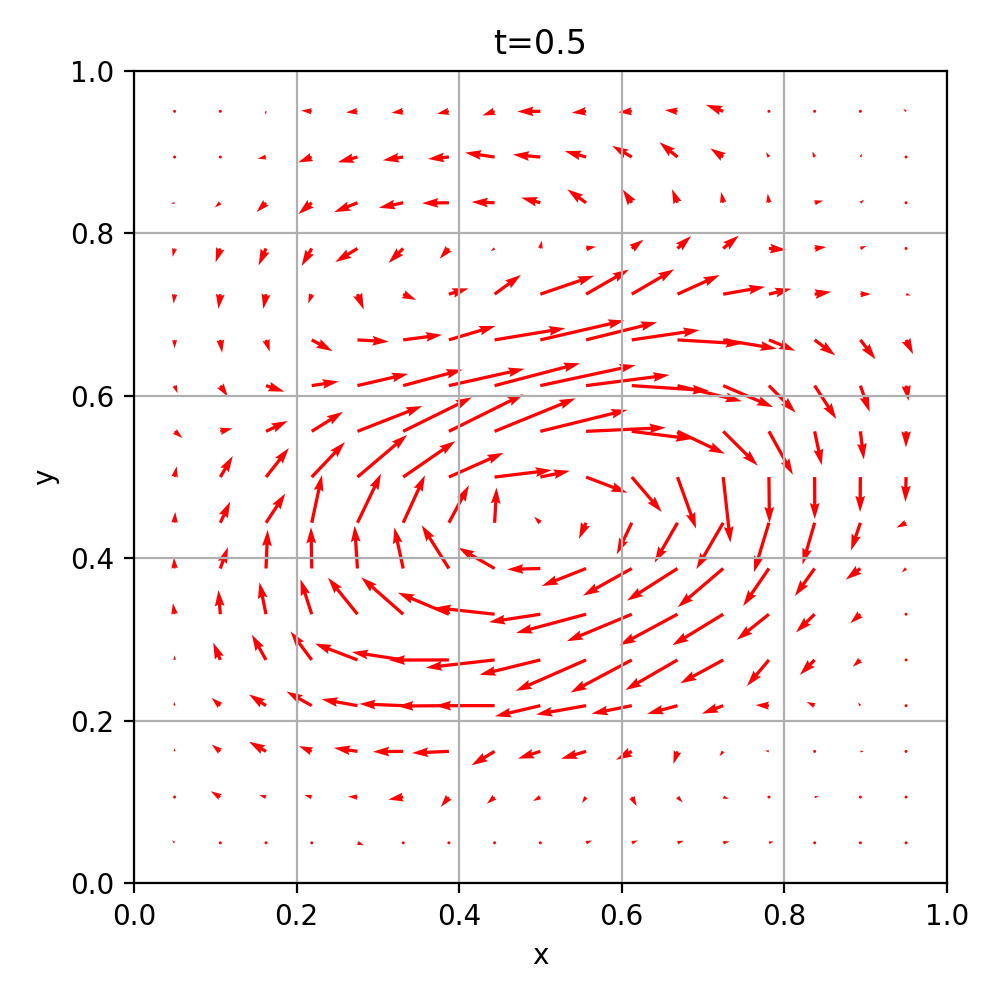}
        \caption{$t=0.5$}
        \label{fig:Bt05}
    \end{subfigure}
    \hfill
    \begin{subfigure}[b]{0.3\textwidth}
        \centering
        \includegraphics[width=\textwidth]{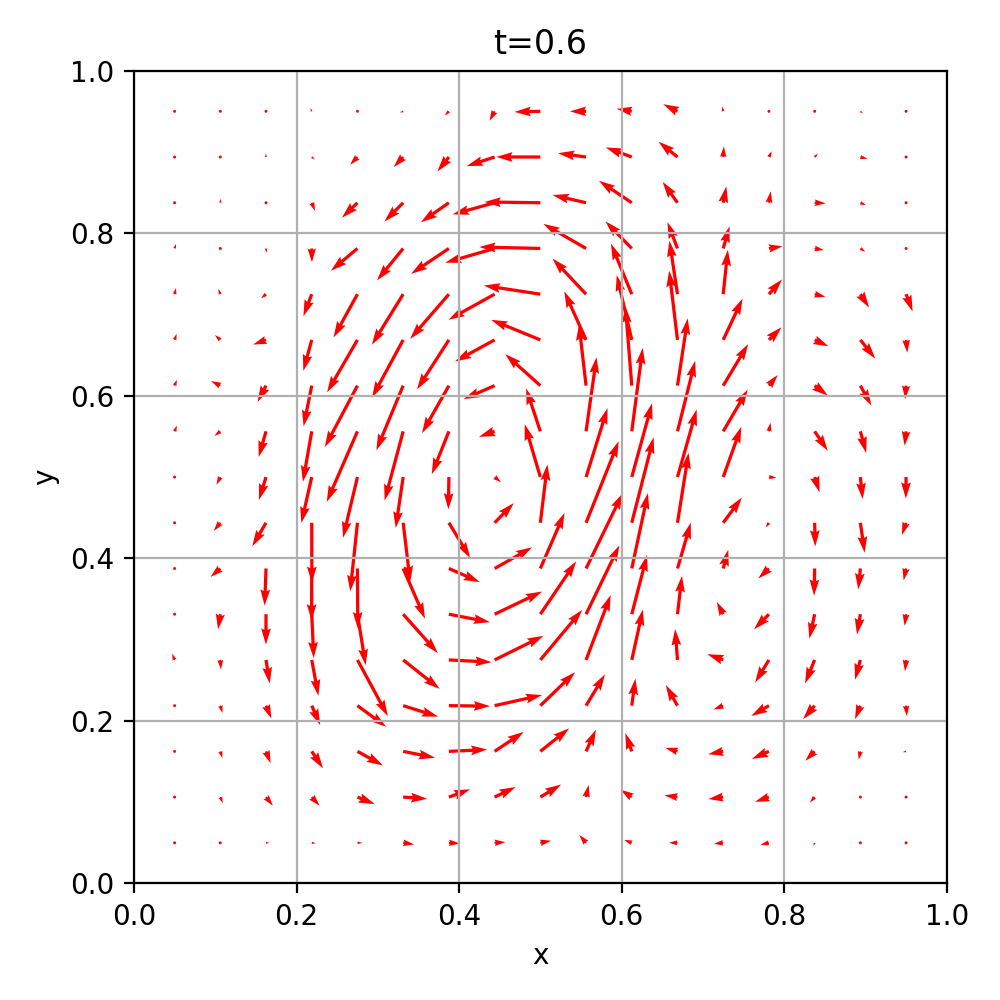}
        \caption{$t=0.6$}
        \label{fig:Bt06}
    \end{subfigure}
    \hfill
    \begin{subfigure}[b]{0.3\textwidth}
        \centering
        \includegraphics[width=\textwidth]{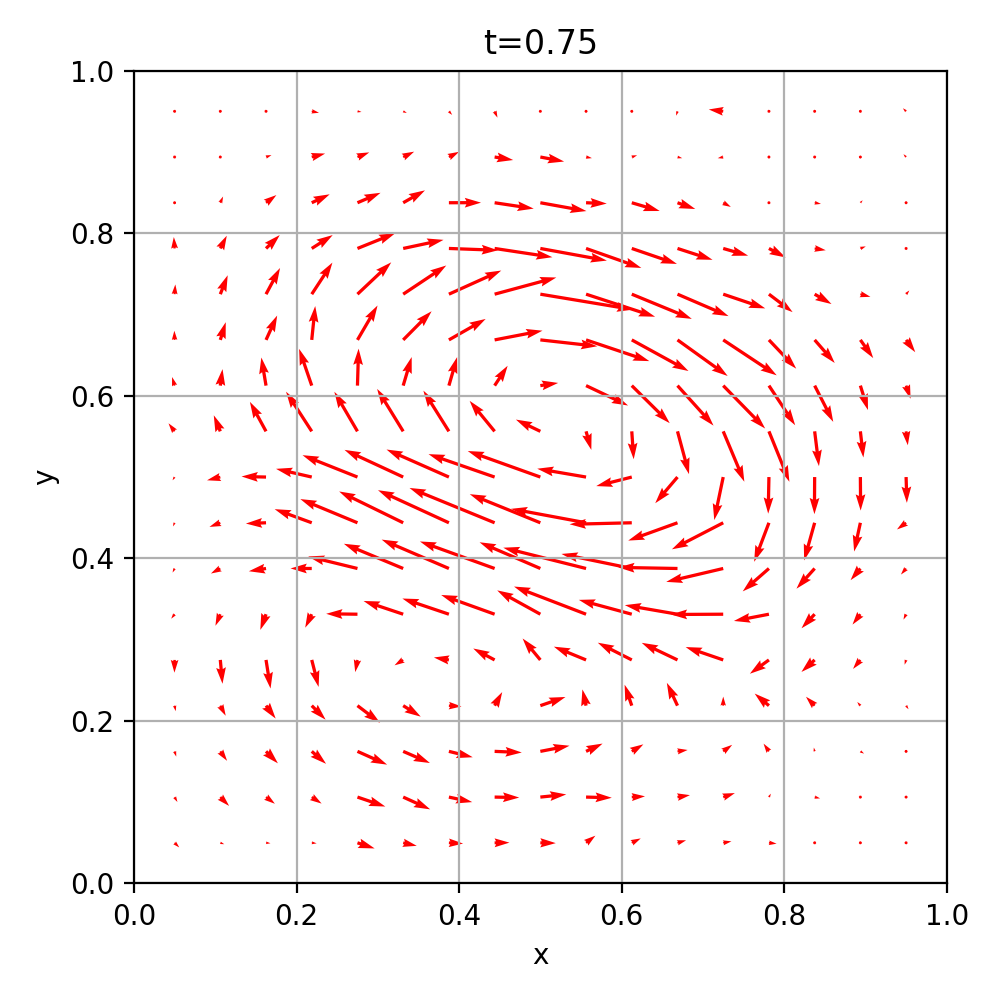}
        \caption{$t=0.75$}
        \label{fig:Bt075}
    \end{subfigure}
    \begin{subfigure}[b]{0.3\textwidth}
        \centering
        \includegraphics[width=\textwidth]{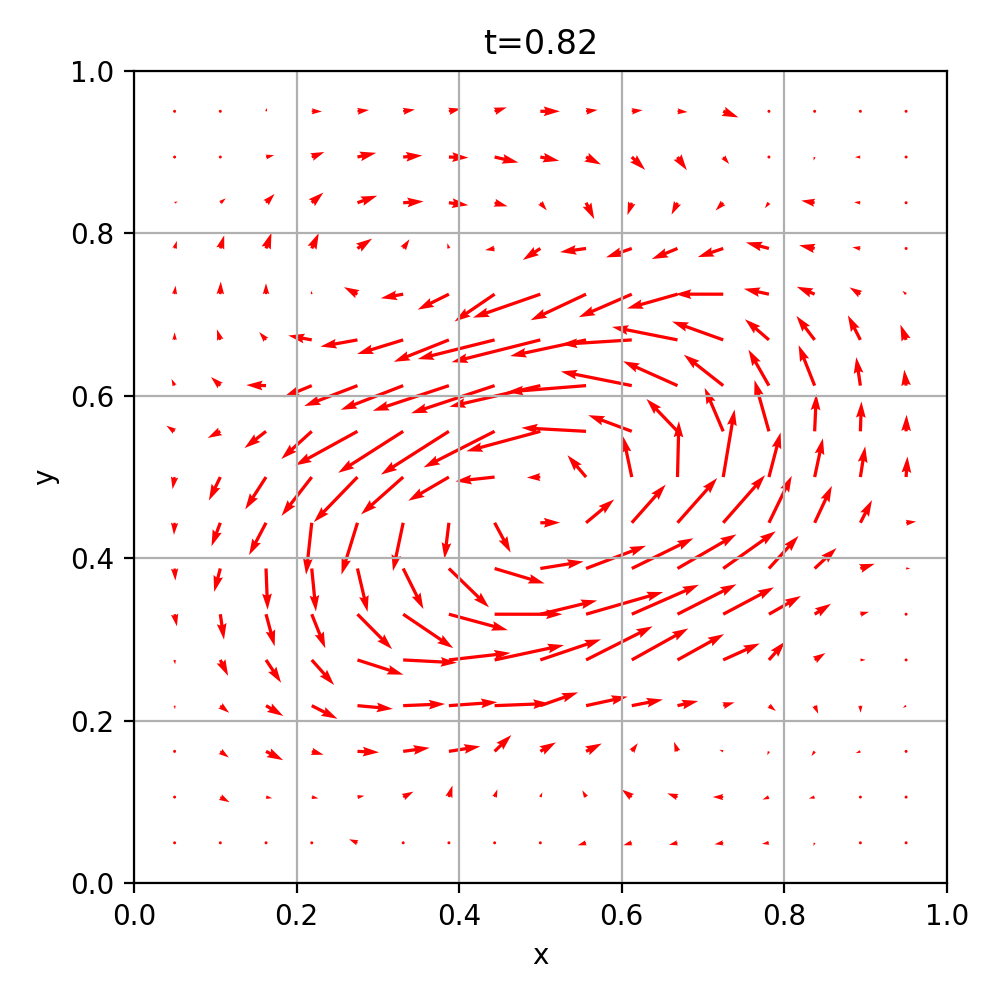}
        \caption{$t=0.82$}
        \label{fig:Bt082}
    \end{subfigure}
    \hfill
    \begin{subfigure}[b]{0.3\textwidth}
        \centering
        \includegraphics[width=\textwidth]{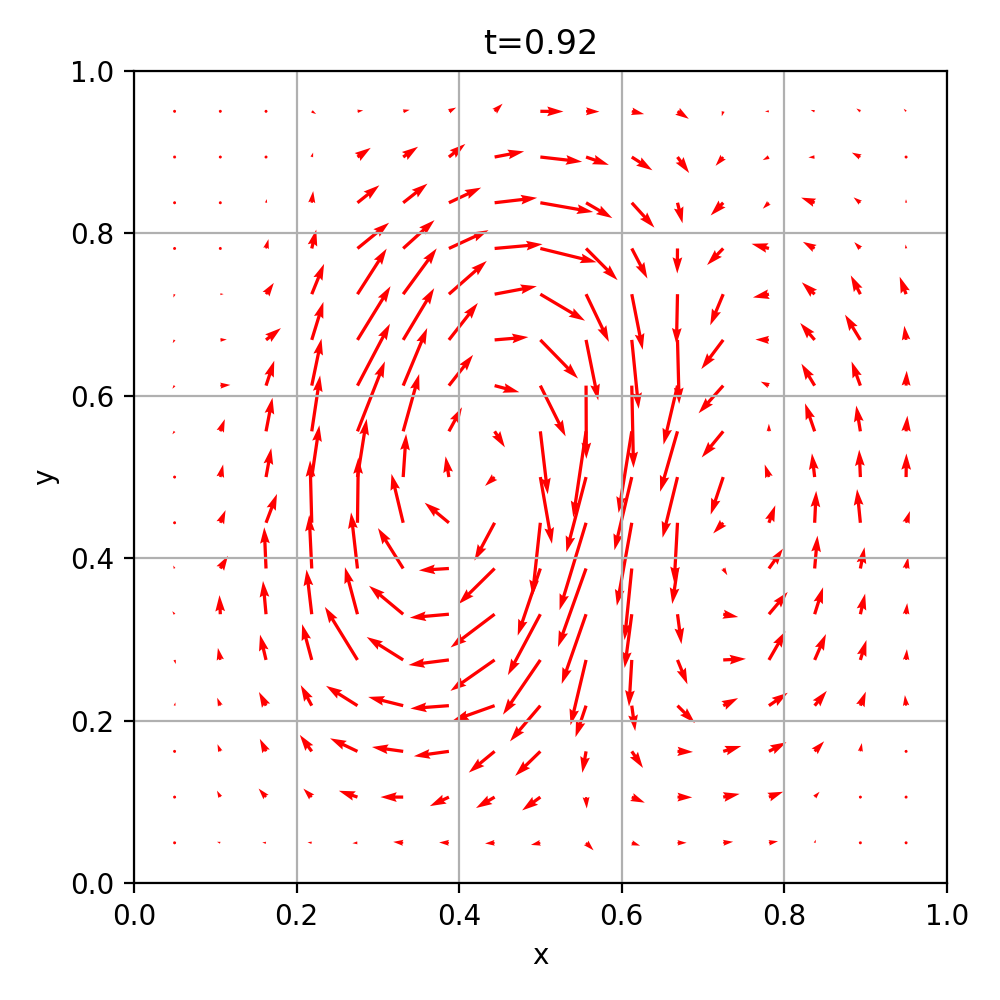}
        \caption{$t=0.92$}
        \label{fig:Bt092}
    \end{subfigure}
    \hfill
    \begin{subfigure}[b]{0.3\textwidth}
        \centering
        \includegraphics[width=\textwidth]{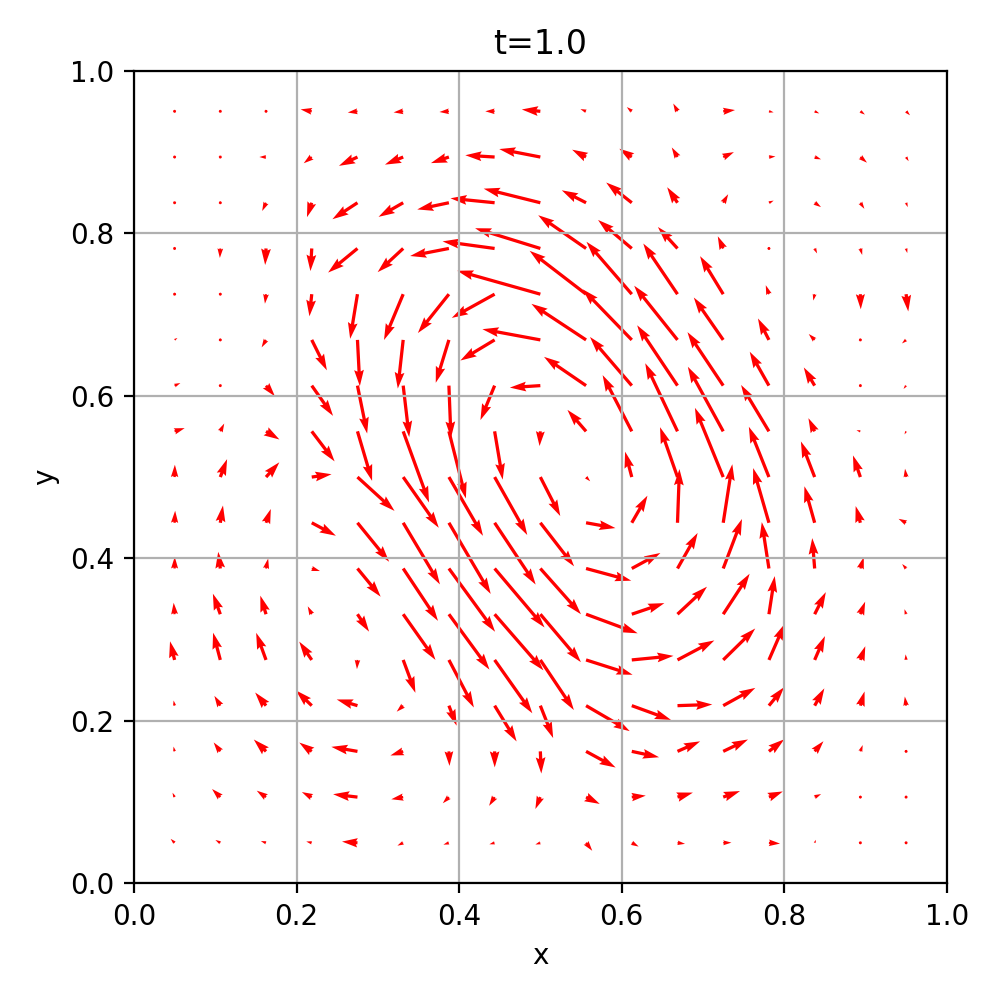}
        \caption{$t=1.0$}
        \label{fig:Bt10}
    \end{subfigure}

    \caption{Temporal evolution of the computed magnetic field $\B$ (shown in red) at the plane $z=0.25$ for different time instances.}
    \label{fig:B_solutions_different_times}
\end{figure}
\begin{figure}[!htbp]
    \centering

    \begin{subfigure}[b]{0.3\textwidth}
        \centering
        \includegraphics[width=\textwidth]{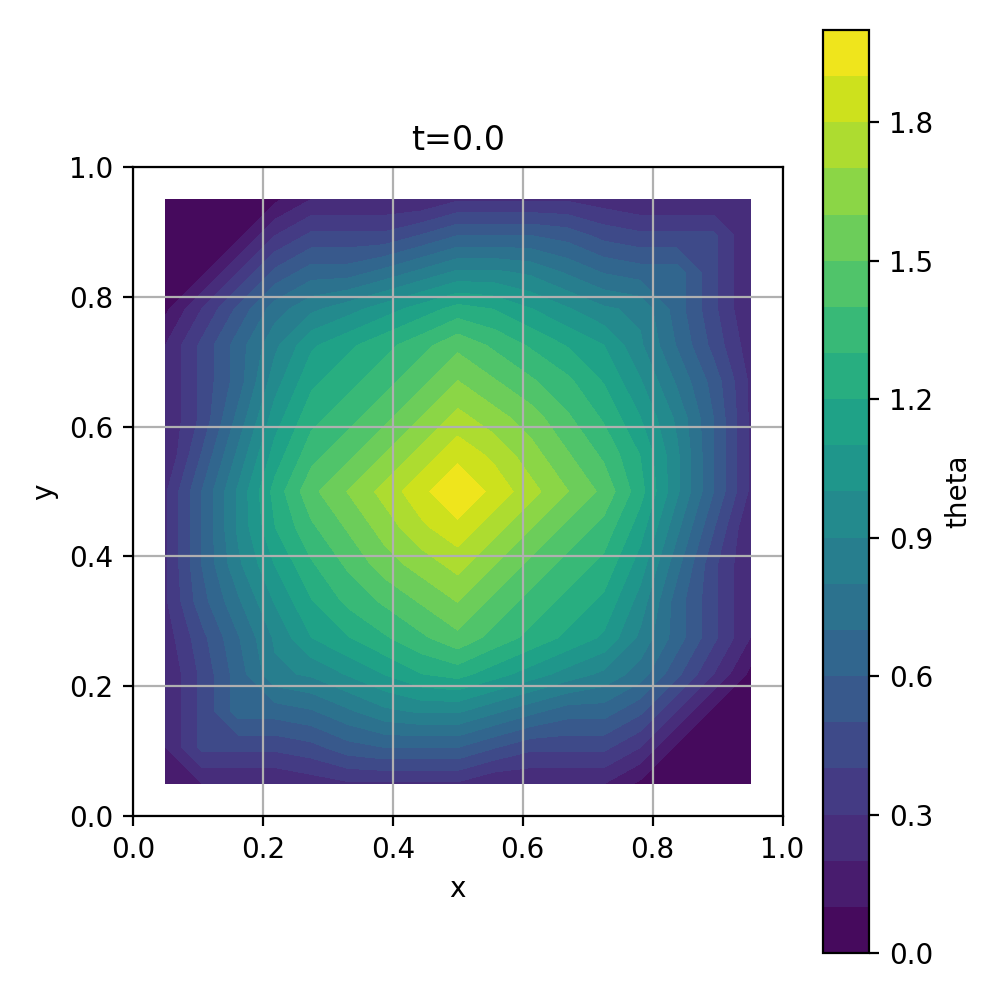}
        \caption{$t=0.0$}
        \label{fig:Ct00}
    \end{subfigure}
    \hfill
    \begin{subfigure}[b]{0.3\textwidth}
        \centering
        \includegraphics[width=\textwidth]{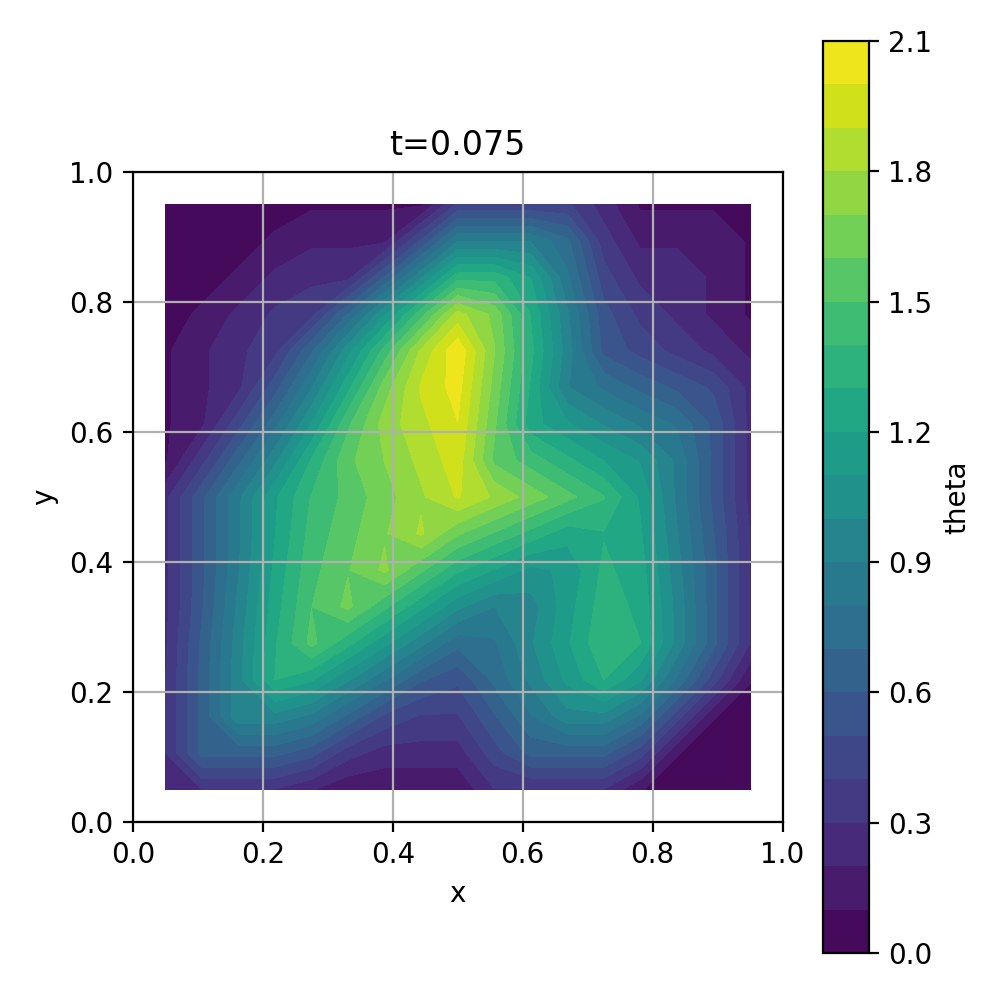}
        \caption{$t=0.075$}
        \label{fig:Ct0075}
    \end{subfigure}
    \hfill
    \begin{subfigure}[b]{0.3\textwidth}
        \centering
        \includegraphics[width=\textwidth]{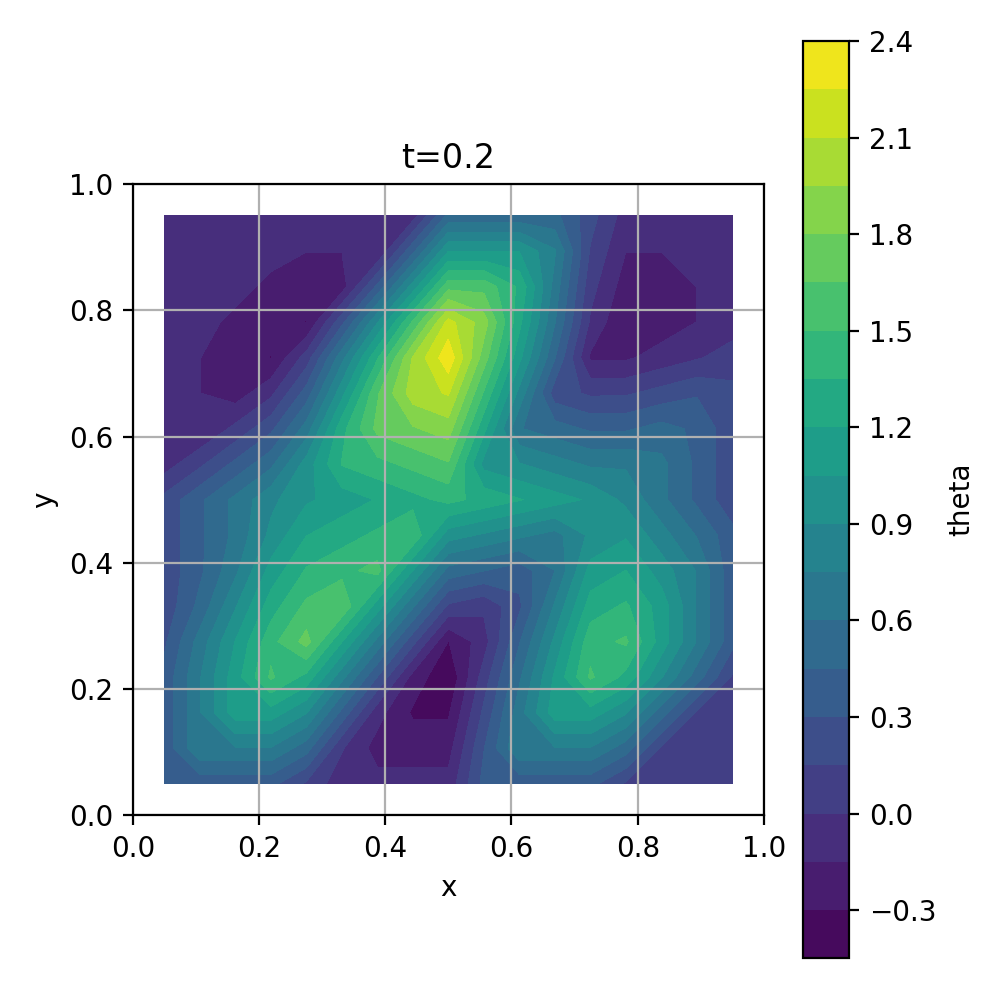}
        \caption{$t=0.2$}
        \label{fig:Ct02}
    \end{subfigure}

    \vspace{0.4cm}

    \begin{subfigure}[b]{0.3\textwidth}
        \centering
        \includegraphics[width=\textwidth]{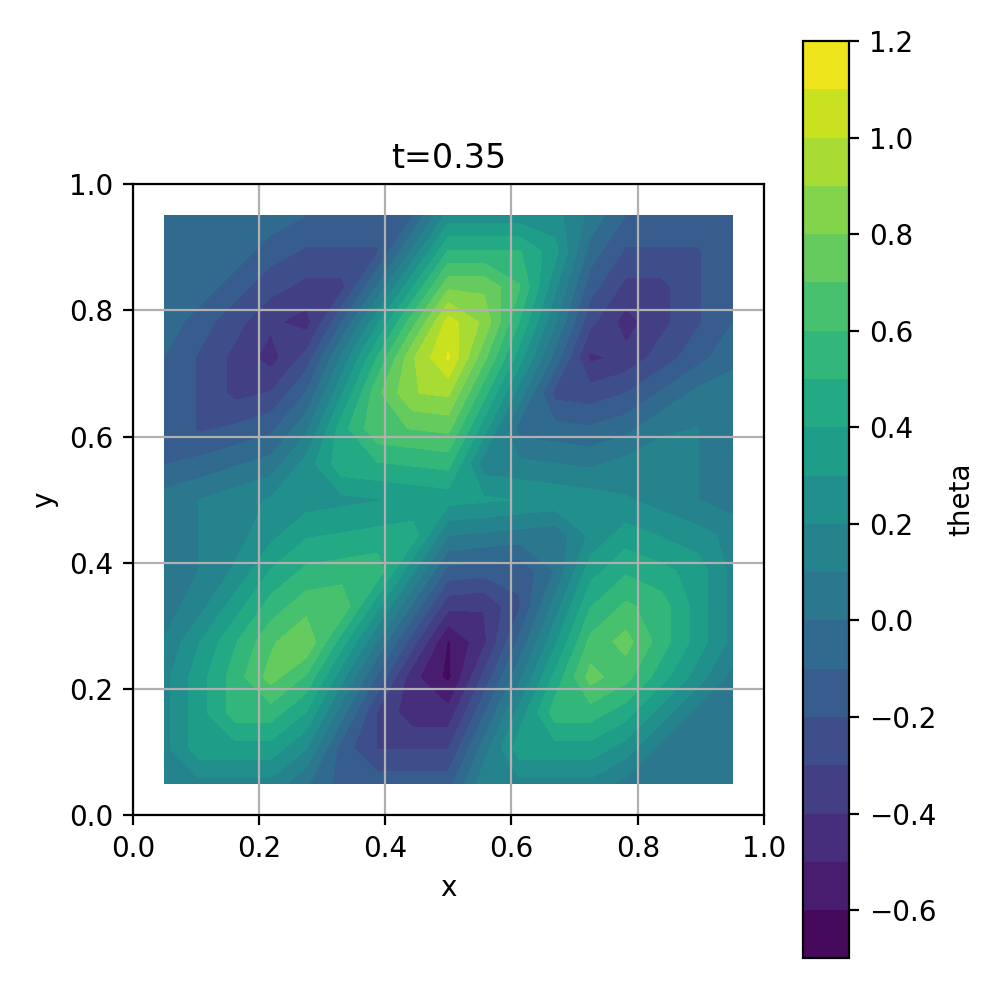}
        \caption{$t=0.35$}
        \label{fig:Ct035}
    \end{subfigure}
   \hfill
     \begin{subfigure}[b]{0.3\textwidth}
        \centering
        \includegraphics[width=\textwidth]{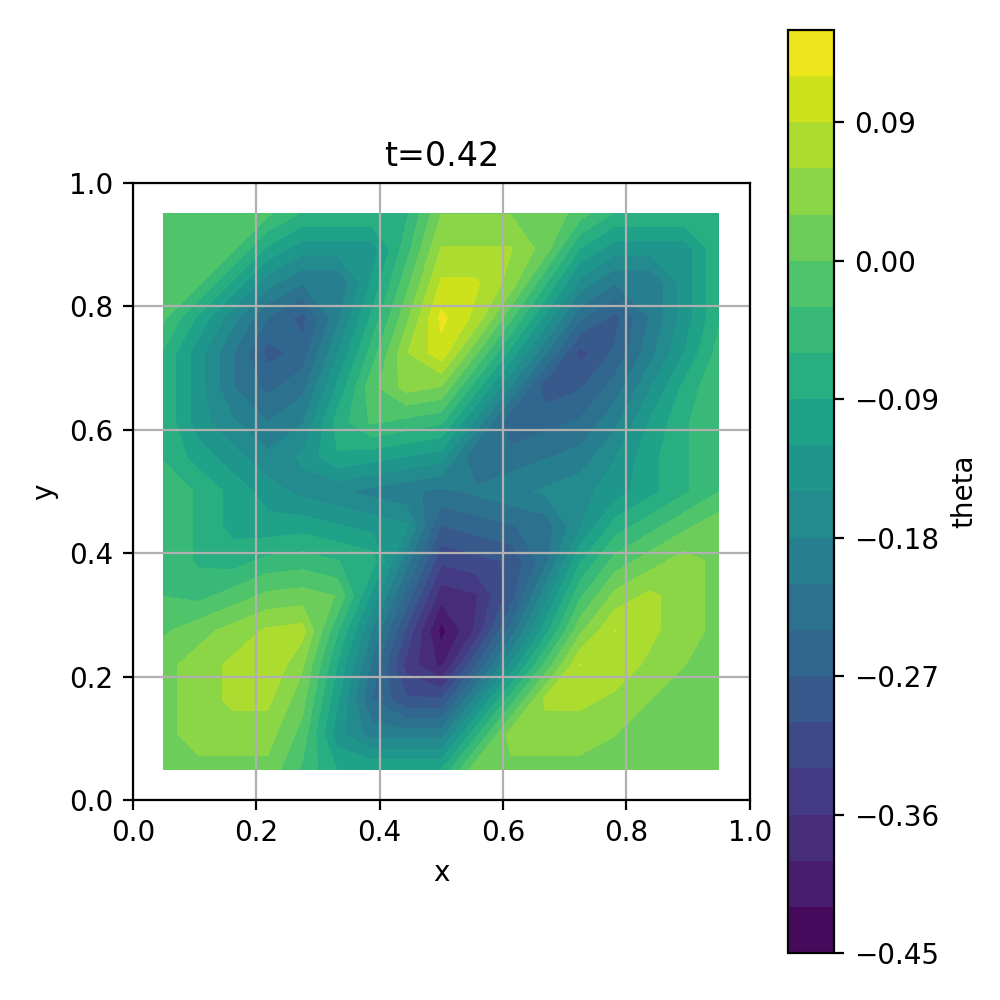}
        \caption{$t=0.42$}
        \label{fig:Ct042}
    \end{subfigure}
   \hfill
    \begin{subfigure}[b]{0.3\textwidth}
        \centering
        \includegraphics[width=\textwidth]{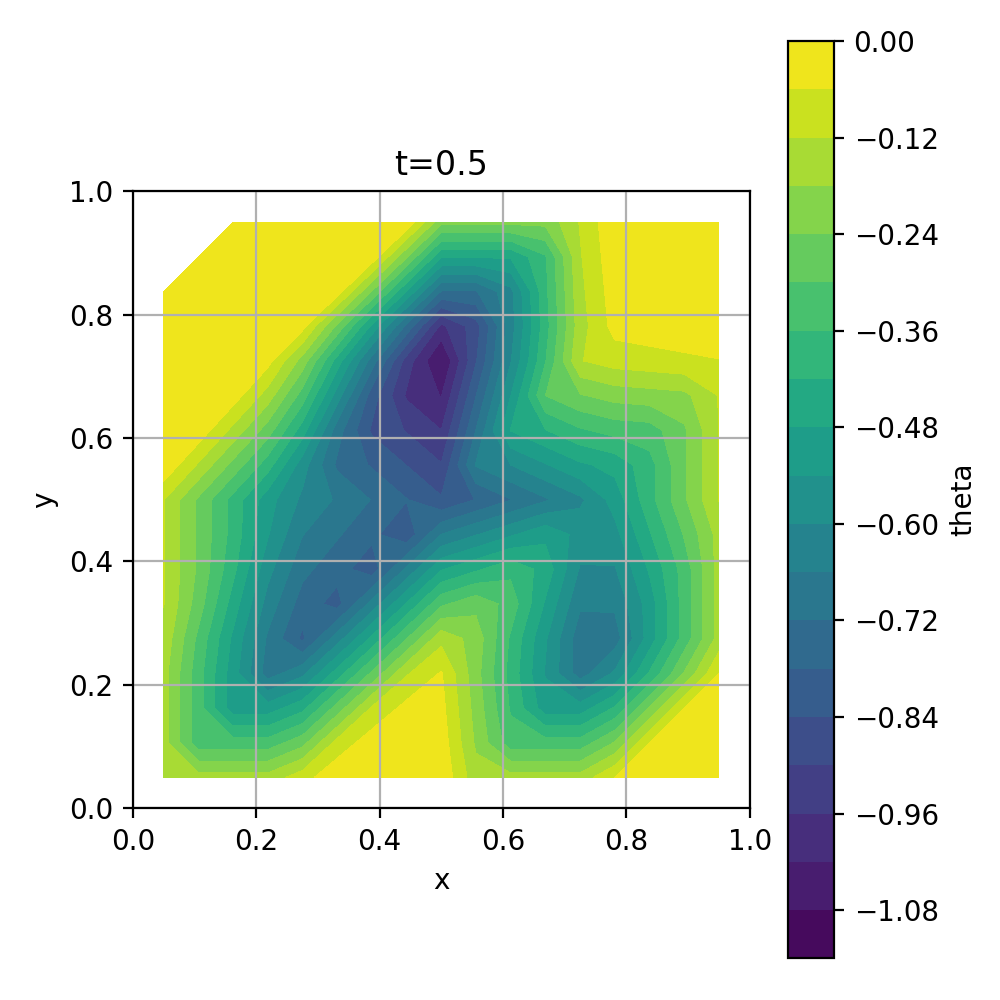}
        \caption{$t=0.5$}
        \label{fig:Ct05}
    \end{subfigure}
    
    \vspace{0.4cm}

    \begin{subfigure}[b]{0.3\textwidth}
        \centering
        \includegraphics[width=\textwidth]{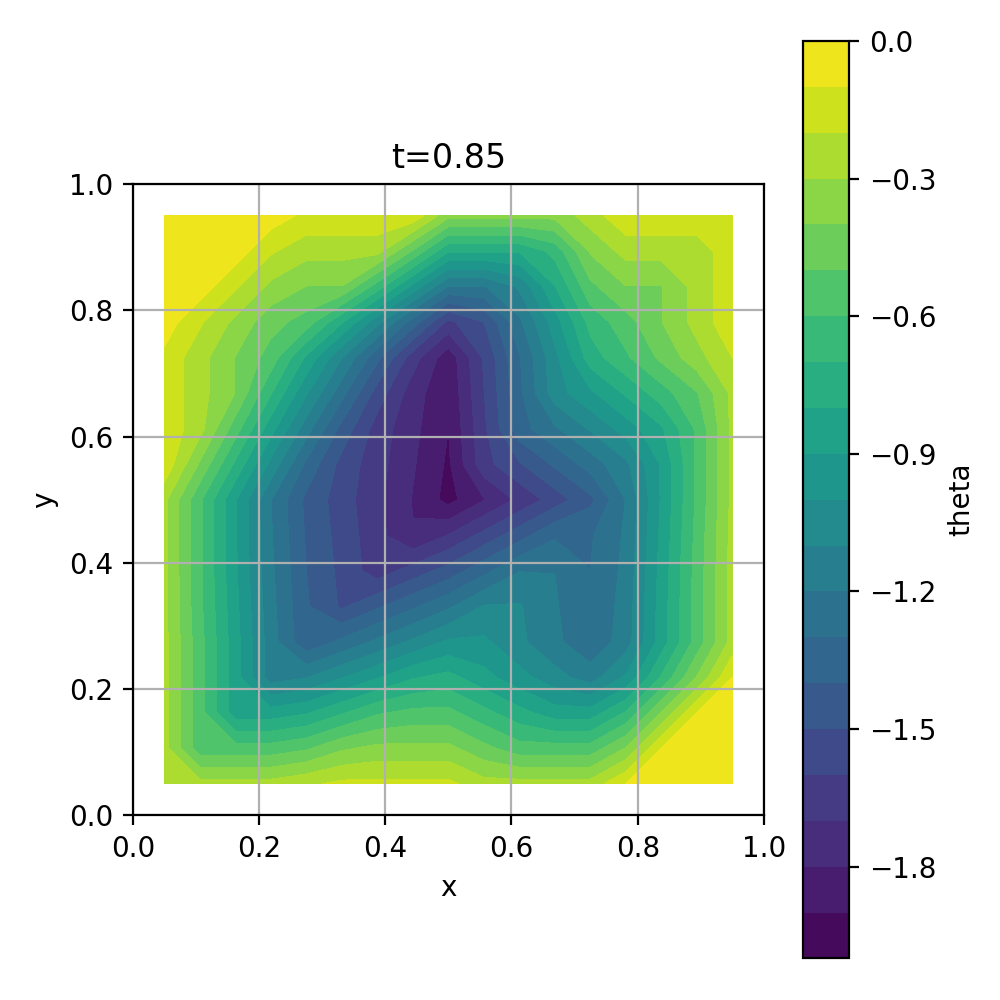}
        \caption{$t=0.85$}
        \label{fig:Ct085}
    \end{subfigure}
      \hfill
     \begin{subfigure}[b]{0.3\textwidth}
        \centering
        \includegraphics[width=\textwidth]{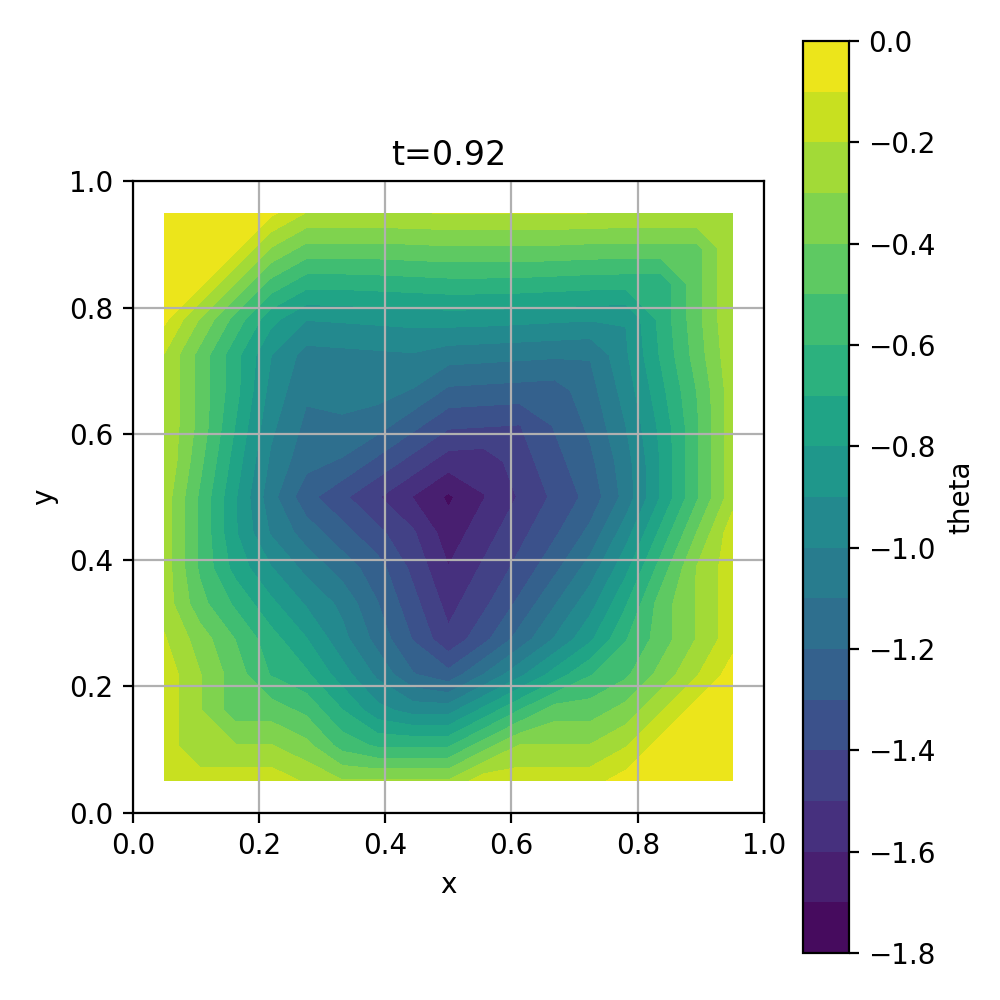}
        \caption{$t=0.92$}
        \label{fig:Ct092}
    \end{subfigure}
      \hfill
     \begin{subfigure}[b]{0.3\textwidth}
        \centering
        \includegraphics[width=\textwidth]{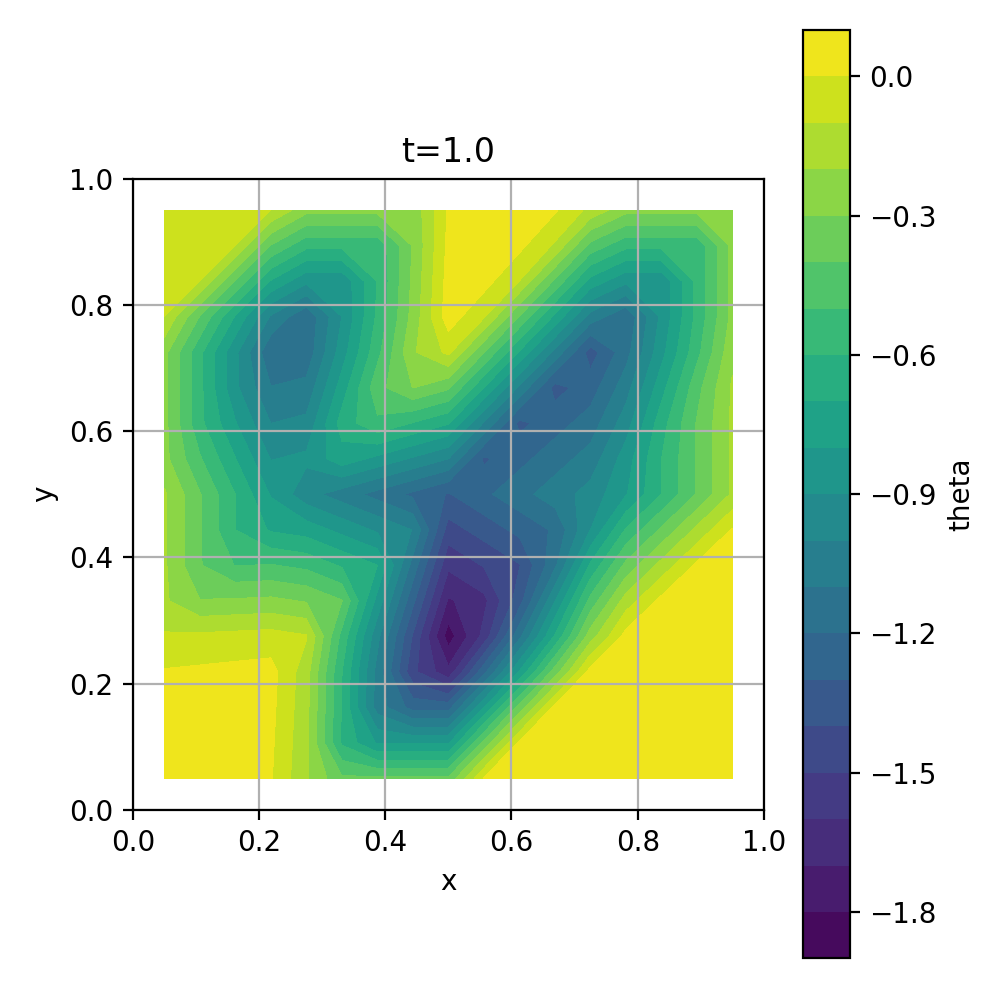}
        \caption{$t=1.0$}
        \label{fig:Ct10}
    \end{subfigure}
    \caption{Temporal evolution of the computed temperature $\theta$ at the plane $z=0.25$ for different time instances; the colour bar is rescaled in each panel.}
    \label{fig:theta_solutions_different_times}
\end{figure}
\end{example}

\subsection{Discrete energy and stability}\label{subsec-stability-exp}

\begin{example}[Discrete energy]\label{ex-stability}
This experiment illustrates the a~priori bounds of Theorem~\ref{bounds}. The exact solution is
\[
\u=\nabla\times\left(0,0,\psi\right),
\qquad
\psi(x,y,z,t)=g(x)\,g(y)\,g(z)\,e^{-t},
\qquad
g(s)=s^{2}(s-1)^{2},
\]
that is,
\[
\u=\Bigl(2x^{2}(x-1)^{2}y(y-1)(2y-1)z^{2}(z-1)^{2}e^{-t},\
-2x(x-1)(2x-1)y^{2}(y-1)^{2}z^{2}(z-1)^{2}e^{-t},\ 0\Bigr),
\]
with pressure $P^{\ast}=(2x-1)(2y-1)(1+t)$ and magnetic field
\[
\B=
\left(
\sin(t)\sin(\pi x)\cos(\pi y),
\,
-\sin(t)\cos(\pi x)\sin(\pi y),
\,
0
\right).
\]
with temperature $\theta=e^{-t}\sin(\pi x)\sin(\pi y)\sin(\pi z)$.

We take $p=3$ and fix $h=0.433$ (four subdivisions per coordinate direction), since only the
time evolution of the discrete norms is considered. Figure~\ref{fig:stability-norms} displays
$\|\B^{m}_{h}\|_{\L^{2}}$, $\|\nabla\times\B^{m}_{h}\|_{\L^{2}}$,
$\|\u^{m}_{h}\|_{\L^{2}}$, $\|\nabla\u^{m}_{h}\|_{\L^{2}}$,
$\|\u^{m}_{h}\|_{\L^{\infty}}$, $\|\theta^{m}_{h}\|_{L^{2}}$, and
$\|\nabla\theta^{m}_{h}\|_{L^{2}}$. All remain bounded and decay on $[0,1]$, consistent
with Theorem~\ref{bounds} and with mesh-independent stability for $p=3$.
   \begin{figure}[!ht]
    \centering
\includegraphics[width=0.58\textwidth]{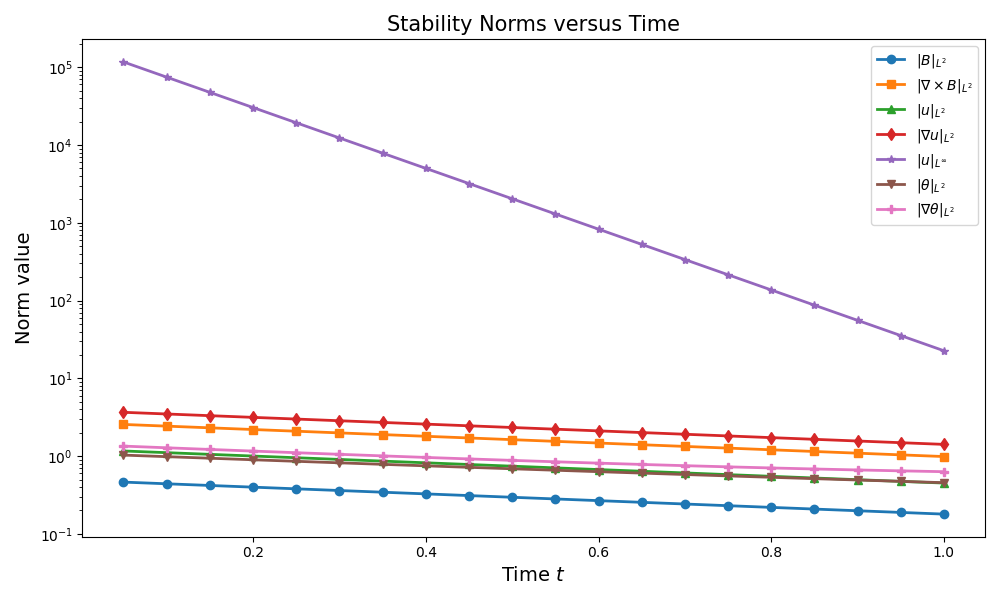}
    \caption{Discrete norms of $\u^{m}_{h},\B^{m}_{h},\theta^{m}_{h}$ versus time for $p=3$
    at $h=0.433$ (logarithmic vertical scale). All quantities entering
    \eqref{eq-bounds-1}-\eqref{eq-bounds-2} remain bounded and decay monotonically.}
   \label{fig:stability-norms}
\end{figure}
\end{example}

\subsection{Temporal convergence}\label{subsec-temporal-conv}

\begin{example}[Convergence in time]\label{ex-time-conv}
Here the spatial mesh is fixed and only $\Delta t$ is refined, so the observed slopes measure
the temporal discretisation error. The exact velocity, pressure and magnetic field are
{
\[
\mathbf{u}
=
2e^{-t}
\begin{pmatrix}
\sin^{2}(\pi x)\,\sin(\pi y)\cos(\pi y)\,\sin^{2}(\pi z)
\\[1.2ex]
-\sin(\pi x)\cos(\pi x)\,\sin^{2}(\pi y)\,\sin^{2}(\pi z)
\\[1.2ex]
0
\end{pmatrix}.
\]
with pressure
\[
P^{\ast}=e^{-t}\sin(\pi x)\sin(\pi y)\sin(\pi z),
\]
and magnetic field
\[
\B=
\left(
e^{-t}\sin(\pi x)\cos(\pi y)\cos(\pi z),
\,
-e^{-t}\cos(\pi x)\sin(\pi y)\cos(\pi z),
\,0
\right).
\]}
and the temperature is $\theta=e^{-t}\sin(\pi x)\sin(\pi y)\sin(\pi z)$.

We take $p=3$, fix $h=0.0866$ ($20$ subdivisions per coordinate direction), and successively
halve $\Delta t$. Errors are recorded at the final time in
$\mathbf L^{2},\mathbf H^{1},\mathbf L^{5},\mathbf W^{1,5}$ for $\u$,
$\mathbf L^{2}$ for $\B$ and $\nabla\times\B$, $L^{2}$ for $P^{\ast}$, and
$L^{2},H^{1}$ for $\theta$. Figure~\ref{fig:time-convergence} shows slopes close to one,
consistent with the $\mathcal O(\Delta t)$ rate obtained from \eqref{eq-main-error} and the
backward-Euler scheme. For $p=3$, $\mathbf W^{1,3}$ is the natural velocity norm; the
$\mathbf L^{5}$ and $\mathbf W^{1,5}$ curves are included as additional diagnostics.
\begin{figure}[!ht]
    \centering
    \includegraphics[width=0.6\textwidth]{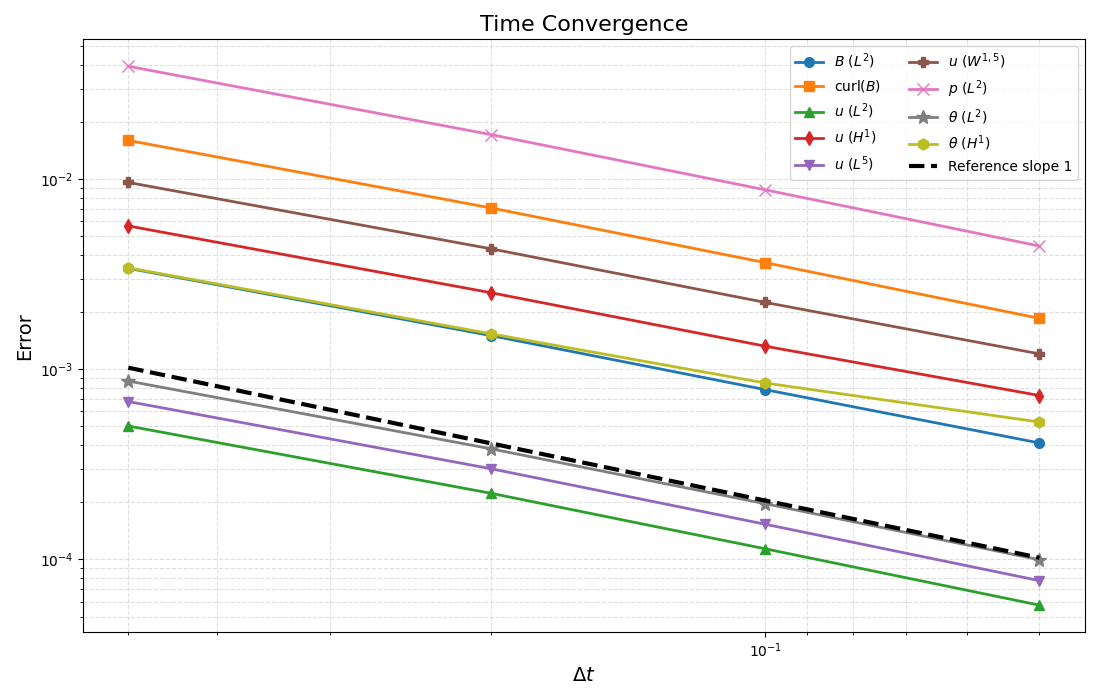}
    \caption{Errors at the final time versus $\Delta t$ for $p=3$ at fixed $h=0.0866$. All
    quantities converge with slope approximately one, in agreement with the first-order
    temporal accuracy implied by Theorem~\ref{Main theorem of error}.}
    \label{fig:time-convergence}
\end{figure}
\end{example}

\subsection{Space-time convergence}\label{subsec-spacetime-conv}

\begin{example}[Convergence under simultaneous refinement]\label{ex-spacetime-conv}
We refine $h$ and $\Delta t$ simultaneously for the shear-thickening case $p=5$. With
$g(s)=s^{3}(s-1)^{3}$, the exact velocity is
\[
\mathbf{u}(x,y,z,t)=
\begin{pmatrix}
e^{-t}g(x)g'(y)g(z)\\[2mm]
-e^{-t}g'(x)g(y)g(z)\\[2mm]
0
\end{pmatrix}.
\]
\noindent
The exact magnetic field is
\[
\mathbf{b}(x,y,z,t)=
\begin{pmatrix}
e^{-t}g(x)g'(y)g(z)\\[2mm]
-e^{-t}g'(x)g(y)g(z)\\[2mm]
0
\end{pmatrix}.
\]
\noindent
with $P^{\ast}=0$ and $\theta=e^{-t}\sin(\pi x)\sin(\pi y)\sin(\pi z)$. Both $\u$ and
$\B$ are divergence-free and vanish on $\partial\mathbb D$. Since $\B=\u$, the induction
coupling $\u\times\B$ vanishes, isolating the $p$-Laplace nonlinearity; the full coupling is
tested in Example~\ref{ex-temporal}.
\\ \noindent
The refinement is
\[
(h,\Delta t)
=
(0.866,0.25),
\,
(0.433,0.0625),
\,
(0.2165,0.015625),
\]
that is, $\Delta t\sim h^{2}$.
\\ \noindent
Tables~\ref{tab:space-time-B}-\ref{tab:space-time-theta} report the final-time errors and
experimental rates. Here $\mathcal E(\cdot)$ denotes the cumulative space-time norms defined
in the captions, distinct from the projected errors
$E^{n}_{\u},E^{n}_{\B},E^{n}_{\theta}$ of Section~\ref{sec-error}.

The rates are measured with respect to $h$ under the coupled refinement
$\Delta t\sim h^{2}$; thus a rate near two in $h$ corresponds to first order in time and
should not be interpreted as second-order temporal accuracy. The observed spatial rates are
higher than the guaranteed exponent $\frac{lp}{2(p-1)}$ in
\eqref{eq-main-error-total}, which is expected for the smooth manufactured solutions used
here and indicates that the theoretical bound need not be sharp in this regime.

\begin{table}[!ht]
\centering
\renewcommand{\arraystretch}{1.2}

\begin{tabular}{|c|c|cc|cc|}
\hline
$h$ & $\Delta t$
& \multicolumn{2}{c|}{$\B$ in $\L^2$}
& \multicolumn{2}{c|}{$\mathcal E(\B)$} \\
\cline{3-6}
& & Error & Rate & Error & Rate \\
\hline
0.866000 & 0.250000 & $3.377912\times10^{-3}$ & -     & $1.001601\times10^{-2}$ & --     \\
0.433000 & 0.062500 & $8.920679\times10^{-4}$ & 1.9209 & $2.633456\times10^{-3}$ & 1.9273 \\
0.216500 & 0.015625 & $2.263062\times10^{-4}$ & 1.9789 & $6.643965\times10^{-4}$ & 1.9868 \\
\hline
\end{tabular}
\caption{\textbf{Errors and convergence rates for the magnetic field $\B$ and $\mathcal E(\B)=\left(\Delta t\sum_{n=1}^{m}\kappa\left\|\nabla\times(\B^{n}-\mathfrak{B}^{n})\right\|^{2}_{\L^{2}}\right)^{\frac{1}{2}}$ for $p=2$ under simultaneous refinement of $h$ and $\Delta t$ according to $\Delta t\sim h^{2}$}.}
\label{tab:space-time-B}
\end{table}

\vspace{0.2cm}

\begin{table}[!ht]
\centering
\renewcommand{\arraystretch}{1.2}

\begin{tabular}{|c|c|cc|cc|}
\hline
$h$ & $\Delta t$
& \multicolumn{2}{c|}{$\u$ in $\L^2$}
& \multicolumn{2}{c|}{$\mathcal E(\u)$} \\
\cline{3-6}
& & Error & Rate & Error & Rate \\
\hline
0.866000 & 0.250000
& $6.072698\times10^{-4}$ & --
& $7.425291\times10^{-3}$ & -- \\

0.433000 & 0.062500
& $9.661759\times10^{-5}$ &{2.6520}
& $1.813942\times10^{-4}$ & 1.9945 \\

0.216500 & 0.015625
& $1.981730\times10^{-5}$ & 2.2855
& $4.508806\times10^{-4}$ & 2.0083 \\
\hline
\end{tabular}
\caption{\textbf{Errors and convergence rates for the velocity field $\u$ in $\L^2$ and $\mathcal E(\u)=\left(\Delta t\sum_{n=1}^{m}\nu\|\nabla(\u^{n}-\mathbf{U}^{n})\|^{p}_{\L^p}\right)^{\frac{1}{p}}$ for $p=2$ under simultaneous refinement of $h$ and $\Delta t$according to $\Delta t\sim h^{2}$.}}
\label{tab:space-time-u}
\end{table}

\begin{table}[!ht]
\centering
\renewcommand{\arraystretch}{1.2}

\begin{tabular}{|c|c|cc|cc|}
\hline
$h$ & $\Delta t$
& \multicolumn{2}{c|}{$\mathcal E(\u)$}
& \multicolumn{2}{c|}{{$P^{\ast}$ in $L^2$}} \\
\cline{3-6}
& & Error & Rate & Error & Rate \\
\hline
0.866000 & 0.250000
& $9.487001\times10^{-3}$ & --
& $1.870904\times10^{-2}$ & -- \\

0.433000 & 0.062500
& $2.368073\times10^{-3}$ & 2.0022
& $5.843580\times10^{-3}$ & 1.6788 \\

0.216500 & 0.015625
& $5.884803\times10^{-4}$ & 2.0086
& $1.416383\times10^{-3}$ & 2.0446 \\
\hline
\end{tabular}
\caption{\textbf{Errors and convergence rates for the pressure $P^{\ast}$ and $\mathcal E(\u)$ for $p=5$. The pressure is not an unknown of the scheme \eqref{eq-FEM-1}-\eqref{eq-FEM-3}, which is posed on the discretely divergence-free space $\mathcal X_{0h}$; it is recovered a posteriori from the momentum residual by means of the discrete inf-sup condition of Lemma~\ref{lemma:inf-sup}, and no error estimate for it is claimed in Theorem~\ref{Main theorem of error}.}}
\label{tab:space-time-pressure}
\end{table}
\begin{table}[!ht]
\centering
\renewcommand{\arraystretch}{1.2}

\begin{tabular}{|c|c|cc|cc|}
\hline
$h$ & $\Delta t$
& \multicolumn{2}{c|}{$\theta$ in $L^2$}
& \multicolumn{2}{c|}{$\mathcal E(\theta)$} \\
\cline{3-6}
& & Error & Rate & Error & Rate \\
\hline
0.866000 & 0.250000
& $1.146766\times10^{-3}$ & --
& $1.642463\times10^{-2}$ & -- \\

0.433000 & 0.062500
& $1.921255\times10^{-4}$ & 2.5774
& $4.140296\times10^{-3}$ & 1.9881 \\

0.216500 & 0.015625
& $4.006077\times10^{-5}$ & 2.2618
& $1.037405\times10^{-3}$ & 1.9968 \\
\hline
\end{tabular}
\caption{\textbf{Errors and convergence rates for the temperature $\theta$ in $L^2$ and  $\mathcal E(\theta)=\left(\Delta t\sum_{n=1}^{m}\sigma\left\|\nabla(\theta^{n}-\Theta^{n})\right\|^{2}_{{L}^{2}}\right)^{\frac{1}{2}}$ for $p=2$ under simultaneous refinement of $h$ and $\Delta t$ according to $\Delta t\sim h^{2}$}.}
\label{tab:space-time-theta}
\end{table}
\end{example}

\medskip

\section{Conclusion and future directions}\label{sec-conclusion}

We have analysed a fully discrete mixed finite element approximation of the
three-dimensional rotating incompressible MHD-Boussinesq system with $p$-Laplace viscosity,
$p\ge2$. The method combines a semi-implicit backward Euler discretisation with
conforming finite elements for the velocity and temperature and
$\mathbf H(\operatorname{curl})$-conforming N\'ed\'elec edge elements for the magnetic
induction. The scheme is well posed at every time level for arbitrary $h>0$ and
$\Delta t>0$; see Theorem~\ref{thm-existence} and Lemma~\ref{lem-uniqueness}.
Moreover, Theorem~\ref{bounds} yields a mesh-independent energy estimate, with no
coupling condition between $\Delta t$ and $h$. A key structural feature is the exact
cancellation between the Lorentz force and the induction coupling induced by the
semi-implicit time staggering.

As $h,\Delta t\to0$, the discrete solutions converge, along a subsequence, to a limit
solution of the continuous system; see Theorem~\ref{Thm-Conv-scheme}. Since the
$p$-Laplacian does not provide a suitable uniform estimate for the discrete time
derivative of the velocity, strong compactness is obtained through a time-translate
estimate and Simon's criterion. The weak limit of the nonlinear diffusion flux is then
identified by Minty's method using the energy $\limsup$ inequality
\eqref{eq-limsup-key}. Under Assumption~\ref{Assumption} and for a pointwise
divergence-free velocity pair, we further obtain the error estimate
\[
\mathcal O\!\left(\Delta t+h^{\frac{lp}{2(p-1)}}\right)
\]
in $L^\infty(0,T;\mathbf L^2)$; see Theorem~\ref{Main theorem of error}. For $p=2$,
this reduces to the classical rate $\mathcal O(\Delta t+h^l)$.

Several directions remain open. First, identifying the $p$-Laplacian flux outside the
range \eqref{eq-p-threshold} appears to require additional tools, such as
Lipschitz-truncation techniques
\cite{DieningRuzickaWolf2010,DieningKreuzerSuli2013,SuliTscherpel2020}; see
Remark~\ref{rem-p-threshold}. Second, the spatial exponent
$\frac{lp}{p-1}$ arises from estimating the nonlinear consistency error in the
$\mathbf L^p$ scale, while the experiments of Section~\ref{sec-numerics} suggest that
this rate may not be sharp for smooth solutions. A natural approach to improving it is
through the quasi-norm framework of Barrett and Liu \cite{BarrettLiu1993} and Diening
and Ruzicka \cite{DieningRuzicka2007}. Finally, the pointwise divergence-free condition
\eqref{eq-exactly-divfree}, used in Section~\ref{sec-error} to obtain a purely quadratic
Gr\"onwall argument, may be relaxed by a more refined treatment of the divergence
remainder in the convective terms.

\bigskip

\unsection{Declarations}

\smallskip
\noindent\textbf{Acknowledgements.} The authors gratefully acknowledge Dr.~Kim Ngan Le for many helpful discussions, insightful comments, and valuable suggestions during the development of this work.

\smallskip
\noindent\textbf{Funding.} This work was supported by the Department of Science and Technology (DST), Government of India, through the INSPIRE Faculty Award, grant
no.\ IFA21-MA 158.

\medskip
\noindent\textbf{Conflict of interest.} The authors have no competing interests to declare
that are relevant to the content of this article.

\medskip
\noindent\textbf{Data availability.}
The numerical data supporting the findings of this study were generated through the finite element simulations described in Section~\ref{sec-numerics}. The relevant results are reported in the manuscript, and additional computational data are available from the corresponding author upon reasonable request.


\bigskip

\bibliographystyle{plainnat}
\bibliography{sample}

\begin{thebibliography}{52}
\providecommand{\natexlab}[1]{#1}
\providecommand{\url}[1]{\texttt{#1}}
\expandafter\ifx\csname urlstyle\endcsname\relax
  \providecommand{\doi}[1]{doi: #1}\else
  \providecommand{\doi}{doi: \begingroup \urlstyle{rm}\Url}\fi

\bibitem[Amrouche et~al.(1998)Amrouche, Bernardi, Dauge, and Girault]{AmroucheBernardiDaugeGirault1998}
C.~Amrouche, C.~Bernardi, M.~Dauge, and V.~Girault.
\newblock Vector potentials in three-dimensional non-smooth domains.
\newblock \emph{Mathematical Methods in the Applied Sciences}, 21:\penalty0 823--864, 1998.

\bibitem[Banas and Prohl(2010)]{BanasProhl2010}
L.~Banas and A.~Prohl.
\newblock Convergent finite element discretization of the multi-fluid nonstationary incompressible magnetohydrodynamics equations.
\newblock \emph{Math. Comp.}, 79:\penalty0 1957--1999, 2010.

\bibitem[Barrett and Liu(1993)]{BarrettLiu1993}
J.~W. Barrett and W.~B. Liu.
\newblock Finite element approximation of the {$p$}-{L}aplacian.
\newblock \emph{Math. Comp.}, 61:\penalty0 523--537, 1993.

\bibitem[Barrett and Liu(1994)]{BarrettLiu1994}
J.~W. Barrett and W.~B. Liu.
\newblock Finite element approximation of the parabolic $p$-laplacian.
\newblock \emph{SIAM Journal on Numerical Analysis}, 31\penalty0 (2):\penalty0 413--428, 1994.

\bibitem[Berselli and Ruzika(2022)]{BerselliRuzicka2022}
L.~C. Berselli and M.~Ruzika.
\newblock Space‐time discretization for nonlinear parabolic systems with $p$‐structure.
\newblock \emph{IMA Journal of Numerical Analysis}, 42\penalty0 (1):\penalty0 260--299, 2022.

\bibitem[Berselli et~al.(2015)Berselli, Diening, and Ruzicka]{BerselliDieningRuzicka2015}
L.~C. Berselli, L.~Diening, and M.~Ruzicka.
\newblock Optimal error estimate for semi-implicit space-time discretization for the equations describing incompressible generalized newtonian fluids.
\newblock \emph{IMA Journal of Numerical Analysis}, 35\penalty0 (2):\penalty0 680--697, 2015.

\bibitem[Bian(2016)]{bian2016initial}
D.~Bian.
\newblock Initial boundary value problem for two-dimensional viscous boussinesq equations for mhd convection.
\newblock \emph{Discrete and Continuous Dynamical Systems - Series S}, 9\penalty0 (6):\penalty0 1591--1611, 2016.

\bibitem[Bian and Gui(2016)]{bian2016boussinesq}
D.~Bian and G.~Gui.
\newblock On 2-d boussinesq equations for mhd convection with stratification effects.
\newblock \emph{Journal of Differential Equations}, 261\penalty0 (3):\penalty0 1669--1711, 2016.

\bibitem[Bian and Liu(2017)]{bian2017initial}
D.~Bian and J.~Liu.
\newblock Initial--boundary value problem to 2d boussinesq equations for mhd convection with stratification effects.
\newblock \emph{Journal of Differential Equations}, 263\penalty0 (12):\penalty0 8074--8101, 2017.

\bibitem[Breit et~al.(2021)Breit, Diening, Storn, and Wichmann]{BreitDieningStornWichmann2021}
D.~Breit, L.~Diening, J.~Storn, and J.~Wichmann.
\newblock The parabolic $p$‐laplacian with fractional differentiability.
\newblock \emph{IMA Journal of Numerical Analysis}, 41\penalty0 (3):\penalty0 2110--2138, 2021.

\bibitem[Cheng et~al.(2009)Cheng, Takahashi, and Forghani]{Cheng2009}
Z.~Cheng, N.~Takahashi, and B.~Forghani.
\newblock \emph{Electromagnetic and Thermal Field Modeling and Application in Electrical Engineering}.
\newblock Science Press, Beijing, 2009.

\bibitem[Costabel and Dauge(1998)]{CostabelDauge1998}
M.~Costabel and M.~Dauge.
\newblock Singularities of maxwell's equations on polyhedral domains.
\newblock In \emph{Analysis, Numerics and Applications of Differential and Integral Equations (Stuttgart, 1996)}, volume 379 of \emph{Pitman Research Notes in Mathematics Series}, pages 69--76. Longman, Harlow, 1998.

\bibitem[Costabel and Dauge(2000)]{CostabelDauge2000}
M.~Costabel and M.~Dauge.
\newblock Singularities of electromagnetic fields in polyhedral domains.
\newblock \emph{Archive for Rational Mechanics and Analysis}, 151:\penalty0 221--276, 2000.

\bibitem[Dauge(1989)]{Dauge1989}
M.~Dauge.
\newblock Stationary stokes and navier-stokes systems on two- or three-dimensional domains with corners. i. linearized equations.
\newblock \emph{SIAM Journal on Mathematical Analysis}, 20\penalty0 (1):\penalty0 74--97, 1989.

\bibitem[Diening and Ruzicka(2007)]{DieningRuzicka2007}
L.~Diening and M.~Ruzicka.
\newblock Interpolation operators in {O}rlicz--{S}obolev spaces.
\newblock \emph{Numer. Math.}, 107:\penalty0 107--129, 2007.

\bibitem[Diening et~al.(2007)Diening, Ebmeyer, and Ruzika]{DieningEbmeyerRuzicka2007}
L.~Diening, C.~Ebmeyer, and M.~Ruzika.
\newblock Optimal convergence for the implicit space-time discretization of parabolic systems with $p$-structure.
\newblock \emph{SIAM Journal on Numerical Analysis}, 45\penalty0 (2):\penalty0 457--472, 2007.

\bibitem[Diening et~al.(2010)Diening, Ruzicka, and Wolf]{DieningRuzickaWolf2010}
L.~Diening, M.~Ruzicka, and J.~Wolf.
\newblock Existence of weak solutions for unsteady motions of generalized {N}ewtonian fluids.
\newblock \emph{Ann. Sc. Norm. Super. Pisa Cl. Sci. (5)}, 9:\penalty0 1--46, 2010.

\bibitem[Diening et~al.(2013)Diening, Kreuzer, and S{\"u}li]{DieningKreuzerSuli2013}
L.~Diening, C.~Kreuzer, and E.~S{\"u}li.
\newblock Finite element approximation of steady flows of incompressible fluids with implicit power-law-like rheology.
\newblock \emph{SIAM J. Numer. Anal.}, 51:\penalty0 984--1015, 2013.

\bibitem[Ding and Li(2025)]{DING2025116470}
Q.~Ding and M.~Li.
\newblock Convergence analysis of finite element method for incompressible magnetohydrodynamics system with variable density.
\newblock In \emph{Proceedings of the Conference on Computational and Applied Mathematics}, volume 462, page 116470, 2025.
\newblock \doi{https://doi.org/10.1016/j.cam.2024.116470}.

\bibitem[Ding et~al.(2022)Ding, Long, and Mao]{ding2022convergence}
Q.~Ding, X.~Long, and S.~Mao.
\newblock Convergence analysis of a fully discrete finite element method for thermally coupled incompressible mhd problems with temperature-dependent coefficients.
\newblock \emph{ESAIM: Mathematical Modelling and Numerical Analysis}, 56\penalty0 (3):\penalty0 969--1005, 2022.

\bibitem[Droniou et~al.(2022)Droniou, Goldys, and Le]{DroniouGoldysLe2022}
J.~Droniou, B.~Goldys, and K.-N. Le.
\newblock Design and convergence analysis of numerical methods for stochastic evolution equations with leray-lions operator.
\newblock \emph{IMA Journal of Numerical Analysis}, 42\penalty0 (2):\penalty0 1143--1179, 2022.

\bibitem[Eckhard and Wróblewska-Kamińska(2013)]{EmmrichWroblewska2013}
E.~Eckhard and A.~Wróblewska-Kamińska.
\newblock Convergence of a full discretization of quasi-linear parabolic equations in isotropic and anisotropic orlicz spaces.
\newblock \emph{SIAM Journal on Numerical Analysis}, 51\penalty0 (3):\penalty0 1163--1184, 2013.

\bibitem[Eckstein and Ruzicka(2018)]{EcksteinRuzicka2018}
S.~Eckstein and M.~Ruzicka.
\newblock On the full space-time discretization of the generalized navier-stokes equations: the dirichlet case.
\newblock \emph{SIAM Journal on Numerical Analysis}, 56\penalty0 (4):\penalty0 2234--2261, 2018.

\bibitem[Gerbeau et~al.(2006)Gerbeau, Bris, and Leli{\`e}vre]{GerbeauLeBrisLelievre2006}
J.-F. Gerbeau, C.~Le Bris, and T.~Leli{\`e}vre.
\newblock \emph{Mathematical Methods for the Magnetohydrodynamics of Liquid Metals}.
\newblock Numerical Mathematics and Scientific Computation. Oxford University Press, Oxford, 2006.

\bibitem[Girault and Raviart(1986)]{GiraultRaviart1986}
V.~Girault and P.-A. Raviart.
\newblock \emph{Finite Element Methods for Navier--Stokes Equations: Theory and Algorithms}, volume~5 of \emph{Springer Series in Computational Mathematics}.
\newblock Springer-Verlag, Berlin, 1986.

\bibitem[Guzman and Neilan(2014)]{GuzmanNeilan2014}
J.~Guzman and M.~Neilan.
\newblock Conforming and divergence-free {S}tokes elements in three dimensions.
\newblock \emph{IMA J. Numer. Anal.}, 34:\penalty0 1489--1508, 2014.

\bibitem[H.(2007)]{Hansen2007}
Eskil H.
\newblock Galerkin runge–kutta discretizations of nonlinear parabolic equations.
\newblock \emph{Journal of Computational and Applied Mathematics}, 205\penalty0 (2):\penalty0 882--890, 2007.

\bibitem[Hashizume(2006)]{hashizume2006numerical}
H.~Hashizume.
\newblock Numerical and experimental research to solve mhd problems in liquid blankets system.
\newblock \emph{Fusion Engineering and Design}, 81\penalty0 (8–14):\penalty0 1431--1438, 2006.

\bibitem[Heywood and Rannacher(1990)]{HeywoodRannacher1990}
John~G. Heywood and Rolf Rannacher.
\newblock Finite element approximation of the nonstationary {N}avier--{S}tokes problem. {IV}. {E}rror analysis for second-order time discretization.
\newblock \emph{SIAM J. Numer. Anal.}, 27:\penalty0 353--384, 1990.

\bibitem[Hiptmair(2002)]{Hiptmair2002}
R.~Hiptmair.
\newblock Finite elements in computational electromagnetism.
\newblock \emph{Acta Numerica}, 11:\penalty0 237--339, 2002.

\bibitem[Le et~al.(2026)Le, Panda, and Sarkar]{LePandaSarkar2026}
K.~N. Le, A.~A. Panda, and E.~I. Sarkar.
\newblock Stability, convergence, and error analysis of finite element methods for 3d magnetohydrodynamics with p-laplacian viscosity, 2026.
\newblock URL \url{https://arxiv.org/abs/2608.24197}.

\bibitem[Li et~al.(2022)Li, Qiu, and Yang]{LiQiuYang2022}
B.~Li, W.~Qiu, and Z.~Yang.
\newblock A convergent post-processed discontinuous {Galerkin} method for incompressible flow with variable density.
\newblock \emph{J. Sci. Comput.}, 91\penalty0 (2):\penalty0 28, 2022.

\bibitem[Li and Li(2024)]{LiLi2024}
S.~Li and Y.~Li.
\newblock Optimal error analysis of an unconditionally stable {BDF2} finite element approximation for the 3d incompressible {MHD} equations with variable density.
\newblock \emph{J. Comput. Appl. Math.}, 445:\penalty0 18, 2024.

\bibitem[Lindqvist(2006)]{Lindqvist2006}
P.~Lindqvist.
\newblock \emph{Notes on the $p$-{L}aplace Equation}.
\newblock University of Jyv{\"a}skyl{\"a}, Report 102, 2006.

\bibitem[Liu and Walkington(2007)]{LiuWalkington2007}
C.~Liu and N.~J. Walkington.
\newblock Convergence of numerical approximations of the incompressible navier-stokes equations with variable density and viscosity.
\newblock \emph{SIAM Journal on Numerical Analysis}, 45\penalty0 (3):\penalty0 1287--1304, 2007.

\bibitem[Marchioro and Pulvirenti(1994)]{marchioro1994mathematical}
C.~Marchioro and M.~Pulvirenti.
\newblock \emph{Mathematical Theory of Incompressible Nonviscous Fluids}.
\newblock Springer-Verlag, New York, 1994.

\bibitem[Meir(1995)]{Meir1995}
A.~J. Meir.
\newblock Thermally coupled, stationary, incompressible mhd flow; existence, uniqueness, and finite element approximation.
\newblock \emph{Numerical Methods for Partial Differential Equations}, 11:\penalty0 311--337, 1995.

\bibitem[Meir and Schmidt(2001)]{MeirSchmidt2001}
A.~J. Meir and P.~G. Schmidt.
\newblock On electromagnetically and thermally driven liquid-metal flows.
\newblock In \emph{Proceedings of the Third World Congress of Nonlinear Analysts, Part 5 (Catania, 2000)}, volume~47, pages 3281--3294, 2001.

\bibitem[Monk(2003)]{Monk2003}
P.~Monk.
\newblock \emph{Finite Element Methods for Maxwell's Equations}.
\newblock Numerical Mathematics and Scientific Computation. Oxford University Press, New York, 2003.

\bibitem[Mulone and Rionero(2003)]{mulone2003necessary}
G.~Mulone and S.~Rionero.
\newblock Necessary and sufficient conditions for nonlinear stability in the magnetic bénard problem.
\newblock \emph{Archive for Rational Mechanics and Analysis}, 166\penalty0 (3):\penalty0 197--218, 2003.

\bibitem[Prohl(2008)]{Prohl2008}
A.~Prohl.
\newblock Convergent finite element discretizations of the nonstationary incompressible magnetohydrodynamics system.
\newblock \emph{M2AN Mathematical Modelling and Numerical Analysis}, 42\penalty0 (6):\penalty0 1065--1087, 2008.

\bibitem[Qiu(2020)]{Qiu2020}
H.~Qiu.
\newblock Error analysis of euler semi-implicit scheme for the nonstationary magneto-hydrodynamics problem with temperature dependent parameters.
\newblock \emph{Journal of Scientific Computing}, 85:\penalty0 1--26, 2020.

\bibitem[Ravindran(2019)]{Ravindran2019}
S.~S. Ravindran.
\newblock Partitioned time-stepping scheme for an mhd system with temperature-dependent coefficients.
\newblock \emph{IMA Journal of Numerical Analysis}, 39:\penalty0 1860--1887, 2019.

\bibitem[Simon(1987)]{Simon1987}
J.~Simon.
\newblock Compact sets in the space {$L^p(0,T;B)$}.
\newblock \emph{Ann. Mat. Pura Appl. (4)}, 146:\penalty0 65--96, 1987.

\bibitem[Smoletntsev et~al.(2010)Smoletntsev, Moreau, Buehler, et~al.]{smoletntsev2010mhd}
S.~Smoletntsev, R.~Moreau, L.~Buehler, et~al.
\newblock Mhd thermofluid issues of liquid-metal blankets: phenomena and advances.
\newblock \emph{Fusion Engineering and Design}, 85\penalty0 (7–9):\penalty0 1196--1205, 2010.

\bibitem[S{\"u}li and Tscherpel(2020)]{SuliTscherpel2020}
E.~S{\"u}li and T.~Tscherpel.
\newblock Fully discrete finite element approximation of unsteady flows of implicitly constituted incompressible fluids.
\newblock \emph{IMA J. Numer. Anal.}, 40:\penalty0 801--849, 2020.

\bibitem[Tabata and Tagami(2005)]{TabataTagami2005}
M.~Tabata and D.~Tagami.
\newblock Error estimates of finite element methods for nonstationary thermal convection problems with temperature-dependent coefficients.
\newblock \emph{Numerische Mathematik}, 100:\penalty0 351--372, 2005.

\bibitem[Temam(1977)]{Temam1977}
R.~Temam.
\newblock \emph{Navier--Stokes Equations: Theory and Numerical Analysis}.
\newblock North-Holland, Amsterdam, 1977.

\bibitem[Walkington(2005)]{Walkington2005}
N.~J. Walkington.
\newblock Convergence of the discontinuous galerkin method for discontinuous solutions.
\newblock \emph{SIAM Journal on Numerical Analysis}, 42\penalty0 (5):\penalty0 1801--1817, 2005.

\bibitem[Wang and Li(2024)]{MR4783950}
S.~Wang and Y.~Li.
\newblock Optimal convergent analysis of a linearized {E}uler finite element scheme for the 2{D} incompressible temperature-dependent {MHD}-{B}oussinesq equations.
\newblock \emph{Commun. Nonlinear Sci. Numer. Simul.}, 138:\penalty0 Paper No. 108264, 17, 2024.
\newblock ISSN 1007-5704,1878-7274.

\bibitem[Ye(2021)]{ye2021global}
Z.~Ye.
\newblock Global regularity of 2d temperature-dependent mhd-boussinesq equations with zero thermal diffusivity.
\newblock \emph{Journal of Differential Equations}, 293:\penalty0 447--481, 2021.

\bibitem[Zhang(2005)]{Zhang2005}
S.~Zhang.
\newblock A new family of stable mixed finite elements for the {3D} {S}tokes equations.
\newblock \emph{Math. Comp.}, 74:\penalty0 543--554, 2005.

\end{thebibliography}

\end{document}